\numberwithin{equation}{section}
\newtheorem{theorem}{Theorem}[section]
\newtheorem{proposition}[theorem]{Proposition}
\newtheorem{lemma}[theorem]{Lemma}
\newtheorem{corollary}[theorem]{Corollary}
\theoremstyle{definition}
\newtheorem{definition}[theorem]{Definition}
\newtheorem{example}[theorem]{Example}
\newtheorem{question}[theorem]{Question}
\newtheorem{notation}[theorem]{Notation}
\theoremstyle{remark}
\newtheorem{remark}[theorem]{Remark}
\numberwithin{equation}{section}
\renewcommand{\labelenumi}{\rm (\theenumi)}
\renewcommand{\p@enumii}{\empty}
\newcommand{\AC}{A}  
\newcommand{\R}{\mathbb{R}}
\newcommand{\C}{\mathbb{C}}
\newcommand{\N}{\mathbb{N}}
\newcommand\T{{\mathbb T}}  
\newcommand\D{\mathbb D} 
\newcommand{\extD}{\mathbb{D}^{\rm e}} 
\newcommand\cB{{\mathcal B}}  
\newcommand{\PickA}{\mathcal{P}}
\newcommand{\PickB}{\mathcal{P}^2}
\newcommand{\PickC}{\mathcal{P}^2_0}
\renewcommand{\Re}{\mathop{\rm Re}} 
\renewcommand{\Im}{\mathop{\rm Im}} 
\renewcommand{\epsilon}{\varepsilon}  
\newcommand{\id}{{\rm id}}
\newcommand{\supp}{\mathop{\rm supp}} 
\newcommand{\prob}{\mathbf{P}(\mathbb R)}
\newcommand{\mea}{\mathbf{M}(\mathbb R)}
\newcommand{\classical}{\mathfrak{c}}
\newcommand{\monotone}{\mathfrak{m}}
\newcommand{\free}{\mathfrak{f}}
\newcommand{\boole}{\mathfrak{b}}
\newcommand{\AR}{\mathrm{r}}
\newcommand{\ar}{r} 
\newcommand{\mean}{\mathrm{m}}
\newcommand{\var}{\mathrm{V}}
\newcommand{\Mean}{\mathrm{M}}
\newcommand{\genc}{\gamma}
\newcommand{\genm}{\eta}
\newcommand{\Genm}{\mathcal{H}}
\newcommand{\rgen}{\kappa}
\newcommand{\rGen}{\mathcal{K}}
\newcommand{\itm}{\lambda}
\newcommand{\para}{\mathbb{I}} 
\newcommand{\kk}{\psi} 
\begin{document}

\title[Additive processes and Loewner chains]{Additive processes on the real line \\ and Loewner chains}
\author[T. Hasebe]{Takahiro Hasebe}
\address{Department of Mathematics, Hokkaido University, North 10 West 8, Kita-ku, Sapporo 060-0810, Japan}
\email{thasebe@math.sci.hokudai.ac.jp}
\author[I. Hotta]{Ikkei Hotta}
\address{Department of Applied Science, Yamaguchi University 2-16-1 Tokiwadai, Ube 755-8611, Japan}
\email{ihotta@yamaguchi-u.ac.jp}
\author[T. Murayama]{Takuya Murayama}
\address{Faculty of Mathematics, Kyushu University, 744 Motooka, Nishi-ku, Fukuoka 819-0395, Japan. Present address: Department of Mathematics, Graduate School of Science, Kobe University, 1-1 Rokkodai, Nada-ku, Kobe 657-8501, Japan}
\email{murayama@math.kobe-u.ac.jp}

\date{\today}

\subjclass[2020]{46L53, 
46L54, 
60G51, 
30C99 
} 
\keywords{Loewner chain, additive process, convolution hemigroup, monotone convolution, free convolution, boolean convolution}

\begin{abstract}
This paper investigates additive processes with respect to several different independences in non-commutative probability in terms of the convolution hemigroups of the distributions of the increments of the processes.
In particular, we focus on the relation of monotone convolution hemigroups and Loewner chains, a special kind of family of conformal mappings, on the upper half-plane.
Generalizing the celebrated Loewner differential equation, we formulate an integral equation and the concept of ``generator'' for any Loewner chain of reciprocal Cauchy transforms.
This generalization enables us to remove the assumption of the absolute continuity of Loewner chains which had been imposed in the literature.
The locally uniform convergence of Loewner chains is then equivalent to a suitable convergence of generators.
Using generators, we define homeomorphisms between the aforementioned class of Loewner chains, the set of monotone convolution hemigroups, and the set of classical convolution hemigroups on the real line.
We also discuss similar homeomorphisms to free, boolean and anti-monotone convolution hemigroups on the real line.
\end{abstract}

\maketitle

\tableofcontents


\section{Introduction}

In this paper, we study additive processes, or stochastic processes with independent increments, from the perspectives of non-commutative probability and complex analysis.
The main topic is the relation of \emph{classical} and \emph{monotone} additive processes.
As shown in the sequel, monotone additive processes are associated with \emph{Loewner chains} of reciprocal Cauchy transforms.
One of our goals is to construct, by developing the theory of Loewner chains, a dynamical version of the \emph{Bercovici--Pata bijection}~\cite{AW14,BNT02,BP99} between classical and monotone additive processes, together with anti-monotone, free and boolean additive processes.

In what follows, we motivate and explain our formulation and results.
Section~\ref{sec:background} is devoted to a review of how the theory of Loewner chains has been interacting with classical and noncommutative probability; the reader who desires a quick access to technical details can skip it.
In Sections~\ref{sec:prel}--\ref{sec:results}, we state the precise definitions needed, more detailed objective, core idea, and main results of this paper.
Section~\ref{sec:outline_of_paper} provides the outline of the remaining contents through Sections~\ref{sec:classical_CH}--\ref{sec:univ_Cauchy} and Appendices~\ref{sec:conv_measures_appdx}--\ref{app:GHP}.

\subsection{Background}
\label{sec:background}

In complex function theory, it has been an important problem to establish sharp estimates on the Taylor coefficients of a given univalent function.
In 1923 C.\ Loewner introduced his famous differential equation, governing the time-evolution of a Loewner chain of certain conformal mappings, to offer an approach involving differential inequalities to this problem.
His approach turned out to be very powerful and, in particular, culminated in the solution of Bieberbach's conjecture by de Branges in 1984.
The interested reader is referred to the monograph \cite{BDDM86} for the history of this conjecture.
Moreover, Loewner's method has spread to several branches of mathematics outside function theory.
The reader can find such applications, for example, in dispersionless integrable systems \cite{ATZ21} and in the analysis of Laplacian growth \cite{GL21}.

In probability theory, the most striking application of the Loewner differential equation was initiated by O.\ Schramm in 2000: namely, the Schramm--Loewner evolution (SLE).
This stochastic process describes a random non-self-intersecting path, which appears in two-dimensional statistical physics, growing from a boundary point of the domain.
Indeed, the complement of a growing path determines a family of decreasing domains, to which the Loewner equation applies.
It is surprising that, in this case, the driving function in the equation becomes one-dimensional Brownian motion.
Thanks to this fact combined with It\^o's stochastic calculus, SLE is ``amenable'' enough to obtain many rigorous results predicted in physics; for an SLE overview, see the monograph \cite{Kem17}.

The huge success of SLE stimulated various studies based on Loewner's method in research fields around probability.
Among them, Bauer~\cite{Bau04} showed a connection between Loewner chains and non-commutative stochastic processes.
In his paper, a solution to the (chordal) Loewner differential equation, which is a family of holomorphic functions $f_t$, $t \ge 0$, on the upper half-plane $\C^+$ determines a family of probability distributions $\mathfrak{m}_t$, $t \ge 0$, on $\mathbb{R}$, and moreover, $\mathfrak{m}_t$ is regarded as the one-dimensional marginal distribution of a non-commutative stochastic process.
Here we note that a \emph{(reverse) evolution family}, a relative of the Loewner chain, had already appeared in the formula 
\begin{equation} \label{eq:Bia98_intro}
\text{``}F_{s,t}(K_{\mu_t}(z))=K_{\mu_s}\text{''}
\end{equation}
in Biane's work \cite[p.~161]{Bia98} on free additive processes.
Bauer focused on the relation of his idea to \emph{free independence} and to Biane's work only. Later, Schlei{\ss}inger \cite{Sch17} pointed out that $\mathfrak{m}_t$ is the marginal distribution of a monotone additive process%
\footnote{Before Schlei{\ss}inger's work, Franz \cite[Sections 1 and 5]{Fra09b} studied the relation of Formula \eqref{eq:Bia98_intro} to monotone probability and announced the paper~\cite{FHS20} that appeared much later.}.
This line of research was further developed by Jekel \cite{Jek20}, Franz, Hasebe and Schlei{\ss}inger \cite{FHS20}, and Hasebe and Hotta \cite{HH22}.
Jekel used Loewner chains on infinite-dimensional spaces to describe operator-valued monotone probability, while the other papers are based on usual Loewner chains.

The present work is a companion to the paper \cite{HH22} by the first and second named authors.
In that paper the authors formulate a bijection between additive processes \emph{on the unit circle} $\T$, (radial) Loewner chains \emph{on the unit disk} $\D$, and \emph{unitary multiplicative} processes with monotonically independent increments.
Key ideas there are introducing a suitable concept of ``generator'' for a Loewner chain and identifying it with the generator in the \emph{L\'evy--Khintchine representation} for a classical additive process.
We basically proceed in the same way, but we must take account for the difference between our setting and the previous one.
Roughly speaking, these differences arise from the (non)compactness of the state space in view of classical probability, the position of the unique attractive point of conformal mappings, called the Denjoy--Wolff point, in view of Loewner theory, and the (un)boundedness of operators in view of noncommutative probability.

Let us make a few comments on the domain and fixed point(s) of a Loewner chain.
In Bieberbach's conjecture, univalent functions are defined on $\D$ and normalized at the origin.
Therefore, the mainstream of research had been the radial case, in which the Denjoy--Wolff point is an interior point, until the end of the twentieth century.
However, the chordal case, mainly considered on $\C^+$, in which the Denjoy--Wolff point is located on the boundary, has also been studied classically; see the surveys \cite[\S4]{ABCDM10} \cite[\S5]{BCDMV14} for the historical development in this setting.
The present article also concerns the chordal case.
In applications including SLE, moreover, several variants of the Loewner equation such as ``whole-plane'', ``bilateral'', and ``covering'' ones appear and are equally important.
Even further, there is a modern framework motivated by the theory of holomorphic semigroups; see another part of the same surveys \cite[\S6]{ABCDM10} \cite[\S10]{BCDMV14}.
This framework includes both interior and boundary fixed points and, in addition, allows them to vary in time.
In this regard, Gumenyuk, Hasebe and P\'erez  \cite{GHP22+,GHP23+} recently developed Loewner theory for Bernstein functions in order to analyze both continuous and discrete state inhomogeneous branching processes in one dimension.
In the continuous state case, the domain for Loewner chains is the right half-plane and the Denjoy--Wolff points may vary from $0$ to $\infty$ on the positive real line, and in the discrete state case, the domain is the unit disk and the Denjoy--Wolff points may vary on $[0,1]$. 
The current article also relies on this modern framework as well as the classical chordal one.

\subsection{Convolutions, convolution hemigroups and additive processes} \label{sec:prel}

We now get into our subject.
Let $\prob$ be the set of Borel probability measures on $\R$.   
The classical convolution $\mu \ast \nu$ of probability measures $\mu, \nu \in \prob$ is the probability distribution of $X+Y$ where $X,Y$ are $\R$-valued independent random variables having distributions $\mu,\nu$, respectively. If we consider $\widehat\mu(z):=\int e^{iwz}\,\mu(dw)$, the characteristic function of $\mu$, then the identity
\begin{equation}
\widehat{\mu \ast \nu}(z) = \widehat{\mu}(z) \widehat{\nu}(z), \qquad z \in \R
\end{equation}
characterizes the classical convolution. 

In non-commutative probability theory, random variables can be regarded as self-adjoint operators on a Hilbert space, and the distribution of a random variable is defined properly.
For each independence the convolution is defined in the same spirit as the classical one; namely, the monotone convolution $\mu \rhd \nu$ (resp.\  anti-monotone convolution $\mu \lhd \nu$, free convolution $\mu \boxplus\nu$ and boolean convolution $\mu\uplus \nu$) of $\mu,\nu \in \prob$ is defined to be the probability distribution of $X+Y$ where $X,Y$ are monotonically (resp.\ anti-monotonically, freely and boolean) independent self-adjoint operators having the prescribed distributions $\mu,\nu$, respectively. Note that anti-monotone independence of $X,Y$ is just monotone independence of $Y,X$. 
We refer the reader to Bercovici and Voiculescu \cite{BV93} for unbounded operator models for free convolution and Franz \cite{Fra09b} for boolean and monotone convolutions.

The convolutions mentioned in the preceding paragraph are characterized in terms of the Cauchy transform and its relatives. Let $\C^+$ signify the upper half-plane. The \emph{Cauchy transform} of $\mu \in \prob$ is the function $G_\mu \colon \C^+ \to (-\C^+)$ defined by 
\begin{equation} \label{eq:def_Cauchy}
G_\mu(z) = \int_\R \frac1{z-x} \,\mu(dx). 
\end{equation}
We shall also write $G_\mu$ as $G[\mu]$ when $\mu$ is accompanied by superscripts or subscripts.
The \emph{reciprocal Cauchy transform} $F_\mu\colon \C^+\to\C^+$, also designated as $F[\mu]$, is then defined by 
\begin{equation} \label{eq:def_rCT}
F_\mu(z) = \frac1{G_\mu(z)}.
\end{equation}
Monotone convolution $\rhd$ \cite{Mur00} is characterized in the way 
\begin{equation} \label{eq:mono_convolution}
F_{\mu\rhd\nu} = F_\mu\circ F_\nu.  
\end{equation}
Characterizations of free and boolean convolutions will be reviewed in Section \ref{sec:c-f-real}. Anti-monotone convolution $\lhd$ is simply characterized by $\mu \lhd \nu = \nu \rhd \mu$. From this simple relation, most of the studies on anti-monotone convolution can be reduced to monotone convolution, but some of them are still noteworthy. 

\begin{notation}[Time parameters]\label{not1}
Throughout the paper, we fix an interval $I$ with the form $I=[0,T_0)$ $(0< T_0 \le +\infty)$ or $I=[0,T_0]$ $(0< T_0 < +\infty)$, unless specified otherwise.
We set $I^2_{\le}:=\{\, (s,t) \in I^2 : s \le t \,\}$ and,
for the families $(y_t)_{t\in I}$ and $(y_{s,t})_{(s,t) \in I^2_{\le}}$ of arbitrary elements, write them as $(y_t)_{t}$ and $(y_{s,t})_{s \le t}$ or more simply $(y_t)$ and $(y_{s,t})$, respectively.
In the sequel, we use some properties usually stated for $I=[0,+\infty)$, but they are easily seen to be valid for a general $I$.
\end{notation}

For a given associative binary operation $\star$ on $\prob$, a \emph{convolution hemigroup with respect to $\star$} ($\star$-CH for brevity) is a family $(\mu_{s,t})_{s\le t}$ in $\prob$ such that $\mu_{s,s}=\delta_0$ and $\mu_{s,t}\star\mu_{t,u} = \mu_{s,t}$ for all $s \le t \le u$ and $(s,t) \mapsto \mu_{s,t}$ is weakly continuous%
\footnote{Here ``weakly'' means ``with respect to weak convergence,'' and a sequence $(\nu_n)_{n \in \N}$ of finite Borel measures on $\R$ is said to be weakly convergent to a finite measure $\nu$ if $\lim_{n \to \infty} \int_{\R} f \,d\nu_n=\int_{\R}f \,d\nu$ for any bounded continuous $f \colon \R \to \R$.
Related definitions and properties are summarized in Appendix~\ref{sec:weak_and_vague}.}. 
A $\star$-CH $(\mu_{s,t})_{0\le s\le t<+\infty}$ (where $I=[0,+\infty)$) is said to be \textit{time-homogeneous} if $\mu_{s,t}=\mu_{0,t-s}$ for all $0\le s\le t<+\infty$. In this case $\mu_t:=\mu_{0,t}$ becomes a \emph{$\star$-convolution semigroup}, i.e., $\mu_{s}\star\mu_t =\mu_{s+t}$ for all $0\le s,t<+\infty$.  

It is obvious that $\rhd$-convolution semigroups and $\lhd$-convolution semigroups are the same. In addition, if $I=[0,T_0]$, there is a canonical correspondence between $\rhd$-CHs and $\lhd$-CHs, given by $(\mu_{s,t}) \mapsto (\mu_{T_0-t, T_0-s})$.

Usually, convolution hemigroups are studied in relation to infinite divisibility of probability distributions.
Given a convolution $\star \in \{\ast,\rhd,\lhd, \boxplus,\uplus\}$, $\mu \in\prob$ is said to be \emph{$\star$-infinitely divisible} if for every $n\in \N$ there exists $\mu_n \in \prob$ such that $\mu=\mu_n ^{\star n}$.
In the case $\star \in \{\ast,\boxplus,\uplus\}$, each element of a $\star$-CH is $\star$-infinitely divisible; for reference, see Remark~\ref{rem:AP_is_ID} below. 
Conversely, if $\mu \in \prob$ is $\star$-infinitely divisible then there exists a $\star$-CH $(\mu_{s,t})_{0\le s\le t<+\infty}$ such that $\mu_{0,1}=\mu$ (one can even take a time-homogeneous CH).
In the case $\star=\rhd$, however, the situation is much different: a $\rhd$-CH typically contains elements that are not $\rhd$-infinitely divisible \cite[Theorem 3.20 and Example 6.12]{FHS20}. 

\begin{remark} \label{rem:AP_is_ID}
In the preceding paragraph, we refer the reader to, e.g., Sato's book \cite[Theorems~9.1 and 9.7 (ii)]{Sat13} for the classical case.
The proof for free convolution is quite similar to the unit circle case \cite[Lemma 4.2]{HH22}; note that the Bawly--Khintchine theorem for $\boxplus$ is given by Bercovici and Pata \cite[Theorem 1]{BP00}.
For boolean convolution, there is nothing to prove because any probability measure is $\uplus$-infinitely divisible.  
\end{remark}

Let $\star\in \{\ast, \rhd, \boxplus\}$.
For every $\star$-CH $(\mu_{s,t})_{s\le t}$, there exists a $\star$-additive process whose increment from time $s$ to $t$ has the prescribed distribution $\mu_{s,t}$ (for uniqueness of such a process, see Remark~\ref{rem:CH_to_AP} below).
Conversely, given a $\star$-additive process, the distributions of its increments obviously form a $\star$-CH. In this paper, we work on convolution hemigroups and do not analyze additive processes directly. 

\begin{remark} \label{rem:CH_to_AP}
For classical and monotone convolutions, additive processes are unique up to finite dimensional distributions; see Sato \cite[Theorem~9.7 (iii)]{Sat13} and Franz, Hasebe and Schlei{\ss}inger \cite[Theorem 4.7]{FHS20}, respectively.
For the free case, existence of additive processes is known in Biane \cite{Bia98} but uniqueness seems not studied in the literature.
For boolean convolution, existence of additive processes (as unbounded operator processes) is not established in the literature.
\end{remark}

\subsection{Objective of this paper}

Our work originates from considerations of \emph{L\'evy--Khintchine representations} for classical and monotone CHs. Although free and boolean ones motivate our work as well, we postpone subjects on them to Section \ref{sec:c-f-real} because they are treated rather independently.

Let $\mea$ be the set of finite nonnegative Borel measures on $\R$. 

\begin{proposition}[L\'evy-Khintchine representation, see e.g.~\cite{Sat13}] \label{prop:CID_LKrep}
Let $\classical$ be a $\ast$-infinitely divisible distribution on $\R$. Then there exists a unique pair $(\genc,\genm) \in \R \times \mea$ such that 
\begin{equation}\label{eq:CLK0_intro}
\widehat \classical(\xi) = \exp\left( i \genc \xi + \int_\R \left( e^{i\xi x} -1 - \frac{i\xi x}{1+x^2} \right)\frac{1+x^2}{x^2}\,\genm(dx) \right), \qquad \xi \in \R.      
\end{equation}
Conversely, for every pair $(\genc,\genm) \in \R \times \mea$ there exists a unique $\ast$-infinitely divisible distribution $ \classical$ for which \eqref{eq:CLK0_intro} holds. 
\end{proposition}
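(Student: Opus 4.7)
The plan is to derive this from the classical Lévy--Khintchine theorem in its standard form (as in \cite[Theorem 8.1]{Sat13}), which represents every $\ast$-infinitely divisible $\classical \in \prob$ uniquely by a triple $(\gamma, a, \nu)$ consisting of a drift $\gamma \in \R$, a Gaussian variance $a \ge 0$, and a Lévy measure $\nu$ on $\R \setminus \{0\}$ satisfying $\int (x^2 \wedge 1)\,\nu(dx) < +\infty$, through
\begin{equation*}
\widehat{\classical}(\xi) = \exp\left( i\gamma\xi - \frac{a\xi^2}{2} + \int_{\R\setminus\{0\}}\left(e^{i\xi x} - 1 - \frac{i\xi x}{1+x^2}\right)\nu(dx)\right).
\end{equation*}
So the content of the proposition is essentially to rewrite this in Kolmogorov's canonical form, i.e., to exhibit a bijection
\begin{equation*}
(\gamma, a, \nu) \longleftrightarrow (\genc, \genm) \in \R \times \mea.
\end{equation*}

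For the correspondence, I would set $\genc := \gamma$ and define
\begin{equation*}
\genm(dx) := a\,\delta_0(dx) + \frac{x^2}{1+x^2}\nu(dx).
\end{equation*}
The finiteness $\genm(\R) = a + \int \frac{x^2}{1+x^2}\nu(dx) < +\infty$ follows from $x^2/(1+x^2) \le x^2 \wedge 1$ up to a constant. Conversely, given $(\genc, \genm) \in \R \times \mea$, put $a := \genm(\{0\})$ and $\nu(dx) := \frac{1+x^2}{x^2}\genm|_{\R\setminus\{0\}}(dx)$ on $\R \setminus \{0\}$; one checks $\int (x^2 \wedge 1)\,\nu(dx) < +\infty$ directly from $\genm(\R) < +\infty$. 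These two operations are mutual inverses. To see the exponents match, I would observe that the integrand
\begin{equation*}
\psi_\xi(x) := \left(e^{i\xi x} - 1 - \frac{i\xi x}{1+x^2}\right)\frac{1+x^2}{x^2}
\end{equation*}
extends continuously to $x=0$ with value $\psi_\xi(0) = -\xi^2/2$ (by Taylor expansion), so that integrating $\psi_\xi$ against $\genm$ reproduces the Gaussian term $-a\xi^2/2$ from the atom at $0$ and the jump part $\int(e^{i\xi x} - 1 - i\xi x/(1+x^2))\,\nu(dx)$ from the restriction to $\R\setminus\{0\}$.

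Existence and uniqueness in \eqref{eq:CLK0_intro} therefore transfer from existence and uniqueness in the standard form. The only point deserving care is that $\psi_\xi$ is bounded and continuous on $\R$, which is needed both for well-definedness of the integral and to transfer uniqueness: if two pairs $(\genc^{(i)}, \genm^{(i)})$ yield the same $\widehat{\classical}$, then taking logarithms (after choosing a consistent branch, valid since $\widehat{\classical}$ is non-vanishing for infinitely divisible $\classical$) and differentiating appropriately recovers $\genc$ from the imaginary linear term in $\xi$ and $\genm$ from the Fourier-type transform $\xi \mapsto \int \psi_\xi\,d\genm$, whose injectivity on $\mea$ is a standard consequence of Stone--Weierstrass/uniqueness of Fourier transforms of finite measures.

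The main obstacle is genuinely upstream, namely the classical Lévy--Khintchine theorem itself; granting that, what remains is the bookkeeping above. The substantive place to be careful is the simultaneous behavior of $\psi_\xi$ and the Jacobian $(1+x^2)/x^2$ at $x = 0$, which is what makes the Kolmogorov packaging valid and accounts for the Gaussian component via the mass $\genm(\{0\})$.
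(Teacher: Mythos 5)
The paper states this proposition as standard background and gives no proof of its own (the result is attributed to the literature; cf.\ Remark~1.1 pointing to Sato's book, and the footnote attributing the reduced form to Kolmogorov via Gnedenko--Kolmogorov). So there is no paper proof to compare against; what you have written is the usual derivation of the Khintchine/Kolmogorov canonical form from the Sato triple form, and it is essentially correct.

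The bijection $(\gamma,a,\nu)\leftrightarrow(\genc,\genm)$ with $\genm = a\delta_0 + \tfrac{x^2}{1+x^2}\nu$ is right, the integrability checks $\tfrac{x^2}{1+x^2}\le x^2\wedge 1$ and $(x^2\wedge 1)\tfrac{1+x^2}{x^2}\le 2$ are right, and the key observation that $\psi_\xi$ extends continuously to $0$ with $\psi_\xi(0)=-\xi^2/2$ (so that the atom at $0$ reproduces the Gaussian term) is exactly the point that makes the packaging work. One small caveat worth mentioning: you should make explicit which cutoff Sato's cited version uses, since Sato's Theorem~8.1 uses $\mathbf{1}_{[-1,1]}(x)$ rather than $x/(1+x^2)$; passing between those two truncations shifts $\gamma$ by the finite quantity $\int x\bigl(\tfrac{1}{1+x^2}-\mathbf{1}_{[-1,1]}(x)\bigr)\nu(dx)$, and it is that corrected drift that you should set equal to $\genc$ (this is Sato's Remark~8.4). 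Finally, your last sentence offering a direct injectivity argument via ``Stone--Weierstrass/uniqueness of Fourier transforms of finite measures'' is both unnecessary (uniqueness already transfers from the uniqueness of $(\gamma,a,\nu)$ through the bijection you constructed) and too glib as stated: the map $\xi\mapsto\int\psi_\xi\,d\genm$ is not a Fourier transform of $\genm$, and differentiating twice in $\xi$ under the integral would involve $(1+x^2)\genm$, which need not be finite. I would simply delete that side remark and let the transfer-of-uniqueness sentence stand alone.
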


In Proposition~\ref{prop:CID_LKrep}, the pair $(\genc,\genm)$ is called the \emph{generator} of $\classical$.
A $\ast$-infinitely divisible distribution $\classical$ with generator $(\genc,\genm)$ is naturally embedded into the convolution semigroup $(\classical^{\ast t})_{t}$ consisting of the $\ast$-infinitely divisible distributions with generator $(t\genc,t\genm)$ so that $\classical=\classical^{\ast 1}$. 

A representation formula for $\rhd$-infinitely divisible distributions is given by a differential equation.
Muraki \cite{Mur00} derived the finite variance case, and Belinschi \cite{Bel05} did the general case. 
\begin{proposition} \label{thm:MLK}
For a $\rhd$-infinitely divisible distribution $\monotone$ on $\R$, there exists a pair $(\genc,\genm)\in \R\times \mea$ 
such that the solution $(f_t)_{0\le t <+\infty}$ of the differential equation 
\begin{align}
\frac{d }{d t} f_t(z) &= p(f_t(z)), \qquad z \in\C^+,\quad 0\le t <+\infty, \label{eq:diff1}  \\
f_0(z) &= z, \qquad\qquad\quad z \in \C^+,  \label{eq:diff2}
\end{align}
where $p\colon \C^+\to \C^+ \cup \R$ is the holomorphic function
\begin{equation}\label{eq:MLK}
    p(z) = -\genc + \int_{\mathbb{R}}\frac{1+zx}{x-z} \,\genm(dx) ,\qquad z\in \C^+,   
\end{equation}
satisfies $f_1=F_\monotone$. The pair $(\genc,\genm)$ is uniquely determined by $\monotone$. 
Conversely, given a function $p$ of the form \eqref{eq:MLK} with $(\genc,\genm)\in \R\times \mea$, the solution $(f_t)_{0\le t <+\infty}$ to equation \eqref{eq:diff1} with initial value \eqref{eq:diff2} consists of the reciprocal Cauchy transforms of some $\rhd$-infinitely divisible distributions. In particular, $f_1=F_\monotone$ for some $\rhd$-infinitely divisible distribution $\monotone$.
\end{proposition}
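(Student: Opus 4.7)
My plan is to construct $p$ as the infinitesimal generator of a composition semigroup on $\C^+$ arising from a $\rhd$-convolution semigroup containing $\monotone$. For the forward direction, $\rhd$-infinite divisibility yields an embedding of $\monotone$ into a weakly continuous semigroup $(\monotone_t)_{t\ge 0}$ with $\monotone_0=\delta_0$ and $\monotone_1=\monotone$: one defines $\monotone_{p/q}$ through $\rhd$-powers and roots available on the $\rhd$-infinitely divisible class and extends by weak continuity. Setting $F_t:=F_{\monotone_t}$, the characterization \eqref{eq:mono_convolution} implies $F_{s+t}=F_s\circ F_t$, so $(F_t)$ is a composition semigroup of reciprocal Cauchy transforms on $\C^+$ with $F_0=\id$. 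By standard half-plane semigroup theory (of Berkson--Porta type at the boundary Denjoy--Wolff point $\infty$), the limit $p(z):=\lim_{t\to 0^+}(F_t(z)-z)/t$ exists locally uniformly on $\C^+$, is holomorphic with $\Im p\ge 0$, and $(F_t)$ is the unique solution of \eqref{eq:diff1}--\eqref{eq:diff2}.

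To pin down the form of $p$, I would invoke the Nevanlinna representation of the Pick function $p$:
$$p(z)=\alpha+\beta z+\int_\R\frac{1+xz}{x-z}\,\rho(dx), \qquad \alpha\in\R,\ \beta\ge 0,\ \rho\in\mea,$$
with $(\alpha,\beta,\rho)$ uniquely determined. The critical step is to prove $\beta=0$, for which I would exploit the Nevanlinna criterion $F_t(iy)/(iy)\to 1$ as $y\to\infty$ characterizing reciprocal Cauchy transforms of probability measures: integrating \eqref{eq:diff1} at $z=iy$ and dividing by $iy$ gives
$$\frac{F_t(iy)}{iy}-1=\int_0^t \frac{p(F_s(iy))}{iy}\,ds;$$
letting $y\to\infty$ and using $p(w)/w\to\beta$ non-tangentially together with $F_s(iy)\sim iy$ (plus a uniform-in-$s$ estimate to justify dominated convergence) forces $\beta t=0$ for all $t$. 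Setting $\genc:=-\alpha$ and $\genm:=\rho$ then yields \eqref{eq:MLK}, and uniqueness of $(\genc,\genm)$ follows from uniqueness of the embedding semigroup together with that of the Nevanlinna representation.

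For the converse, given $(\genc,\genm)$ the function $p$ in \eqref{eq:MLK} satisfies $\Im p\ge 0$ on $\C^+$ and $p(z)/z\to 0$ non-tangentially at $\infty$. Invariance of $\C^+$ under the vector field $p$ provides a global holomorphic solution $(f_t)_{t\ge 0}$ of \eqref{eq:diff1}--\eqref{eq:diff2} with $f_t\colon\C^+\to\C^+$. The decay $p(z)/z\to 0$ propagates through the ODE to yield $f_t(iy)/(iy)\to 1$, so by Nevanlinna's criterion each $f_t=F_{\monotone_t}$ for some $\monotone_t\in\prob$; autonomy of \eqref{eq:diff1} gives $f_{s+t}=f_s\circ f_t$, hence $\monotone_{s+t}=\monotone_s\rhd\monotone_t$ via \eqref{eq:mono_convolution}, so each $\monotone_t$ is $\rhd$-infinitely divisible and $\monotone:=\monotone_1$ satisfies $F_\monotone=f_1$.

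The main obstacle I anticipate is the existence, regularity, and half-plane Nevanlinna structure of the generator $p$: establishing that the composition semigroup $(F_t)$ is genuinely differentiable at $t=0^+$ with a holomorphic limit (the Berkson--Porta-type step) and securing the uniform estimates needed for the dominated convergence in the $\beta=0$ argument. Once these analytic points are in hand, the rest — the Nevanlinna parametrization, the converse, and uniqueness — is routine bookkeeping.
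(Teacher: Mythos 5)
The paper does not prove Proposition~\ref{thm:MLK}; it is cited from Muraki~\cite{Mur00} (finite variance) and Belinschi~\cite{Bel05} (general case). Your outline --- embed $\monotone$ in a weakly continuous $\rhd$-convolution semigroup, differentiate the associated composition semigroup $(F_{\monotone_t})_{t\ge 0}$ at $t=0^+$, apply the Nevanlinna representation to the resulting Pick-type generator, show the linear coefficient $\beta$ vanishes, and reverse via the flow for the converse --- matches the architecture of those references, so the route is the right one.

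The genuine gap is in your very first step, which you present as routine bookkeeping: ``one defines $\monotone_{p/q}$ through $\rhd$-powers and roots $\ldots$ and extends by weak continuity.'' That a $\rhd$-infinitely divisible $\monotone$ embeds into a \emph{weakly continuous} semigroup with $\monotone_1=\monotone$ is precisely the technical heart of the theorem --- it is what Muraki and Belinschi labor to establish, not a consequence that falls out for free. Concretely, to make your sketch run one needs at minimum: (i) uniqueness of $\rhd$-$n$-th roots, i.e.\ that $F_\mu^{\circ n}=F_\nu^{\circ n}$ forces $\mu=\nu$ among reciprocal Cauchy transforms, so that $\monotone_{p/q}$ is well defined, which is far from self-evident; (ii) a tightness and normal-families argument showing $\monotone_{1/n}\to\delta_0$ weakly; (iii) an estimate permitting continuous extension from rational to real times. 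Without (i)--(iii) there is no composition semigroup to differentiate, and the Berkson--Porta machinery never engages. By contrast, the steps you flag as the ``main obstacle'' (differentiability of $(F_t)$ at $t=0^+$ and the dominated convergence for $\beta=0$) are comparatively light once tightness of $\{\monotone_s : s\in[0,1]\}$ on a compact time interval is in hand; indeed $\beta=0$ drops out immediately from the multiplicativity $F_{t+s}'(\infty)=F_t'(\infty)F_s'(\infty)$, which gives $F_t'(\infty)=e^{\beta t}$, combined with $F_t'(\infty)=1$ for reciprocal Cauchy transforms --- no integral estimate is needed. You have identified secondary obstacles while taking the primary one for granted.
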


In Proposition~\ref{thm:MLK}, the pair $(\genc,\genm)$ is called the \emph{generator} of $\monotone$.
The holomorphic vector field \eqref{eq:MLK} determined by this pair is known to be semi-complete \cite{BP78}; i.e., the solution $f_t\colon \C^+\to \C^+$ to the initial value problem \eqref{eq:diff1}--\eqref{eq:diff2} exists for all $0\le t<+\infty$ and defines a family of holomorphic functions.
For each $t>0$, the probability measure $\monotone_t$ can then be determined by $F[\monotone_t]=f_t$ and is $\rhd$-infinitely divisible with generator $(t\genc,t\genm)$.
Moreover, $(\monotone_t)_{0\le t <+\infty}$ forms a $\rhd$-convolution semigroup, i.e., $\monotone_0=\delta_0$ and $\monotone_{s}\rhd \monotone_t = \monotone_{s+t}$. 
We also note that $(f_t)$ satisfies another differential equation
\begin{equation}\label{eq:diff3}
  \frac{\partial }{\partial t} f_t(z) =   p(z) \frac{\partial }{\partial z}f_t(z)
\end{equation}
obtained by differentiating $f_{t+s}(z)=f_t(f_s(z))$ with respect to $s$ at $s=0$. 

By Propositions \ref{prop:CID_LKrep} and \ref{thm:MLK}, the sets of $\ast$- and $\rhd$-infinitely divisible distributions can be parametrized by common generators, and hence a bijection exists between these two sets.
This bijection is called the \emph{Bercovici--Pata bijection} (between classical and monotone convolutions) and naturally appears in limit theorems for i.i.d.\ random variables; see Anshelevich and Williams \cite{AW14}. 

A $\ast$-CH has a time-dependent L\'evy-Khintchine representation (see, e.g., Sato \cite[Theorem~9.8 and Section~56]{Sat13}).

\begin{proposition}
\label{prop:LK_repr}
For a $\ast$-CH $(\classical_{s,t})_{s\le t}$, there exist families $(\genc_t)_{t}$ in $\R$ and $(\genm_t)_{t}$ in $\mea$ such that  
\begin{equation}\label{eq:CLK}
\widehat\classical_{0,t}(\xi) = \exp\left( i \genc_t \xi + \int_\R \left( e^{i\xi x} -1 - \frac{i\xi x}{1+x^2} \right)\frac{1+x^2}{x^2}\,\genm_t(dx) \right), \quad \xi \in \R,    
\end{equation}
and the following hold:  
\begin{enumerate}
\item \label{LK1}
$\genc_0=0$, and $t \mapsto \genc_t$ is continuous;
\item  \label{LK2}
$\genm_0=0$, and $t \mapsto \genm_t$ is setwise  continuous and nondecreasing, i.e., the function $t \mapsto \genm_t(B)$ is non-decreasing continuous for each $B \in \cB(\R)$.  
\end{enumerate}
The family $(\genc_t,\genm_t)_{t}$ is unique. Conversely, for a family $(\genc_t,\genm_t)_{t}$ satisfying \eqref{LK1} and \eqref{LK2}, there exists a unique $\ast$-CH $(\classical_{s,t})_{s\le t}$ that satisfies \eqref{eq:CLK}. 
\end{proposition}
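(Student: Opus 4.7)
My plan is to reduce the statement to the well-known L\'evy--Khintchine representation for $\ast$-infinitely divisible distributions (Proposition~\ref{prop:CID_LKrep}) together with the continuity theorem for generating pairs. By Remark~\ref{rem:AP_is_ID}, every $\classical_{0,t}$ is $\ast$-infinitely divisible, so Proposition~\ref{prop:CID_LKrep} supplies a unique pair $(\genc_t, \genm_t) \in \R \times \mea$ satisfying \eqref{eq:CLK}. The normalization $\classical_{0,0} = \delta_0$ forces $\genc_0 = 0$ and $\genm_0 = 0$.

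The key algebraic step is the hemigroup identity $\classical_{0,t} = \classical_{0,s} \ast \classical_{s,t}$ for $s \le t$. Since characteristic functions multiply under $\ast$, the L\'evy--Khintchine exponents add, so $\classical_{s,t}$ itself is $\ast$-infinitely divisible with generating pair $(\genc_t - \genc_s, \genm_t - \genm_s)$; the fact that this pair must lie in $\R \times \mea$ forces $\genm_s \le \genm_t$ setwise whenever $s \le t$, which is the monotonicity part of \eqref{LK2}.

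For continuity, I would invoke the standard equivalence between weak convergence of $\ast$-infinitely divisible laws and weak convergence of the associated generating pairs (see Sato~\cite[Theorem~8.7]{Sat13}): weak continuity of $(\classical_{0,t})_t$ then gives $\genc_{t_n} \to \genc_t$ and $\genm_{t_n} \to \genm_t$ weakly as finite measures whenever $t_n \to t$. Fix $B \in \cB(\R)$; by the monotonicity established above, $s \mapsto \genm_s(B)$ is nondecreasing, and testing the weak convergence against the constant function $1$ yields convergence of the total masses $\genm_{t_n}(\R) \to \genm_t(\R)$. A standard one-sided-limit argument (taking $t_n \uparrow t$ or $t_n \downarrow t$ and combining monotonicity with the equality of total masses in the limit) then promotes weak convergence to $\genm_{t_n}(B) \to \genm_t(B)$, verifying \eqref{LK1} and \eqref{LK2}.

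Conversely, given $(\genc_t, \genm_t)_t$ obeying \eqref{LK1}--\eqref{LK2}, I would define $\classical_{s,t}$ as the unique $\ast$-infinitely divisible distribution with generating pair $(\genc_t - \genc_s, \genm_t - \genm_s)$; the monotonicity hypothesis guarantees $\genm_t - \genm_s \in \mea$. The identity $\classical_{s,s} = \delta_0$ is immediate, the hemigroup relation follows from additivity of generating pairs under $\ast$ together with the uniqueness in Proposition~\ref{prop:CID_LKrep}, and the weak continuity of $(s,t) \mapsto \classical_{s,t}$ follows from the continuity theorem applied in reverse. The main technical obstacle throughout is promoting weak to setwise continuity for $\genm_t$; this step works precisely because the hemigroup structure forces setwise monotonicity as a byproduct of infinite divisibility, and without it a subtle shift of mass toward the origin (where the factor $\frac{1+x^2}{x^2}$ blows up) could in principle escape detection by weak convergence.
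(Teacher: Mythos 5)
The paper does not prove this proposition itself; it cites Sato's book (Theorem~9.8 and Section~56), so your reconstruction is a genuine addition rather than a duplicate. The argument is essentially correct, and the key structural points are exactly right: the hemigroup identity forces $\genm_s\le\genm_t$ setwise, the continuity theorem for $\ast$-infinitely divisible laws yields $\genm_{t_n}(\R)\to\genm_t(\R)$, and the sandwich argument combining monotonicity with convergence of the limiting total mass upgrades weak convergence to setwise convergence. Your closing remark also correctly identifies the danger that motivates the monotonicity argument: $\genm_t$ carries a possible atom at $0$ (the Gaussian part in this parametrization), and weak convergence alone cannot detect mass sliding onto or off that atom.

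One step is stated a little backwards and should be tightened. You write that additivity of exponents shows $\classical_{s,t}$ ``is $\ast$-infinitely divisible with generating pair $(\genc_t-\genc_s,\genm_t-\genm_s)$'', but $\exp(\cdot)$ with a \emph{signed} measure in place of $\genm$ need not be a characteristic function, so this cannot be read off directly. The clean order is: first invoke the fact, already cited in Remark~\ref{rem:AP_is_ID}, that every $\classical_{s,t}$ is $\ast$-infinitely divisible, hence has \emph{some} generating pair $(\tilde\genc,\tilde\genm)\in\R\times\mea$; then the distribution with pair $(\genc_s+\tilde\genc,\genm_s+\tilde\genm)$ has the same characteristic function as $\classical_{0,t}$, so uniqueness in Proposition~\ref{prop:CID_LKrep} gives $\tilde\genc=\genc_t-\genc_s$ and $\tilde\genm=\genm_t-\genm_s$, and nonnegativity of $\tilde\genm$ yields the monotonicity. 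With that reorganization the proof is complete and matches the result the paper imports from Sato.
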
 

In Proposition~\ref{prop:LK_repr}, we also call $(\genc_t,\genm_t)_{t}$ the generator of $(\classical_{s,t})_{s\le t}$.
If we disregard the $t$-dependence of the pair, the representation \eqref{eq:CLK} is just a direct application of Proposition~\ref{prop:CID_LKrep} to each $\classical_{0,t}$, which is $\ast$-infinitely divisible.

The main objective of the present paper is to answer the following question: 

\begin{question} \label{main_question}
What should be the right ``L\'evy--Khintchine representation'' for $\rhd$-CHs?  
\end{question} 

Different from the classical case, Proposition~\ref{thm:MLK} for $\rhd$-infinitely divisible distributions does not give an answer to Question~\ref{main_question}, because elements in $\rhd$-CHs are not $\rhd$-infinitely divisible in general, as mentioned in Section \ref{sec:prel}. A closely related fact is the non-equivalence of classical and monotone limit theorems for sums of independent random variables \cite[Proposition 6.37]{FHS20} \cite[Introduction]{HH22}.

\subsection{Core idea}
\label{subsec:idea}

To answer Question \ref{main_question}, we recall that 
a $\rhd$-CH $(\monotone_{s,t})_{s\le t}$ is associated with the \emph{Loewner chain} $(f_t:=F[\monotone_{0,t}])_{t}$.
Here by Loewner chain we mean that each $f_t$ is a univalent self-mapping of $\C^+$ and that $t\mapsto f_t(\C^+)$ is decreasing; for a precise definition, see Section \ref{sec:mono-CHs}. 
If we additionally assume certain absolute continuity on $t$, then by Franz, Hasebe and Schlei{\ss}inger \cite[Proposition 3.12]{FHS20}, the \emph{Loewner differential equation} 
\begin{equation} \label{eq:chordal_LDE_intro}
\frac{\partial f_t(z)}{\partial t}=\frac{\partial f_t(z)}{\partial z}p(z,t),
\qquad \text{a.e.}~t \in I,\, z \in \C^+, 
\end{equation}
holds for a function $p$, called a \emph{Herglotz vector field}, of the form 
\begin{equation}\label{eq:q_intro}
p(z,t) = -\dot{\genc}_t + \int_\R \frac{1+xz}{x-z} \,\dot{\genm}_t(dx),  \qquad z\in\C^+,\ t\in I.
\end{equation}
In \eqref{eq:q_intro}, $t\mapsto(\dot{\genc}_t, \dot{\genm}_t) \in\R\times \mea$ is such that $t\mapsto \dot{\genc}_t$ and $t\mapsto \dot{\genm}_t(B)$ are locally integrable for all $B\in \cB(\R)$.

\begin{remark} \label{rem:LDE_intro}
In terms of $f_{s,t}:=f_s^{-1}\circ f_t~(s\le t)$, the differential equation \eqref{eq:chordal_LDE_intro} can be generalized to   
\begin{equation} \label{eq:chordal_LDE2_intro}
\frac{\partial f_{s,t}(z)}{\partial t}=\frac{\partial f_{s,t}(z)}{\partial z}p(z,t),
\qquad \text{a.e.}~t \in I\cap[s,+\infty),\ z \in \C^+,
\end{equation}
for each $s\in I$.
Moreover, the differential equation
\begin{equation}
\label{eq:chordal_LDE3_intro}
\frac{\partial f_{s,t}(z)}{\partial s}=-p(f_{s,t}(z),s),
\qquad \text{a.e.}~s \in [0,t],\, t\in I,\, z \in \C^+,
\end{equation}
holds, which is a non-autonomous generalization of \eqref{eq:diff1}.
\end{remark}

The idea of this paper is to define the ``generator'' of $(\monotone_{s,t})_{s\le t}$ to be  
\begin{equation} \label{eq:LDE_generator}
    \genc_t:=\int_0^t \dot{\genc}_s\, ds \quad \text{and} \quad \genm_t:=\int_0^t \dot{\genm}_s \, ds.
\end{equation}
This definition is consistent with the time-homogeneous case mentioned after Proposition~\ref{thm:MLK}.
Indeed, putting $p(z,t)=p(z)$ with $\dot{\genc}_t \equiv \genc$ and $\dot{\genm}_t \equiv \genm$ in \eqref{eq:chordal_LDE_intro} and \eqref{eq:q_intro} gives the differential equation \eqref{eq:diff3}, and the generator $(\genc_t, \genm_t)$ coincides with $(t\genc,t\genm)$ of $\monotone_t$.
Another probabilistic viewpoint supporting the definition \eqref{eq:LDE_generator} will be given in Section \ref{subsec:generator_moments}. 

The main difficulty in introducing the generator is the absolute continuity in the time parameter assumed in \eqref{eq:chordal_LDE_intro}--\eqref{eq:LDE_generator}.
Such ``smoothness'' is not required in the L\'evy--Khintchine representation for $\ast$-CHs in Proposition~\ref{prop:LK_repr}.
To treat the non-smooth case, in which Loewner chains are only continuous with respect to $t\in I$, we generalize the Loewner differential equations.  A natural framework will be ``Loewner integro-differential equations'' and ``Loewner integral equations''.  
Indeed, this idea worked perfectly in the previous study \cite{HH22} on decreasing Loewner chains on the unit disk with Denjoy--Wolff fixed point at $0$ and CHs on the unit circle. 
We show in this paper that the same idea works on the real line as well. 

\begin{remark}
A kind of distributional treatment of the driving term, $(\dot{\eta}_t)_{t \ge 0}$ in our core idea, also appeared in Jekel~\cite[Section 5.1]{Jek20}, although it was due to the operator-valued setting rather than a lack of absolute continuity.
\end{remark}

\subsection{Role of second moment}
\label{sec:role_of_2nd}

To generalize the Loewner differential equations \eqref{eq:chordal_LDE3_intro} and \eqref{eq:chordal_LDE_intro} to integral/integro-differential equations, an important task is to construct a suitable time-change; in other words, we have to find an ``intrinsic'' time parameter rather than the ``extrinsic'' parameter $t$.
Although there may be some freedom of choice, the probabilistic meaning of the parameter that we shall adopt in the general case seems unclear.
However, if every element of the $\rhd$-CH $(\monotone_{s,t})_{s \le t}$ has finite second moment, we can choose the variance $r(t)$ of $\monotone_{0,t}$ as our desired parameter.
In this case, the corresponding Loewner chain $(f_t)_{t \in I}$ has the form
\[
f_t(z) = z - m_t - \frac{r(t)}{z} +o(z^{-1})
\] 
as $z\to \infty$ nontangentially; here $m_t$ is the mean of $\monotone_{0,t}$.

The case of finite second moment has been mainly considered in our context~\cite{Bau04, Sch17, Jek20} except for \cite{FHS20}.
In this paper, we also work on this specific case before going into the general case.
We do this for several reasons: preference for the clear probabilistic meaning, comparison of our results with prior ones, and difference in technique.
The last point would need some remarks.
We can discuss the case of finite second moment using only a classical part of Loewner theory.
On the other hand, to treat the general case, we need several results in the modern theory.
The arguments will also be heavier. 
The authors believe that the discussion within the classical framework, which could be a good guide to the general case as well, is worth the space.

\subsection{Overview of main results}  
\label{sec:results}

Below is a summary of the results that we shall obtain, carrying out the ideas in Section~\ref{subsec:idea}.
The general and finite-second-moment cases are put parallel as mentioned in Section~\ref{sec:role_of_2nd}.

\subsubsection{Loewner chains and monotone CHs} \label{subsec:Loewner-monotone}

We use the following symbols (Definitions~\ref{def:reciprocal} and \ref{def:class_P}):
\begin{align*}
\PickA&=\{\, F_\mu : \mu \in \prob \,\}, \\
\PickB&=\left\{\, F_\mu : \mu \in \prob,\ \int_{\R} x^2\,\mu(dx)<+\infty \,\right\}, \\
\PickC&=\left\{\, F_\mu : \mu \in \prob,\ \int_{\R} x^2\,\mu(dx)<+\infty,\ \int_{\R} x\,\mu(dx)=0 \,\right\}. 
\end{align*}
For $\mathcal{F}=\PickA$, $\PickB$, or $\PickC$, we say that a family $(f_t)_t \subset \mathcal{F}$ of univalent self-mappings of $\C^+$ is a \emph{decreasing Loewner chain} in $\mathcal{F}$ or $\mathcal{F}$-DLC for brevity (Definition~\ref{def:DLC})
if the ranges $f_t(\C^+)$, $t\in I$, are non-increasing in $t$, $f_0 = \id$, and $t\mapsto f_t$ is continuous (in the topology of locally uniform convergence).
The relation $f_t=F[\monotone_{0,t}]$ gives a one-to-one correspondence between $\PickA$-DLCs and $\rhd$-CHs (Proposition~\ref{prop:additive_DLC}).
In this correspondence one can include what we call \emph{reverse evolution families} in $\PickA$ ($\PickA$-REFs for short), two-parameter families in $\PickA$ of (univalent) self-mappings of $\C^+$ determined by
\[
f_{s,t}=f_s^{-1} \circ f_t=F[\monotone_{s,t}],\qquad s \le t,
\]
from the other two (Propositions~\ref{prop:DLC_to_REF} and \ref{prop:additive_REF}).

In the case that every element $\monotone_{s,t}$ of a $\rhd$-CH $(\monotone_{s,t})_{s\le t}$ has finite second moment, its mean and variance appear in the Nevanlinna integral representation, whose general form is given in \eqref{eq:PNrep}, of the associated $\PickB$-DLC and $\PickB$-REF.
In fact, for a general $f=F_\mu \in \PickB$ there exist unique $\mean(f) \in \R$ and unique finite Borel measure $\rho_f$ such that
\begin{equation} \label{eq:PickB_rep_s1}
f(z) = z - \mean(f) + \int_\R\frac1{x-z} \,\rho_{f}(dx),
\end{equation}
and moreover, the mean and variance of $\mu$ are $\mean(f)$ and $\AR(f):=\rho_f(\R)$, respectively (cf.\ Proposition~\ref{prop:Cauchy_P_prime}).
The number $\AR(f)$ is called the \emph{angular residue at infinity} of $f$ (see Section~\ref{subsec:angular_residue}).
Applying \eqref{eq:PickB_rep_s1} to $\PickB$-DLCs we can deduce the following:

\begin{theorem}[Theorem \ref{th:capacity_conti_prime} below]  \label{thm:continuity_hcapacity}
For a $\PickB$-DLC $(f_t)_{t}$, the mapping $t\mapsto \mean(f_t)$ is continuous, and $t \mapsto \rho_{f_t}$ is weakly continuous.
In particular, $t \mapsto \AR(f_t)$ is continuous.
\end{theorem}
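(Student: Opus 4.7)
The plan is to recover the parameters $\mean(f)$ and $\rho_f$ of $f \in \PickB$ from the boundary values along the imaginary axis. From the elementary identity
\[
f(iy) - iy + \mean(f) = \int_\R \frac{x + iy}{x^2 + y^2}\,\rho_f(dx),
\]
I read off $\mean(f) = \lim_{y\to\infty}(-\Re f(iy))$ and, using that $y\,\Im(f(iy) - iy) = \int \frac{y^2}{x^2+y^2}\,\rho_f(dx)$ is non-decreasing in $y$, also $\AR(f) = \rho_f(\R) = \sup_{y>0} y\,\Im(f(iy) - iy)$. Fix a compact subinterval $[0,T] \subset I$. By the addition formula $\AR(f_t) - \AR(f_s) = \AR(f_{s,t}) \ge 0$ stated just above the theorem, $t \mapsto \AR(f_t)$ is non-decreasing on $I$, so the masses $\rho_{f_t}(\R)$ are uniformly bounded by $M := \AR(f_T)$ on $[0,T]$.

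For continuity of $\mean$, the AM--GM bound $|x|/(x^2+y^2) \le 1/(2y)$ yields the uniform estimate $\bigl|\int \frac{x}{x^2+y^2}\,\rho_f(dx)\bigr| \le \rho_f(\R)/(2y) \le M/(2y)$ for $t \in [0,T]$, hence
\[
|\mean(f_s) - \mean(f_t)| \le |\Re f_s(iy) - \Re f_t(iy)| + M/y.
\]
Fixing $y$ large and using local uniform continuity of $s \mapsto f_s(iy)$ yields continuity of $\mean$.

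For continuity of $\AR$, lower semicontinuity is immediate since $\AR(f_t) = \sup_{y>0} g_y(t)$ with $g_y(t) := y\,\Im(f_t(iy) - iy)$ continuous in $t$; combined with monotonicity, this forces left-continuity. For right-continuity, fix $t$ and consider the REF element $f_{t,s} = f_t^{-1}\circ f_s \in \PickB$ for $s > t$, for which $\AR(f_{t,s}) = \AR(f_s) - \AR(f_t)$. Local uniform convergence $f_s \to f_t$ and univalence of $f_t$ yield $f_{t,s} \to \id$ locally uniformly on $\C^+$ as $s \to t^+$; reapplying the argument of the previous paragraph to the REF seen as a chain starting at time $t$ gives $\mean(f_{t,s}) \to 0$. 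The task then reduces to showing $\AR(f_{t,s}) \to 0$, which I expect to be the main obstacle: merely knowing $f_{t,s}(iy) \to iy$ pointwise does not preclude mass of $\rho_{f_{t,s}}$ from escaping to infinity, as would be required to swap $\sup_y$ and $\lim_{s\to t^+}$ directly. My proposed resolution is geometric: $\AR$ equals, up to a standard identification, the half-plane capacity of the hull $K_{t,s} := \C^+ \setminus f_{t,s}(\C^+)$, and by classical Loewner theory half-plane capacity is continuous under Carath\'eodory kernel convergence of hulls. Since $f_{t,s} \to \id$ locally uniformly is equivalent to Carath\'eodory convergence $f_{t,s}(\C^+) \to \C^+$ and hence to $K_{t,s} \to \emptyset$, we obtain $\AR(f_{t,s}) \to 0$.

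Finally, the identity $G[\rho_{f_t}](z) = z - \mean(f_t) - f_t(z)$ together with the established continuities of $\mean(f_t)$ and $f_t$ in $t$ gives locally uniform convergence $G[\rho_{f_{t_n}}] \to G[\rho_{f_t}]$ whenever $t_n \to t$. Since $\rho_{f_{t_n}}(\R) \le M$ uniformly, Helly's selection theorem together with Stieltjes inversion (uniqueness of the Cauchy representation) yields vague convergence $\rho_{f_{t_n}} \to \rho_{f_t}$. Convergence of total mass $\rho_{f_{t_n}}(\R) = \AR(f_{t_n}) \to \AR(f_t) = \rho_{f_t}(\R)$, established in the previous paragraph, then upgrades vague to weak convergence.
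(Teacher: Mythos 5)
Your treatment of the continuity of $\mean(f_t)$ and the upgrade from vague to weak convergence of $\rho_{f_t}$ are both correct and essentially match the paper's argument (see the proof of Theorem~\ref{th:capacity_conti_prime} and Corollary~\ref{cor:rho_conti}). Your derivation of left-continuity from lower semicontinuity of $\AR = \sup_{y>0}g_y$ together with monotonicity is also fine and is a slightly repackaged version of the paper's monotone-convergence argument at the end of the proof of Theorem~\ref{thm:capacity_conti}.

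The gap is in right-continuity, which you correctly flag as the main obstacle but then close with a claim that does not hold: you assert that ``by classical Loewner theory half-plane capacity is continuous under Carath\'eodory kernel convergence of hulls.'' This is not a classical fact, and as a bare statement it is false -- a slit translated off to infinity gives hulls converging to $\emptyset$ in the Carath\'eodory sense while $\operatorname{hcap}$ stays bounded away from zero. The paper itself warns of the underlying phenomenon in Remark~\ref{rem:AR}~\eqref{rem:disconti_of_ar}: $\AR$ is \emph{not} continuous on $\PickC$, precisely because characteristic mass can escape to infinity under locally uniform convergence. The Carath\'eodory route you sketch is not a shortcut around this difficulty -- it is the content of \cite{Mur22}, a 2022 paper by the third author cited in Remark~\ref{rem:continuity} as an \emph{alternative} argument, and that remark also notes that the equivalence between locally uniform convergence of the $f_t$ and Carath\'eodory convergence of the ranges $f_t(\C^+)$ is itself unproved in the boundary-fixed-point setting, so you cannot freely pass between them. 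Thus the step $\AR(f_{t,s}) \to 0$ remains unproved in your proposal. The paper resolves it by an explicit estimate (Lemma~\ref{lem:RC_P-ALC}): working with the shifted chain $h_u(z) = f_u(z+iR)-iR$, one writes the area $A(u)$ of a tail of the hull $\C^+\setminus h_u(\C^+)$ as $\pi\int_{u_u}^\infty \frac{d}{dx}\Re[h_u(x)]\,\rho(h_u;dx)$, and uses the nesting $h_s(\C^+)\supset h_u(\C^+)$ to show that persistence of mass $\AR(h_u) \geq \epsilon > 0$ as $u\to 0^+$ (with the mass fleeing every fixed compact, by Lemma~\ref{lem:RL_P-ALC_meas}) would force $A(u_\ell) < A(u_m)$ for some $\ell<m$, contradicting monotonicity of the ranges. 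That area comparison is the missing idea in your argument; it exploits the \emph{nested} structure of a DLC in a way that mere Carath\'eodory convergence of hulls does not capture.
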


Theorem~\ref{thm:continuity_hcapacity} in particular implies that, given a $\rhd$-CH $(\monotone_{s,t})_{s\le t}$ with finite second moment, its mean and variance are continuous functions of $(s,t)$;
indeed, Lemma~\ref{lem:quasi-chordal_DLC_to_REF} yields
\begin{gather*}
\Mean(\monotone_{s,t}):=\int_{\R} x \,\monotone_{s,t}(dx)=\mean(f_{s,t})=\mean(f_t) - \mean(f_s), \\
\var(\monotone_{s,t}):=\int_{\R} (x-\Mean(\monotone_{s,t}))^2 \,\monotone_{s,t}(dx)=\AR(f_{s,t}) =\AR(f_t) - \AR(f_s).
\end{gather*}

\begin{remark} \label{rem:mean-var_conti}
The continuity of the mean and variance is \emph{not} a direct consequence of the weak continuity of $(\monotone_{s,t})$, because every non-constant polynomial is unbounded on $\R$ and hence cannot be taken as a test function paired with $\monotone_{s,t}$.
Even apart from the probabilistic context, an example $f_t (z) = z + (1/t-z)^{-1} \in \PickC$, $t>0$, shows that Theorem \ref{thm:continuity_hcapacity} is not very trivial.
Here, $\rho_{f_t} = \delta_{1/t}$ does not converge as $t \to +0$ but $f_t$ does to $\id$; indeed, $(f_t)$ is not a DLC as $f_t(\C^+)$ is not decreasing.
\end{remark}

\subsubsection{Loewner integral equations}

Here is the general form of Loewner integral equation for a $\PickA$-REF $f_{s,t}=F[\monotone_{s,t}]$, which answers our Question~\ref{main_question}.

\begin{theorem}[Theorem~{\ref{thm:integral}} below]
\label{th:REF_to_LIE_s1}
Let $(f_{s,t})_{s\le t}$ be a $\PickA$-REF. Then there exist a continuous function $I\ni t\mapsto \genc_t \in \R$ with $\genc_0=0$ and a Borel measure $\Genm$ on $\R\times I$ with $\Genm(\R\times[0,t])<\infty$ and $\Genm(\R\times \{t\})=0$ for all $t\in I$, such that 
\begin{equation}\label{eq:general_LIE_s1}
    f_{s,t} (z) = z  + \genc_s-\genc_t + \int_{\R \times [s,t]} \frac{1+x f_{r,t}(z)}{x-f_{r,t}(z)} \,\Genm(dx\,dr), \qquad 0 \le s \le t, ~z \in \C^+. 
\end{equation}
The pair $((\genc_t),\Genm)$ is unique. Conversely, given such a pair $((\genc_t),\Genm)$, there is a unique $\PickA$-REF $(f_{s,t})$ such that \eqref{eq:general_LIE_s1} holds.
\end{theorem}

If $(f_{s,t})_{s \le t}$ is a $\PickC$-REF (i.e., $\Mean(\monotone_{s,t})=0$ and $\var(\monotone_{s,t})<+\infty$), then \eqref{eq:general_LIE_s1} reduces to
\begin{equation} \label{eq:chordal_LIE_s1}
f_{s,t}(z)=z+\int_{\R\times[s,t]} \frac{1}{x-f_{r,t}(z)} \,\varTheta(dx\,dr),\qquad s\le t,\, z \in \C^+, 
\end{equation}
with additional property $\varTheta(\R\times[s,t]) = \AR(f_{s,t})=\var(\monotone_{s,t})$; see \eqref{eq:LIE2} in Theorem~\ref{th:DLC_to_LIE}.
As will be discussed in Sections~\ref{sec:chordal_LDE} and \ref{sec:LIE-additive}, in the case $\AR(f_{0,t})=t$ the integral equation \eqref{eq:chordal_LIE_s1} is proven to be essentially the same as the classical chordal Loewner differential equation \eqref{eq:chordal_LODE}. Moreover, for the $\PickC$-DLC $(f_t):=(f_{0,t})$ we have the integro-differential equation
\begin{equation*}
f_t(z)=z+\int_{\R\times [0,t]} \frac{1}{x-z} \,\frac{\partial f_s(z)}{\partial z}  \,\varTheta(dx\,ds), \qquad t \in I,\, z \in \C^+;
\end{equation*}
see \eqref{eq:LIE-additive} and Proposition~\ref{prop:CMF}.

\begin{remark}
We do not establish an integro-differential equation for a general $\PickA$-DLC because there would appear the product of a continuous function and a Schwartz distribution that seems ill-defined; see Remark \ref{rem:ill}. Only $\PickA$-DLCs with better regularity, such as $\PickC$-DLCs as above and the ones in Remark \ref{rem:LIDE_general}, satisfy reasonable integro-differential equations.  
\end{remark}

\subsubsection{Generators of monotone CHs and bijections}
\label{subsubsec:bijections}

Given a $\rhd$-CH $(\monotone_{s,t})$, the pair $((\genc_t),\Genm)$ is determined by Theorem~\ref{th:REF_to_LIE_s1}.
Set $\genm_t(B):=\Genm(B \times [0,t])$, $B \in \cB(\R)$, which defines a finite measure on $\R$ for each $t \in I$.
We call the pair $((\genc_t), (\genm_t))$ (or $((\genc_t),\Genm)$) the \emph{generator} of $(\monotone_{s,t})$ (and also of the associated DLC and REF).
The totality of such pairs $((\genc_t), (\genm_t))$ can be completely characterized by conditions \eqref{LK1}--\eqref{LK2} in Proposition \ref{prop:LK_repr}, see Section~\ref{subsec:LK}.
Now identifying the generator $((\genc_t), (\genm_t))$ of a $\rhd$-CH with the one in the L\'evy--Khintchine representation~\eqref{eq:CLK} for $\ast$-CHs, we can define a bijection between the sets of $\rhd$-CHs and of $\ast$-CHs.
This is what we have referred to as a ``dynamical version of the Bercovici--Pata bijection'' in the first paragraph of this paper.

The bijection introduced above is consistent with the idea in Section \ref{subsec:idea}.
Indeed, suppose that a $\PickA$-REF $(f_{s,t})$ has a generator $(\gamma_t,\genm_t)_t$ that is absolutely continuous in the sense that  
\[
\gamma_t = \int_0^t \dot{\gamma}_s \,ds \qquad  \text{and}\qquad  \genm_t = \int_0^t \dot{\genm}_s\, ds, 
\] 
 where $t\mapsto\dot{\gamma}_t$ and $t\mapsto \dot{\genm}_t(B)$ are both locally integrable for all $B\in \cB(\R)$. Then $\Genm(dx\,dt) = \dot{\genm}_t(dx)\,dt$ and so  the integral equation \eqref{eq:general_LIE_s1} is equivalent to \eqref{eq:chordal_LDE3_intro} with Herglotz vector field 
\eqref{eq:q_intro}. 

In the case that each $\monotone_{s,t}$ has finite second moment, the \emph{reduced generator} $((m_t), (\rgen_t))$ is defined in a similar way based on the reduced equation \eqref{eq:chordal_LIE_s1}; see Definition~\ref{def:monotone_MGF}.
The relation to the generator $((\genc_t), (\genm_t))$ is
\[
m_t=\genc_t+\int_{\R}x\,\genm_t(dx),\qquad \rgen_t(dx)=(1+x^2)\,\genm_t(dx),
\]
(cf.\ Definition~\ref{def:classical_MGF}), and the L\'evy--Khintchine representation for the corresponding $\ast$-CH%
\footnote{This expression is originally due to Kolmogorov; see Gnedenko and Kolmogorov~\cite[Chapter~3]{GK54}.}%
 is
\begin{equation} \label{eq:CLK_reduced_intro}
\widehat\classical_{0,t}(\xi)=\exp\left(i m_t \xi + \int_{\R}\frac{e^{i\xi x}-1- i\xi x }{x^2} \, \rgen_t(dx)\right), \qquad \xi \in\R, ~t\in I.
\end{equation}
In this case, the following relations of moments are easily obtained but still noteworthy: for all $s\le t$, 
\begin{align}
\Mean(\classical_{0,t}) &= \Mean(\monotone_{0,t}) = \mean(f_t) = m_t, \\  
\var(\classical_{0,t}) &= \var(\monotone_{0,t})=\AR(f_t) = \rgen_t(\R), \label{eq:var_hcap} \\
\var(\classical_{s,t}) & =\var(\monotone_{s,t}) = \AR(f_{s,t})=\var(\monotone_{0,t})-\var(\monotone_{0,s}).   \label{eq:var_hcap2}
\end{align}



We can formulate a bijection similar to the above between classical, free and boolean CHs using the bijection already well-known for infinitely divisible distributions. 
This will be implemented in Section~\ref{sec:c-f-real}.

\subsubsection{Continuity of the bijection}
\label{sec:intro_conti_bijec}

In order to formulate the continuity of the bijection introduced in Section~\ref{subsubsec:bijections}, we have to define a suitable topology on the sets on which the bijection is defined.
For the set of $\PickA$-DLCs or of $\PickA$-REFs, a natural choice will be just the topology of locally uniform convergence (with respect to $(z,t)$ or to $(z,s,t)$).
Then, as a $\star$-CH $(\mu_{s,t})$ with $\star=\ast$ or $\rhd$ is a weakly continuous mapping $(s,t) \mapsto \mu_{s,t}$ from the parameter set $I^2_\le=\{\, (s,t) \in I^2 :  s \le t \,\}$ to $\mea$, the corresponding topology on the set of $\star$-CHs should be of ``locally uniform convergence'' of such \emph{measure-valued} functions on $I^2_\le$.
This convergence is, in fact, described concretely as follows.
Let $\para=I$ or $I^2_\le$ and $C(\para; \mea)$ denote the set of weakly continuous mappings from $\para$ to $\mea$.
Suppose that $(\mu_{\tau})_{\tau \in \para}$ and $(\mu_{\tau}^n)_{\tau \in \para}$, $n \in \N$, are elements of $C(\para; \mea)$.
Then we can say that $(\mu_{\tau}^n)_{\tau \in \para}$ \emph{converges weakly} to $(\mu_{\tau})_{\tau \in \para}$ \emph{locally uniformly} on $\para$ as $n \to \infty$ if
\begin{equation} \label{eq:luwc_intro}
\lim_{n\to \infty}\sup_{\tau \in K}\left\lvert \int_{\R}f(x) \,\mu^n_{\tau}(dx) - \int_{\R}f(x) \,\mu_{\tau}(dx) \right\rvert=0
\end{equation}
for any compact set $K \subset \para$ and any $f \in C_{\rm b}(\R)$, the set of bounded continuous functions.
The weak convergence is exactly the convergence in the compact-open topology of $C(\para; \mea)$; see Appendix~\ref{sec:topology_of_luwc} for details.

We continue to use the symbol $I^2_\le=\{\, (s,t) \in I^2 :  s \le t \,\}$ in the next theorem.

\begin{theorem}[Proposition~\ref{prop:two_sigmas}, Theorems~\ref{th:conv_classical}, \ref{th:conv_monotone} and \ref{th:conv_P-REF_gen} below] \label{th:conv_monotone_intro}
Let $(\genc_t, \genm_t)_{t}$ and $(\genc_t^n,\genm_t^n)_{t}$, $n\in \N$, be generators, i.e., families satisfying conditions \eqref{LK1} and \eqref{LK2} in Proposition \ref{prop:LK_repr}.
Consider the following objects that have $(\genc_t, \genm_t)_{t}$ and $(\genc_t^n, \genm_t^n)_{t}$ as their generators, respectively: $\ast$-CHs $(\classical_{s,t})_{s\le t}$ and $(\classical^n_{s,t})_{s\le t}$, $\rhd$-CHs $(\monotone_{s,t})_{s\le t}$ and $(\monotone^n_{s,t})_{s\le t}$, $\PickA$-DLCs $(f_t)_{t}$ and $(f^n_t)_{t}$, $\PickA$-REFs $(f_{s,t})_{s\le t}$ and $(f^n_{s,t})_{s\le t}$.
Then the following are equivalent as $n\to \infty$:
\begin{enumerate}
\item \label{C1_intro} $(\classical^n_{0,t})_{t}$ converges weakly to $(\classical_{0,t})_{t}$ locally uniformly on $I$;
\item \label{C2_intro}
$(\classical^n_{s,t})_{s\le t}$ converges weakly to $(\classical_{s,t})_{s\le t}$ locally uniformly on $I^2_\le$;
\item \label{M1_intro}
$(\monotone^n_{0,t})_{t}$ converges weakly to $(\monotone_{0,t})_{t}$ locally uniformly on $I$;
\item \label{M2_intro}
$(\monotone^n_{s,t})_{s\le t}$ converges weakly to $(\monotone_{s,t})_{s\le t}$ locally uniformly on $I^2_\le$;
\item \label{M3_intro}
$f^n_t(z)$ converges to $f_t(z)$ locally uniformly on $\C^+ \times I$;
\item \label{M4_intro}
$f^n_{s,t}(z)$ converges to $f_{s,t}(z)$ locally uniformly on $\C^+ \times   I^2_\le$;
\item \label{M5_intro}
$(\genc^n_t)_{t}$ converges to $(\genc_t)_{t}$ locally uniformly on $I$, and $(\genm^n_t)_{t}$ converges weakly to $(\genm_t)_{t}$ locally uniformly on $I$.
\item \label{M6_intro}
$(\genc^n_t)_{t}$ converges to $(\genc_t)_{t}$ locally uniformly on $I$, and $(\genm^n_t)_{t}$ converges weakly to $(\genm_t)_{t}$ for each $t \in I$.
\end{enumerate}
\end{theorem}

\begin{remark}
The compact-open topology of $C(\para; \mea)$, which we regard as the topology of ``locally uniform weak convergence,'' is metrizable (Corollary~\ref{cor:metrizability}).
Therefore, Theorem~\ref{th:conv_monotone_intro}, which contains only sequential properties, actually guarantees the genuine continuity of our bijection in this topology.
In the case of finite second moment, we can consider a similar but finer topology to include the convergence of the second moments into the equivalent conditions above; see Corollary~\ref{cor:moment_conv} and comments after that corollary.
\end{remark}

We note that Theorem~\ref{th:conv_monotone_intro} shows the continuous dependence of each object on its generator.
Indeed, as the statement itself suggests, studying this dependence consists of not a small part of the proof of the theorem.

It is much easier to formulate similar bijections between classical, free and boolean CHs. A bijection between monotone and anti-monotone CHs can also be defined via Theorem \ref{th:REF_to_LIE_s1}.
All these bijections are continuous. 
Therefore, we obtain homeomorphisms between the five kinds of CHs. 
We skip the details here and refer the reader to Theorems~\ref{th:luc_boole}, \ref{thm:free} and Remark~\ref{rem:M_AM} for actual statements.

\subsection{Outline of the subsequent sections}
\label{sec:outline_of_paper}

Section~\ref{sec:classical_CH} concerns classical CHs.
L\'evy's continuity theorem is generalized in Section~\ref{subsec:Levy_continuity} so that it can be applied to $\ast$-CHs and generators.
Recalling a functional limit theorem for additive processes from the literature, in Section~\ref{subsec:LK}, we confirm that a sequence of $\ast$-CHs converges weakly locally uniformly if and only if the sequence of their generators converges.
An analogous property is achieved for reduced generators, just by simple calculations, in Section~\ref{sec:mgf_for_classical}.

Section~\ref{sec:Pick_Cauchy} is devoted to the property of the (reciprocal) Cauchy transform.
Although this section contains results more or less known, their proofs are provided in Appendix~\ref{sec:Pick_appdx} for completeness.
As a fundamental tool, the Nevanlinna integral representation for holomorphic functions from $\C^+$ to $\overline{\C^+}$ is reviewed in Section~\ref{sec:PN_to_Cauchy}.
Section~\ref{subsec:angular_residue} then collects several known conditions for a given function to be the reciprocal Cauchy transform of a probability measure with finite second moment.
In Section~\ref{sec:conti_Cauchy_trans}, we establish the continuity of the Cauchy transform with respect to locally uniform convergence of measure-valued functions.

Section~\ref{sec:mono-CHs} is a basic part of Loewner theory for monotone CHs.
Section~\ref{sec:REF_DLC} concerns general DLCs and REFs, and in Section~\ref{sec:ALC}, the results are specialized to $\PickA$-DLCs (or $\PickA$-REFs), which consist of reciprocal Cauchy transforms of probability measures.

In Section~\ref{sec:finite_2nd_case}, we give a closer look at $\PickB$- and $\PickC$-DLCs and REFs to achieve our main results for $\rhd$-CHs in the case of finite second moment.
The reader interested only in the general case makes limited use of the techniques developed in Section, because Section~\ref{sec:general_case} makes limited use of the techniques developed in Section~\ref{sec:finite_2nd_case}.
 Section~\ref{sec:capacity_conti}, we prove that the angular residues at infinity of the functions in a $\PickC$-DLC form a continuous function of time parameter, which is a key to showing Theorem~\ref{thm:continuity_hcapacity}.
Section~\ref{sec:chordal_LDE} is a review on the chordal Loewner differential equation; as already mentioned, $\PickC$-DLCs are usually called chordal Loewner chains and have been studied classically.
Using the results so reviewed, in Section~\ref{sec:LIE-additive}, we derive the Loewner integral equation \eqref{eq:chordal_LIE_s1}.
By virtue of these equations, we can introduce the reduced generator of a $\rhd$-CH with finite second moment and hence of the associated $\PickB$-DLC in Section~\ref{subsec:reducedGF}.
In Section~\ref{subsec:conv_Loewner_rgf}, we prove that a sequence of $\PickB$-DLCs converges locally uniformly if and only if the sequence of reduced generators converges in a suitable sense, which is a variant of Theorem~\ref{th:conv_monotone_intro}. 
Section~\ref{subsec:generator_moments} gives a relation between the moments of $\ast$- and $\rhd$-CHs under the bijection.

In Section~\ref{sec:general_case}, we establish our main results for $\rhd$-CHs in the general case.
For this purpose we cite some facts from the modern Loewner theory in Section~\ref{sec:modern_prel}.
The Loewner integral equation \eqref{eq:general_LIE_s1} for $\PickA$-REFs is then derived in Section~\ref{sec:LIE_for_P-REF}.
This equation enables us to define generators in the general case.
In Section~\ref{sec:REF_gen_homeo} we prove that the correspondence between $\PickA$-REFs and generators is homeomorphic.

In Section~\ref{sec:free_boolean-CHs}, we merge free and boolean CHs into our context.
After providing some preliminary results on the Voiculescu transform in Section~\ref{subsec:luc_Voic-trans}, we introduce the bijection and show its bicontinuity  in Section~\ref{sec:c-f-real}, using the L\'evy--Khintchine representations for free and boolean CHs that have already been known.
Section~\ref{subsec:interpret_bij_free_etc} is devoted to calculations of infinitesimal growth of the moments of classical, free, and boolean CHs under the bijection corresponding to Section~\ref{subsec:generator_moments}.
In Section~\ref{subsec:embed}, we study a related phenomenon, the subordination property of free CHs, originally observed in Biane's work \cite{Bia98} cited in Section~\ref{sec:background}.

In Section~\ref{sec:univ_Cauchy}, we investigate probability measures with compact support in view of geometric function theory.
In Section~\ref{sec:Q-ALC}, we relate the support of probability measures in a $\rhd$-CH to the support of the generator and also the continuous extendibility to the real line of the associated $\PickB$-DLC.
Through Sections~\ref{sec:transfinite_diameter}--\ref{sec:hcap_area}, we state the relation of the variance of measures to certain capacity-like quantities.

We have five appendices.
Appendix~\ref{sec:conv_measures_appdx} concerns topological treatments of finite Borel measures and functions taking values in the space of such measures.
Appendix~\ref{sec:anglim_and_deriv} concerns the angular limit and derivative of holomorphic functions.
Appendix~\ref{sec:Pick_appdx} collects the proofs skipped in Section~\ref{sec:Pick_Cauchy}. 
Appendix~\ref{app:GHP} provides a rather self-contained proof of a crucial lemma needed in Section \ref{sec:general_case} on absolute continuity of REFs. 
Appendix~\ref{app:conv} gives proofs of known results on the reciprocal Cauchy transforms in free probability theory.

\section*{Acknowledgments}
The authors are grateful to Sebastian Schlei{\ss}inger for many fruitful discussions, including suggestions on Theorem \ref{thm:continuity_hcapacity}.
Also, the authors are greatly indebted to the anonymous referee for the careful reading and valuable suggestions; in fact, the referee encouraged us to generalize the results under no second moment assumption, suggested the use of Harnack's inequality, which is adopted in Appendix~\ref{app:GHP}, and pointed out the lack of some references.
This work is supported by JSPS KAKENHI Grant Numbers 18H01115, 19K14546, 20H01807, 20K03632, 22K20341, 23K25775, 24K16935, and JSPS Open Partnership Joint Research Projects grant no.~JPJSBP120209921.

\section{Classical convolution hemigroups}
\label{sec:classical_CH}

In this section, we begin with a generalization of L\'evy's continuity theorem%
\footnote{See a basic textbook on probability, e.g., \cite[Theorem~6.11]{BW16}\cite[Theorem~26.3]{Bil12}\cite[Theorem~9.8.2]{Dud02}.}
and then apply it to $\ast$-CHs and their generators. The results are suitably modified in the case of finite second moment.

\subsection{A generalization of L\'evy's continuity theorem}
\label{subsec:Levy_continuity}

In the next proposition, let $\para=I$ or $I^2_{\le}$.
$\rho$ denotes an arbitrary distance on $\mea$ which induces the weak topology, such as the L\'evy--Prokhorov distance. Weak convergence is indicated by $\xrightarrow{\rm w}$.

\begin{proposition} \label{prop:gLevy_conti}
Suppose that $\mu_{\tau}^n \in \mea$ for all $\tau \in \para$ and $n \in \N$ and that $(\mu_{\tau})_{\tau \in \para} \in C(\para; \mea)$.
Then the following are equivalent:
\begin{enumerate}
\item \label{i:gLevy_luwc}
$\lim_{n\to \infty}\sup_{\tau \in K}\left\lvert \int_{\R}f \,d\mu^n_{\tau} - \int_{\R}f \,d\mu_{\tau} \right\rvert=0$ for every compact set $K \subset \para$ and for every $f \in C_{\rm b}(\R)$;
\item \label{i:gLevy_cf-luc}
$\widehat{\mu^n_{\tau}}(\xi) \to \widehat{\mu_{\tau}}(\xi)$ ($n \to \infty$) uniformly in $(\xi, \tau)$ on each compact subset of $\R \times \para$;
\item \label{i:gLevy_cf-pwc}
for each fixed $\xi \in \R$, $\widehat{\mu^n_{\tau}}(\xi) \to \widehat{\mu_{\tau}}(\xi)$ ($n \to \infty$) uniformly in $\tau$ on each compact subset of $\para$.
\end{enumerate}
In addition, \eqref{i:gLevy_luwc}--\eqref{i:gLevy_cf-pwc} are further equivalent to the condition
\begin{enumerate}
\setcounter{enumi}{3}
\item \label{i:gLevy_metric-luc}
$\lim_{n \to \infty}\sup_{\tau \in K} \rho(\mu^n_{\tau}, \mu_{\tau})=0$ for every compact set $K \subset \para$.
\end{enumerate}
\end{proposition}

\begin{proof}
We first make a simple observation.
Let $K \subset \para$ be compact and $(\tau(n))_{n \in \N}$ be a sequence in $K$ which converges to some $u$.
By assumption, $\mu_{\tau(n)} \xrightarrow{\rm w} \mu_u$ as $n \to \infty$.
Then each one of \eqref{i:gLevy_luwc}--\eqref{i:gLevy_metric-luc} yields $\mu^n_{\tau(n)} \xrightarrow{\rm w} \mu_u$.
For example, if \eqref{i:gLevy_cf-pwc} holds, then for each $\xi \in \R$,
\begin{align*}
\lvert \widehat{\mu^n_{\tau(n)}}(\xi) - \widehat{\mu_u}(\xi) \vert
&\le \lvert \widehat{\mu^n_{\tau(n)}}(\xi) - \widehat{\mu_{\tau(n)}}(\xi) \vert + \lvert \widehat{\mu_{\tau(n)}}(\xi) - \widehat{\mu_u}(\xi) \vert \\
&\le \sup_{\tau \in K}\lvert \widehat{\mu^n_\tau}(\xi) - \widehat{\mu_\tau}(\xi) \rvert + \lvert \widehat{\mu_{\tau(n)}}(\xi) - \widehat{\mu_u}(\xi) \vert
\to 0,
\end{align*}
which implies $\mu^n_{\tau(n)} \xrightarrow{\rm w} \mu_u$ by L\'evy's continuity theorem.

Now assume that one of \eqref{i:gLevy_luwc}--\eqref{i:gLevy_metric-luc} fails.
Then there exist a compact set $K \subset \para$, a positive number $\varepsilon$, and an increasing sequence $(n(k))_{k \in \N}$ in $\N$ such that $\sup_{\tau \in K}D(\mu^{n(k)}_\tau, \mu_\tau) \ge 2\varepsilon$.
Here, $D(\mu, \nu)$ is one of
\[
\left\lvert \int f \,d\mu - \int f \,d\nu \right\rvert,\ \sup_{\xi \in B}\lvert \hat{\mu}(\xi) - \hat{\nu}(\xi) \rvert,\ \lvert \hat{\mu}(\xi) - \hat{\nu}(\xi) \rvert,\ \rho(\mu, \nu)
\]
for some $f \in C_{\rm b}(S)$ in the first and for some compact $B \subset \R$ in the second expression, according to which of \eqref{i:gLevy_luwc}--\eqref{i:gLevy_metric-luc} is assumed to fail.
Then, taking a further subsequence of $(n(k))_{k \in \N}$ if necessary, we can choose a sequence $(\tau(k))_{k \in \N}$ in $K$ which converges to some $u$ so that $D(\mu^{n(k)}_{\tau(k)}, \mu_{\tau(k)}) \ge \varepsilon$.
This implies that all of \eqref{i:gLevy_luwc}--\eqref{i:gLevy_metric-luc} fail by the previous observation.
Indeed, if $\mu_{\tau(k)} \xrightarrow{\rm w} \mu_u$ and $\mu^{n(k)}_{\tau(k)} \xrightarrow{\rm w} \mu_u$ occur at the same time, then $D(\mu^{n(k)}_{\tau(k)}, \mu_{\tau(k)})$ must go to zero as $k \to \infty$.
Thus, taking the contraposition yields the conclusion.
\end{proof}

\subsection{Convergence of classical additive processes}  \label{subsec:LK}  

Using Proposition~\ref{prop:gLevy_conti}, we translate an existing functional limit theorem for classical additive processes in terms of $\ast$-CHs.
To do this, let us restate Proposition~\ref{prop:LK_repr} with words of stochastic processes.
Let $X=(X_t)_t$ be a one-dimensional additive process, that is, an a.s.-c\`adl\`ag%
\footnote{meaning ``right continuous with left limits''}
and stochastically continuous process with independent increments.
In terms of the process $X$ with marginal laws $(\mu_t)_t$, \eqref{eq:CLK} is the same as the representation
\begin{equation} \label{eq:L-K_1d}
\mathbb{E}[e^{i\xi X_t}]
=\exp\left[ i \gamma_t \xi + \int_{\R} \left(e^{i\xi x}-1-\frac{i\xi x}{1+x^2}\right)\frac{1+x^2}{x^2} \,\eta_t(dx) \right],
\qquad \xi \in \R.
\end{equation}
As in Proposition~\ref{prop:LK_repr}, we call the family of $(\gamma_t, \eta_t) \in \R \times \mea$, $t \in I$, the \emph{generator} of $X$.

\begin{remark}
Compared to the multidimensional formula \eqref{eq:L-K_md} below, the Gaussian coefficient in \eqref{eq:L-K_1d} is given by $\eta_t(\{0\})$; here, the value of the integrand at $x=0$ is assigned as the limit
\[
\lim_{x\to 0}\left(e^{i\xi x}-1-\frac{i\xi x}{1+x^2}\right)\frac{1+x^2}{x^2}=-\frac{\xi^2}{2}.
\]
\end{remark}

The next proposition is taken from Jacod and Shiryaev~\cite[Theorem~VII.3.4 and Corollary~VII.4.43]{JS10}, adapted to the one-dimensional case.

\begin{proposition} \label{prop:JS10}
Let $X^n=(X^n_t)_t$, $n \in \N$, and $X=(X_t)_t$ be one-dimensional additive processes and $(\gamma^n_t, \eta^n_t)_t$ and $(\gamma_t, \eta_t)_t$ be their generators, respectively.
Then the following are equivalent:
\begin{enumerate}
\item \label{JS10:J_1}
$X^n \xrightarrow{\rm d} X$ in Skorokhod's $J_1$-topology as $n \to \infty$;
\item \label{JS10:pair_pw}
The generators satisfy
\begin{gather}
\lim_{n\to \infty}\sup_{t\in [0,T]}\lvert \gamma^n_t-\gamma_t\rvert=0
\qquad \text{for each}\ T \in I\ \text{and}
\label{eq:luc_gamma} \\
\operatorname*{w-lim}_{n \to \infty}\eta^n_t=\eta_t 
\qquad \text{for each}\ t \in I; \label{eq:pwc_eta}
\end{gather}
\item \label{JS10:pair_lu}
The generators satisfy \eqref{eq:luc_gamma} and
\begin{equation} \label{eq:luc_eta}
\begin{gathered}
\lim_{n \to \infty}\sup_{t \in [0,T]}\left\lvert \int_{\R} f(x) \, \eta^n_t(dx) - \int_{\R} f(x) \, \eta_t(dx) \right\rvert=0 \\
\text{for each}\ T \in I\ \text{and for each}\ f \in C_{\rm b}(\R);
\end{gathered}
\end{equation}
\item \label{JS10:cf_lu}
The characteristic function $\mathbb{E}[\exp(i\xi X^n_t)]$ converges to $\mathbb{E}[\exp(i\xi X_t)]$ locally uniformly in $(\xi,t) \in \R \times I$ as $n \to \infty$;
\item \label{JS10:cf_pw}
The characteristic function $\mathbb{E}[\exp(i\xi X^n_t)]$ converges to $\mathbb{E}[\exp(i\xi X_t)]$ locally uniformly in $t \in I$ as $n \to \infty$ for each $\xi \in \R$.
\end{enumerate}
\end{proposition}

In Proposition~\ref{prop:JS10}, let $\mu^n_t$ and $\mu_t$ be the distributions of $X^n_t$ and $X_t$, respectively.
Then condition \eqref{JS10:cf_lu} or \eqref{JS10:cf_pw} is equivalent to $(\mu^n_t)_t \to (\mu_t)_t$ in $C(I; \prob)$ by Proposition~\ref{prop:gLevy_conti}.
In addition, \eqref{eq:luc_eta} means that $(\eta^n_t)_t \to (\eta_t)_t$ in $C(I; \mea)$.
Hence, if a sequence of additive processes converges to an additive process, then the finite measures in the L\'evy--Khintchine representation as well as the marginal distributions of the processes converge locally uniformly in time parameter.

\begin{remark} \label{rem:Polya}
In Proposition~\ref{prop:JS10}, conditions \eqref{eq:pwc_eta} and \eqref{eq:luc_eta} are equivalent because $\eta^n_t$ and $\eta_t$ are setwise continuous and non-decreasing in $t$.
In fact, a theorem of P\'olya%
\footnote{See the original paper \cite{Pol20} or a suitable book on probability, e.g., \cite[Theorem 4.5]{BW16}.}
ensures that, if a sequence of right-continuous non-decreasing functions $F_n$, $n \in \N$, on $I$ converges to a continuous function $F$ pointwise with $F_n(0)=F(0)=0$, then $(F_n)_{n \in \N}$ converges to $F$ locally uniformly on $I$.
From this theorem we easily see that \eqref{eq:pwc_eta} implies \eqref{eq:luc_eta}, decomposing a test function $f \in C_{\rm b}(\R)$ into the positive and negative parts.
Since the marginal distributions $\mu^n_t$ of the processes $X^n_t$ have no monotonicity in $t$, we cannot drop $t$-uniformity in the convergence of $((\mu^n_t)_t)_{n \in \N}$.
\end{remark}

\begin{remark}
Here are comments on analogous results.
\begin{enumerate}
\item
Although it is not a focus of the current paper, for multidimensional additive processes the same property as Proposition~\ref{prop:JS10} holds true, except that the pair $(\gamma_t, \eta_t)$ in \eqref{eq:L-K_1d} should be replaced by the characteristic triplet.
Namely, the following expression is adopted:
\begin{equation} \label{eq:L-K_md}
\mathbb{E}[e^{i\xi \cdot X_t}]
=\exp\left[ i \gamma_t \cdot \xi - \frac{1}{2}\xi \cdot C_t\xi + \int_{\R} \left(e^{i\xi x}-1-i\xi \cdot h(x) \right) \nu_t(dx) \right],
\qquad \xi \in \R^d.
\end{equation}
Here, $C_t$ is the Gaussian covariance matrix, $h \colon \R^d \to \R^d$ is a truncation function~\cite[p.75]{JS10}, and $\nu_t$ is a L\'evy measure.
Then
\[
\tilde{C}_t := C_t - \int_{\R^d} h(x) \otimes h(x) \,\nu_t(dx)
\]
is called the modified second characteristic~\cite[p.115]{JS10}.
The multidimensional version of Proposition~\ref{prop:JS10} states that the locally uniform convergences of additive processes, of modified triplets, and of marginal distributions (of the processes) are all equivalent; for the precise statement, see Jacod and Shiryaev~\cite[Theorem~VII.3.4 and Corollary~VII.4.43]{JS10}.

\item
An analogue to Proposition~\ref{prop:JS10} is formulated for a sequence of random walks constructed by an infinitesimal triangular array in Hata's thesis~\cite{Ha23}.
Similar results were proved even on a Lie group; see Feinsilver~\cite{Fe78}.
\end{enumerate}
\end{remark}

We now switch our language from processes to $\ast$-CHs and make further discussions on conditions \eqref{eq:pwc_eta} and \eqref{eq:luc_eta}, which are equivalent by Remark~\ref{rem:Polya}.
For this purpose we slightly rewrite the L\'evy--Khintchine representation \eqref{eq:CLK}.
As $\widehat\classical_{s,t}=\widehat\classical_{0,t}/\widehat\classical_{0,s}$, setting $\genc_{s,t}:=\genc_{t}-\genc_s$ and $\genm_{s,t}:=\genm_t-\genm_s$ yields
\begin{equation}\label{eq:CLK2}
\widehat\classical_{s,t}(\xi) = \exp\left( i \genc_{s,t}\xi + \int_\R \left( e^{i\xi x} -1 - \frac{i\xi x}{1+x^2} \right)\frac{1+x^2}{x^2}\,\genm_{s,t}(dx) \right). 
\end{equation}
We then define
\begin{align} 
&\Genm(B\times (s,t]):=\genm_{s,t}(B)=\genm_t(B)-\genm_s(B), \qquad s\le t,\ B\in \cB(\R), \label{eq:two_sigmas} \\
&\Genm(\R \times \{0\}) := 0. \label{eq:sigma2}
\end{align}  
Since $(x,t) \mapsto \Genm((-\infty, x] \times [0,t])$ is a two-dimensional distribution function%
\footnote{See, e.g., Billingsley~\cite[Theorem~12.5]{Bil12} for multi-dimensional distribution functions.},
 $\Genm$ extends to a $\sigma$-finite Borel measure on $\R \times I$.
Since $(\genm_t)_{t}$ is setwise nondecreasing and continuous, we see that
\begin{equation} \label{LK3}
\Genm(\R \times [0, t])<\infty
\quad \text{and}\quad
\Genm(\R \times \{t\})=0
\quad \text{for every}\ t \in I.
\end{equation}
Conversely, given a Borel measure $\Genm$ on $\R \times I$ satisfying \eqref{LK3}, the relation \eqref{eq:two_sigmas} defines a unique family $(\genm_t)_{t}$ in $\mea$ that is setwise nondecreasing and continuous with $\genm_0=0$.
In terms of $\Genm$, \eqref{eq:CLK2} is written as
\begin{equation} \label{eq:CLK3}
\widehat\classical_{s,t}(\xi) = \exp\left( i \genc_{s,t}\xi + \int_{\R\times (s,t]} \left( e^{i\xi x} -1 - \frac{i\xi x}{1+x^2} \right)\frac{1+x^2}{x^2}\,\Genm(dx\,du) \right).
\end{equation}

With the notation above, we have the following:

\begin{proposition} \label{prop:two_sigmas}
Let $(\genm_t)_{t}$ and $(\genm^n_t)_{t}$, $n\in \N$, be setwise nondecreasing continuous mappings from $I$ to $\mea$ with $\genm_0=\genm^n_0=0$.
Let $\Genm$ be the $\sigma$-finite Borel measure defined by \eqref{eq:two_sigmas} and \eqref{LK3}.
For each $n$, similarly, let $\Genm^n$ be the measure defined by \eqref{eq:two_sigmas} and \eqref{LK3} with the superscript ${}^n$ put on $\genm$ and $\Genm$.
Then the following are equivalent as $n\to \infty$:
\begin{enumerate}
\item \label{G1}
\eqref{eq:luc_eta} holds, i.e., $(\genm^n_t)_{t}$ converges weakly to $(\genm_t)_{t}$ locally uniformly on $I$;
\item \label{G2}
\eqref{eq:pwc_eta} holds, i.e., $\genm^n_t$ converges weakly to $\genm_t$ for each $t\in I$;
\item \label{G3}
$\Genm^n\rvert_{\R\times [0,T]}$ converges weakly to $\Genm\rvert_{\R\times [0,T]}$ for each $T \in I$. 
\end{enumerate}
Here, $\Genm\rvert_{\R\times [0,T]}$ is the measure on $\cB(\R \times [0,T])$ defined by $\Genm\rvert_{\R\times [0,T]}(B):=\Genm(B \cap (\R\times [0,T]))$, $B \in \cB(\R \times [0,T])$, and $\Genm^n\rvert_{\R\times [0,T]}$ is defined in the same way.
\end{proposition}

\begin{proof}
It suffices to prove that \eqref{G2} and \eqref{G3} are equivalent.
Since
\[
\int_{\R \times [0,T]} f(x) \,\Genm^n(dx\,dt) = \int_{\R} f(x) \,\eta^n_T(dx),
\qquad f\in C_{\rm b}(\R),
\]
we immediately see that \eqref{G3} implies \eqref{G2}.
Conversely, suppose \eqref{G2}.
Then repeating the proof of $\rm (G2)\Rightarrow (G3)$ in Hasebe and Hotta~\cite[Proposition~2.6]{HH22}, we see that $\Genm^n\rvert_{\R \times [0,T]}$ converges vaguely to $\Genm\rvert_{\R \times [0,T]}$ as $n \to \infty$.
Moreover, the total mass $\Genm^n(\R \times [0,T])=\eta^n_T(\R)$ converges to $\Genm(\R \times [0,T])=\eta_T(\R)$ by \eqref{G2}.
Hence \eqref{G3} follows.
\end{proof}

Finally Propositions~\ref{prop:JS10} and \ref{prop:two_sigmas} (and the comments after the former proposition) are summarized, from the perspective of $\ast$-CHs, as follows:

\begin{theorem} \label{th:conv_classical}
Let $(\classical_{s,t})_{s\le t}$ and $(\classical_{s,t}^n)_{s\le t}$, $n\in \N$, be $\ast$-CHs and $(\genc_t, \genm_t)_{t}$ and $(\genc^n_t, \genm^n_t)_{t}$, be their respective generators.
The following are then equivalent:
\begin{enumerate}
\item \label{C1}
$(\classical^n_{0,t})_{t}$ converges weakly to $(\classical_{0,t})_{t}$ locally uniformly on $I$;
\item \label{C2}
$(\classical^n_{s,t})_{s\le t}$ converges weakly to $(\classical_{s,t})_{s\le t}$ locally uniformly on $I^2_\le$;
\item \label{C3}
$(\genc^n_t)_{t}$ converges to $(\genc_t)_{t}$ locally uniformly on $I$, and the equivalent conditions \eqref{G1}--\eqref{G3} in Proposition~\ref{prop:two_sigmas} are fulfilled.
\end{enumerate}
\end{theorem}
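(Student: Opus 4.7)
The plan is to establish the equivalence through the cycle $(\ref{C2})\Rightarrow(\ref{C1})\Rightarrow(\ref{C3})\Rightarrow(\ref{C2})$, leaning on (a) the classical continuity theorem for $\ast$-infinitely divisible distributions, (b) Theorem \ref{th:two_sigmas}, which already handles the L\'evy-measure part, and (c) a direct analysis of the L\'evy--Khintchine exponent \eqref{eq:CLK3}.

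The implication $(\ref{C2})\Rightarrow(\ref{C1})$ is trivial: restricting the locally uniform convergence on $I^2_\le$ to the closed subset $\{0\}\times I$ yields exactly condition (\ref{C1}).

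For $(\ref{C1})\Rightarrow(\ref{C3})$, I would first extract pointwise (in $t$) weak convergence $\classical^{(n)}_{0,t}\to\classical_{0,t}$ and invoke the classical continuity theorem for $\ast$-infinitely divisible laws (e.g.\ Sato \cite[Theorem~8.7]{Sat13} or Jacod--Shiryaev \cite[Theorem~VII.3.4]{JS10}) to conclude that $\genc^{(n)}_t\to\genc_t$ and $\genm^{(n)}_t\to\genm_t$ weakly at every $t\in I$. The convergence of the L\'evy-measure part is then promoted to locally uniform convergence by Theorem \ref{th:two_sigmas}, equivalence (\ref{G2})$\Leftrightarrow$(\ref{G1}). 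For the drifts, the pointwise convergence $\genc^{(n)}_t\to\genc_t$ has to be upgraded to locally uniform convergence on $I$; this is an equicontinuity argument that uses the continuity of the limit $t\mapsto\genc_t$ together with the already established uniform behaviour of the L\'evy measures on compact time sets, and can be read off from \cite[Corollary~VII.4.43]{JS10}.

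For $(\ref{C3})\Rightarrow(\ref{C2})$, I would plug the assumed convergences into formula \eqref{eq:CLK3}. Writing
\[
\Psi(\xi,x):=\Bigl(e^{i\xi x}-1-\frac{i\xi x}{1+x^2}\Bigr)\frac{1+x^2}{x^2},
\]
which is bounded and jointly continuous in $(\xi,x)$, one has
\[
\log\widehat{\classical^{(n)}_{s,t}}(\xi)
=i(\genc^{(n)}_t-\genc^{(n)}_s)\xi
+\int_{\R\times(s,t]}\Psi(\xi,x)\,\Genm^{(n)}(dx\,du).
\]
By Theorem \ref{th:two_sigmas}\,(\ref{G3}), $\Genm^{(n)}|_{\R\times[0,T]}\to\Genm|_{\R\times[0,T]}$ weakly for every $T\in I$, so the integral converges locally uniformly in $(s,t,\xi)$; here the condition $\Genm(\R\times\{t\})=0$ from \eqref{LK3} is exactly what ensures that the boundary of $\R\times(s,t]$ is a $\Genm$-null set, hence that integration against $\Psi$ is continuous in $(s,t)$. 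Combining with the locally uniform convergence of $\genc^{(n)}$ gives locally uniform convergence of $\widehat{\classical^{(n)}_{s,t}}(\xi)$, and a parametric version of L\'evy's continuity theorem (cf.\ \cite[Proposition~5.2]{HHM23+}) translates this into (\ref{C2}).

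The main obstacle I anticipate is the joint $(s,t)$-dependence in Step 3: one must show that the boundary contribution to the integral $\int_{\R\times(s,t]}\Psi(\xi,x)\,\Genm^{(n)}(dx\,du)$ does not conspire to destroy uniformity when $(s,t)$ ranges over a compact set, which relies essentially on the no-horizontal-atom condition \eqref{LK3}. A secondary subtlety, in Step 2, is the passage from pointwise to locally uniform convergence of the drifts; this is where continuity of the limit family $t\mapsto\genc_t$ (guaranteed by the CH structure) does the necessary equicontinuity work.
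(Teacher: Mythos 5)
The paper does not supply a proof of this theorem; it simply cites Jacod--Shiryaev \cite[Theorem~VII.3.4 and Corollary~VII.4.43]{JS10} and the companion paper \cite[Theorem~3.1, Proposition~5.2]{HHM23+}. Your reconstruction follows exactly the strategy those references use: reduce to convergence of characteristics/generating families pointwise in $t$ via the classical continuity theorem for $\ast$-infinitely divisible laws, upgrade to locally uniform convergence using the structure of a CH, and then run the L\'evy--Khintchine formula in reverse. So the route is the right one and the two nontrivial obstacles you flag are the genuine ones.

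Two points of detail are worth sharpening. First, in the drift step of $(\ref{C1})\Rightarrow(\ref{C3})$, ``an equicontinuity argument'' is a little loose on its own: what actually makes it work is that the locally uniform convergence in $(\ref{C1})$ gives locally uniform convergence of $\widehat{\classical^{(n)}_{0,t}}(\xi)\to\widehat{\classical_{0,t}}(\xi)$ (since $x\mapsto e^{i\xi x}\in C_{\rm b}(\R)$), and $\widehat{\classical_{0,t}}(\xi)$ never vanishes, so the continuous determination of the exponent converges locally uniformly; subtracting the already-controlled $\genm$-integral at a fixed $\xi$ (say $\xi=1$) isolates $\genc^{(n)}_t$ as a locally uniformly convergent expression. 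This is cleaner than appealing to equicontinuity of $(\genc^{(n)})_n$ directly. Second, in $(\ref{C3})\Rightarrow(\ref{C2})$, the uniformity over $(s,t)$ with the discontinuous kernel $\mathbf{1}_{(s,t]}$ is precisely the content of Theorem~\ref{th:two_sigmas}, whose proof already handles the interplay between weak convergence of $\Genm^{(n)}$ and the no-horizontal-atom condition $\Genm(\R\times\{t\})=0$; rather than re-proving uniformity from scratch one can reuse that statement (with $\Psi(\xi,\cdot)\in C_{\rm b}(\R)$ as test function) after separating the $\xi$-dependence, and then finish with the parametric continuity theorem from \cite[Proposition~5.2]{HHM23+}, just as you propose.
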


\subsection{Reduced generators}
\label{sec:mgf_for_classical}

The mean and variance of a $\ast$-infinitely divisible distribution can be written with its generator as follows:

\begin{proposition}[e.g.\ {\cite[Corollary~25.8]{Sat13}}]
\label{prop:2nd_moment}
For a $\ast$-infinitely divisible distribution $\classical$ with L\'evy--Khintchine representation \eqref{eq:CLK0_intro}, the following hold:
\begin{enumerate}[label={\rm (\roman*)}]
\item \label{prop:2nd_moment_1}
$\int_{\R}x^2\,\classical(dx)<+\infty$ if and only if $\int_{\R}(1+x^2)\,\genm(dx)<+\infty$;
\item \label{prop:2nd_moment_2}
if either of the integrals in \ref{prop:2nd_moment_1} is finite, then
\begin{align}
\Mean(\classical)&=\genc+\int_{\R}x\,\genm(dx), \label{eq:mean} \\
\var(\classical)&=\int_{\R}(1+x^2)\,\genm(dx). \label{eq:variance}
\end{align}
\end{enumerate}
\end{proposition}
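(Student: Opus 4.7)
The plan is to link finiteness of the second moment of $\classical$ to $C^2$-regularity of its characteristic function $\widehat{\classical}$ near $0$, and to read off the mean and variance from the first two derivatives of $\psi := \log\widehat{\classical}$.

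To handle sufficiency in (i) together with the formulas in (ii), I would assume $\int_\R(1+x^2)\,\genm(dx) <+\infty$ and differentiate \eqref{eq:CLK0_intro} twice in $\xi$ under the integral sign. The interchange of derivative and integral is justified by dominated convergence: after differentiating, the integrand of $\psi'$ is bounded, uniformly for $\xi$ in any compact set, by a constant multiple of $1+|x|$, and that of $\psi''$ by $1+x^2$; the apparent singularity at $x=0$ is removed by Taylor expansion of $e^{i\xi x}$, which kills the compensating term $i\xi x/(1+x^2)$. This produces
\begin{align*}
\psi'(\xi) &= i\genc + i\int_\R \Bigl[e^{i\xi x}\tfrac{1+x^2}{x} - \tfrac{1}{x}\Bigr]\genm(dx), \\
\psi''(\xi) &= -\int_\R e^{i\xi x}(1+x^2)\,\genm(dx).
\end{align*}
Evaluating at $\xi = 0$ gives $\psi'(0) = i\bigl(\genc + \int_\R x\,\genm(dx)\bigr)$ and $\psi''(0) = -\int_\R(1+x^2)\,\genm(dx)$. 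The standard fact that a characteristic function of class $C^2$ near $0$ forces finite second moment, with $\Mean(\classical) = -i\widehat{\classical}'(0)$ and $\int_\R x^2\,\classical(dx) = -\widehat{\classical}''(0)$, combined with the chain rule $\widehat{\classical}'' = (\psi'' + (\psi')^2)\widehat{\classical}$ and $\widehat{\classical}(0) = 1$, yields \eqref{eq:mean} and \eqref{eq:variance} immediately.

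For the remaining (necessity) direction of (i), assume $\int_\R x^2\,\classical(dx) <+\infty$. Then $\widehat{\classical}$, hence $\psi$, belongs to $C^2$ near $0$. Because $\Re\psi$ is even (as $\widehat{\classical}(-\xi) = \overline{\widehat{\classical}(\xi)}$), its derivative at $0$ vanishes, so $-\Re\psi''(0) = \lim_{\xi\to 0}(-2\Re\psi(\xi))/\xi^2 <+\infty$. Substituting the real part of \eqref{eq:CLK0_intro} rewrites this limit as
\[
\lim_{\xi\to 0}\int_\R \frac{2(1-\cos(\xi x))}{\xi^2}\cdot\frac{1+x^2}{x^2}\,\genm(dx),
\]
whose nonnegative integrand tends pointwise to $1+x^2$. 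Fatou's lemma then yields $\int_\R(1+x^2)\,\genm(dx) \le -2\Re\psi''(0) <+\infty$, completing (i).

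The main obstacle is the technical but routine justification of differentiation under the integral sign: one must simultaneously tame the apparent singularity at $x = 0$ via Taylor expansion of $e^{i\xi x}$ and control the growth as $|x|\to\infty$ via the moment hypothesis on $\genm$. Once these dominators are in place, the identification of mean and variance is a direct calculation, and the Fatou argument for the necessity in (i) is immediate.
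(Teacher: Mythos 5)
Your proof is correct, and it follows a genuinely different route from the reference the paper cites. The paper itself gives no proof of Proposition~\ref{prop:2nd_moment}; it defers to Sato's Corollary~25.8, whose proof runs through Sato's Theorem~25.3, a probabilistic truncation/decomposition argument (split the L\'evy measure at $|x|=1$, compare $\classical$ with a compound Poisson piece, and transfer moments term by term). You instead differentiate the exponent $\psi=\log\widehat{\classical}$ twice under the integral sign, read off $\psi'(0)$ and $\psi''(0)$, and appeal to the fact that existence of $\widehat{\classical}''(0)$ forces finite second moment with $\Mean(\classical)=-i\widehat{\classical}'(0)$ and $\int x^2\,\classical(dx)=-\widehat{\classical}''(0)$; the converse follows by Fatou applied to $-2\Re\psi(\xi)/\xi^2$, using that $\Re\psi$ is even so $\Re\psi'(0)=0$. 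The dominating bounds $C(1+|x|)$ and $1+x^2$ are both correct once the singularity at $x=0$ is removed by Taylor expansion, and both are $\genm$-integrable under the hypothesis $\int(1+x^2)\,\genm(dx)<\infty$, so the interchange is sound. Your analytic route is shorter and produces the explicit mean and variance formulas as an immediate byproduct of evaluating at $\xi=0$; Sato's route is heavier but gives the equivalence for moments of every order at once.

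One trivial slip: in the Fatou step the integrand of $-2\Re\psi(\xi)/\xi^2$ already tends pointwise to $1+x^2$, so the bound you obtain is $\int_\R(1+x^2)\,\genm(dx)\le -\Re\psi''(0)$, not $-2\Re\psi''(0)$. This is harmless, since only finiteness is needed there, and in fact the two sides are equal once the sufficiency direction is available.
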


Proposition~\ref{prop:2nd_moment} allows us to introduce the following definition of reduced generators for $\ast$-CHs with finite second moment:

\begin{definition}[reduced generator of a $\ast$-CH]
\label{def:classical_MGF}
Let $(\classical_{s,t})_{s\le t}$ be a $\ast$-CH and $(\genc_t, \genm_t)_{t}$ be its generator in Proposition~\ref{prop:LK_repr}.
Suppose that $\var(\classical_{0,t})=\int_{\R}(1+x^2)\,\genm_t(dx)<+\infty$ for each $t\in I$.
Setting
\begin{equation} \label{eq:gen_rgen}
m_t:=\genc_t+\int_{\R}x\,\genm_t(dx),\qquad \rgen_t(dx):=(1+x^2)\,\genm_t(dx),
\end{equation}
we call $(m_t,\rgen_t)_{t}$ the \emph{reduced generator} of $(\classical_{s,t})_{s\le t}$. 
In this case, the L\'evy--Khintchine representation takes the reduced form \eqref{eq:CLK_reduced_intro}.
\end{definition}

In the sequel, we consider vague convergence of measures.
Here, a sequence $(\nu_n)_{n \in \N}$ of finite Borel measures on $\R$ is said to be vaguely convergent to a finite measure $\nu$ if $\lim_{n \to \infty}\int_{\R} f \,d\nu_n=\int_{\R} f \,d\nu$ for any $f$ taken from the set $C_{\rm c}(\R)$ of continuous functions with compact support.
Note that in the case $\sup_{n \in \N} \nu_n(\R)<+\infty$ we obtain an equivalent definition, replacing $C_{\rm c}(\R)$ with the larger set $C_\infty(\R)$ of continuous functions vanishing at infinity; see Appendix~\ref{sec:weak_and_vague}.
Vague convergence is indicated by $\xrightarrow{\rm v}$.

The density transformation $\genm(dx) \mapsto (1+x^2) \,\genm(dx)$ in Definition~\ref{def:classical_MGF} translates weak convergence into the vague one. 
 
\begin{lemma} \label{lem:vague-weak}
Let $\genm, \genm_n \in \mea$, $n\in \mathbb{N}$, and set $\rgen(dx):=(1+x^2) \,\genm(dx)$ and $\rgen_n(dx):=(1+x^2) \,\genm_n(dx)$.
Suppose that $\sup_n \rgen_n(\mathbb{R})<+\infty$.
Then $\genm_n \xrightarrow{\rm w} \genm$ as $n \to \infty$ if and only if $\rgen_n \xrightarrow{\rm v} \rgen$.
\end{lemma}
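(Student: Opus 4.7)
My plan is to exploit the key observation that multiplication by the density $1+x^2$ swaps the natural test-function spaces for the two modes of convergence: if $f \in C_{\rm c}(\R)$ then $f(x)(1+x^2) \in C_{\rm c}(\R) \subset C_{\rm b}(\R)$, while if $f \in C_{\rm b}(\R)$ then $f(x)/(1+x^2) \in C_\infty(\R)$. Once this is noted, the lemma reduces to two standard facts about measures.

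The forward implication ($\genm_n \xrightarrow{\rm w} \genm$ implies $\rgen_n \xrightarrow{\rm v} \rgen$) will be almost immediate: for any $f \in C_{\rm c}(\R)$, I would rewrite
\[
\int_\R f(x) \,\rgen_n(dx) = \int_\R f(x)(1+x^2) \,\genm_n(dx),
\]
note that the new integrand is in $C_{\rm b}(\R)$, and apply the assumed weak convergence of $\genm_n$. This half does not need the uniform bound $\sup_n \rgen_n(\R)<+\infty$.

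For the converse, assume $\rgen_n \xrightarrow{\rm v} \rgen$ and set $M:=\sup_n \rgen_n(\R)<+\infty$. First I would verify that $\rgen(\R)\le M$ by testing vague convergence against compactly supported cutoffs approaching $\mathbf{1}_K$ for large compact $K$, so that $\rgen\in\mea$. Then, for $f \in C_{\rm b}(\R)$, I would write
\[
\int_\R f(x) \,\genm_n(dx) = \int_\R \frac{f(x)}{1+x^2} \,\rgen_n(dx),
\]
observe that $f/(1+x^2) \in C_\infty(\R)$, and upgrade the vague convergence of $(\rgen_n)$ to convergence against $C_\infty$-test functions. The upgrade is the usual approximation: given $\varepsilon>0$, pick a compact $K$ outside which $|f(x)/(1+x^2)|<\varepsilon$ and a cutoff $\chi \in C_{\rm c}(\R)$ with $0\le\chi\le 1$ and $\chi\equiv 1$ on $K$; then $(f/(1+x^2))\chi \in C_{\rm c}(\R)$ handles the main part by vague convergence, while the remainder is bounded uniformly in $n$ by $\varepsilon(M+\rgen(\R))$.

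The only nontrivial step is this $C_{\rm c}$-to-$C_\infty$ upgrade, whose essence is the standard fact (already invoked in Section~\ref{sec:intro_conti_bijec} via Proposition~\ref{prop:vague_bounded_set}) that for a sequence of measures with uniformly bounded total mass, vague convergence extends to test functions vanishing at infinity. Everything else is a direct manipulation of Radon--Nikodym densities, and the hypothesis $\sup_n \rgen_n(\R)<+\infty$ plays no role beyond enabling this upgrade.
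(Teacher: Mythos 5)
Your proof is correct and follows essentially the same approach as the paper's: rewrite the integrals via the density $1+x^2$ to swap test-function classes, and in the ``if'' direction upgrade vague to $C_\infty$ convergence using the uniform mass bound (the paper cites Proposition~\ref{prop:vague_bounded_set} for this upgrade, whereas you spell out the cutoff argument explicitly). Your extra remark that $\rgen(\R)\le M$, so that $\rgen\in\mea$, is a small but sensible clarification that the paper leaves implicit.
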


\begin{proof}
\textit{``Only if'' part}.
Suppose $\genm_n \stackrel{\rm w}{\to} \genm$.
Then for $f \in C_{\rm c}(\R)$ we have
\begin{align*}
\lim_{n \to \infty}\int_{\R} f(x) \,\rgen_n(dx)
&=\lim_{n \to \infty}\int_{\R} f(x) (1+x^2) \,\genm_n(dx) \\
&=\int_{\R} f(x)(1+x^2) \,\genm(dx)
=\int_{\R} f(x) \,\rgen(dx)
\end{align*}
because $f(x)(1+x^2) \in C_{\rm b}(\R)$.
This implies $\rgen_n \xrightarrow{\rm v} \rgen$.

\smallskip\noindent
\textit{``If'' part}.
Suppose $\rgen_n \stackrel{\rm v}{\to} \rgen$.
Then for $f \in C_{\rm b}(\R)$ we have
\begin{align*}
\lim_{n \to \infty}\int_{\R} f(x) \,\genm_n(dx)
&=\lim_{n \to \infty}\int_{\R} \frac{f(x)}{1+x^2} \,\rgen_n(dx) \\
&=\int_{\R} \frac{f(x)}{1+x^2} \,\rgen(dx)
=\int_{\R} f(x) \,\genm(dx)
\end{align*}
because $f(x)(1+x^2)^{-1} \in C_\infty(\R)$.
Here, we have used $\sup_n \rgen_n(\R)<+\infty$ and Proposition~\ref{prop:vague_bounded_set}.
Thus $\genm_n \xrightarrow{\rm w} \genm$.
\end{proof}

Let $(m_t, \rgen_t)_{t}$ be the reduced generator of a $\ast$-CH.
It is easy to see directly (or from some literature) that $(\rgen_t)_{t}$ inherits setwise continuity from $(\genm_t)$ in the (non-reduced) generator.
We can then define a unique Borel measure $\rGen$ on $\R \times I$ that satisfies
\begin{align}
&\rGen(B\times (s,t])=\rgen_t(B)-\rgen_s(B),
&&s\le t,\ B\in \cB(\R); \label{eq:two_thetas} \\
&\rGen(\R \times [0, t])<+\infty,\ \rGen(\R \times \{t\})=0,
&&t \in I, \label{LK3_theta}
\end{align}
as in \eqref{eq:two_sigmas} and \eqref{LK3}.
Proposition~\ref{prop:two_sigmas} and Theorem~\ref{th:conv_classical} are transferred to this setting. 

\begin{definition} \label{def:luvc}
Let $\para= I$ or $I^2_{\le}$. 
We say that a sequence $((\mu_{\tau}^n)_{\tau \in \para})_{n \in \N}$ in $C(\para; \mea)$ converges \emph{vaguely to $(\mu_{\tau})_{\tau \in \para}\in C(\para; \mea)$ locally uniformly on $\para$} if \eqref{eq:luwc_intro} holds for all compact $K \subset \para$ and $f \in C_{\rm c}(\R)$. 
\end{definition}

\begin{remark}
Suppose that $(\mu_\tau^n)$ converges vaguely to $(\mu_\tau)$ locally uniformly.
If the total masses are uniformly bounded, i.e., $\sup_{n, \tau} \mu^n_\tau(\R)<+\infty$, then $f$ in \eqref{eq:luwc_intro} can be chosen from $C_\infty(\R)$, as is the case with the usual vague convergence, by a simple approximation argument (cf.~Proposition \ref{prop:vague_bounded_set}).
\end{remark}

\begin{proposition} \label{prop:two_thetas}
Let $(\rgen_t)_{t}$ and $(\rgen^n_t)_{t}$, $n\in \N$, be setwise non-decreasing continuous mappings from $I$ to $\mea$ with $\rgen_0=\rgen^n_0=0$.
Let $\rGen$ be the measure defined by \eqref{eq:two_thetas} and \eqref{LK3_theta}.
For each $n$, similarly, let $\rGen^n$ be the measure defined by \eqref{eq:two_thetas} and \eqref{LK3_theta} with the superscript $^n$ put on $\rgen$ and $\rGen$.
Then the following are equivalent as $n\to \infty$:
\begin{enumerate}
\item \label{mG1}
$(\rgen^n_t)_{t}$ converges vaguely to $(\rgen_t)_{t}$ locally uniformly on $I$;
\item \label{mG2}
$\rgen^n_t$ converges vaguely to $\rgen_t$ for each $t\in I$;
\item \label{mG3}
$\rGen^n$ converges vaguely to $\rGen$.
\end{enumerate}
\end{proposition}

\begin{proof}
This is proved in the same way as Proposition~\ref{prop:two_sigmas}, so we omit the proof.
\end{proof}

\begin{theorem} \label{th:conv_classical_SMB}
Let $(\classical_{s,t})_{s\le t}$ and $(\classical_{s,t}^n)_{s\le t}$, $n\in \N$, be $\ast$-CHs that satisfy $\var(\classical_{0,t})<+\infty$ and $\sup_{n\in \N}\var(\classical^n_{0,t}) < +\infty$ for each $t\in I$.
Let $(m_t, \rgen_t)_{t}$ and $(m^n_t, \rgen^n_t)_{t}$ be their respective reduced generators.
Then the following are equivalent:
\begin{enumerate}
\item \label{mC1}
$(\classical^n_{0,t})_{t}$ converges weakly to $(\classical_{0,t})_{t}$ locally uniformly on $I$;
\item \label{mC2}
$(m^n_t)_{t}$ converges to $(m_t)_{t}$ locally uniformly on $I$, and the equivalent conditions \eqref{mG1}--\eqref{mG3} in Proposition~\ref{prop:two_thetas} are fulfilled.
\end{enumerate}
\end{theorem}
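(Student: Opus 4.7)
The plan is to deduce Theorem \ref{th:conv_classical_SMB} from the already-proven Theorem \ref{th:conv_classical} by translating back and forth between reduced and non-reduced generating families. Definition \ref{def:classical_MGF} gives
\[
\rgen_t(dx) = (1+x^2)\,\genm_t(dx), \qquad m_t = \genc_t + \int_\R x \,\genm_t(dx),
\]
with the inverses $\genm_t(dx) = (1+x^2)^{-1}\,\rgen_t(dx)$ and $\genc_t = m_t - \int_\R \frac{x}{1+x^2}\,\rgen_t(dx)$, and the same formulas apply to each $(m^{(n)}_t,\rgen^{(n)}_t)$ and $(\genc^{(n)}_t,\genm^{(n)}_t)$. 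As a preliminary remark, note that $t \mapsto \rgen^{(n)}_t(\R) = \var(\classical^{(n)}_{0,t})$ is nondecreasing, because variances add along a $\ast$-CH, so the pointwise hypothesis $\sup_n \var(\classical^{(n)}_{0,t}) < +\infty$ improves to the \emph{uniform-in-$t$} bound
\[
C_K := \sup_{n \in \N,\ t \in K} \rgen^{(n)}_t(\R) = \sup_{n \in \N} \rgen^{(n)}_{\sup K}(\R) < +\infty
\]
for every compact $K \subset I$.

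The main technical step is a locally uniform analogue of Lemma \ref{lem:vague-weak}: under the bound $C_K$, weak locally uniform convergence of $\genm^{(n)}_t \to \genm_t$ on $I$ is equivalent to vague locally uniform convergence of $\rgen^{(n)}_t \to \rgen_t$. For the ``only if'' direction, any $f \in C_{\rm c}(\R)$ satisfies $f(x)(1+x^2) \in C_{\rm b}(\R)$, so
\[
\int_\R f(x)\,\rgen^{(n)}_t(dx) = \int_\R f(x)(1+x^2)\,\genm^{(n)}_t(dx)
\]
converges locally uniformly in $t$ by the weak-convergence hypothesis. For the reverse direction, for any $f \in C_{\rm b}(\R)$ I would write $\int f\,d\genm^{(n)}_t = \int \frac{f(x)}{1+x^2}\,\rgen^{(n)}_t(dx)$ and approximate $f(x)/(1+x^2) \in C_\infty(\R)$ in the sup norm by some $g \in C_{\rm c}(\R)$ within $\varepsilon/C_K$. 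The tail contributions are then bounded by $\varepsilon$ uniformly in $n$ and $t \in K$, while the main part converges locally uniformly on $K$ by vague locally uniform convergence applied to $g$.

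Next I would handle the first coordinates. Since $x/(1+x^2) \in C_\infty(\R)$, the same truncation argument based on $C_K$ yields
\[
\int_\R \frac{x}{1+x^2}\,\rgen^{(n)}_t(dx) \longrightarrow \int_\R \frac{x}{1+x^2}\,\rgen_t(dx)
\]
locally uniformly on $I$ whenever $\rgen^{(n)}_t \to \rgen_t$ vaguely locally uniformly. Combined with $m^{(n)}_t - \genc^{(n)}_t = \int \frac{x}{1+x^2}\,\rgen^{(n)}_t(dx)$ and its limit counterpart, this gives the equivalence between locally uniform convergence of $\genc^{(n)}_t$ and of $m^{(n)}_t$. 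Putting everything together, Theorem \ref{th:conv_classical} expresses condition \eqref{mC1} as ``$\genc^{(n)}_t \to \genc_t$ locally uniformly on $I$ and $\genm^{(n)}_t \to \genm_t$ weakly locally uniformly on $I$,'' which the two paragraphs above convert precisely into condition \eqref{mC2}, and vice versa.

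The hard part is the locally uniform upgrade of Lemma \ref{lem:vague-weak} in the second paragraph: the tail error from approximating $C_\infty$ functions by compactly supported ones must be controlled uniformly not only in $n$ but also in $t$ on each compact sub-interval of $I$. The monotonicity of $t \mapsto \var(\classical^{(n)}_{0,t})$ is precisely what makes this possible via the bound $C_K$; without monotonicity one would need a stronger hypothesis than the pointwise variance bound stated in the theorem.
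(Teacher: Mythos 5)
Your proof is correct, but it takes a heavier route than the paper's. The paper applies Lemma~\ref{lem:vague-weak} only \emph{pointwise} at each fixed $t$: by Theorem~\ref{th:conv_classical}, condition~\eqref{mC1} gives $\genm^{(n)}_t \xrightarrow{\rm w} \genm_t$ for each $t$, Lemma~\ref{lem:vague-weak} then gives $\rgen^{(n)}_t \xrightarrow{\rm v} \rgen_t$ pointwise (condition~\eqref{mG2}), and Theorem~\ref{th:two_thetas} upgrades this \emph{for free} to the locally uniform statement~\eqref{mG1}. The mean formula is then passed to the limit using the locally uniform vague convergence just obtained. No uniform-in-$t$ mass bound is needed, because the whole point of Theorems~\ref{th:two_sigmas} and~\ref{th:two_thetas} is that for setwise nondecreasing continuous families starting at~$0$, pointwise and locally uniform convergence are automatically equivalent.

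You instead prove a locally uniform version of Lemma~\ref{lem:vague-weak} from scratch, and to control the tail error you invoke the monotonicity of $t \mapsto \var(\classical^{(n)}_{0,t})$ to get the bound $C_K$. This observation is sound (variance is additive along a $\ast$-CH), and your argument goes through --- one small detail you leave implicit is that you also need $\sup_{t\in K}\rgen_t(\R) < +\infty$, which follows by the same monotonicity applied to the limiting family, or by vague lower semicontinuity of total mass. So your route is a self-contained alternative that makes the analytic approximation explicit, while the paper's route is shorter because the hard work was already encapsulated in Theorem~\ref{th:two_thetas}. Your closing remark that monotonicity is indispensable is accurate only for your approach; the paper's argument never needs the uniform bound.
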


\begin{proof}
Let $(\genc_t, \genm_t)_{t}$ and $(\genc^n_t, \genm^n_t)_{t}$, $n \in \N$, be the generators of $(\classical_{s,t})_{s\le t}$ and $(\classical^n_{s,t})_{s\le t}$, respectively.
To prove $\eqref{mC1} \Rightarrow \eqref{mC2}$, suppose \eqref{mC1}.
By Theorem~\ref{th:conv_classical}, $\genm^n_t \xrightarrow{\rm w} \genm_t$ for each $t \in I$, and hence $\rgen^n_t \xrightarrow{\rm v} \rgen_t$ by Lemma~\ref{lem:vague-weak}.
This further implies, by Proposition~\ref{prop:two_thetas}, that $(\rgen^n_t)_{t}$ converges vaguely to $(\rgen_t)_{t}$ locally uniformly on $I$.
In particular, by \eqref{eq:mean},
\begin{align*}
&m^n_t=\int_{\R} x \,\genm^n_t(dx) = \int_{\R} x(1+x^2)^{-1} \,\rgen^n_t(dx) \\
&\longrightarrow \int_{\R} x(1+x^2)^{-1} \,\rgen_t(dx) = \int_{\R} x \,\genm_t(dx)=m_t,
\quad n\to \infty,
\end{align*}
locally uniformly in $t$.
Similarly, the implication $\eqref{mC2} \Rightarrow \eqref{mC1}$ follows easily from Propositions~\ref{prop:two_sigmas}, \ref{prop:2nd_moment} and Lemma~\ref{lem:vague-weak}; so we omit the details.
\end{proof}

\section{Pick functions and Cauchy transform}
\label{sec:Pick_Cauchy}

A holomorphic function from the upper half-plane $\C^+$ to $\overline{\C^+}=\C^+ \cup \R$ is called a \emph{Pick} (or \emph{Nevanlinna}) \emph{function}.
(It takes a real value only if it is constant because every non-constant holomorphic function is an open mapping.)
This section collects its basic properties and studies the Cauchy and reciprocal Cauchy transforms \eqref{eq:def_Cauchy} and \eqref{eq:def_rCT} as special subclasses of Pick functions.

\subsection{Nevanlinna formula}
\label{sec:PN_to_Cauchy}

A Pick function $f$ has the \emph{Nevanlinna representation}
\begin{align} \label{eq:PNrep}
f(z)&=\alpha + \beta z + \int_{\R}\left(\frac{1}{x-z}-\frac{x}{1+x^2}\right)\rho(dx)  = \alpha + \beta z + \int_{\R}\frac{1+xz}{x-z}\,\frac{\rho(dx)}{1+x^2}. 
\end{align}
Here, $\alpha \in \R$, $\beta \in [0,+\infty)$, and $\rho$ is a Borel measure on $\R$ with $\int_{\R}(1+x^2)^{-1}\,\rho(dx)<+\infty$.
They also satisfy the formulae
\begin{align}
&\alpha=\Re f(i), & \label{eq:PN_alpha} \\
&\beta=\lim_{y \to +\infty}\frac{f(iy)}{iy}=\lim_{y \to +\infty}\frac{\Im f(iy)}{y}, & \label{eq:PN_beta} \\
&\frac1{\pi}\lim_{y \to +0}\int_a^b \Im f(x+iy) \,dx = \rho((a,b))+\frac{\rho(\{a, b\})}{2},
&a, b \in \R,\ a<b. \label{eq:PN_interval}
\end{align}
The last one is called the Stieltjes inversion formula.
By \eqref{eq:PN_beta} and \eqref{eq:PN_interval} we mean, in particular, that all the limits exist.
Proof of \eqref{eq:PNrep}--\eqref{eq:PN_interval} can be found in, e.g., Tsuji~\cite[{\S}IV.6]{Tsu75} or Rosenblum and Rovnyak~\cite[\S5.3]{RR94}. 
In addition, we have $\Im f(z) \ge \beta \Im z$ and
\begin{equation} \label{eq:PN_total}
y \Im(f(iy) - i\beta y) = \int_{\R}\frac{y^2}{x^2+y^2}\,\rho(dx)
\uparrow \rho(\R)
\qquad (y \to +\infty)
\end{equation}
by the monotone convergence theorem.

The $\beta$ in \eqref{eq:PN_beta}, which is also denoted by $f^\prime(\infty)$, is called the \emph{angular derivative} of the Pick function $f$ at $\infty$.
To explain the meaning, let $\Gamma_{a,b}:=\{\, z=x+iy : y>a \lvert x \rvert,\ y>b \,\}$ for $a,b>0$.
We call $\Gamma_{a,b}$ a \emph{Stolz angle} (in $\C^+$) at $\infty$.
A function $f \colon \C^+ \to \C$ is said to have \emph{angular limit}%
\footnote{Appendix \ref{sec:anglim_and_deriv} summarizes a general definition and properties of angular limit required in this paper.}
$c \in \hat{\C}=\C \cup \{\infty\}$ at $\infty$ if $f(z) \to c$ as $z \to \infty$ through each Stolz angle.
In this case, we write $\angle\lim_{z \to \infty}f(z)=c$.
Now, suppose that $f$ is a Pick function of \eqref{eq:PNrep}; then
\begin{equation} \label{eq:angderiv_at_infty}
\angle\!\lim_{z \to \infty}\frac{f(z)}{z}
=\angle\!\lim_{z \to \infty}f^\prime(z)
=\beta
\end{equation}
\cite[Theorem~IV.19]{Tsu75}.

The case $\rho(\R)<+\infty$ in \eqref{eq:PNrep} is considered frequently in the sequel.
In this case, \eqref{eq:PNrep} reads
\begin{equation} \label{eq:PNrep2}
f(z)=\alpha^\prime + \beta z + \int_{\R}\frac{1}{x-z}\,\rho(dx)
\end{equation}
with $\alpha^\prime:=\alpha-\int_{\R}x(1+x^2)^{-1}\,\rho(dx)$.
In fact, the negative of the Cauchy transform $G_\mu(z)$ of $\mu \in \mea$ defined by \eqref{eq:def_Cauchy} has the representation \eqref{eq:PNrep2} with $\alpha^\prime=\beta=0$ and $\rho=\mu$, and hence it determines the measure $\mu$ uniquely via \eqref{eq:PN_interval}.
If $\mu \neq 0$, i.e., $G_\mu \neq 0$, then the reciprocal Cauchy transform $F_\mu=1/G_\mu$ is also a Pick function.

Here is a version of the well-known condition \cite[Theorem~3, {\S}VI.59]{AG93} \cite[Proposition~2.1]{Maa92} \cite[Theorem~10]{MS17} for a Pick function to be the (reciprocal) Cauchy transform of some finite measure; see Appendix \ref{sec:Pick_appdx} for proof.

\begin{proposition} \label{prop:Maassen_2.1}
Suppose that Pick functions $f$ and $g$ are both non-zero and satify $f=-1/g$.
Then the following are equivalent:
\begin{enumerate}
\item \label{Ma:g}
$\lim_{y \to +\infty}iy\, g(iy)=c$ for some $c \in \C$;
\item \label{Ma:f}
$\lim_{y \to +\infty}f(iy)/iy=d$ for some $d \in \C \setminus \{0\}$;
\item \label{Ma:G}
$g=-G_\mu$ for some $\mu \in \mea$;
\item \label{Ma:F}
$f=F_\mu$ for some $\mu \in \mea$.
\end{enumerate}
If one of these is true, then $cd=-1$ holds in \eqref{Ma:g} and \eqref{Ma:f}, the measures $\mu$ in \eqref{Ma:G} and \eqref{Ma:F} are the same and non-zero, and $c=\mu(\R)$.
Moreover, $y \mapsto y\Im g(iy)$ is non-decreasing with $\lim_{y \to +\infty}y\Im g(iy)=c$.
\end{proposition}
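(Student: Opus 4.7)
The plan is to prove the cycle (iii)$\Leftrightarrow$(iv), (i)$\Leftrightarrow$(ii), (iii)$\Rightarrow$(i), and the crucial direction (i)$\Rightarrow$(iii). The first two equivalences are purely algebraic consequences of the relations $f=-1/g$ and $F_\mu=1/G_\mu$; in particular, from $f(iy)/(iy) = -1/(iy\,g(iy))$, one reciprocates the limits appearing in (i) and (ii). The derivation below will show that $c$ must coincide with $\mu(\R)>0$, so in particular $c\neq 0$ and the reciprocal is well-defined.

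For (iii)$\Rightarrow$(i) I would apply dominated convergence to
\[
iy\,g(iy) = -\int_\R \frac{iy}{iy-x}\,\mu(dx),
\]
using $\lvert iy/(iy-x)\rvert\le 1$ and pointwise convergence of the integrand to $1$ as $y\to+\infty$; this immediately yields $iy\,g(iy)\to -\mu(\R)$, so (i) holds with $c=\mu(\R)$.

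The main direction is (i)$\Rightarrow$(iii). I would apply the Nevanlinna representation \eqref{eq:PNrep} to $g$ with parameters $(\alpha,\beta,\rho)$ and show in turn that (i) forces $\beta=0$, then $\rho(\R)<+\infty$, and finally $\alpha-\int_\R x(1+x^2)^{-1}\rho(dx)=0$, whereupon $g=-G_\rho$. First, by \eqref{eq:PN_beta} one has $\beta=\lim_{y\to+\infty} g(iy)/(iy)$; but (i) gives $g(iy)=-c/(iy)+o(1/y)\to 0$, so $g(iy)/(iy)\to 0$ and hence $\beta=0$. Next, separating real and imaginary parts of $iy\,g(iy)\to -c$ yields
\[
y\,\Im g(iy)\longrightarrow \Re c \quad\text{and}\quad y\,\Re g(iy)\longrightarrow -\Im c.
\]
Combining the first identity with \eqref{eq:PN_total} (in which $\beta=0$) forces $\rho(\R)=\Re c<+\infty$, and this simultaneously establishes the monotonicity and limit of $y\mapsto y\,\Im g(iy)$ asserted at the end of the proposition. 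With $\rho(\R)<+\infty$ the representation simplifies to $g(z)=\alpha'-G_\rho(z)$ as in \eqref{eq:PNrep2}. Writing
\[
iy\,g(iy) = i\alpha' y - iy\,G_\rho(iy)
\]
and applying the already-established (iii)$\Rightarrow$(i) to $-G_\rho$, the second term tends to $-\rho(\R)$; the first is purely imaginary and diverges unless $\alpha'=0$, which then also forces $c=\rho(\R)\in[0,+\infty)$. Setting $\mu=\rho$ gives (iii), and the non-vanishing of $g$ ensures $\mu\neq 0$ and $c=\mu(\R)>0$.

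The principal obstacle lies in the last reduction: one must argue that the divergent purely imaginary term $i\alpha' y$ cannot be absorbed into a finite limit $-c\in\C$, which is why the real/imaginary split of $iy\,g(iy)$ is carried out carefully in the correct order (first using \eqref{eq:PN_beta} to kill $\beta$, then \eqref{eq:PN_total} to control $\rho(\R)$, and only finally dispatching $\alpha'$). Everything else reduces to direct dominated-convergence calculations and the algebraic identities $f=-1/g$ and $F_\mu=1/G_\mu$.
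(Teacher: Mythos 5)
Your proposal is correct and follows essentially the same route as the paper: both reduce to proving $\eqref{Ma:g}\Leftrightarrow\eqref{Ma:G}$ and use the Nevanlinna representation, killing first $\beta$, then determining $\rho(\R)$ from \eqref{eq:PN_total}, and finally eliminating $\alpha'$ because $i\alpha'y$ cannot have a finite limit. The only cosmetic difference is that you deduce $\beta=0$ directly from \eqref{eq:PN_beta} via $g(iy)\to 0$, whereas the paper extracts $\beta=0$ and $\rho(\R)=\Re c$ in one stroke from \eqref{eq:PN_total}; your explicit remark that $c=\mu(\R)>0$ is needed to reciprocate in the $\eqref{Ma:g}\Leftrightarrow\eqref{Ma:f}$ step is a welcome clarification of a point the paper leaves tacit.
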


\begin{definition}[Function space $\PickA$]
\label{def:reciprocal}
We write the class of Pick functions $f$ with $f'(\infty)=\lim_{y\to+\infty}f(iy)/iy=1$ as $\PickA$. From Proposition \ref{prop:Maassen_2.1} it follows that
\[
\PickA = \{F_\mu: \mu \in \prob\}. 
\]
\end{definition}

\subsection{Angular residues at infinity}
\label{subsec:angular_residue}

In Proposition \ref{prop:Maassen_2.1}, we can say more about the second moment of $\mu$ if $\mu \in \prob$.
For the proof of the next proposition, see Appendix \ref{sec:Pick_appdx}.

\begin{proposition} \label{prop:class_P}
For a Pick function $f$, the following five are equivalent: 
\begin{enumerate}
\item \label{HN:ang}
The angular limit
\begin{equation} \label{HN2}
\AR(f):=\angle\! \lim_{z \to \infty} z(z-f(z))
\end{equation}
exists finitely;
\item \label{HN:rad}
$\lim_{y \to +\infty}(f(iy)-iy)=0$, and $\sup_{y>0}y(\Im f(iy)-y)<+\infty$;
\item \label{HN:PN}
there exists $\rho \in \mea$ such that
\[
f(z) = z + \int_{\R} \frac{1}{x - z} \,\rho(dx)
=z - G_\rho(z),
\qquad z \in \C^+;
\]
\item \label{HN:ineq}
there exists $C\ge 0$ such that
\begin{equation} \label{eq:HN_ineq}
\lvert f(z)-z \rvert \le \frac{C}{\Im z},
\qquad z \in \C^+;
\end{equation}
\item \label{HN:Cauchy}
there exists $\mu \in \prob$ with $\Mean(\mu)=0$ and $\var(\mu)<+\infty$ such that $f=F_\mu$.
\end{enumerate}
In \eqref{HN:PN} and \eqref{HN:Cauchy}, the measures $\rho$ and $\mu$ are unique.
Moreover, if one of the above holds, then
\begin{equation} \label{HN:four_quantities}
\AR(f) = \rho(\R)
= \inf \{\, C \ge 0 : C\ \text{\rm satisfies}\ \eqref{eq:HN_ineq} \,\}
= \var(\mu),
\end{equation}
and
\begin{equation} \label{HN1}
\lim_{\substack{z \to \infty \\ z \in \C^+_b}} (f(z)-z) = 0,
\qquad b>0.
\end{equation}
Here, $\C^+_b:=\{\, z : \Im z>b \,\}$.
\end{proposition}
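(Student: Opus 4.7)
The plan is to use the Nevanlinna representation \eqref{eq:PNrep} as the backbone, establishing (ii) $\Leftrightarrow$ (iii) $\Leftrightarrow$ (iv) first, then adding (i) via direct calculation on Stolz angles, and finally connecting to (v) through an angular asymptotic expansion of the Cauchy transform.

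I would first show (ii) $\Leftrightarrow$ (iii). Given the Nevanlinna data $(\alpha, \beta, \rho)$ of $f$, the conditions in (ii) translate through \eqref{eq:PN_beta} and \eqref{eq:PN_total} to $\beta = 1$ and $\rho(\R) < +\infty$. With $\rho$ finite, the representation collapses to $f(z) = z + c - G_\rho(z)$ for a real constant $c$, and the remaining half of (ii), $f(iy) - iy \to 0$, forces $c = 0$ because $G_\rho(iy) \to 0$; this yields (iii). The converse is a straightforward inspection along the imaginary axis. The bridge to (iv) is elementary: (iii) gives $|f(z) - z| = |G_\rho(z)| \le \rho(\R)/\Im z$, while (iv) specialized to $z = iy$ recovers both conditions in (ii), giving also the identity $\inf C = \rho(\R)$ in \eqref{HN:four_quantities}.

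Next I would handle (i). For (iii) $\Rightarrow$ (i), I expand
\[
z(z - f(z)) = zG_\rho(z) = \rho(\R) + \int_\R \frac{x}{z-x}\, \rho(dx)
\]
via $z/(z-x) = 1 + x/(z-x)$; on any Stolz angle at $\infty$ the elementary bound $|x/(z-x)| \le 1 + |z|/\Im z$ is uniform in $x$, so dominated convergence gives $\AR(f) = \rho(\R)$. For (i) $\Rightarrow$ (ii), putting $z = iy$ in \eqref{HN2} and separating real and imaginary parts shows that existence of the finite angular limit forces both $y(\Im f(iy) - y)$ and $y\Re f(iy)$ to converge, from which (ii) is read off directly.

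The most delicate step, and the main obstacle, is connecting (v) to the rest. From (iii) one has $f(iy)/iy = 1 - G_\rho(iy)/iy \to 1$, so $f \in \PickA$ and Proposition~\ref{prop:Maassen_2.1} gives $f = F_\mu$ for some $\mu \in \prob$. To recover the moments of $\mu$, I would establish the angular expansion
\[
F_\mu(z) = z - m - \frac{v}{z} + o(z^{-1}), \qquad z \to \infty\ \text{in a Stolz angle},
\]
for every $\mu \in \prob$ with finite second moment, where $m := \Mean(\mu)$ and $v := \var(\mu)$. This follows from the exact identity $(z-x)^{-1} = z^{-1} + xz^{-2} + x^2 z^{-2}(z-x)^{-1}$ plus a dominated-convergence argument on Stolz angles using the uniform bound on $|z|/|z-x|$ there, followed by inversion of the resulting expansion of $G_\mu$. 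Combined with the evaluation $z(z - F_\mu(z)) = zm + v + o(1)$, the finiteness of the angular limit forces $m = 0$ and $\AR(f) = v$, yielding (v) together with the remaining identity in \eqref{HN:four_quantities}; the converse direction (v) $\Rightarrow$ (i) is then immediate from the same expansion. Uniqueness of $\rho$ and $\mu$ follow from Stieltjes inversion \eqref{eq:PN_interval} and Proposition~\ref{prop:Maassen_2.1}. Lastly, \eqref{HN1} is obtained by splitting $|G_\rho(z)| \le \int_\R |z-x|^{-1}\, \rho(dx)$ at $|x| \le M$ versus $|x| > M$: the first contribution is $O(1/(|z|-M))$ uniformly on $\C^+_b$, while the second is bounded by $\rho(\{|x|>M\})/b$, which is made small by tightness of $\rho$.
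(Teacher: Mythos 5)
Your handling of the cycle \eqref{HN:ang}--\eqref{HN:ineq} is sound and runs along the same Nevanlinna-representation lines as the paper (with a slightly different ordering of implications), and your derivation of \eqref{HN1} and of the identity $\inf\{C\} = \rho(\R)$ matches the paper's. The deduction \eqref{HN:Cauchy} $\Rightarrow$ \eqref{HN:ang} via the angular expansion of $G_\mu$ is also correct, since in that direction you genuinely do have finite second moment available.

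The gap is in the direction \eqref{HN:PN} $\Rightarrow$ \eqref{HN:Cauchy}. You obtain $f = F_\mu$ for some $\mu \in \prob$ from Proposition~\ref{prop:Maassen_2.1}, and you then want to apply the angular expansion $F_\mu(z) = z - m - v/z + o(z^{-1})$. But as you yourself note, that expansion (and the dominated-convergence argument proving it) is only valid \emph{for $\mu$ with finite second moment} --- and finiteness of the second moment is precisely what \eqref{HN:Cauchy} asserts and what must be \emph{established}, not assumed. Writing ``$z(z - F_\mu(z)) = zm + v + o(1)$, the finiteness of the angular limit forces $m=0$ and $\AR(f)=v$'' presupposes the very hypothesis you are trying to derive; as written, the argument is circular. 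The paper circumvents this by introducing Maassen's function $C_f(y) = y^2(1/f(iy) - 1/(iy))$ and computing $y \Im C_f(y) = \int_{\R} y^2 x^2/(x^2+y^2)\,\mu(dx)$. This expression is always finite and is non-decreasing in $y$, so the \emph{monotone} convergence theorem applies without any a priori moment hypothesis: the limit is $\int_{\R} x^2\,\mu(dx)$ on the one hand and equals $\AR(f) < +\infty$ on the other, and this identity is what yields finiteness of the second moment. Your proposal needs that monotone-convergence step (or some equivalent Tauberian input) before the expansion argument can be invoked; without it the implication \eqref{HN:PN} $\Rightarrow$ \eqref{HN:Cauchy} is not proved.
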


Conditions \eqref{HN:ang}--\eqref{HN:Cauchy} have been studied classically \cite{AAS83,GB92,Maa92}; in particular, \eqref{HN1} is referred to as the \emph{hydrodynamic normalization}%
\footnote{Indeed, it has a meaning in fluid mechanics: since $f(z) \sim z$ near $\infty$, the function $f$ is the complex velocity potential of a planar flow which is almost uniform around infinity.}
\emph{at infinity}.
The constant $\AR(f)$ in \eqref{HN2} is called the \emph{angular residue} of $f$ at infinity.
We note that \eqref{HN2} is also written as the asymptotic expansion
\[
f(z)=z-\frac{\AR(f)}{z}+o(\lvert z \rvert^{-1})
\]
as $z$ goes to $\infty$ through any Stolz angle at $\infty$.

Taking Proposition \ref{prop:class_P} \eqref{HN:Cauchy} (and Proposition~\ref{prop:Cauchy_P_prime} below) into account, we introduce the following subclasses of $\PickA$:

\begin{definition}[Function spaces $\PickC$ and $\PickB$]
\label{def:class_P}
Let
\begin{align*}
\PickC &:= \{\, f : f\ \text{satisfies one of \eqref{HN:ang}--\eqref{HN:Cauchy} in Proposition~\ref{prop:class_P}} \,\}, \\
\PickB&:=\{\, f : f+m \in \PickC\ \text{for some}\ m \in \R \,\}.
\end{align*}
By Proposition~\ref{prop:class_P}~\eqref{HN:PN}, for each $f \in \PickB$ there exists a unique pair $(m,\rho) \in \R \times \mea$ such that
\begin{equation} \label{def:class_P_prime}
f(z) = z - m + \int_{\R} \frac{1}{x-z} \,\rho(dx)
=z - m - G_\rho(z),
\qquad z \in \C^+.
\end{equation}
We write $m=\mean(f)$, $\rho=\rho_f=\rho(f; \cdot)$, and $\AR(f):=\rho_f(\R)$; then the first equality in \eqref{HN:four_quantities} still holds trivially.
We call $\rho_f$ the \emph{characteristic measure} of $f$.
\end{definition}

\begin{remark} \label{rem:AR}
\begin{enumerate}
\item \label{rem:disconti_of_ar}
The functional $\AR \colon \PickC \to [0,+\infty)$ is not continuous; to see this, consider the example in Remark~\ref{rem:mean-var_conti}.
To retrieve continuity, one has to restrict $\PickC$ to a proper subset, as we shall indeed do in Proposition~\ref{prop:DCT_for_AR}.

\item \label{rem:ar_and_area}
It is known that, if $f \in \PickC$ is univalent, then $\AR(f)$ is comparable with the \emph{Euclidean} area of a certain \emph{hyperbolic} neighborhood of the complement $\C^+ \setminus f(\C^+)$ in the hyperbolic plane $\C^+$; for details, see Section~\ref{sec:hcap_area}.
\end{enumerate}
\end{remark}

For $f \in \PickB$ and $a \in \R$ it is obvious that $f-a \in \PickB$ with $\mean(f-a)=\mean(f)+a$ and $\AR(f-a)=\AR(f)$.
In addition, the inner shift $\tilde{f}(z)=f(z-a)$ belongs to $\PickB$ with $\mean(\tilde{f})=\mean(f)+a$ and $\AR(\tilde{f})=\AR(f)$, which is seen from
\[
\tilde{f}(z)=z-a-\mean(f)+\int_{\R}\frac{1}{x-z+a}\,\rho_f(dx)=z-(\mean(f)+a)+\int_{\R}\frac{1}{x-z}\,\rho_{\tilde{f}}(dx).
\]
Here, $\rho_{\tilde{f}}$ is given by $\rho_{\tilde{f}}(B)=\rho_f(B-a)$, $B \in \cB(\R)$, with $B-a:=\{\, x-a : x \in B\,\}$.
Similarly it is easy to see the following:

\begin{proposition} \label{prop:Cauchy_P_prime}
$\PickB$ is the set of reciprocal Cauchy transforms of probability measures of finite second moment; i.e.,
\[
\PickB=\left\{\, F_\mu : \mu \in \mathbf{P}(\R),\ \int_{\R}x^2 \,\mu(dx)<+\infty \,\right\}.
\]
Moreover, $\mean(F_\mu)=\Mean(\mu)$ and $\AR(F_\mu)=\var(\mu)$.
\end{proposition}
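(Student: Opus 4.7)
The plan is to reduce the statement to Proposition~\ref{prop:class_P} via translations of measures on $\R$, using the elementary identity that if $T_a(x):=x+a$ and $\mu'$ is the pushforward of $\mu$ by $T_a$, then $G_{\mu'}(z)=G_\mu(z-a)$ and hence $F_{\mu'}(z)=F_\mu(z-a)$.

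For the inclusion $\{F_\mu : \mu\in\prob,\ \int_\R x^2\,\mu(dx)<+\infty\}\subseteq \PickB$, take such a $\mu$, set $m:=\Mean(\mu)$, and let $\nu$ be the pushforward of $\mu$ by $T_{-m}$. Then $\Mean(\nu)=0$ and $\var(\nu)=\var(\mu)<+\infty$, so Proposition~\ref{prop:class_P} gives $F_\nu\in\PickC$ with representation $F_\nu(z)=z+\int_\R(x-z)^{-1}\,\rho(dx)$ for a unique $\rho\in\mea$ satisfying $\rho(\R)=\var(\nu)$. Since $\mu$ is in turn the pushforward of $\nu$ by $T_m$, the translation identity gives $F_\mu(z)=F_\nu(z-m)$; substituting this and then pushing $\rho$ forward by $T_m$ to a measure $\tilde\rho\in\mea$ yields $F_\mu(z)+m=z+\int_\R(y-z)^{-1}\,\tilde\rho(dy)$ with $\tilde\rho(\R)=\rho(\R)$. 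By the representation \eqref{HN:PN} of Proposition~\ref{prop:class_P}, this means $F_\mu+m\in\PickC$, hence $F_\mu\in\PickB$. Reading off the representation of $F_\mu$ also gives $\mean(F_\mu)=m=\Mean(\mu)$ and $\AR(F_\mu)=\tilde\rho(\R)=\var(\mu)$.

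For the reverse inclusion, let $f\in\PickB$ with representation $f(z)=z-m+\int_\R(x-z)^{-1}\,\rho_f(dx)$, where $m=\mean(f)$. Pushing $\rho_f$ forward by $T_{-m}$ to a measure $\rho'\in\mea$ and computing $f(z+m)$ gives $f(z+m)=z+\int_\R(y-z)^{-1}\,\rho'(dy)$, which by Proposition~\ref{prop:class_P} equals $F_\nu$ for a unique $\nu\in\prob$ with $\Mean(\nu)=0$ and $\var(\nu)=\rho'(\R)=\AR(f)$. Defining $\mu$ as the pushforward of $\nu$ by $T_m$ gives $\Mean(\mu)=m$ and $\var(\mu)=\var(\nu)<+\infty$, and once more the translation identity produces $F_\mu(z)=F_\nu(z-m)=f(z)$.

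The main obstacle is that one is naively tempted to construct the probability measure for $f\in\PickB$ directly as a translate of the one associated with $f+m\in\PickC$, but the relation $F_\mu=F_\nu-m$ does \emph{not} correspond to any simple pushforward (it would force $F_\nu$ to be affine and $\nu$ to be a delta mass). The correct bookkeeping, carried out above, is to translate the \emph{argument} of the reciprocal Cauchy transform: one first transforms the input, $f(z)\leadsto f(z+m)$, and lands in $\PickC$; then one inverts that shift at the level of measures via pushforward by $T_m$.
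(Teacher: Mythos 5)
Your proof is correct and follows the same route the paper intends: the paper precedes this proposition with a discussion of how inner shifts $f(\cdot-a)$ act on $\mean$, $\rho_f$, and $\AR$ via a pushforward of the characteristic measure, and then states the proposition with "Similarly it is easy to see the following"; you carry out exactly that inner-shift reduction to Proposition~\ref{prop:class_P}. Your closing remark about why the outer shift $f\mapsto f+m$ does not mesh with a pushforward of measures is a useful clarification that the paper leaves implicit.
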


\subsection{Convergence of Cauchy transforms}
\label{sec:conti_Cauchy_trans} 

The Cauchy transform is known to be continuous with respect to vague or weak convergence.
For the proof of two basic properties below, see Appendix \ref{sec:Pick_appdx}.

\begin{proposition}[e.g.\ {\cite[Corollary~14, Chapter~3]{MS17}}]
\label{prop:continuity_Cauchy}
Suppose that $\mu_n \in \mea$, $n \in \N$, are bounded: $\sup_{n \in \N}\mu_n(\R)<+\infty$.
Let $\AC \subset \C^+$ be a subset which has an accumulation point in $\C^+$. 
Then $\mu_n \xrightarrow{\rm v} \mu$ as $n \to \infty$ for some $\mu \in \mea$ if and only if $(G[\mu_n])_{n \in \N}$ converges to a function $G$ pointwise on $\AC$.
Moreover, if either of these conditions holds, then $G=G_\mu$, and $G[\mu_n] \to G_\mu$ locally uniformly on $\C^+$.
\end{proposition}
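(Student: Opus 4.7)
Set $M:=\sup_n \mu_n(\R)<+\infty$.
The plan is to treat the two implications separately, using normal families on one side and vague compactness of bounded measures on the other.

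\emph{``Only if'' part.} Fix $z\in\C^+$ and note that the Cauchy kernel $k_z(x):=1/(z-x)$ belongs to $C_\infty(\R)$. Because the $\mu_n$ are uniformly bounded and $\mu_n\xrightarrow{\rm v}\mu$, Proposition~\ref{prop:vague_bounded_set} allows us to test vague convergence against functions in $C_\infty(\R)$; hence $G[\mu_n](z)\to G[\mu](z)$ for each $z\in\C^+$. Moreover $|G[\mu_n](z)|\le M/\Im z$, so the family $(G[\mu_n])$ is locally uniformly bounded on $\C^+$. Vitali's theorem (or Montel plus uniqueness of subsequential limits) then upgrades the pointwise limit to locally uniform convergence on $\C^+$.

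\emph{``If'' part.} The estimate $|G[\mu_n](z)|\le M/\Im z$ again makes $(G[\mu_n])$ a normal family on $\C^+$. Any subsequential locally uniform limit must equal $G$ on $\AC$, and since $\AC$ has an accumulation point in $\C^+$, the identity theorem forces all such limits to coincide with a single holomorphic function $\tilde G$ on $\C^+$; thus $G[\mu_n]\to\tilde G$ locally uniformly on $\C^+$. Next, because the $\mu_n$ are uniformly bounded, the vague compactness theorem (Banach--Alaoglu applied to $C_{\rm c}(\R)^\ast$, i.e., Helly's selection theorem) supplies a subsequence $\mu_{n_k}\xrightarrow{\rm v}\mu$ for some $\mu\in\mea$. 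The ``only if'' part just proved gives $G[\mu_{n_k}]\to G_\mu$ locally uniformly, so $\tilde G=G_\mu$ and in particular $G=G_\mu$ on $\AC$.

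It remains to promote the subsequential vague limit to a limit of the whole sequence. If $\nu\in\mea$ is any other vague subsequential limit of $(\mu_n)$, then by the ``only if'' part $G_\nu$ coincides with $\tilde G=G_\mu$ on $\C^+$, and the Stieltjes inversion formula \eqref{eq:PN_interval} applied to the Pick function $-G_\mu=-G_\nu$ yields $\nu=\mu$. Uniqueness of the vague subsequential limit, combined with relative vague compactness of $(\mu_n)$, forces $\mu_n\xrightarrow{\rm v}\mu$, and the locally uniform convergence $G[\mu_n]\to G_\mu$ is already established.

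The only slightly delicate point is the ``if'' direction, where one has to cleanly combine normality of $(G[\mu_n])$ with vague compactness of $(\mu_n)$ and the identity theorem; however, each ingredient is standard, so no real obstacle arises.
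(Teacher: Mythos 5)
Your proof is correct and uses essentially the same ingredients as the paper's: the bound $\lvert G[\mu_n](z)\rvert\le M/\Im z$, vague compactness of $\overline{\mathbf{M}}_M(\R)$, the identity theorem on $\AC$, uniqueness of the Cauchy transform (Stieltjes inversion), and Vitali/Montel for the upgrade to locally uniform convergence. The only difference is organizational — you establish locally uniform convergence of $G[\mu_n]$ via normal families before extracting the vague limit, whereas the paper first pins down the vague limit and applies Vitali at the end — but this does not change the substance of the argument.
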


\begin{corollary}[e.g.\ {\cite[Theorem~13, Chapter~3]{MS17}}]
\label{cor:continuity_Cauchy}
Let $\mu, \mu_n \in \mea$, $n \in \N$.
Then $\mu_n \xrightarrow{\rm w} \mu$ as $n \to \infty$ if and only if $\mu_n(\R) \to \mu(\R)$ and $G[\mu_n] \to G_\mu$ locally uniformly on $\C^+$.
In the case $\mu, \mu_n \in \prob$, in particular, $\mu_n \xrightarrow{\rm w} \mu$ as $n \to \infty$ if and only if $F[\mu_n] \to F_\mu$ locally uniformly on $\C^+$.
\end{corollary}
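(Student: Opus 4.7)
The plan is to deduce this corollary from the preceding Proposition~\ref{prop:continuity_Cauchy} together with the standard fact that weak convergence of finite nonnegative Borel measures is equivalent to vague convergence plus convergence of total masses. This is mainly a bookkeeping exercise, and I do not anticipate any serious obstacle; the only point requiring slight care is passing between $G_\mu$ and $F_\mu$ in the second claim.

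For the ``only if'' direction of the first assertion, suppose $\mu_n \xrightarrow{\rm w} \mu$. Testing against the constant function $1 \in C_{\rm b}(\R)$ yields $\mu_n(\R) \to \mu(\R)$. Since weak convergence implies vague convergence, Proposition~\ref{prop:continuity_Cauchy} gives $G[\mu_n] \to G_\mu$ locally uniformly on $\C^+$. Conversely, suppose $\mu_n(\R) \to \mu(\R)$ and $G[\mu_n] \to G_\mu$ locally uniformly on $\C^+$. In particular, $\sup_n \mu_n(\R)<+\infty$, so Proposition~\ref{prop:continuity_Cauchy} (applied, say, on any compact subset $\AC$ of $\C^+$ with an accumulation point there) yields $\mu_n \xrightarrow{\rm v} \mu$. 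Since vague convergence combined with convergence of total masses upgrades to weak convergence (this follows, for instance, from approximating any $f \in C_{\rm b}(\R)$ by $f \cdot \chi$ with a cutoff $\chi \in C_{\rm c}(\R)$ and using tightness obtained from the mass condition; see the results in Appendix~\ref{sec:conv_measures_appdx}), we conclude $\mu_n \xrightarrow{\rm w} \mu$.

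For the probability-measure case, note that $\mu_n(\R)=\mu(\R)=1$ automatically, so by the first part, $\mu_n \xrightarrow{\rm w} \mu$ is equivalent to $G[\mu_n] \to G_\mu$ locally uniformly on $\C^+$. It remains to show that the latter is equivalent to $F[\mu_n] \to F_\mu$ locally uniformly on $\C^+$. This follows from the fact that both $G_\mu$ and $F_\mu$ are non-vanishing holomorphic functions on $\C^+$: indeed, $\Im F_\mu(z) \ge \Im z>0$ for every $z \in \C^+$ by the Nevanlinna representation, so $F_\mu$ is bounded away from $0$ on any compact $K \Subset \C^+$, and the same therefore holds for $G_\mu=1/F_\mu$. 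Hence on each such $K$, uniform convergence of $G[\mu_n]$ to $G_\mu$ transfers to uniform convergence of $F[\mu_n]=1/G[\mu_n]$ to $F_\mu=1/G_\mu$, and vice versa, by continuity of the reciprocal on compact subsets of $\C \setminus \{0\}$. This finishes the proof.
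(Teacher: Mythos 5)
Your argument is correct and follows essentially the same route as the paper: the first assertion is reduced to Proposition~\ref{prop:continuity_Cauchy} together with Proposition~\ref{prop:vague_to_weak}, and the second assertion is reduced to the first plus a lower bound on $\lvert G_\mu \rvert$ on compacta.

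One small imprecision worth flagging in the last step: from ``$F_\mu$ is bounded away from $0$ on $K$'' you cannot immediately conclude ``the same therefore holds for $G_\mu=1/F_\mu$''; taking reciprocals of a lower bound gives an \emph{upper} bound, not a lower one. The lower bound on $\lvert G_\mu \rvert$ is true, but for the separate reason that $G_\mu$ is a nowhere-vanishing continuous function on the compact set $K$ (equivalently, $G_\mu(K)$ is a compact subset of $-\C^+$), which is precisely how the paper phrases it via $b := -\sup\{\Im w : w \in G_\mu(K)\}>0$. With that clarification your proof is complete. Incidentally, your estimate $\Im F[\mu_n](z)\ge\Im z$ is actually a nice improvement in the $F\Rightarrow G$ direction, since it bounds $\lvert G[\mu_n]\rvert$ above by $1/\Im z$ uniformly in $n$ without any appeal to the convergence itself; the paper's proof uses the symmetric estimate in both directions after first extracting a uniform lower bound for $\lvert G[\mu_n]\rvert$ from the convergence hypothesis.
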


We extend Proposition~\ref{prop:continuity_Cauchy} and Corollary~\ref{cor:continuity_Cauchy}, considering the locally uniform convergence of continuous mappings from the parameter set $\para=I$ or $I^2_{\le}$ to $\prob$.
To this end, we recall that the three topologies on $\prob$---weak, vague, and $\sigma(\prob,C_\infty(\R))$-ones---are all identical; see Propositions~\ref{prop:vague_bounded_set}, \ref{prop:metrizability}, and \ref{prop:vague_to_weak}.
Based on this fact, the following equivalence is easily verified, either by analogy with Proposition~\ref{prop:gLevy_conti} or by considering uniform structures as in Appendix~\ref{sec:topology_of_luwc}.

\begin{proposition} \label{prop:three_luwc}
Let $(\mu_{\tau})_{\tau \in \para}$ and $(\mu^n_{\tau})_{\tau \in \para}$, $n \in \N$, be elements of $C(\para; \prob)$ with $\para=I$ or $I^2_{\le}$.
Then the following are equivalent as $n \to \infty$:
\begin{enumerate}
\item
$(\mu^n_{\tau})_{\tau}$ converges weakly to $(\mu_{\tau})_{\tau}$ locally uniformly on $\para$, i.e.,
\begin{equation} \label{eq:general_luwc}
\lim_{n \to \infty}\sup_{\tau \in K}\left\lvert \int_{\R} f(x) \,\mu^n_{\tau}(dx) - \int_{\R} f(x) \,\mu_{\tau}(dx) \right\rvert=0
\end{equation}
for any compact $K \subset \para$ and any $f \in C_{\rm b}(\R)$; 
\item
$(\mu^n_{\tau})_{\tau}$ converges vaguely to $(\mu_{\tau})_{\tau}$ locally uniformly on $\para$, i.e., \eqref{eq:general_luwc} holds for any compact $K$ and any $f \in C_{\rm c}(\R)$; 
\item
\eqref{eq:general_luwc} holds for any compact $K$ and any $f \in C_\infty(\R)$.
\end{enumerate}
\end{proposition}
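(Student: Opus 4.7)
The implications $(1)\Rightarrow(3)\Rightarrow(2)$ are immediate from the inclusions $C_c(S)\subset C_\infty(S)\subset C_b(S)$, so the substance lies in $(2)\Rightarrow(1)$. My plan is to upgrade locally uniform vague convergence to locally uniform weak convergence by combining a uniform tightness argument over a compact time set $K\subset T$ with a truncation of test functions to compact support.

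For the tightness step, fix a compact $K\subset T$. Continuity of $t\mapsto\mu_t$ renders $\{\mu_t:t\in K\}$ weakly compact in $\mathbf{P}(S)$, hence tight by Prokhorov's theorem (applicable since $S$ is second countable locally compact Hausdorff, in particular Polish): given $\varepsilon>0$ there is a compact $C_\varepsilon\subset S$ with $\mu_t(C_\varepsilon)\ge 1-\varepsilon/2$ for every $t\in K$. Choose, via Urysohn's lemma, $\varphi_\varepsilon\in C_c(S)$ with $\mathbf{1}_{C_\varepsilon}\le\varphi_\varepsilon\le 1$. Hypothesis (2) applied to $\varphi_\varepsilon$ yields
$$\sup_{t\in K}\left\lvert\int_S\varphi_\varepsilon\,d\mu^{(n)}_t-\int_S\varphi_\varepsilon\,d\mu_t\right\rvert\longrightarrow 0,\qquad n\to\infty,$$
whence $\mu^{(n)}_t(S\setminus C_\varepsilon)\le\varepsilon$ for every $t\in K$ and every $n\ge n_0$. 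Since each finite-$n$ family $\{\mu^{(n)}_t:t\in K\}$ is itself tight by the same continuity-plus-Prokhorov argument, one may enlarge $C_\varepsilon$ to absorb the finitely many $n<n_0$, obtaining a single compact set handling every $\mu^{(n)}_t$ and every $\mu_t$ with $t\in K$. This establishes tightness of the combined family $\mathcal F_K$.

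For the approximation step, fix $f\in C_b(S)$ with $M:=\|f\|_\infty$ and $\eta>0$. Using the tightness of $\mathcal F_K$, pick a compact $C\subset S$ with $\mu(S\setminus C)\le\eta/(4M+1)$ for all $\mu\in\mathcal F_K$, and pick $\psi\in C_c(S)$ with $\mathbf{1}_C\le\psi\le 1$. Then $f\psi\in C_c(S)$, and the decomposition
$$\int_S f\,d\mu^{(n)}_t-\int_S f\,d\mu_t=\int_S f\psi\,d(\mu^{(n)}_t-\mu_t)+\int_S f(1-\psi)\,d\mu^{(n)}_t-\int_S f(1-\psi)\,d\mu_t$$
has its last two terms bounded by $M\eta/(4M+1)<\eta/4$ each, uniformly in $t\in K$, while the first tends to $0$ uniformly in $t\in K$ by hypothesis (2) applied to $f\psi$. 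Thus $\limsup_n\sup_{t\in K}\lvert\int f\,d\mu^{(n)}_t-\int f\,d\mu_t\rvert\le\eta/2$, and letting $\eta\downarrow 0$ completes $(2)\Rightarrow(1)$.

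The main obstacle is producing the compact mass-support set uniformly in $t\in K$ and in $n$: while tightness of a single probability measure on a Polish space is automatic, transferring this control to an entire family indexed by $K\times\mathbb{N}$ is precisely what requires hypothesis (2). Everything after the tightness step is a standard three-term approximation.
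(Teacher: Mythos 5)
Your proof of $(2)\Rightarrow(1)$ is correct and proceeds by a direct approximation: first upgrade the tightness of $\{\mu_t:t\in K\}$ (which follows from weak continuity of $t\mapsto\mu_t$ on compact $K$ plus Prokhorov's theorem) to uniform tightness of the full family $\{\mu_t,\mu^{(n)}_t:t\in K,\ n\in\N\}$ using hypothesis $(2)$, and then control $\int f\,d\mu$ for $f\in C_{\rm b}(S)$ via the decomposition $f=f\psi+f(1-\psi)$ with $\psi\in C_{\rm c}(S)$ chosen from the tightness data. One small slip in the tightness step: applying $(2)$ to $\varphi_\varepsilon$ gives $\mu^{(n)}_t(\supp\varphi_\varepsilon)\ge\int\varphi_\varepsilon\,d\mu^{(n)}_t\ge1-\varepsilon$, hence $\mu^{(n)}_t(S\setminus\supp\varphi_\varepsilon)\le\varepsilon$; the compact set that absorbs the mass is $\supp\varphi_\varepsilon$, not $C_\varepsilon$, since the bound $\varphi_\varepsilon\ge\mathbf{1}_{C_\varepsilon}$ gives no control on the mass of $\mu^{(n)}_t$ outside $C_\varepsilon$. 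As $\supp\varphi_\varepsilon$ is still compact this does not affect the conclusion, but it should be stated precisely.

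The route differs genuinely from the paper's. As the citation to \cite[Section~4]{HHM23+} and the parallel proof of Theorem~\ref{th:luc_Cauchy} indicate, the paper reduces locally uniform convergence to a sequential criterion: each of $(1)$--$(3)$ is shown equivalent to ``$\mu^{(n)}_{t(n)}\xrightarrow{\rm w}\mu_u$ whenever $K\subset T$ is compact, $t(n)\in K$, and $t(n)\to u$,'' and the equivalence of the three sequential criteria is then immediate from the coincidence of the weak, vague, and $\sigma(\mathbf{P}(S),C_\infty(S))$-topologies on $\mathbf{P}(S)$ (Propositions~\ref{prop:vague_bounded_set} and \ref{prop:vague_to_weak}). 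That approach packages the work into a reusable sequential lemma, exploited again in Theorem~\ref{th:luc_Cauchy}; your direct approach is self-contained and exposes the underlying mechanism (uniform tightness plus truncation of test functions) explicitly, at the cost of being somewhat longer.
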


We now generalize the continuity of Cauchy transforms as follows:

\begin{proposition} \label{prop:luc_Cauchy} 
Let $\para=I$ or $I^2_{\le}$,
$(\mu_\tau)_{\tau \in \para}$, $(\mu^n_\tau)_{\tau \in \para}$, $n\in \N$, be members of $C(\para; \prob)$, and $\AC \subset \C^+$ be a subset which has an accumulation point in $\C^+$.
Then the following are equivalent as $n\to \infty$:
\begin{enumerate}
\item \label{i:luwc_Cauchy}
$(\mu^n_\tau)_{\tau \in \para}$ converges weakly to $(\mu_\tau)_{\tau \in \para}$ locally uniformly on $\para$;
\item \label{i:luc_Cauchy}
$G[\mu^n_\tau](z)$ converges to $G[\mu_\tau](z)$ locally uniformly on $\C^+\times \para$;
\item \label{i:pwc_Cauchy}
$G[\mu^n_\tau](z)$ converges to $G[\mu_\tau](z)$ locally uniformly on $\para$ for each $z \in \AC$.
\end{enumerate}
\end{proposition}

\begin{proof}

Similar to the proof of Proposition~\ref{prop:gLevy_conti}, the following observation is true:
given a compact $K \subset \para$ and a sequence $(\tau(n))_{n \in \N}$ in $K$ with $\tau(n) \to u$ for some $u \in K$, each of \eqref{i:luwc_Cauchy}--\eqref{i:pwc_Cauchy} implies $\mu^n_{\tau(n)} \xrightarrow{\rm w} \mu_u$.

Now let us assume that one of \eqref{i:luwc_Cauchy}--\eqref{i:pwc_Cauchy} fails.
This implies that there exist a compact $K \subset \para$, an $\epsilon>0$, and an increasing sequence $(n(k))_{k \in \N}$ such that $\sup_{\tau \in K} D(\mu^{n(k)}_\tau, \mu_\tau) \ge 2\epsilon$;
here $D(\mu, \nu)$ is one of
\begin{itemize}
\item
$\lvert \int f \,d\mu - \int f \,d\nu \rvert$ for some $f \in C_{\rm b}(\R)$,
\item
$\sup_{z \in B} \lvert G_\mu(z) - G_{\nu}(z) \rvert$ for some compact $B \subset \C$,
\item
$\lvert G_\mu(z) - G_{\nu}(z) \rvert$ for some $z \in \AC$,
\end{itemize}
according to which of \eqref{i:luwc_Cauchy}--\eqref{i:pwc_Cauchy} is assumed to fail.
Taking a further subsequence of $(n(k))_k$ if necessary, we can then find a sequence $(\tau(k))_{k \in \N}$ in $K$ converging to some $u \in K$ such that $D(\mu^{n(k)}_{\tau(k)}, \mu_{\tau(k)}) \ge \epsilon$.
By the observation at the beginning, this implies that all of \eqref{i:luwc_Cauchy}--\eqref{i:pwc_Cauchy} fail.
Considering the contraposition, we obtain the conclusion.
\end{proof}

To include the convergence of reciprocal Cauchy transforms into the equivalence in Proposition~\ref{prop:luc_Cauchy}, we note the following simple fact:

\begin{lemma} \label{lem:bicontinuity}
Let $D$ be a domain in $\C$, $\para=I$ or $I^2_{\le}$, and $f\colon  D \times \para \to \C$ be such that $z \mapsto f(z,\tau)$ is holomorphic for every fixed $\tau \in \para$, $\tau \mapsto f(z,\tau)$ is continuous for every fixed $z \in D$, and $f$ is locally bounded on $D \times \para$. Then $f$ is (jointly) continuous on $D \times \para$.  
\end{lemma}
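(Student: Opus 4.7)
The plan is to show joint continuity at an arbitrary point $(z_0, t_0) \in D \times T$ by combining Cauchy's integral formula (which upgrades separate holomorphy plus local boundedness into equi-Lipschitz control in $z$) with the hypothesis of continuity in $t$.

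First I would use local boundedness to choose $r>0$ with $\overline{B(z_0,2r)} \subset D$ and an open neighborhood $U$ of $t_0$ such that $M := \sup_{(w,t) \in \overline{B(z_0,2r)} \times U} |f(w,t)| < +\infty$. Then, since $z \mapsto f(z,t)$ is holomorphic on $D$ for each fixed $t$, Cauchy's integral formula gives
\[
f(z,t) - f(z',t) = \frac{1}{2\pi i}\int_{|w-z_0|=2r} f(w,t)\left(\frac{1}{w-z} - \frac{1}{w-z'}\right) dw,\qquad z,z' \in B(z_0,r),\ t\in U.
\]
Estimating the integrand with $|w-z|, |w-z'| \ge r$ and integrating over a circle of length $4\pi r$, one obtains the equi-Lipschitz bound
\[
|f(z,t) - f(z',t)| \le \frac{2M}{r}\, |z-z'|,\qquad z,z' \in B(z_0,r),\ t \in U,
\]
which is the key estimate: the modulus of continuity in $z$ is controlled uniformly in $t \in U$.

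Given this, joint continuity follows by the standard triangle-inequality argument:
\[
|f(z,t) - f(z_0,t_0)| \le |f(z,t) - f(z_0,t)| + |f(z_0,t) - f(z_0,t_0)| \le \frac{2M}{r}|z-z_0| + |f(z_0,t) - f(z_0,t_0)|.
\]
The first term tends to $0$ as $z \to z_0$, uniformly in $t \in U$, and the second tends to $0$ as $t \to t_0$ by the assumed continuity of $t \mapsto f(z_0,t)$. Hence $f$ is continuous at $(z_0,t_0)$, and since $(z_0,t_0)$ was arbitrary, $f$ is jointly continuous on $D \times T$.

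The only mild obstacle is arranging the Cauchy disk and the neighborhood $U$ simultaneously so that the local bound $M$ is finite, but this is exactly what local boundedness on the product $D \times T$ provides (and the local compactness of $T$ is not needed for this lemma itself—it appears for other reasons in the paper). No use is made of the first-countability hypothesis here, though it would be available if one preferred a sequential proof of the same estimate.
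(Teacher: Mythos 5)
Your proof is correct and uses the same key ingredients as the paper's---Cauchy's integral formula together with local boundedness---differing only in packaging: you extract an equi-Lipschitz bound in $z$ uniformly in $t$ near $t_0$ and then conclude via the triangle inequality, whereas the paper writes the joint difference $f(w,s)-f(z,t)$ as a single contour integral and invokes the dominated convergence theorem. Your side remark is also accurate: local compactness and first-countability of $T$ play no role in this particular lemma.
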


\begin{proof}
This follows by applying the dominated convergence theorem in the identity
\[
f(w,\sigma) - f(z,\tau) = \frac1{2\pi i} \int_\Gamma \left(\frac{f(\zeta,\sigma)}{\zeta-w}  - \frac{f(\zeta,\tau)}{\zeta-z}\right)d\zeta,  
\]
where $\Gamma$ is a smooth simple closed curve such that $w, z \in \mathop{\rm ins}\Gamma \subset D$.
Here, $\mathop{\rm ins}\Gamma$ denotes the domain surrounded by $\Gamma$.
\end{proof}

\begin{corollary} \label{cor:luc_rCauchy}
In Proposition~\ref{prop:luc_Cauchy}, the equivalent conditions~\eqref{i:luwc_Cauchy}--\eqref{i:pwc_Cauchy} are also equivalent to either of the following:
\begin{enumerate}
\setcounter{enumi}{3}
\item \label{i:luc_rCauchy}
$F[\mu^n_\tau](z)$ converges to $F[\mu_\tau](z)$ locally uniformly on $\C^+\times \para$;
\item \label{i:pwc_rCauchy}
$F[\mu^n_\tau](z)$ converges to $F[\mu_\tau](z)$ locally uniformly on $\para$ for each $z\in \AC$.
\end{enumerate}
\end{corollary}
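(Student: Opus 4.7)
The plan is to append \eqref{i:luc_rCauchy} and \eqref{i:pwc_rCauchy} to the equivalence chain already established in Theorem \ref{th:luc_Cauchy}. Since $F_\mu = 1/G_\mu$ and $G_\mu$ never vanishes on $\C^+$ (as $\Im G_\mu(z) < 0$ for $\mu \in \prob$), reciprocation between $G$ and $F$ is well-defined, and the task reduces to showing that reciprocation is continuous with respect to the two convergences in play. The crux will be uniform lower bounds on $\lvert G_{\mu_t}(z) \rvert$, equivalently upper bounds on $\lvert F_{\mu_t}(z) \rvert$, over the relevant compact sets.

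First, I would recall the universal estimate $\Im F_{\mu_t}(z) \ge \Im z$ for $\mu_t \in \prob$, which follows from the Nevanlinna representation of $F_{\mu_t}$ with $\beta = 1$ (Proposition \ref{prop:class_P}); this yields both $\lvert F_{\mu_t}(z) \rvert \ge \Im z$ and $\lvert G_{\mu_t}(z) \rvert \le (\Im z)^{-1}$, uniformly in $\mu_t \in \prob$. In particular, $(z,t) \mapsto G_{\mu_t}(z)$ is locally bounded on $\C^+ \times T$. Since it is holomorphic in $z$ and, by weak continuity of $(\mu_t)$ together with $x \mapsto (z-x)^{-1} \in C_\infty(\R)$, continuous in $t$ for each fixed $z$, Lemma \ref{lem:bicontinuity} gives its joint continuity on $\C^+ \times T$. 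The same holds for $(z,t) \mapsto F_{\mu_t}(z) = 1/G_{\mu_t}(z)$.

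For \eqref{i:luc_Cauchy} $\Rightarrow$ \eqref{i:luc_rCauchy}: on any compact $K \subset \C^+ \times T$, the jointly continuous nowhere-zero function $(z,t) \mapsto G_{\mu_t}(z)$ satisfies $\delta := \inf_K \lvert G_{\mu_t}(z) \rvert > 0$, and the uniform convergence on $K$ forces $\lvert G[\mu_t^{(n)}](z) \rvert \ge \delta/2$ for large $n$, whence
\[
\bigl\lvert F[\mu_t^{(n)}](z) - F[\mu_t](z) \bigr\rvert = \frac{\lvert G[\mu_t](z) - G[\mu_t^{(n)}](z) \rvert}{\lvert G[\mu_t^{(n)}](z)\, G[\mu_t](z) \rvert} \le \frac{2}{\delta^{2}}\, \bigl\lvert G[\mu_t](z) - G[\mu_t^{(n)}](z) \bigr\rvert
\]
uniformly on $K$, proving \eqref{i:luc_rCauchy}. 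Conversely, for \eqref{i:luc_rCauchy} $\Rightarrow$ \eqref{i:luc_Cauchy}, the universal bound $\lvert F[\mu_t^{(n)}](z) \rvert, \lvert F_{\mu_t}(z) \rvert \ge \Im z$ supplies the positive lower bound $\inf_{(z,t) \in K} \Im z > 0$ on compact $K \subset \C^+ \times T$ directly (no joint-continuity argument needed), and the same reciprocation gives the convergence of $G$-values on $K$.

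The equivalence \eqref{i:pwc_Cauchy} $\Leftrightarrow$ \eqref{i:pwc_rCauchy} is handled identically with $z \in \AC$ held fixed: for each compact $K \subset T$, the continuous non-vanishing function $t \mapsto G_{\mu_t}(z)$ (respectively $t \mapsto F_{\mu_t}(z)$, which enjoys $\lvert F_{\mu_t}(z) \rvert \ge \Im z$) is bounded below in modulus, so locally uniform convergence on $T$ transfers to the reciprocal. No genuine obstacle is anticipated; the only subtlety is the invocation of Lemma \ref{lem:bicontinuity} via the estimate $\lvert G_{\mu_t}(z) \rvert \le (\Im z)^{-1}$ to secure the joint continuity needed to extract the uniform lower bound on compact subsets of $\C^+ \times T$.
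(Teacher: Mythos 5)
Your proposal is correct and follows essentially the same route as the paper: establish joint continuity of $(z,t)\mapsto G[\mu_t](z)$ via Lemma~\ref{lem:bicontinuity} (using the universal bound $|G_{\mu_t}(z)| \le (\Im z)^{-1}$ for local boundedness and Corollary~\ref{cor:continuity_Cauchy} for continuity in $t$), extract a uniform lower bound on $|G_{\mu_t}(z)|$ over a compact set, and then run the reciprocation estimate. The paper takes $b = -\sup \Im G[\mu_t](z)$ where you take $\inf |G[\mu_t](z)|$, which is an inessential difference, and your observation that the converse direction is even simpler thanks to $|F_{\mu_t}(z)| \ge \Im z$ is a small but correct streamlining that the paper leaves implicit under "proved quite similarly."
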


\begin{proof} 
We just mention the proof of $\eqref{i:luc_Cauchy} \Rightarrow \eqref{i:luc_rCauchy}$ briefly, for the other implications are proved quite similarly.
Assume \eqref{i:luc_Cauchy}; i.e., $G[\mu^n_\tau](z) \to G[\mu_\tau](z)$ locally uniformly on $\C^+ \times \para$.
Let $K \subset \C^+$ and $B \subset \para$ be compact in the respective spaces.
By the weak continuity of $(\mu_\tau)_{\tau \in \para}$ and Corollary~\ref{cor:continuity_Cauchy},
$\tau \mapsto G[\mu_\tau](z)$ is continuous for each $z$.
As $|G[\mu_\tau](z)| \le 1/\Im(z)$,   by Lemma~\ref{lem:bicontinuity}, the function $(z,\tau) \mapsto G[\mu_\tau](z)$ on $K \times \para$ is continuous.
In particular, we have $b:=-\sup\{\, \Im w : w=G[\mu_\tau](z)\ \text{for some}\ (z,\tau) \in K \times B \,\}>0$.
Then it follows in the same way as in the end of Appendix~\ref{sec:prf_conv_appdx} that $F[\mu^n_\tau](z) \to F[\mu_\tau](z)$ locally uniformly on $\C^+ \times \para$, which means \eqref{i:luc_rCauchy}.
\end{proof}

\section{Basics in Loewner theory for monotone probability}
\label{sec:mono-CHs}

In this section, we first give general results on Loewner chains.
After that, we study chains which correspond to $\rhd$-CHs, using the results in Section~\ref{sec:Pick_Cauchy}.

\subsection{Decreasing Loewner chains and reverse evolution families}
\label{sec:REF_DLC}

Let $I$ be as in Notation~\ref{not1} and $D$ be a simply connected proper subdomain of $\C$.

\begin{definition}[decreasing Loewner chain]
\label{def:DLC}
A one-parameter family $(f_t)_{t \in I}$ of holomorphic functions on $D$ is called a \emph{decreasing Loewner chain} (DLC for short) on $D$ if
\begin{enumerate}[label=\rm (DLC\arabic*),leftmargin=5em]
\item \label{DLC1}
$f_t$ is univalent for each $t$, 
\item \label{DLC2}
$f_0=\mathrm{id}_D$, and $f_s(D) \supset f_t(D)$ if $s \le t$, 
\item \label{DLC3}
$t \mapsto f_t$ is continuous (in the sense of locally uniform convergence).
\end{enumerate}
Moreover, for a family $\mathcal{F}$ of holomorphic functions, we refer to $(f_t)$ as an \emph{$\mathcal{F}$-DLC} if $f_t \in \mathcal{F}$ for all $t \in I$. 
\end{definition}

We will first provide some results for general DLCs on $\C^+$ and then specialize in $\mathcal{F}$-DLCs for $\mathcal{F}=\PickA$, $\PickC$ and $\PickB$, which we have introduced in Definitions~\ref{def:reciprocal} and \ref{def:class_P}.

\begin{definition}[reverse evolution family]
\label{def:REF}
We call a two-parameter family $(f_{s,t})_{s,t \in I, s \le t}$ of holomorphic self-mappings on $D$ a \textit{reverse evolution family} (REF for short) on $D$ if
\begin{enumerate}[label=\rm (REF\arabic*),leftmargin=5em]
\item \label{TM1}
$f_{s,s}=\mathrm{id}_D$ for each $s$, 
\item \label{TM2}
$f_{s,t} \circ f_{t,u} = f_{s,u}$ if $s \le t \le u$, 
\item \label{TM3}
$(s,t) \mapsto f_{s,t}$ is continuous.
\end{enumerate}
Moreover, given a family $\mathcal{F}$ we call $(f_{s,t})$ an \emph{$\mathcal{F}$-REF} if $f_{s,t} \in \mathcal{F}$ for all $s,t$.
\end{definition}


\begin{remark} \label{rem:reversal}
An intriguing contrast lies in the fact that many researchers in complex analysis focus on the case in which the composition order in \ref{TM2} is reversed; namely, a two-parameter family $(\varphi_{s,t})_{s,t \in I, s \le t}$ of holomorphic self-mappings on $D$ is called an \emph{evolution family} (EF for short) if 
\begin{enumerate}[label=\rm ({EF\arabic*}),leftmargin=4em]
\item \label{EF1} $\varphi_{s,s} = \id_D $ for all $s \in I$;
\item \label{EF2} $\varphi_{t,u} \circ \varphi_{s,t} = \varphi_{s,u}$ if $s \le t \le u$;
\item \label{EF3} $(s,t)\mapsto \varphi_{s,t}$ is continuous.
\end{enumerate}
We will also use the term $\mathcal{F}$-EF in the same way as $\mathcal{F}$-REF. 
Here we note that one can obtain some duality between REFs and EFs, modifying their parameters suitably.
For example, it is easily seen that $( f_{s,t} )_{s \le t}$ is a REF if and only if $( \varphi_{s,t} := f_{T-t, T-s} )_{s \le t \le T}$ is an EF with index set $[0,T]$ for every $T \in I$.  Alternatively, one could also consider the EF  $(f_{-t, -s} )_{s \le t}$ indexed by the negative set $-I$.  
\end{remark}

Although the definition of (R)EFs does not assume univalence \textit{a priori}, the following is true:

\begin{proposition} \label{th:univalent_REF}
Every EF consists of only univalent functions.
By Remark~\ref{rem:reversal}, any REF also consists of univalent functions only.
\end{proposition}

Pommerenke~\cite[Satz~3]{Pom65} firstly gave a beautiful proof of Proposition~\ref{th:univalent_REF}, using simple techniques in complex analysis, with the additional assumption that all elements of a given EF have a fixed point at the origin (see also Yanagihara~\cite[Theorem~2.6]{Yan19+}).
Later, his technique was slightly extended by Hoshinaga, Hotta and Yanagihara \cite[Theorem~1.6]{HHY22+} to show the univalence of the elements of an EF without any restriction.
Contreras and D\'{\i}az-Madrigal \cite[Proposition~2.4]{CDM21} treated the case where $D$ is a Riemann surface. 
For another proof based on the Loewner differential equation, see Franz, Hasebe and Schlei{\ss}inger~\cite[Theorem~3.16]{FHS20}.

\begin{proposition}[cf.\ {\cite[Theorem~3.1]{CDMG10} \cite[Theorem~4.1]{CDMG14}}]
\label{prop:DLC_to_REF}
If $(f_t)$ is a DLC on $D$, then the family
\begin{equation} \label{eq:REF-DLC}
f_{s,t}:=f_s^{-1} \circ f_t,
\qquad s,t \in I,\ s \le t,
\end{equation}
is an REF on $D$.
Conversely, if $(f_{s,t})$ is an REF on $D$, then the family $f_t:=f_{0,t}$, $t \in I$, is a DLC on $D$.
In particular, DLCs and REFs are in one-to-one correspondence via \eqref{eq:REF-DLC}.
\end{proposition}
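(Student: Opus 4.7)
The plan is to verify the two directions of the correspondence separately and then show that the two constructions are mutually inverse. For the first direction, given a DLC $(f_t)_{t \in I}$ on $D$, I would define $f_{s,t}:=f_s^{-1} \circ f_t$ for $s \le t$ and check the REF axioms. The decreasing range condition $f_t(D) \subset f_s(D)$ ensures that $f_s^{-1} \circ f_t$ is a well-defined holomorphic self-mapping of $D$, while $f_{s,s}=\mathrm{id}_D$ is immediate. The cocycle relation $f_{s,t} \circ f_{t,u} = f_{s,u}$ follows from a routine chain of compositions using $f_t \circ f_t^{-1}=\mathrm{id}$ on $f_t(D)$, which contains $f_u(D)$ by monotonicity.

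The nontrivial point, and the step I expect to be the main obstacle, is showing that $(s,t) \mapsto f_{s,t}$ is (jointly) continuous in the locally uniform sense. For this, I would invoke the classical fact that if $(g_n)$ is a sequence of univalent holomorphic functions converging locally uniformly on $D$ to a univalent function $g$, then the inverses $g_n^{-1}$ converge to $g^{-1}$ locally uniformly on $g(D)$ (a consequence of Hurwitz's theorem together with normal family arguments). Combined with the continuity of $t \mapsto f_t$ from the DLC axiom, this would give $f_{s_n}^{-1} \circ f_{t_n} \to f_s^{-1} \circ f_t$ locally uniformly on $D$ for any $(s_n,t_n) \to (s,t)$; one needs to choose the compact sets in $D$ and use that the images $f_t(K)$ of a compact $K \subset D$ are uniformly contained in a compact subset of $f_s(D)$ for $s$ close to $t$, which follows from the locally uniform convergence $f_{t_n} \to f_t$.

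For the second direction, given an REF $(f_{s,t})$, I would set $f_t:=f_{0,t}$. Univalence of each $f_t$ is precisely Proposition~\ref{th:univalent_REF}, the axiom $f_0=\mathrm{id}_D$ comes from \eqref{TM1} in Definition~\ref{def:REF}, and the continuity of $t \mapsto f_t$ is the restriction of the joint continuity of $(s,t) \mapsto f_{s,t}$ to the slice $s=0$. The decreasing ranges are obtained from the factorization $f_t=f_{0,t}=f_{0,s}\circ f_{s,t}=f_s \circ f_{s,t}$ for $s \le t$, which immediately gives $f_t(D) \subset f_s(D)$.

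Finally, to establish the bijection, I would check that the two constructions invert each other. Starting from a DLC $(f_t)$ and forming the REF $f_{s,t}=f_s^{-1}\circ f_t$, setting $s=0$ recovers $f_{0,t}=f_t$. Conversely, starting from an REF $(f_{s,t})$ and forming $f_t=f_{0,t}$, the factorization $f_{0,t}=f_{0,s}\circ f_{s,t}$ combined with the univalence of $f_{0,s}$ (again from Proposition~\ref{th:univalent_REF}) yields $f_{s,t}=f_s^{-1}\circ f_t$, which is exactly \eqref{eq:REF-DLC}. This would complete the one-to-one correspondence.
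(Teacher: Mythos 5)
Your proof is correct and follows essentially the same plan as the paper's: you correctly identify joint continuity of $(s,t)\mapsto f_{s,t}$ as the only nontrivial point (the paper says exactly this), and the remaining verifications (well-definedness, identity, cocycle, and the easy converse via Proposition~\ref{th:univalent_REF}) are routine in both. The one genuine difference is in how that continuity is obtained. You delegate it to the classical fact that locally uniform convergence of univalent functions implies locally uniform convergence of their inverses on the limit image (plus the observation that $f_{t'}(K)$ stays inside a compact subset of $f_{s'}(D)$ for $(s',t')$ near $(s,t)$), arriving at the bound
\[
\sup_{z\in K}\lvert f_{s_n,t_n}(z)-f_{s,t}(z)\rvert
\le \sup_{w\in L}\lvert f_{s_n}^{-1}(w)-f_s^{-1}(w)\rvert
+\omega\Bigl(\sup_{z\in K}\lvert f_{t_n}(z)-f_t(z)\rvert\Bigr),
\]
with $\omega$ a modulus of continuity of $f_s^{-1}$ on $L$. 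The paper instead proves the needed estimate directly and quantitatively via the Lagrange inversion formula
$f_{u,v}(z)=\frac{1}{2\pi i}\int_{\partial D_k}\frac{w\,f_u^\prime(w)}{f_u(w)-f_v(z)}\,dw$
on a well-chosen analytic Jordan subdomain; this self-contained route is not just cosmetic, since the very same integral representation and $\epsilon$-bookkeeping are reused in the proof of Theorem~\ref{th:conv_DLC_REF}. So both arguments are correct, yours is a bit shorter by black-boxing the inverse-convergence lemma, while the paper's is better suited for reuse downstream.
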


\begin{proof}
Let $(f_t)$ be a DLC.
The only nontrivial part of this proposition is the continuity of $(s,t) \mapsto f_{s,t}$ with $f_{s,t}$ defined by \eqref{eq:REF-DLC}.
Let $s,t \in I$ be such that $s \le t$, $K$ be a compact subset of $D$, and $(D_k)_{k \in \N}$ be an increasing sequence of analytic Jordan domains with $\overline{D_k} \subset D$ and $\bigcup_k D_k=D$.
As
\[
f_t(K) \subset f_t(D) \subset f_s(D) = \bigcup_{k \in \N}f_s(D_k),
\]
there exists $k \in \N$ such that $f_t(K) \subset f_s(D_k)$ by the compactness of $f_t(K)$.
We fix such a $k$ and put $\epsilon:=5^{-1}d(f_t(K),f_s(\partial D_k))>0$ with $d$ denoting the Euclidean distance.
By the continuity of $u \mapsto f_u$, there exist neighborhoods $N(s)$ and $N(t)$ of $s$ and $t$, respectively, in $I$ such that
\begin{equation} \label{eq:K_and_bD_k}
\sup_{\substack{z \in K \\ v \in N(t)}}\lvert f_v(z)-f_t(z) \rvert<\epsilon
\quad \text{and} \quad
\sup_{\substack{z \in \partial D_k \\ u \in N(s)}}\lvert f_u(z)-f_s(z) \rvert<\epsilon.
\end{equation}
Then we have $f_v(K) \subset f_u(D_k)$ and $d(f_v(K),f_u(\partial D_k)) \ge 3\epsilon$ 
for all $u \in N(s)$ and $v \in N(t)$.
In particular, for such a $(u,v)$ the Lagrange inversion formula yields
\begin{equation} \label{eq:Lagrange}
f_{u,v}(z)
=f_u^{-1}(f_v(z))
=\frac{1}{2\pi i}\int_{\partial D_k}\frac{w f_u^\prime(w)}{f_u(w)-f_v(z)}\,dw, \qquad z\in K. 
\end{equation}
Noting that $\lvert f_u(w)-f_v(z) \rvert \ge 3\epsilon$ in this formula, we have
\begin{align*}
&\sup\{\, \lvert f_{u,v}(z)-f_{s,t}(z) \rvert : z \in K,\ (u,v) \in N(s) \times N(t),\ u \le v \,\} \\
&\le \frac{l(\partial D_k)}{18\pi \epsilon^2}\sup \lvert w \rvert \left\lvert f_u^\prime(w)(f_s(w)-f_t(z))-f_s^\prime(w)(f_u(w)-f_v(z)) \right\rvert
\end{align*}
Here, $l(\partial D_k)$ denotes the length of $\partial D_k$, and the latter supremum is taken over all $z \in K$, $w \in \partial D_k$, and $(u,v) \in N(s) \times N(t)$ with $u \le v$.
Clearly, the last expression goes to zero as $(u,v) \to (s,t)$, as was to be proved.
\end{proof}

We say that a DLC $(f_t)_{t}$ and a REF $(f_{s,t})_{s\le t}$ are \emph{associated with} each other if \eqref{eq:REF-DLC} holds.
Here is a convergence theorem for such DLCs and REFs, which generalizes part of the previous result \cite[Theorem~3.15]{HH22}.

\begin{theorem} \label{th:conv_DLC_REF}
Let $D \subsetneq \C$ be a simply connected domain.
Suppose that $(f_t)_{t}$ and $(f^n_t)_{t}$, $n \in \N$, are DLCs on $D$, and $(f_{s,t})_{s\le t}$ and $(f^n_{s,t})_{s\le t}$ are the REFs associated with them, respectively.
Then the following are equivalent as $n \to \infty$:
\begin{enumerate}
\item \label{i:conv_DLC}
$f^n_t(z)$ converges to $f_t(z)$ locally uniformly on $D \times I$;
\item \label{i:conv_REF}
$f^n_{s,t}(z)$ converges to $f_{s,t}(z)$ locally uniformly on $D \times I^2_\le$.
\end{enumerate}
\end{theorem}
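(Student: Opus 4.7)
The direction $\eqref{i:conv_REF}\Rightarrow\eqref{i:conv_DLC}$ is immediate: since $f_t=f_{0,t}$ and $f_t^{(n)}=f_{0,t}^{(n)}$ by Proposition \ref{prop:DLC_to_REF}, and since $\{0\}\times K_2$ is a compact subset of $I^2_\le$ for any compact $K_2\subset I$, the locally uniform convergence asserted in \eqref{i:conv_REF} restricts directly to \eqref{i:conv_DLC}. For the reverse implication I plan to upgrade the Lagrange-inversion argument already carried out in the proof of Proposition \ref{prop:DLC_to_REF} so as to include uniformity in the index $n$.

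Fix a compact set $K\subset D$ and a compact set $L\subset I^2_\le$, and fix an exhaustion $(D_k)_{k\in\N}$ of $D$ by analytic Jordan domains with $\overline{D_k}\subset D$. The first step is a geometric preparation: for each $(s_0,t_0)\in L$, the compact set $f_{t_0}(K)$ lies in the open set $f_{s_0}(D)=\bigcup_k f_{s_0}(D_k)$, so there exists $k_0=k_0(s_0,t_0)\in\N$ with $f_{t_0}(K)\subset f_{s_0}(D_{k_0})$; repeating the reasoning between \eqref{eq:K_and_bD_k} and \eqref{eq:Lagrange}, one can then choose $\epsilon_0>0$ and a neighborhood $N=N(s_0,t_0)$ of $(s_0,t_0)$ in $L$ so that
\begin{equation*}
f_t(K)\subset f_s(D_{k_0})\qquad\text{and}\qquad d\bigl(f_t(K),f_s(\partial D_{k_0})\bigr)\ge 3\epsilon_0
\end{equation*}
hold for every $(s,t)\in N$. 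Compactness of $L$ then provides a finite subcover by such neighborhoods, so it suffices to prove the uniform convergence on $K\times N(s_0,t_0)$ for each fixed $(s_0,t_0)$.

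On such a neighborhood, the locally uniform convergence $f_t^{(n)}(z)\to f_t(z)$ on $D\times I$ furnishes an $n_0$ such that, for all $n\ge n_0$ and all $(s,t)\in N$, both $\sup_{z\in K}\lvert f_t^{(n)}(z)-f_t(z)\rvert<\epsilon_0$ and $\sup_{w\in\partial D_{k_0}}\lvert f_s^{(n)}(w)-f_s(w)\rvert<\epsilon_0$ hold, which forces $f_t^{(n)}(K)\subset f_s^{(n)}(D_{k_0})$ and $d(f_t^{(n)}(K),f_s^{(n)}(\partial D_{k_0}))\ge \epsilon_0$. I would then apply the Lagrange inversion formula \eqref{eq:Lagrange} on the contour $\partial D_{k_0}$ to both $f_{s,t}$ and $f_{s,t}^{(n)}$, subtract them, and bound the difference of the integrands by a quantity controlled by $\sup_{w\in\partial D_{k_0}}\lvert f_s^{(n)}(w)-f_s(w)\rvert$, by $\sup_{w\in\partial D_{k_0}}\lvert (f_s^{(n)})'(w)-f_s'(w)\rvert$ (which tends to zero by Weierstrass's theorem applied on a slightly larger analytic Jordan subdomain of $D$), and by $\sup_{z\in K}\lvert f_t^{(n)}(z)-f_t(z)\rvert$, each divided by the uniform denominator bound $\epsilon_0$.

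The main obstacle is not the contour-integral comparison itself---which is a routine application of uniform estimates and the Cauchy-estimate-based transfer of locally uniform convergence to derivatives---but the uniformity built into the geometric preparation: one must obtain not a single Jordan domain $D_{k_0}$ valid throughout $L$ but only a finite family, one per neighborhood in a cover, and secure a common positive lower bound on the denominator distance both for $(f_t)$ and, for all sufficiently large $n$, for $(f_t^{(n)})$. This compactness-plus-continuity manoeuvre is the place where the locally uniform (rather than merely pointwise) convergence of the DLCs is fully exploited.
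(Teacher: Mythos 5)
Your proposal is correct and follows essentially the same route as the paper: both reduce the implication $\eqref{i:conv_DLC}\Rightarrow\eqref{i:conv_REF}$ to the Lagrange inversion formula set up in the proof of Proposition~\ref{prop:DLC_to_REF}, secure a common lower bound on the denominator $\lvert f^{(n)}_u(w)-f^{(n)}_v(z)\rvert$ from the assumed locally uniform convergence, and then split the difference of integrands into terms controlled by $\sup\lvert f^{(n)}-f\rvert$ and $\sup\lvert (f^{(n)})'-f'\rvert$ on the relevant compacta. The only cosmetic difference is that you make the finite-subcover step over a compact $L\subset I^2_\le$ explicit, whereas the paper carries out the estimate near a generic $(s,t)$ and leaves the covering argument implicit.
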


\begin{proof}
Since \eqref{i:conv_REF} trivially implies \eqref{i:conv_DLC}, we only prove the converse.
The idea is similar to the proof of Proposition~\ref{prop:DLC_to_REF}.
As in that proof, let $s,t \in I$ be such that $s \le t$, $K$ be a compact subset of $D$, $D_k$ be an analytic Jordan domain with $\overline{D_k} \subset D$ and $f_t(K) \subset f_s(D_k)$, and $\epsilon=5^{-1}d(f_t(K),f_s(\partial D_k))$.
In addition, let $N(s)$ and $N(t)$ be neighborhoods of $s$ and $t$, respectively, such that \eqref{eq:K_and_bD_k} holds.
Moreover, by \eqref{i:conv_DLC} we can take $n_0 \in \N$, replacing $N(s)$ and $N(t)$ with smaller ones if necessary, so that
\begin{equation} \label{eq:K_and_bD_k_at_n}
\sup_{\substack{z \in K \\ v \in N(t)}}\lvert f^n_v(z)-f_v(z) \rvert<\epsilon
\quad \text{and} \quad
\sup_{\substack{z \in \partial D_k \\ u \in N(s)}}\lvert f^n_u(z)-f_u(z) \rvert<\epsilon
\end{equation}
for all $n \ge n_0$.
By \eqref{eq:K_and_bD_k} and \eqref{eq:K_and_bD_k_at_n}, we have $f^n_v(K) \subset f^n_u(D_k)$ and $d(f^n_v(K),f^n_u(\partial D_k)) \ge \epsilon$ for all $u \in N(s)$, $v \in N(t)$, and $n \ge n_0$.
Formula \eqref{eq:Lagrange} also holds with the superscript $^n$ put to all the appearances of $f_{u,v}$, $f_u$, and $f_v$ in it.
Since $\lvert f^n_u(w)-f^n_v(z) \rvert \ge \epsilon$ in this formula, we have
\begin{align*}
&\sup\{\, \lvert f^n_{u,v}(z) - f_{u,v}(z) \rvert : z \in K,\ (u,v) \in N(s) \times N(t),\ u \le v \,\} \\
&\le \frac{l(\partial D_k)}{2\pi \epsilon^2}
\sup \lvert w \rvert \left\lvert (f^n_u)^\prime(w)(f_u(w)-f_v(z))-f_u^\prime(w)(f^n_u(w)-f^n_v(z)) \right\rvert
\end{align*}
for all $n \ge n_0$.
Here, the latter supremum is taken over all $z \in K$, $w \in \partial D_k$, and $(u,v) \in N(s) \times N(t)$ with $u \le v$.
The last expression goes to zero as $n \to \infty$ by \eqref{i:conv_DLC}, which implies \eqref{i:conv_REF}.
\end{proof}

\subsection{DLCs and REFs of reciprocal Cauchy transforms}
\label{sec:ALC}

By \eqref{eq:mono_convolution}, Proposition~\ref{prop:Maassen_2.1}, and Corollary~\ref{cor:continuity_Cauchy}, we have the following:

\begin{proposition} \label{prop:additive_REF}
For a $\rhd$-CH $(\monotone_{s,t})_{s\le t}$, the family $(F[\monotone_{s,t}])_{s\le t}$ of reciprocal Cauchy transforms is a $\PickA$-REF on $\C^+$.
Conversely, for a $\PickA$-REF $(f_{s,t})_{s\le t}$ 
 there exists a unique $\rhd$-CH $(\monotone_{s,t})_{s\le t}$ such that $f_{s,t}=F[\monotone_{s,t}]$.
\end{proposition}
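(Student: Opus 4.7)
The plan is to exhibit a two-way dictionary between $\rhd$-CHs and $\PickA$-REFs induced by the reciprocal Cauchy transform, verifying axioms on one side from the corresponding axioms on the other via three tools: the characterization $\PickA=\{F_\mu: \mu \in \prob\}$ from Definition~\ref{def:reciprocal} (with the uniqueness of $\mu$ coming from Proposition~\ref{prop:Maassen_2.1}), the monotone convolution identity \eqref{eq:mono_convolution}, and the continuity of the (reciprocal) Cauchy transform from Corollary~\ref{cor:continuity_Cauchy} or, for parametrized families, Corollary~\ref{cor:luc_rCauchy}. Univalence does not need to be checked by hand thanks to Proposition~\ref{th:univalent_REF}.

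For the first direction, let $(\monotone_{s,t})_{s\le t}$ be a $\rhd$-CH and set $f_{s,t}:=F[\monotone_{s,t}]$. Since each $\monotone_{s,t}\in\prob$, we have $f_{s,t}\in\PickA$. Axiom~\eqref{TM1} of Definition~\ref{def:REF} follows from $\monotone_{s,s}=\delta_0$, because $F_{\delta_0}(z)=z$. Axiom~\eqref{TM2} follows from the hemigroup law $\monotone_{s,t}\rhd\monotone_{t,u}=\monotone_{s,u}$ together with \eqref{eq:mono_convolution}: $f_{s,t}\circ f_{t,u} = F[\monotone_{s,t}]\circ F[\monotone_{t,u}] = F[\monotone_{s,t}\rhd\monotone_{t,u}]=f_{s,u}$. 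Axiom~\eqref{TM3}, the joint continuity in $(s,t)$ in the topology of locally uniform convergence on $\C^+$, is exactly the content of Corollary~\ref{cor:luc_rCauchy} applied with the parameter space $T=I^2_\le$ (with the accumulation-point set $\AC$ chosen arbitrarily in $\C^+$); here we use the weak continuity of $(s,t)\mapsto\monotone_{s,t}$ built into the definition of a $\rhd$-CH.

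For the converse, given a $\PickA$-REF $(f_{s,t})_{s\le t}$, Definition~\ref{def:reciprocal} supplies a unique $\monotone_{s,t}\in\prob$ with $f_{s,t}=F[\monotone_{s,t}]$, which already handles uniqueness. The axioms of a $\rhd$-CH are then read off in reverse: $f_{s,s}=\mathrm{id}$ forces $F[\monotone_{s,s}]=F[\delta_0]$, hence $\monotone_{s,s}=\delta_0$ by the injectivity of $F$; the composition law $f_{s,t}\circ f_{t,u}=f_{s,u}$ combined with \eqref{eq:mono_convolution} gives $F[\monotone_{s,t}\rhd\monotone_{t,u}]=F[\monotone_{s,u}]$, so $\monotone_{s,t}\rhd\monotone_{t,u}=\monotone_{s,u}$; and the weak continuity of $(s,t)\mapsto\monotone_{s,t}$ is obtained by running Corollary~\ref{cor:luc_rCauchy} in the opposite direction (or, equivalently, applying Corollary~\ref{cor:continuity_Cauchy} pointwise in $(s,t)$ along with the continuity of $(s,t)\mapsto f_{s,t}$).

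There is no real obstacle here: the whole argument is a translation exercise once Definition~\ref{def:reciprocal}, Proposition~\ref{prop:Maassen_2.1}, Corollary~\ref{cor:continuity_Cauchy}, and the characterization \eqref{eq:mono_convolution} are in hand. The only point that deserves a one-line caveat is that Definition~\ref{def:REF} of an REF does not \emph{a priori} require univalence of the $f_{s,t}$, so strictly speaking one should either cite Proposition~\ref{th:univalent_REF} at the end of the first direction to note that univalence comes for free, or observe that it is automatic from $f_{s,t}\in\PickA$ used as reciprocal Cauchy transforms being, in general, non-univalent, so that univalence is genuinely obtained here from the REF composition law rather than from membership in $\PickA$.
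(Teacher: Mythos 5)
Your proof is correct and follows exactly the approach the paper itself indicates (the paper gives no separate argument; it simply cites \eqref{eq:mono_convolution}, Proposition~\ref{prop:Maassen_2.1}, and Corollary~\ref{cor:continuity_Cauchy} before stating the proposition). One small imprecision worth tidying: for Axiom~\eqref{TM3} of Definition~\ref{def:REF} the tool is Corollary~\ref{cor:continuity_Cauchy} applied along sequences $(s_n,t_n)\to(s,t)$ (as you in fact invoke in the converse direction), rather than Corollary~\ref{cor:luc_rCauchy}, which concerns convergence of a \emph{sequence} of measure-valued families and is not literally a statement about continuity of a single family; and your closing sentence about univalence is garbled --- presumably you meant that univalence does \emph{not} follow from $\PickA$-membership alone (reciprocal Cauchy transforms are generally non-univalent) but is supplied automatically by Proposition~\ref{th:univalent_REF}, while Definition~\ref{def:REF} in any case does not ask for it.
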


For DLCs on $\C^+$ the same kind of assertion is less obvious but still true by virtue of Corollary~\ref{cor:anglim_composite}.

\begin{proposition} \label{prop:additive_DLC}
If  $(f_t)_{t}$ is a $\PickA$-DLC
\footnote{Franz, Hasebe and Schlei{\ss}inger~\cite[Definition~3.1~(2)]{FHS20} call  a $\PickA$-DLC an additive Loewner chain. } 
then the associated REF $(f_{s,t})_{s\le t}$ is a $\PickA$-REF.
Hence, in particular, there exists a unique $\rhd$-CH $(\monotone_{s,t})_{s\le t}$ such that $f_{s,t}=F[\monotone_{s,t}]$.
\end{proposition}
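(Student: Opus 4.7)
The plan is to reduce the claim to a single angular-derivative identity and exploit the composition rule for angular limits at $\infty$.

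First, I would invoke Proposition~\ref{prop:DLC_to_REF} to conclude that the family $f_{s,t}:=f_s^{-1}\circ f_t$ is already known to be an REF on $\C^+$. Since the DLC is decreasing, $f_t(\C^+)\subset f_s(\C^+)$, so $f_s^{-1}$ is holomorphic on $f_t(\C^+)$ and maps into $\C^+$; hence each $f_{s,t}\colon\C^+\to\C^+$ is a Pick function. By Definition~\ref{def:reciprocal}, the remaining task is to show
\[
\beta_{s,t}:=\lim_{y\to+\infty}\frac{f_{s,t}(iy)}{iy}=1
\qquad\text{for every}\ s\le t.
\]
The Nevanlinna representation of the Pick function $f_{s,t}$ (Section~\ref{sec:PN_to_Cauchy}) guarantees that $\beta_{s,t}\in[0,+\infty)$ exists.

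The identity $f_t=f_s\circ f_{s,t}$ is the key, combined with the chain rule for angular limits at $\infty$ provided by Corollary~\ref{cor:anglim_composite}. Since $f_s,f_t\in\PickA$ yield $f_s'(\infty)=f_t'(\infty)=1$, applying the chain rule to the factorization
\[
\frac{f_t(z)}{z}=\frac{f_s(f_{s,t}(z))}{f_{s,t}(z)}\cdot\frac{f_{s,t}(z)}{z}
\]
as $z\to\infty$ nontangentially forces $1=1\cdot\beta_{s,t}$, i.e.\ $\beta_{s,t}=1$. Once $f_{s,t}\in\PickA$ for all $s\le t$, Proposition~\ref{prop:additive_REF} immediately yields the unique $\rhd$-CH $(\monotone_{s,t})_{s\le t}$ with $F[\monotone_{s,t}]=f_{s,t}$.

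The main obstacle is verifying that the chain rule from Corollary~\ref{cor:anglim_composite} genuinely applies, because the rule needs $f_{s,t}(z)$ to tend to $\infty$ through some Stolz angle as $z\to\infty$ nontangentially. This is automatic when $\beta_{s,t}>0$ (then $f_{s,t}(z)\sim\beta_{s,t} z$ on Stolz angles by the Nevanlinna formula). The potentially tricky case $\beta_{s,t}=0$ can be ruled out directly: either $f_{s,t}(iy)$ stays bounded as $y\to+\infty$---then by continuity of $f_s$ the value $f_t(iy)=f_s(f_{s,t}(iy))$ also remains bounded, contradicting $f_t\in\PickA$---or $f_{s,t}(iy)\to\infty$ with $\Im f_{s,t}(iy)/y\to 0$, in which case a dominated-convergence estimate on the Nevanlinna integral for $f_s$ at $w=f_{s,t}(iy)$ yields $f_s(w)/w\to 1$ and hence $f_t(iy)/(iy)\sim f_{s,t}(iy)/(iy)\to 0$, again contradicting $f_t\in\PickA$. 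Thus $\beta_{s,t}>0$ and the chain rule closes the argument. I expect that in the authors' setup, Corollary~\ref{cor:anglim_composite} is formulated so as to subsume this case analysis, making the proof a one-line application.
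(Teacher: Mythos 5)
You correctly identify Corollary~\ref{cor:anglim_composite} as the key tool, and this is indeed the paper's whole proof: apply it with $f = f_t$ and $g = f_s$. Both are univalent elements of $\PickA$ by the DLC property, so $f'(\infty) = g'(\infty) = 1 \neq 0$, and $f_t(\C^+) \subset f_s(\C^+)$; the corollary then gives $f_{s,t}'(\infty) = f_t'(\infty)/f_s'(\infty) = 1$, hence $f_{s,t} \in \PickA$. Importantly, the corollary needs no prior knowledge that $f_{s,t}(z) \to \infty$ nontangentially---that is established \emph{inside} its proof via the accessible-boundary-point and Lindel\"of argument of Proposition~\ref{prop:anglim_composite}. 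Your closing remark that the corollary is set up to subsume the case analysis is exactly right, and the direct application is the intended one-line proof; note also that what the corollary provides is not a generic chain rule for the factorization $f_t = f_s \circ f_{s,t}$ but a specific statement about $g^{-1}\circ f$, which is precisely what you need.

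The detour you present, however, has genuine gaps and I would not expect it to be repairable without essentially re-proving the corollary. In the ``bounded'' case, the appeal to continuity of $f_s$ fails whenever $\Im f_{s,t}(iy) \to 0$: the points $f_{s,t}(iy)$ then escape every compact subset of $\C^+$, and $f_s$ need not extend continuously to $\R$---for instance, if the measure $\rho$ in the Nevanlinna representation of $f_s$ carries an atom at the boundary limit point, $f_s(f_{s,t}(iy))$ can blow up and no contradiction results. In the ``unbounded'' case, your dominated-convergence estimate yields $f_s(w)/w \to 1$ only when $w$ tends to $\infty$ through a Stolz angle; the hypothesis $\Im f_{s,t}(iy)/y \to 0$ does not preclude $w = f_{s,t}(iy)$ from escaping tangentially, and the angular derivative at $\infty$ gives no control over tangential approach. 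Both cases are therefore disposed of only under precisely the nontangential hypothesis you are trying to establish---the argument is circular. Lean directly on Corollary~\ref{cor:anglim_composite}.
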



Theorem~\ref{th:conv_DLC_REF} is transferred to the $\PickA$-valued case as follows:

\begin{theorem} \label{th:conv_monotone}
Let $(\monotone_{s,t})_{s\le t}$ and $(\monotone^n_{s,t})_{s\le t}$, $n\in \N$, be $\rhd$-CHs.
Suppose that $(f_t)_{t}$ and $(f_{s,t})_{s\le t}$ (resp.\ $(f^n_t)_t$ and $(f^n_{s,t})_{s\le t}$) are the $\PickA$-DLC and $\PickA$-REF associated with $(\monotone_{s,t})$ (resp.\ $(\monotone^n_{s,t})$).
Then the following are equivalent as $n\to \infty$:
\begin{enumerate}
\item \label{M1}
$(\monotone^n_{0,t})_{t}$ converges weakly to $(\monotone_{0,t})_{t}$ locally uniformly on $I$;
\item \label{M2}
$(\monotone^n_{s,t})_{s\le t}$ converges weakly to $(\monotone_{s,t})_{s\le t}$ locally uniformly on $I^2_\le$;
\item \label{M3}
$f^n_t(z)$ converges to $f_t(z)$ locally uniformly on $\C^+ \times I$;
\item \label{M4}
$f^n_{s,t}(z)$ converges to $f_{s,t}(z)$ locally uniformly on $\C^+ \times I^2_\le$.
\end{enumerate}
\end{theorem}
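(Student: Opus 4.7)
The plan is to establish the four equivalences by reducing everything to two already-proved tools: the locally-uniform continuity of the reciprocal Cauchy transform (Corollary~\ref{cor:luc_rCauchy}) and the DLC--REF convergence equivalence (Theorem~\ref{th:conv_DLC_REF}). A convenient chain is \eqref{M1}$\Leftrightarrow$\eqref{M3}$\Leftrightarrow$\eqref{M4}$\Leftrightarrow$\eqref{M2}, where the outer two equivalences both invoke Corollary~\ref{cor:luc_rCauchy} (with different parameter spaces) and the middle one invokes Theorem~\ref{th:conv_DLC_REF}.

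For \eqref{M1}$\Leftrightarrow$\eqref{M3}, I would take $T=I$ and apply Corollary~\ref{cor:luc_rCauchy} to the weakly continuous curves $t\mapsto \monotone_{0,t}$ and $t\mapsto \monotone^{(n)}_{0,t}$ in $C(I;\prob)$; by definition $f_t=F[\monotone_{0,t}]$ and $f^{(n)}_t=F[\monotone^{(n)}_{0,t}]$, so condition (iv) of Corollary~\ref{cor:luc_rCauchy} is exactly \eqref{M3} while condition (i) is exactly \eqref{M1}. For \eqref{M2}$\Leftrightarrow$\eqref{M4}, the same argument applies, now with $T=I^2_\le$; this set is a closed subset of $\R^2$ and hence a locally-compact, first-countable topological space, and $(s,t)\mapsto \monotone_{s,t}$ is weakly continuous by the very definition of a $\rhd$-CH in Section~\ref{sec:prel}, so Corollary~\ref{cor:luc_rCauchy} applies verbatim.

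For \eqref{M3}$\Leftrightarrow$\eqref{M4}, Proposition~\ref{prop:additive_DLC} guarantees that each $(f^{(n)}_t)_t$ is a $\PickA$-DLC on $D=\C^+$ whose associated REF coincides with $(f^{(n)}_{s,t})_{s\le t}$ through the relation \eqref{eq:REF-DLC}, and likewise for the limiting pair. Theorem~\ref{th:conv_DLC_REF} (with $D=\C^+$) then yields the equivalence directly. Chaining the three equivalences closes the loop.

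I do not expect any serious obstacle here: the genuine content has been absorbed into the earlier tools, and what remains is bookkeeping. The only subtle point is the verification that $I^2_\le$ is locally compact and first-countable so that Corollary~\ref{cor:luc_rCauchy} is applicable with this parameter space; this is immediate since $I^2_\le$ inherits these properties from $\R^2$. One should also be mindful that in \eqref{M2} the topology of locally uniform weak convergence is indexed by the two-parameter variable $(s,t)$, but this matches the formulation of Corollary~\ref{cor:luc_rCauchy} (which is stated for an arbitrary parameter space $T$), so no extra argument is needed.
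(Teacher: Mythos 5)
Your proposal is correct and follows exactly the paper's proof: $\eqref{M1}\Leftrightarrow\eqref{M3}$ and $\eqref{M2}\Leftrightarrow\eqref{M4}$ from Corollary~\ref{cor:luc_rCauchy} (with $T=I$ and $T=I^2_\le$ respectively), and $\eqref{M3}\Leftrightarrow\eqref{M4}$ from Theorem~\ref{th:conv_DLC_REF}. The extra remarks on local compactness and first-countability of $I^2_\le$ are harmless elaborations of what the paper leaves implicit.
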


\begin{proof}
$\eqref{M1} \Leftrightarrow \eqref{M3}$ and $\eqref{M2} \Leftrightarrow \eqref{M4}$ follow from Corollary~\ref{cor:luc_rCauchy}, and $\eqref{M3} \Leftrightarrow \eqref{M4}$ is a direct consequence of Theorem~\ref{th:conv_DLC_REF}.
\end{proof}

\begin{remark} \label{rem:antimonotone_EF}
It is easy to show analogues of Proposition \ref{prop:additive_REF} and Theorem \ref{th:conv_monotone} for $\lhd$-CHs and $\PickA$-EFs.
However, analogues of Propositions \ref{prop:DLC_to_REF} and \ref{prop:additive_DLC} hold only partially.
More precisely, an ``increasing'' Loewner chain $(\varphi_t)\subset\PickA$ naturally induces the $\PickA$-EF $\varphi_{s,t}:=\varphi_t^{-1} \circ \varphi_s$, but given a $\PickA$-EF, the uniqueness and existence of an increasing LC inside $\PickA$ are unclear. 
If the index set is $I =[0,T]$, then $\varphi_t := \varphi_{t,T}$ will be a natural choice of such an increasing LC. 
In Contreras, D\'{\i}az-Madrigal and Gumenyuk~\cite{CDMG10}, this kind of existence and uniqueness are discussed in a more general setting of complex analysis.

\end{remark}


\section{Main results: the case of finite second moment}
\label{sec:finite_2nd_case}

In this section we study $\PickA$-REFs and $\PickA$-DLCs such that the associated $\rhd$-CHs have finite second moment.
By Proposition \ref{prop:Cauchy_P_prime} such REFs and DLCs are in $\PickB$.

\subsection{Continuity of angular residue for DLCs}
\label{sec:capacity_conti}

We begin with $\PickC$-REFs and $\PickC$-DLCs.
By \eqref{eq:REF-DLC} the DLC associated with a $\PickC$-REF is clearly a $\PickC$-DLC. 
The proof of the next lemma, which ensures the opposite correspondence, requires some effort; see Bauer~\cite[Lemma~4.1]{Bau05} or Murayama~\cite[Appendix~B]{Mur23}.

\begin{lemma} \label{lem:chordal_DLC_to_REF}
Suppose that $f, g \in \PickC$, $g$ is univalent, and $f(\C^+) \subset g(\C^+)$.
Then $g^{-1} \circ f \in \PickC$, and the angular residues at $\infty$ satisfy $\AR(g^{-1} \circ f)=\AR(f)-\AR(g)$.
In particular, the REF associated with a $\PickC$-DLC is a $\PickC$-REF.
\end{lemma}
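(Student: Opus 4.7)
The plan is to study $h := g^{-1} \circ f$, which is well-defined and holomorphic on $\C^+$ thanks to $f(\C^+) \subset g(\C^+)$, and to verify the equivalent conditions in Proposition \ref{prop:class_P} that place $h$ in $\PickC$ with the stated angular residue. The key algebraic tool is the representation $f = \id - G_{\rho_f}$, $g = \id - G_{\rho_g}$ afforded by Proposition \ref{prop:class_P} \eqref{HN:PN}, combined with the functional equation $g \circ h = f$.

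First I would show $h \in \PickA$, i.e.\ $h'(\infty) = 1$. Writing the Nevanlinna representation of $h$ yields $h(iy)/(iy) \to \beta := h'(\infty) \in [0,+\infty)$ as $y \to +\infty$. The identity $g(h(iy)) = f(iy) \to \infty$ together with the holomorphy of $g$ on $\C^+$ forces $h(iy) \to \infty$ (any bounded subsequential limit $w_0 \in \overline{\C^+}$ would give $g(w_0) = \infty$, impossible), hence $\beta>0$. Then $h(iy) \sim \beta iy$ approaches infinity within a Stolz angle at $\infty$, so $g(h(iy))/h(iy) \to g'(\infty)=1$ by \eqref{eq:angderiv_at_infty}; combining with $f(iy)/(iy)\to 1$ forces $\beta=1$. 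As a byproduct, \eqref{eq:angderiv_at_infty} gives $h(z)/z \to 1$ nontangentially, and the Nevanlinna form with $\beta=1$ shows $\Im h(z) \ge \Im z$ on $\C^+$.

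Next, the identity $g \circ h = f$ yields $h(z) - z = G_{\rho_g}(h(z)) - G_{\rho_f}(z)$. Using the crude bound $|G_\rho(w)| \le \rho(\R)/\Im w$ together with $\Im h(z) \ge \Im z$, one obtains
\[
|h(z) - z| \le \frac{\AR(g)}{\Im h(z)} + \frac{\AR(f)}{\Im z} \le \frac{\AR(f)+\AR(g)}{\Im z},
\]
which is \eqref{HN:ineq}, so $h \in \PickC$ by Proposition \ref{prop:class_P}. To evaluate $\AR(h)$, I would multiply the displayed identity by $-z$ and let $z \to \infty$ nontangentially. Then $z G_{\rho_f}(z) \to \AR(f)$, and
\[
z\, G_{\rho_g}(h(z)) = \frac{z}{h(z)} \cdot h(z)\, G_{\rho_g}(h(z)) \to 1 \cdot \AR(g),
\]
the latter because $h$ carries Stolz angles at $\infty$ into Stolz angles at $\infty$ (from $h(z)/z \to 1$ nontangentially) so that $h(z) \to \infty$ nontangentially. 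Hence $\AR(h) = \AR(f) - \AR(g)$.

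The main obstacle is the verification $h \in \PickA$: it requires tracking the nontangential asymptotics of the Nevanlinna integrals of $h$ and chaining them with the angular derivative of $g$ at $\infty$. Once $h'(\infty)=1$ is in hand, the bound $\Im h \ge \Im z$ makes the inequality \eqref{HN:ineq} fall out of the Cauchy-transform estimate essentially for free, and the angular residue identity is then a direct asymptotic computation. The final assertion of the lemma is immediate: for a $\PickC$-DLC $(f_t)$ the inclusion $f_t(\C^+) \subset f_s(\C^+)$ holds whenever $s \le t$, so $f_{s,t} = f_s^{-1} \circ f_t \in \PickC$ by what has just been proved, with $\AR(f_{s,t}) = \AR(f_t) - \AR(f_s)$.
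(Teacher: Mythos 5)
The part of your argument that computes $\AR(h)$ for $h := g^{-1} \circ f$ is correct and efficient \emph{once} one knows $h \in \PickA$, i.e.\ $h'(\infty) = 1$: the Nevanlinna estimate $\Im h(z) \ge h'(\infty)\,\Im z = \Im z$ then renders \eqref{HN:ineq} immediate from the identity $h(z)-z = G_{\rho_g}(h(z)) - G_{\rho_f}(z)$, and the nontangential limit giving $\AR(h)=\AR(f)-\AR(g)$ is clean. However, your derivation of $\beta := h'(\infty) > 0$ has a genuine gap. The inference ``$h(iy) \to \infty$, hence $\beta > 0$'' is false for Pick functions in general: for instance $h(z) = e^{i\pi/4}\sqrt{z}$ (principal branch) maps $\C^+$ into $\C^+$, satisfies $h(iy) = i\sqrt{y} \to \infty$, and yet $h'(\infty) = \lim_{y\to+\infty} h(iy)/(iy) = 0$. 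Nor is the preliminary claim $h(iy) \to \infty$ airtight as you state it: if some subsequence $h(iy_n)$ converged to a boundary point $w_0 \in \R$, it is \emph{not} automatically impossible for $g$ to tend to $\infty$ along that sequence, since the Cauchy transform $G_{\rho_g}$ blows up nontangentially at any atom of $\rho_g$ and can blow up tangentially elsewhere, and you have no control over whether $h(iy_n)$ approaches $w_0$ nontangentially.

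The missing ingredient is precisely what the paper invokes for the analogous Proposition~\ref{prop:additive_DLC}: Corollary~\ref{cor:anglim_composite}, which says that for \emph{univalent} Pick functions $f,g$ with $f(\C^+)\subset g(\C^+)$ and $f'(\infty),g'(\infty)\neq 0$, the composite $h=g^{-1}\circ f$ has $h'(\infty)=f'(\infty)/g'(\infty)$. That result depends on genuine boundary theory --- accessible boundary points, Lindel\"of's theorem, and the Julia--Wolff--Carath\'eodory machinery of Appendix~\ref{sec:anglim_and_deriv}, applied to the conformal inverse $g^{-1}$ --- and cannot be replaced by the elementary half-plane asymptotics you sketch; in fact your argument never uses the univalence of $g$, which is essential at this step. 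Once $h'(\infty)=1$ is secured this way, the remainder of your write-up carries through and recovers $h \in \PickC$ and $\AR(h) = \AR(f)-\AR(g)$, in line with the cited references of Bauer and Murayama.
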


For a $\PickC$-DLC $(f_t)_t$, it is clear that $I\ni t\mapsto \AR(f_t) \in[0,\infty)$ is non-decreasing because $0 \le \AR(f_{s,t})=\AR(f_t)-\AR(f_s)$ by Lemma~\ref{lem:chordal_DLC_to_REF}. 
Moreover, the following holds:

\begin{proposition} \label{prop:capacity_conti}
Let $(f_t)_{t}$ be a $\PickC$-DLC.
The angular residue $\AR(f_t)$ at $\infty$ is then a non-decreasing continuous function of $t \in I$. 
\end{proposition}

In what follows, we actually prove Proposition~\ref{prop:DCT_for_AR} below, which is slightly generalized.
Once Proposition~\ref{prop:DCT_for_AR} is established, taking $\tilde{f}=f_T$ shows the continuity of $t \mapsto \AR(f_t)$ on any interval $[0,T] \subset I$ for a $\PickC$-DLC $(f_t)$.

\begin{proposition} \label{prop:DCT_for_AR}
Suppose that $\tilde{f}$, $f$ and $f_n$, $n \in \N$, are functions in $\PickC$.
If each $f_n$ is univalent, $f_n \to f$ ($n \to \infty$) locally uniformly on $\C^+$, and $\tilde{f}(\C^+) \subset f(\C^+) \cap \bigcap_{n \in \N} f_n(\C^+)$, then
\[
\lim_{n \to \infty}\AR(f_n)=\AR(f).
\]
\end{proposition}

\begin{proof}
We build several functions from $\tilde{f}$, $f$ and $f_n$ to make their characteristic measures more tractable.

First, note that $f$ is also univalent by Hurwitz's theorem.
Then by assumption, the composites $\varphi:=f^{-1} \circ \tilde{f}$ and $\varphi_n:=f_n^{-1} \circ \tilde{f}$ are well-defined and, by Lemma~\ref{lem:chordal_DLC_to_REF}, belong to $\PickC$ with
\begin{equation} \label{eq:composed_AR}
\AR(\varphi)=\AR(\tilde{f})-\AR(f),
\quad
\AR(\varphi_n)=\AR(\tilde{f})-\AR(f_n).
\end{equation}
We have a trivial inequality
\begin{equation} \label{eq:dom_func_1}
\Im \varphi_n(z) = \Im (f_n^{-1}(\tilde{f}(z))) \le \Im \tilde{f}(z), \quad z \in \C^+,\ n \in \N.
\end{equation}
Moreover, it is easy to see that $\varphi_n \to \varphi$ ($n \to \infty$) locally uniformly, using the Lagrange inversion formula as in the proof of Proposition~\ref{prop:DLC_to_REF}.

Next, we fix $R>0$; it is unnecessary to specify the value of $R$ because any positive number will do.
We define
\[
\tilde{h}(z):=\tilde{f}(z+iR)-iR,\quad
\psi(z):=\varphi(z+iR)-iR,\quad
\psi_n(z):=\varphi_n(z+iR)-iR.
\]
These are again functions in $\PickC$; in fact,
\begin{align*}
\AR(\tilde{h})
&=\angle\!\lim_{z\to\infty} z(z-\tilde{h}(z))
=\angle\!\lim_{z\to\infty} z(z+iR-\tilde{f}(z+iR)) \\
&=\angle\!\lim_{z\to\infty} (z+iR)(z+iR-\tilde{f}(z+iR))
=\AR(\tilde{f})
\end{align*}
and similarly
\begin{equation} \label{eq:ver-shifted_AR}
\AR(\psi)=\AR(\varphi),\quad
\AR(\psi_n)=\AR(\varphi_n).
\end{equation}
Note that these shifted functions are defined on $\{\, z : \Im z > -R \,\}$, which contains $\R$.
By \eqref{eq:dom_func_1}, we have
\[
\Im \psi_n(x) \le \Im \tilde{h}(x),\quad x \in \R.
\]
In addition, $\psi_n(x) \to \psi(x)$ for every $x \in \R$ as $\varphi_n \to \varphi$.

By \eqref{eq:composed_AR} and \eqref{eq:ver-shifted_AR}, the proof finishes if we show
\[
\lim_{n \to \infty}\AR(\psi_n)=\AR(\psi).
\]
To see this, we observe from the Stieltjes inversion formula \eqref{eq:PN_interval} that the characteristic measures of $\tilde{h}$, $\psi$ and $\psi_n$ have their imaginary parts (divided by $\pi$) as the densities.
In particular, $x \mapsto \Im \tilde{h}(x)$ is integrable on $(\R, dx)$.
The properties in the preceding paragraph thus allow us to apply the Lebesgue dominated convergence theorem, which yields, as $n \to \infty$,
\begin{align*}
\AR(\psi_n)=\rho(\psi_n; \R)&=\frac{1}{\pi}\int_{\R}\Im \psi_n(x)\,dx \\
&\to \frac{1}{\pi}\int_{\R}\Im \psi(x)\,dx=\rho(\psi; \R)=\AR(\psi). \qedhere
\end{align*}
\end{proof}

\begin{remark}
The reader can observe that the assumption of Proposition~\ref{prop:DCT_for_AR} prohibits the ``escape to infinity'' that causes discontinuity in Remark~\ref{rem:mean-var_conti}.
\end{remark}

By Proposition~\ref{prop:capacity_conti} and Corollary~\ref{cor:continuity_Cauchy}, we have the following corollary:

\begin{corollary} \label{cor:rho_conti}
For a $\PickC$-DLC $(f_t)_{t}$, the mapping $t \mapsto \rho_{f_t}$ from $I$ to $\mea$ is weakly continuous. 
\end{corollary}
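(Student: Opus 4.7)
The plan is a direct application of Corollary~\ref{cor:continuity_Cauchy} combined with Theorem~\ref{thm:capacity_conti}. Fix $t_0 \in I$ and take any sequence $(t_n) \subset I$ with $t_n \to t_0$; since $I$ is first countable, it suffices to show $\rho_{f_{t_n}} \xrightarrow{\rm w} \rho_{f_{t_0}}$ in $\mea$.

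First, by Proposition~\ref{prop:class_P}~\eqref{HN:PN} and Definition~\ref{def:class_P}, for each $f_t \in \PickC$ we have the identity
\[
G[\rho_{f_t}](z) = z - f_t(z), \qquad z \in \C^+.
\]
Since the DLC $(f_t)_t$ is continuous in $t$ in the sense of locally uniform convergence (Definition~\ref{def:DLC}~\eqref{DLC3}), the family $(z,t) \mapsto z - f_t(z)$ is continuous on $\C^+ \times I$; in particular,
\[
G[\rho_{f_{t_n}}] \longrightarrow G[\rho_{f_{t_0}}] \qquad \text{locally uniformly on $\C^+$.}
\]

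Second, the total masses satisfy $\rho_{f_t}(\R) = \AR(f_t)$ by the definition in \eqref{HN:four_quantities} and Definition~\ref{def:class_P}, so by Theorem~\ref{thm:capacity_conti} we have $\rho_{f_{t_n}}(\R) = \AR(f_{t_n}) \to \AR(f_{t_0}) = \rho_{f_{t_0}}(\R)$. In particular the sequence $(\rho_{f_{t_n}})$ is uniformly bounded in total mass, which is the standing hypothesis in Corollary~\ref{cor:continuity_Cauchy}.

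With these two ingredients—locally uniform convergence of Cauchy transforms and convergence of total masses—Corollary~\ref{cor:continuity_Cauchy} yields $\rho_{f_{t_n}} \xrightarrow{\rm w} \rho_{f_{t_0}}$, which is the desired weak continuity. There is no substantive obstacle: all the work has already been done in Theorem~\ref{thm:capacity_conti} (the nontrivial continuity of the angular residue, which controls the total mass and prevents the escape of mass to infinity) and in Corollary~\ref{cor:continuity_Cauchy} (the classical equivalence between weak convergence and locally uniform convergence of Cauchy transforms together with convergence of total masses).
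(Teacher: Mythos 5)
Your proposal is correct and is exactly the argument the paper intends: the paper's own proof is the single sentence ``By Theorem~\ref{thm:capacity_conti} and Corollary~\ref{cor:continuity_Cauchy},'' and you have simply unwound this, using the identity $G[\rho_{f_t}](z)=z-f_t(z)$ and the continuity of $t\mapsto f_t$ to get locally uniform convergence of the Cauchy transforms, and Theorem~\ref{thm:capacity_conti} to get convergence of the total masses $\rho_{f_{t_n}}(\R)=\AR(f_{t_n})$. The only small misstatement is that ``uniform boundedness in total mass'' is the standing hypothesis of Proposition~\ref{prop:continuity_Cauchy}, not of Corollary~\ref{cor:continuity_Cauchy}, whose hypotheses you in fact verify in the sharper form (convergence of total masses) that the corollary requires.
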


The angular residue at infinity governs a part of the structure of a $\PickC$-DLC $(f_t)_t$, especially its modulus of continuity in $t$.
For example, we can prove a kind of converse to Proposition~\ref{prop:capacity_conti}.
Actually, this direction is easier to prove and known to experts.

\begin{proposition}[e.g.\ {\cite[Theorem~5.1]{Bau05}}]
\label{prop:converse_to_cap_conti}
Let $(f_t)_{t}$ be a one-parameter family in $\PickC$ that satisfies the following:
\begin{enumerate}
\item $f_t$ is univalent for $t \in I$;
\item $f_0=\mathrm{id}_{\C^+}$, and $f_s(\C^+) \supset f_t(\C^+)$ for $s,t \in I$ with $s \le t$;
\item $t \mapsto \AR(f_t)$ is continuous.
\end{enumerate}
Then $t \mapsto f_t$ is continuous, i.e., $(f_t)_{t}$ is a DLC in the sense of Definition~\ref{def:DLC}.
Moreover, if $\AR(f_t)=t$ for all $t \in I$, then $t \mapsto f_t(z)$ is locally Lipschitz on $I$ for each $z \in \C^+$.
\end{proposition}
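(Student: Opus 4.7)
The plan is to deduce both statements from the basic estimate
\[
|f_{s,t}(z) - z| \le \frac{\AR(f_t) - \AR(f_s)}{\Im z}, \qquad z \in \C^+,
\]
valid for all $s,t \in I$ with $s \le t$, where $f_{s,t} := f_s^{-1} \circ f_t$. This inequality follows from Lemma~\ref{lem:chordal_DLC_to_REF}, which guarantees $f_{s,t} \in \PickC$ with $\AR(f_{s,t}) = \AR(f_t) - \AR(f_s)$, combined with Proposition~\ref{prop:class_P}~\eqref{HN:ineq}, in which $\AR(f_{s,t})$ is identified with the optimal constant in the hydrodynamic estimate $|f_{s,t}(z) - z| \le C/\Im z$.

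The next step is to estimate $|f_t(z) - f_{t_0}(z)|$ for $t$ near $t_0 \in I$. If $t_0 \le t$ I would write $f_t = f_{t_0} \circ f_{t_0,t}$, and if $t \le t_0$ I would write $f_{t_0} = f_t \circ f_{t,t_0}$; either way, a straight-segment argument yields
\[
|f_t(z) - f_{t_0}(z)| \le \sup_{\zeta \in [z,\,f_{u,v}(z)]} |f_{u}'(\zeta)| \cdot |f_{u,v}(z) - z|,
\]
with $(u,v) = (t_0,t)$ or $(t,t_0)$ accordingly. A uniform bound on the derivative comes from the representation $f_r(z) = z - G_{\rho_{f_r}}(z)$ with $\rho_{f_r}(\R) = \AR(f_r)$ from Proposition~\ref{prop:class_P}~\eqref{HN:PN}, which yields
\[
|f_r'(z)| \le 1 + \int_\R \frac{\rho_{f_r}(dx)}{|x-z|^2} \le 1 + \frac{\AR(f_r)}{(\Im z)^2};
\]
by the assumed continuity of $t \mapsto \AR(f_t)$, this bound is uniform for $r$ in a compact neighborhood of $t_0$ and $\zeta$ in a compact subset of $\C^+$. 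Together with the basic estimate, for any compact $K \subset \C^+$ with $\eta := \inf_{z \in K} \Im z > 0$ one obtains
\[
\sup_{z \in K} |f_t(z) - f_{t_0}(z)| \le C_K \cdot |\AR(f_t) - \AR(f_{t_0})|
\]
for $t$ in a compact neighborhood of $t_0$, with $C_K$ depending only on $K$ and that neighborhood. Continuity of $\AR(f_\cdot)$ then yields locally uniform convergence $f_t \to f_{t_0}$, completing the proof that $(f_t)_t$ is a DLC.

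For the final statement, the same displayed estimate under the normalization $\AR(f_t) = t$ produces the local Lipschitz bound $|f_t(z) - f_{t_0}(z)| \le C(z, K_0)\,|t-t_0|$ for each $z \in \C^+$ and $t_0$ in any compact $K_0 \subset I$, so that $t \mapsto f_t(z)$ is in fact locally Lipschitz, and therefore locally absolutely continuous, on $I$. I do not anticipate a substantive obstacle: the main ingredients (Lemma~\ref{lem:chordal_DLC_to_REF}, Proposition~\ref{prop:class_P}, and the Cauchy-type bound on $f_r'$) are all in hand. The only point requiring a moment of care is to ensure that the segment $[z, f_{u,v}(z)]$ stays inside a fixed compact subset of $\C^+$ where the derivative bound is valid; this is automatic once $|v-u|$ is small enough, because $|f_{u,v}(z) - z| \to 0$ uniformly on compact subsets of $\C^+$ by the basic estimate.
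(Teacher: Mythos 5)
Your proposal is correct and uses the same core inputs as the paper (Lemma~\ref{lem:chordal_DLC_to_REF}, Proposition~\ref{prop:class_P}\,\eqref{HN:ineq}, and the estimate $|f_{s,t}(z)-z|\le(\AR(f_t)-\AR(f_s))/\Im z$), but the middle step is carried out a little differently. The paper writes $f_t(z)-f_s(z)=f_s(f_{s,t}(z))-f_s(z)$ and then evaluates this difference \emph{exactly} from the Nevanlinna representation of $f_s$, obtaining
\[
|f_t(z)-f_s(z)|\le\frac{\AR(f_t)-\AR(f_s)}{\Im z}\left\{1+\frac{\AR(f_s)}{(\Im z)^2}\right\},
\]
with no mean-value argument and hence no segment to control. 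You instead integrate $f_s'$ along $[z,f_{s,t}(z)]$ and invoke $|f_s'(\zeta)|\le 1+\AR(f_s)/(\Im\zeta)^2$; this yields the same final bound, and the segment concern you flag at the end actually dissolves without any smallness assumption on $|v-u|$: since $f_{u,v}$ is Pick, $\Im f_{u,v}(z)\ge\Im z$, so the entire segment $[z,f_{u,v}(z)]$ lies in $\{\,\zeta:\Im\zeta\ge\Im z\,\}$ and the derivative bound applies uniformly there with constant $1+\AR(f_u)/(\Im z)^2$. Both routes arrive at a Lipschitz estimate in the quantity $\AR(f_t)$, from which continuity and, under $\AR(f_t)=t$, local absolute continuity follow immediately, exactly as you conclude; the paper's exact identity is just marginally tidier in that it sidesteps the segment altogether.
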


\begin{proof}
Let $f_{s,t}:=f_s^{-1} \circ f_t$ for $s,t \in I$ with $s \le t$.
Then
\begin{align*}
f_t(z)-f_s(z)
&=f_s(f_{s,t}(z))-f_s(z) \\ \displaybreak[1]
&=f_{s,t}(z)-z+\int_{\R}\left(\frac{1}{x-f_{s,t}(z)}-\frac{1}{x-z}\right)\rho(f_s;dx) \\ \displaybreak[1]
&=(f_{s,t}(z)-z)\left\{1+\int_{\R}\frac{\rho(f_s;dx)}{(x-f_{s,t}(z))(x-z)}\right\}.
\end{align*}
Here, we know $f_{s,t} \in \PickC$ and $\AR(f_{s,t})=\AR(f_t)-\AR(f_s)$ from Lemma~\ref{lem:chordal_DLC_to_REF}.
Hence, by Proposition~\ref{prop:class_P}~\eqref{HN:ineq},
\begin{align}
\lvert f_t(z)-f_s(z) \rvert
&\le \frac{\AR(f_{s,t})}{\Im z}\left\{1+\frac{\AR(f_s)}{(\Im f_{s,t}(z))(\Im z)}\right\}
\notag \\
&\le \frac{\AR(f_t)-\AR(f_s)}{\Im z}\left\{1+\frac{\AR(f_s)}{(\Im z)^2}\right\}.
\label{eq:dc-abs_conti}
\end{align}
From the last inequality the proposition is obvious.
\end{proof}

\begin{corollary} \label{cor:constant_DLC}
Let $(f_t)_{t}$ be a $\PickC$-DLC.
If $\AR(f_a)=\AR(f_b)$ for some $a,b \in I$ with $a<b$, then $f_t=f_a$ for every $t \in [a,b]$.
\end{corollary}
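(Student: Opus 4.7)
The approach is straightforward once one exploits Lemma~\ref{lem:chordal_DLC_to_REF} together with the uniqueness in the Nevanlinna-type representation for $\PickC$ (Proposition~\ref{prop:class_P}~\eqref{HN:PN}). The plan is to show that the REF elements $f_{a,t}$ for $t\in[a,b]$ all collapse to the identity, which then forces $f_t=f_a$.

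First, I would observe that Lemma~\ref{lem:chordal_DLC_to_REF} gives $\AR(f_{s,u})=\AR(f_u)-\AR(f_s)\ge 0$ for all $s\le u$ in $I$, so $t\mapsto\AR(f_t)$ is non-decreasing. The hypothesis $\AR(f_a)=\AR(f_b)$ therefore sandwiches $\AR(f_t)$ between these equal values for every $t\in[a,b]$, yielding $\AR(f_t)=\AR(f_a)$ throughout $[a,b]$.

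Next, for each $t\in[a,b]$, set $f_{a,t}:=f_a^{-1}\circ f_t$. By Lemma~\ref{lem:chordal_DLC_to_REF} again, $f_{a,t}\in\PickC$ and
\[
\AR(f_{a,t})=\AR(f_t)-\AR(f_a)=0.
\]
By Proposition~\ref{prop:class_P}~\eqref{HN:PN}, there is a unique $\rho\in\mea$ with
\[
f_{a,t}(z)=z+\int_\R\frac1{x-z}\,\rho(dx),\qquad z\in\C^+,
\]
and by \eqref{HN:four_quantities} we have $\rho(\R)=\AR(f_{a,t})=0$. Thus $\rho\equiv 0$, so $f_{a,t}=\id_{\C^+}$, which gives $f_t=f_a\circ f_{a,t}=f_a$ for every $t\in[a,b]$.

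There is no real obstacle here: the result is essentially a trivial consequence of the combination of (i) the additivity of $\AR$ along compositions in $\PickC$, and (ii) the fact that the mapping $\PickC\ni f\mapsto\rho_f\in\mea$ is a bijection in which $\AR(f)=\rho_f(\R)$ determines the total mass. The only subtlety worth emphasizing in writing up is that, although $\PickC$-DLCs are defined on all of $I$, the restriction argument above needs $a\in I$ only so that $f_a^{-1}$ makes sense as a self-map (which is automatic since $f_a$ is univalent by Definition~\ref{def:DLC} and $f_a(\C^+)\supset f_t(\C^+)$ by the decreasing property).
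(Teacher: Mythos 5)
Your proof is correct, but it takes a genuinely different route from the paper. The paper proves this corollary in one line by appealing to the quantitative continuity estimate \eqref{eq:dc-abs_conti},
\[
\lvert f_t(z)-f_s(z) \rvert
\le \frac{\AR(f_t)-\AR(f_s)}{\Im z}\left\{1+\frac{\AR(f_s)}{(\Im z)^2}\right\},
\]
which was derived in Proposition~\ref{prop:converse_to_cap_conti}: once $\AR(f_t)=\AR(f_a)$ throughout $[a,b]$, this bound immediately forces $f_t=f_a$. Your argument instead passes through the REF element $f_{a,t}=f_a^{-1}\circ f_t$, uses Lemma~\ref{lem:chordal_DLC_to_REF} to get $\AR(f_{a,t})=0$, and then invokes the uniqueness of the characteristic measure in the representation of Proposition~\ref{prop:class_P}~\eqref{HN:PN} (together with the identity $\AR(f)=\rho_f(\R)$) to conclude $\rho_{f_{a,t}}\equiv 0$, so $f_{a,t}=\id$. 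Both are short and rely on the same underlying ingredients, but yours is arguably more structural---it isolates the qualitative fact that $\id$ is the \emph{only} element of $\PickC$ with zero angular residue---whereas the paper's route is a byproduct of a quantitative Lipschitz-type bound that it already needed for other purposes (namely, the absolute continuity statement in Proposition~\ref{prop:converse_to_cap_conti}). Either way the proof is complete, and your remark about $f_a^{-1}\circ f_t$ being well defined by univalence and nesting of images is correct and worth keeping.
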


\begin{proof}
Obvious from \eqref{eq:dc-abs_conti}.
\end{proof}

We next consider $\PickB$-REFs and $\PickB$-DLCs.
By \eqref{eq:REF-DLC}, the DLC associated with a $\PickB$-REF is a $\PickB$-DLC.
The next lemma gives the converse.

\begin{lemma} \label{lem:quasi-chordal_DLC_to_REF}
Suppose that $f,g \in \PickB$ are univalent and satisfy $f(\C^+) \subset g(\C^+)$.
Then $g^{-1} \circ f \in \PickB$, $\mean(g^{-1} \circ f)=\mean(f)-\mean(g)$, and $\AR(g^{-1} \circ f)=\AR(f)-\AR(g)$.
In particular, if $(f_t)_{t}$ is a $\PickB$-DLC, then the associated REF is a $\PickB$-REF, and $t \mapsto \AR(f_t)$ is nondecreasing.
\end{lemma}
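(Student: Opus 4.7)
The plan is to reduce to the $\PickC$ case already handled in Lemma~\ref{lem:chordal_DLC_to_REF} by absorbing the discrepancy between $\mean(f)$ and $\mean(g)$ into suitable outer and inner real shifts. Set $m_0:=\mean(g)-\mean(f)$ and put
\[
\breve f(z):=f(z)+\mean(f),\qquad \breve g(w):=g(w+m_0)+\mean(f).
\]
Using the shift rules for $\PickB$ recalled just before Proposition~\ref{prop:Cauchy_P_prime}, short calculations give $\mean(\breve f)=\mean(\breve g)=0$, so $\breve f,\breve g\in\PickC$. Since $\C^++m_0=\C^+$, the hypothesis $f(\C^+)\subset g(\C^+)$ transfers verbatim to $\breve f(\C^+)\subset\breve g(\C^+)$; moreover the characteristic measure of $\breve f$ coincides with $\rho_f$ and that of $\breve g$ is only a real translate of $\rho_g$, so $\AR(\breve f)=\AR(f)$ and $\AR(\breve g)=\AR(g)$.

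The next step is to unwind $\breve g^{-1}$: from $\breve g(w)=\zeta$ one obtains $w=g^{-1}(\zeta-\mean(f))-m_0$, and specializing to $\zeta=\breve f(z)=f(z)+\mean(f)$ yields the algebraic identity
\[
g^{-1}(f(z))=\breve g^{-1}(\breve f(z))+m_0,\qquad z\in\C^+.
\]
Applying Lemma~\ref{lem:chordal_DLC_to_REF} to the pair $(\breve f,\breve g)$ then gives $\breve h:=\breve g^{-1}\circ\breve f\in\PickC$ with $\AR(\breve h)=\AR(\breve f)-\AR(\breve g)=\AR(f)-\AR(g)$. Consequently $h:=g^{-1}\circ f=\breve h+m_0\in\PickB$, and applying the $\PickB$ outer-shift rule once more yields $\mean(h)=\mean(\breve h)-m_0=-m_0=\mean(f)-\mean(g)$ and $\AR(h)=\AR(\breve h)=\AR(f)-\AR(g)$, as claimed.

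For the ``in particular'' part, if $(f_t)_t$ is a $\PickB$-DLC, then Definition~\ref{def:DLC}(ii) supplies $f_s(\C^+)\supset f_t(\C^+)$ for $s\le t$, so the first part applied with $(f,g)=(f_t,f_s)$ gives $f_{s,t}=f_s^{-1}\circ f_t\in\PickB$ with $\AR(f_{s,t})=\AR(f_t)-\AR(f_s)$; non-negativity of this difference is immediate from $\rho_{f_{s,t}}\ge 0$, yielding monotonicity of $t\mapsto\AR(f_t)$.

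I do not foresee a genuine obstacle: the whole argument is bookkeeping around the shift rules of Definition~\ref{def:class_P} combined with the $\PickC$ input of Lemma~\ref{lem:chordal_DLC_to_REF}. The only point that requires a moment of thought is choosing the right combination of outer and inner shifts in $\breve f$ and $\breve g$ so that both become mean-zero simultaneously while the image inclusion is preserved; once this alignment is fixed, the formulas for $\mean(h)$ and $\AR(h)$ drop out automatically.
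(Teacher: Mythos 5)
Your proof is correct and follows the same strategy as the paper: reduce to the $\PickC$ case by real shifts, invoke Lemma~\ref{lem:chordal_DLC_to_REF}, and then transport the conclusions back via the shift rules. The only difference is bookkeeping: the paper uses the two symmetric inner shifts $\tilde f(z)=f(z+\mean(f))$ and $\tilde g(z)=g(z+\mean(g))$, both of which automatically land in $\PickC$ and preserve the images unchanged, whereas you apply an outer shift to $f$ and a combined inner-and-outer shift to $g$; your identity $g^{-1}\circ f=\breve g^{-1}\circ\breve f+m_0$ plays exactly the role of the paper's relation $\varphi(z)=\tilde\varphi(z-\mean(f))+\mean(g)$. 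The paper's choice is a touch more symmetric, but the content is the same.
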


\begin{proof}
Let $\varphi:=g^{-1} \circ f$.
Since the inner shifts $\tilde f(z)=f(z+\mean(f))$ and $\tilde g(z)=g(z+\mean(g))$ are members of $\PickC$,
we have $\tilde \varphi:=\tilde{g}^{-1} \circ \tilde{f} \in \PickC$ and $\AR(\tilde \varphi)=\AR(f)-\AR(g)$ by Lemma~\ref{lem:chordal_DLC_to_REF}.
In addition, the relation of $\varphi$ and $\tilde \varphi$ is seen from
\[
f(z)=\tilde f(z-\mean(f))
=\tilde g(\tilde \varphi(z-\mean(f)))
=g(\tilde \varphi(z-\mean(f))+\mean(g));
\]
that is, $\varphi(z)=\tilde \varphi(z-\mean(f))+\mean(g)$.
This expression implies $\varphi \in \PickB$, $\mean(\varphi)=\mean(f)-\mean(g)$, and $\AR(\varphi)=\AR(\tilde \varphi)$.
\end{proof}

\begin{theorem} \label{th:capacity_conti_prime}
For a $\PickB$-DLC $(f_t)_{t}$, the mapping $t\mapsto \mean(f_t)$ is continuous, and $t \mapsto \rho_{f_t}$ is weakly continuous.
In particular, $t \mapsto \AR(f_t)$ is also continuous (and non-decreasing).
\end{theorem}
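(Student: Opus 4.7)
The plan is to reduce this theorem to its already-established $\PickC$-analogue (Theorem \ref{thm:capacity_conti}) via a time-dependent real inner shift. First I would establish continuity of $m_t := \mean(f_t)$; then define $g_t(z) := f_t(z + m_t)$, verify that $(g_t)_t$ is a $\PickC$-DLC, and apply Theorem \ref{thm:capacity_conti} to conclude the continuity of $\AR(f_t) = \AR(g_t)$; finally, deduce the weak continuity of $\rho_{f_t}$ from vague continuity of the Cauchy transform combined with the convergence of the total masses.

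To control $m_t$, I first need $\AR(f_t)$ to be locally bounded. Lemma \ref{lem:quasi-chordal_DLC_to_REF} applied to the REF associated with $(f_t)$ gives $\AR(f_{s,t}) = \AR(f_t) - \AR(f_s) \ge 0$ for $s \le t$, so $t\mapsto \AR(f_t)$ is non-decreasing and hence locally bounded. Then the representation \eqref{def:class_P_prime} yields
\[
\lvert f_t(iy) - iy + m_t \rvert = \lvert G_{\rho_{f_t}}(iy) \rvert \le \frac{\AR(f_t)}{y}, \qquad y>0,
\]
so $m_t = \lim_{y \to +\infty}(iy - f_t(iy))$ with convergence uniform in $t$ on compact subsets of $I$. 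Since $t \mapsto f_t(iy)$ is continuous for each fixed $y$ (as $(f_t)$ is a DLC), $m_t$ is a locally uniform limit of continuous functions, hence continuous.

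Next, substituting \eqref{def:class_P_prime} into the definition of $g_t$ gives
\[
g_t(z) = z + \int_\R \frac{1}{(x - m_t) - z}\,\rho_{f_t}(dx) = z - G_{\tilde \rho_t}(z),
\]
where $\tilde \rho_t$ is the pushforward of $\rho_{f_t}$ by $x \mapsto x - m_t$. Thus $g_t \in \PickC$ with $\AR(g_t) = \tilde \rho_t(\R) = \AR(f_t)$. Crucially, since $z \mapsto z + m_t$ is a self-bijection of $\C^+$, we have $g_t(\C^+) = f_t(\C^+)$, whence the images of the $g_t$'s decrease in $t$; combined with $g_0 = \id$ and the joint continuity of $(z,t) \mapsto g_t(z)$ (from the continuity of both $m_t$ and $(f_t)$), this shows that $(g_t)$ is a $\PickC$-DLC. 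Theorem \ref{thm:capacity_conti} then yields the continuity of $\AR(f_t) = \AR(g_t)$.

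For the weak continuity of $\rho_{f_t}$, I observe that $G_{\rho_{f_t}}(z) = z - m_t - f_t(z)$ is jointly continuous in $(z,t)$ and that $\rho_{f_t}(\R) = \AR(f_t)$ is locally bounded, so Proposition \ref{prop:continuity_Cauchy} gives the vague continuity $\rho_{f_t} \xrightarrow{\rm v} \rho_{f_{t_0}}$ as $t \to t_0$; combined with the continuity of the total masses $\rho_{f_t}(\R)=\AR(f_t)$ just established, this upgrades to weak continuity. The principal subtlety is the apparent circularity between the continuity of $m_t$ and that of $\rho_{f_t}$: it is broken by the monotonicity of $\AR(f_t)$, obtained essentially for free from the REF formalism in the first step, which supplies the a priori control needed to extract $m_t$ from the large-$y$ asymptotics of $f_t(iy)$ uniformly in $t$.
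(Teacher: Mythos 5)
Your proof is correct and follows essentially the same strategy as the paper: establish continuity of $m_t$ first via an estimate of the form $\lvert iy - f_t(iy) - m_t \rvert \le \AR(f_t)/y$ together with the monotonicity of $\AR(f_t)$ supplied by Lemma~\ref{lem:quasi-chordal_DLC_to_REF}, then reduce to the $\PickC$-case via the inner shift $g_t(z) = f_t(z + m_t)$ and invoke Theorem~\ref{thm:capacity_conti}. The one point at which you diverge from the paper is the final step: the paper applies Corollary~\ref{cor:rho_conti} to the shifted $\PickC$-DLC $(\tilde f_t)$ to get weak continuity of the shifted measures $\tilde\rho_t$ and then translates this back to $\rho_{f_t}$ by a pushforward identity plus a uniform-continuity estimate with the portmanteau theorem, whereas you bypass the translation entirely by applying Proposition~\ref{prop:continuity_Cauchy} directly to $G_{\rho_{f_t}} = z - m_t - f_t(z)$ (joint continuity in $(z,t)$ plus local mass bound gives vague continuity) and then upgrading to weak continuity using the separately established continuity of the total masses $\rho_{f_t}(\R)=\AR(f_t)$ via Proposition~\ref{prop:vague_to_weak}. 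Both routes are valid; yours is marginally more direct since it never needs to manipulate the pushforwards $\tilde\rho_t(B)=\rho_{f_t}(B+m_t)$ explicitly. One small point worth flagging: Proposition~\ref{prop:continuity_Cauchy} is stated for sequences, so you implicitly rely on the metrizability of the vague topology (Proposition~\ref{prop:metrizability}) to pass from sequential to topological continuity; this is harmless but deserves a word.
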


\begin{proof}
We put $m_t:=\mean(f_t)$, $\rho_t:=\rho_{f_t}$, and $\ar_t:=\AR(f_t)$.
For $s,t \in I$ and $v>0$ we have
\begin{align*}
\lvert m_t-m_s \rvert 
&\le\lvert f_s(iv) - f_t(iv) \rvert +\left\lvert \int_\R \frac1{x-iv}\,\rho_s(dx) -  \int_\R \frac1{x-iv}\,\rho_t(dx)\right\rvert \\
&\le \lvert f_s(iv)-f_t(iv) \rvert + \frac{\ar_s}{v} + \frac{\ar_t}{v},
\end{align*}
and hence
\[
\limsup_{t\to s}\,\lvert m_t -m_s \rvert \le \frac{\ar_s+\lim_{t\downarrow s}\ar_{t}}{v},\qquad s \in I,\ v>0. 
\]
Letting $v \to +\infty$ now yields $\lim_{t\to s}\lvert m_t-m_s \rvert=0$, which shows that $m_t$ is continuous.

To see the weak continuity of $\rho_t$, let $\tilde f_t(z) := f_t(z+m_t)$, which is in $\PickC$.
The characteristic measure of $\tilde f_t$ is given by $\tilde\rho_t(B):=\rho_t(B+m_t)$, $B \in \cB(\R)$.
It is easy to see that $(\tilde f_t)_{t}$ is a $\PickC$-DLC, and hence the mapping $t \mapsto \tilde{\rho}_t$ is weakly continuous by Corollary \ref{cor:rho_conti}.
For any $T \in I$, any $s,t \in [0,T]$, and any bounded uniformly continuous function $f \colon \R \to \R$, we have
\begin{align*}
&\left\lvert \int_{\R} f(x) \,\rho_s(dx) - \int_{\R} f(x) \,\rho_t(dx) \right\rvert \\ \displaybreak[1]
&=\left\lvert \int_{\R} f(x+m_s) \,\tilde\rho_s(dx) - \int_{\R} f(x+m_t) \,\tilde\rho_t(dx) \right\rvert \\ \displaybreak[1]
&\le \left\lvert \int_{\R} f(x+m_t) \,\tilde\rho_s(dx) - \int_{\R} f(x+m_t) \,\tilde\rho_t(dx) \right\rvert
+ \left\lvert \int_{\R} \bigl( f(x+m_s) - f(x+m_t) \bigr) \,\tilde\rho_s(dx) \right\rvert \\ \displaybreak[1]
&\le \left\lvert \int_{\R} f(x+m_t) \,\tilde\rho_s(dx) - \int_{\R} f(x+m_t) \,\tilde\rho_t(dx) \right\rvert
+ \ar_T \sup_{x \in \R}\lvert f(x+m_s)-f(x+m_t) \rvert.
\end{align*}
The last expression tends to zero as $s \to t$, and hence $t \mapsto \rho_t$ is weakly continuous by the portmanteau theorem (Proposition~\ref{prop:portmanteau}).
\end{proof}

\subsection{Chordal Loewner differential equation in the classical setting}
\label{sec:chordal_LDE}

There are several works \cite{AAS83,GB92,Bau05} that treat the evolution equation for $\PickC$-(R)EFs and $\PickC$-(D)LCs in the absolutely continuous case.
We summarize these results in a way suitable for our context.

We begin with a brief mention of holomorphic semigroups.
Let $D \subsetneq \C$ be a simply connected domain.
A continuous family $(f_t)_{0\le t <+\infty}$ of holomorphic self-mappings of $D$ is a \emph{semigroup} if $f_0=\id_D$ and $f_{t+s}=f_t \circ f_s$ for all $0\le s,t <+\infty$.
It is known \cite[Theorem~10.1.4]{BCDM20} that $(f_t)$ satisfies the autonomous evolution equation
\[
\frac{d f_t(z)}{dt}=p(f_t(z)),\qquad p(z):=\lim_{t \to +0}\frac{f_t(z)-z}{t},\ z \in D,
\]
with $p$ called the \emph{infinitesimal generator} of $(f_t)$.
For example, if $D=\C^+$ and if $(f_t)$ consists of reciprocal Cauchy transforms (i.e., $f_t^\prime(\infty)=1$ as in Definition \ref{def:reciprocal}), then as is already seen in Proposition \ref{thm:MLK}, the infinitesimal generator of $(f_t)$ is given by \eqref{eq:MLK}.
If further $(f_t)_t \subset \PickC$, then the infinitesimal generator is exactly a Cauchy transform (with negative sign) \cite[Theorem~2]{GB92}:

\begin{proposition}
\label{prop:P-generator}
A holomorphic function $p\colon \C^+\to \C$ is the  infinitesimal generator of a semigroup in $\PickC$ if and only if there exist $a \ge 0$ and $\nu \in \prob$ such that $p(z)=-aG_\nu(z)=-G_{a\nu}(z)$.
\end{proposition}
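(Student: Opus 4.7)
The plan is to prove both directions by reducing to the Nevanlinna representation of the generator, with the angular residue $\AR(f_t)$ serving as the quantitative bridge.

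For the sufficiency direction, given $p(z) = -aG_\nu(z)$, I first rewrite $p$ in the monotone L\'evy--Khintchine form \eqref{eq:MLK} by setting $\eta(dx) := a(1+x^2)^{-1}\,\nu(dx)$ (a finite measure, since $\nu$ is a probability) and $\gamma := -\int_\R x\, d\eta(x)$; the identity $(1+xz)/(x-z) = -x - (1+x^2)/(z-x)$ shows that this choice reproduces exactly $p$. Proposition~\ref{thm:MLK} then produces a semigroup $(f_t) \subset \PickA$ with this generator. To upgrade from $\PickA$ to $\PickC$, I exploit that every function in $\PickA$ satisfies $\Im f_s(z) \ge \Im z$ (immediate from the Nevanlinna representation with linear coefficient $1$); combined with the bound $\lvert p(z) \rvert \le a/\Im z$, the integral equation $f_t(z) = z + \int_0^t p(f_s(z))\,ds$ yields $\lvert f_t(z) - z \rvert \le at/\Im z$, which by Proposition~\ref{prop:class_P}~\eqref{HN:ineq} places $f_t$ in $\PickC$.

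For the necessity direction, let $(f_t)$ be a semigroup in $\PickC$ with generator $p$. Since $f_{s+t} = f_s \circ f_t$ with $f_s, f_{s+t} \in \PickC$, Lemma~\ref{lem:chordal_DLC_to_REF} gives $\AR(f_{s+t}) = \AR(f_s) + \AR(f_t)$, and with the continuity supplied by Theorem~\ref{thm:capacity_conti} I deduce $\AR(f_t) = at$ for some $a \ge 0$. Proposition~\ref{prop:class_P}~\eqref{HN:ineq} then yields $\lvert f_t(z) - z \rvert \le at/\Im z$, and letting $t \to 0^+$ gives the uniform estimate $\lvert p(z) \rvert \le a/\Im z$. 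Moreover $p$ is Pick, being the locally uniform limit of the Pick functions $z \mapsto (f_t(z)-z)/t = -G_{\rho_{f_t}/t}(z)$.

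It remains to feed these data into the Nevanlinna representation $p(z) = \alpha + \beta z + \int_\R (1/(x-z) - x/(1+x^2))\,d\rho(x)$. The bound $\lvert p(iy) \rvert \le a/y$ forces $\beta = 0$ via $p(iy)/iy \to \beta$, and via \eqref{eq:PN_total} it also gives $\rho(\R) \le a < \infty$; taking $y \to +\infty$ in $\Re p(iy)$ then pins down $\alpha = \int_\R x(1+x^2)^{-1}\,d\rho(x)$, collapsing the representation to $p = -G_\rho$. Setting $a' := \rho(\R)$ and $\nu := \rho/a'$ when $a' > 0$ (and any $\nu \in \prob$ otherwise) finishes the proof. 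I expect the main delicacy to be this last Nevanlinna computation: one has to track the decay of both $\Im p(iy)$ and $\Re p(iy)$ to eliminate the linear and additive terms, but once $\rho(\R) < \infty$ is secured, the rest is dominated convergence.
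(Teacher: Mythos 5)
Your proof is correct. The paper itself does not prove Proposition~\ref{prop:P-generator} — it only cites Goryainov and Ba \cite[Theorem~2]{GB92} — so there is no internal proof to compare against; your argument is a self-contained one that fits the paper's tools.

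Two small remarks. First, in the necessity direction invoking Theorem~\ref{thm:capacity_conti} is more than you need: the additivity $\AR(f_{s+t})=\AR(f_s)+\AR(f_t)$ from Lemma~\ref{lem:chordal_DLC_to_REF} together with nonnegativity of $\AR$ already gives monotonicity ($\AR(f_t)-\AR(f_s)=\AR(f_{t-s})\ge 0$ for $s\le t$), and an additive nonnegative function on $[0,+\infty)$ is automatically linear; no continuity input is required. Second, there is a shorter alternative to the Nevanlinna bookkeeping in the necessity direction: since $(f_t(z)-z)/t=-G_{\rho_{f_t}/t}(z)$ with $\rho_{f_t}(\R)/t=a$ fixed, vague compactness (Proposition~\ref{prop:Alaoglu_for_vague}) and continuity of the Cauchy transform (Proposition~\ref{prop:continuity_Cauchy}) show that every vague subsequential limit $\rho$ of $(\rho_{f_t}/t)_{t\downarrow 0}$ satisfies $p=-G_\rho$, and the uniqueness of the Cauchy transform then determines $\rho$ with $\rho(\R)\le a$. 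That said, your explicit elimination of $\beta$ and then $\alpha$ from the Nevanlinna representation, using $|p(iy)|\le a/y$, is also correct; the one step worth spelling out is the dominated convergence justifying $\Re p(iy)\to\alpha-\int_\R x(1+x^2)^{-1}\,\rho(dx)$, which is indeed valid because $\rho(\R)<+\infty$ and the integrand is bounded by $1$ for $y\ge 1$.
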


As a non-autonomous variant, a function $p \colon D \times I \to \C$ is called a \emph{Herglotz vector field} on $D$ if $p(z,t)$ is an infinitesimal generator (i.e., the infinitesimal generator of a semigroup) on $D$ for each $t \in I$ and has certain regularity in $t$.
In general theory \cite{BCDM12, CDMG10, BCDM09}, the non-autonomous equations for evolution families and Loewner chains are written in terms of this vector field.

As one can imagine from Proposition~\ref{prop:P-generator}, to a $\PickC$-DLC a Herglotz vector field $p$ of the following form corresponds:
\begin{equation} \label{eq:reduced_HVF}
p(z,t)=-G[\nu_t](z)=\int_{\mathbb{R}}\frac{1}{x-z}\,\nu_t(dx),
\qquad z \in \C^+,~t \in I. 
\end{equation}
Here, $(\nu_t)_{t \in I}$ is a family in $\mea$ such that $t \mapsto p(z,t)$ is measurable for each fixed $z \in \C^+$.
If a Herglotz vector field $p$ has the form \eqref{eq:reduced_HVF}, let us call $(\nu_t)_{t}$ the \emph{driving kernel} of $p$.
Moreover, if $\nu_t$ are all in $\prob$, then we say that $p$ is \emph{normalized}. 
For later use, we here provide equivalent conditions for the $t$-measurability of $p$ in terms of $(\nu_t)_t$.
Below, let $\overline{\bf M}_1(\R)$ be the set of Borel sub-probability measures on $\R$ and $\cB(\overline{\bf M}_1(\R))$ be the Borel $\sigma$-algebra of $\overline{\bf M}_1(\R)$ endowed with vague or weak topology; here, either topology generates the same $\sigma$-algebra \cite[Corollary~D.6]{Mur23}.
$\mathrm{Hol}(\C^+,-\overline{\C^+})$ denotes the set of holomorphic functions on $\C^+$ taking values in $-\overline{\C^+}$ equipped with the topology of locally uniform convergence.

\begin{lemma} \label{lem:HVF_measurability}
Let $(\nu_t)_{t}$ be a family in $\overline{\bf M}_1(\R)$.
Then the following are equivalent:
\begin{enumerate}
\item \label{i:def_Borel_mble}
the mapping $t \mapsto \nu_t$ is $\cB(I)/\cB(\overline{\bf M}_1(\R))$-measurable;
\item \label{i:lu_Cauchy_mble}
$t \mapsto G[\nu_t]$ is $\cB(I)/\cB(\mathrm{Hol}(\C^+,-\overline{\C^+}))$-measurable;
\item \label{i:pw_Cauchy_mble}
$t \mapsto G[\nu_t](z)$ is $\cB(I)/\cB(-\overline{\C^+})$-measurable for each $z \in \C^+$;
\item \label{i:bd_conti_mble}
$t \mapsto \int_{\R}f(x)\,\nu_t(dx)$ is $\cB(I)/\cB(\C)$-measurable for each $f \in C_{\rm b}(\R)$;
\item \label{i:cpt_conti_mble}
$t \mapsto \int_{\R}f(x)\,\nu_t(dx)$ is $\cB(I)/\cB(\C)$-measurable for each $f \in C_{\rm c}(\R)$.
\end{enumerate}
If $\nu_t \in \prob$ for all $t \in I$, then $\cB(\overline{\bf M}_1(\R))$ in \eqref{i:def_Borel_mble} can be replaced by $\cB(\prob)$.
\end{lemma}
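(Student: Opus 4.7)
The plan is to prove the cycle of implications (i) $\Rightarrow$ (iv) $\Rightarrow$ (v) $\Rightarrow$ (iii) $\Rightarrow$ (ii) $\Rightarrow$ (i), noting that (ii) $\Rightarrow$ (iii) is immediate since evaluation at a fixed point is continuous on $\mathrm{Hol}(\C^+,-\overline{\C^+})$ with the topology of locally uniform convergence. The implications (i) $\Rightarrow$ (iv) $\Rightarrow$ (v) are routine: for $f \in C_{\rm b}(\R)$, the functional $\mu \mapsto \int f\,d\mu$ is continuous on $\overline{\bf M}_1(\R)$ with weak topology (and for $f\in C_{\rm c}$, with vague topology), and (iv) $\Rightarrow$ (v) follows from $C_{\rm c}(\R) \subset C_{\rm b}(\R)$.

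For (v) $\Rightarrow$ (iii), I would fix $z \in \C^+$ and note that $x \mapsto (z-x)^{-1}$ belongs to $C_\infty(\R)$. By density of $C_{\rm c}(\R)$ in $C_\infty(\R)$ in sup-norm, pick $f_n \in C_{\rm c}(\R)$ with $\lVert f_n - (z-\cdot)^{-1}\rVert_\infty \to 0$. The uniform bound $\nu_t(\R) \le 1$ yields
\begin{equation*}
\sup_{t \in I}\left\lvert \int_\R f_n(x)\,\nu_t(dx) - G[\nu_t](z) \right\rvert \le \lVert f_n - (z-\cdot)^{-1}\rVert_\infty,
\end{equation*}
so $t \mapsto G[\nu_t](z)$ is a uniform (hence pointwise) limit of measurable functions and is itself measurable.

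For (iii) $\Rightarrow$ (ii), I would equip $\mathrm{Hol}(\C^+, -\overline{\C^+})$ with the standard complete metric
\begin{equation*}
d(f,g) := \sum_{n\ge 1} 2^{-n}\bigl(1 \wedge \sup_{z \in K_n}\lvert f(z)-g(z) \rvert\bigr)
\end{equation*}
for a compact exhaustion $(K_n)_n$ of $\C^+$ (each $K_n$ chosen as the closure of an open set). Fix a countable dense set $D \subset \C^+$; by continuity of holomorphic functions, each $\sup_{z \in K_n}$ equals $\sup_{z \in K_n \cap D}$. Then for any fixed $f_0 \in \mathrm{Hol}(\C^+, -\overline{\C^+})$, the map $t \mapsto d(G[\nu_t], f_0)$ is a countable sup-and-sum of measurable functions by (iii) applied to $z \in D$, and Borel-measurability of $t \mapsto G[\nu_t]$ into the separable metric space $\mathrm{Hol}(\C^+, -\overline{\C^+})$ follows.

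The closing step (ii) $\Rightarrow$ (i) is what I expect to be the main obstacle and rests on the following topological-embedding property: the Cauchy transform $G : \overline{\bf M}_1(\R) \to \mathrm{Hol}(\C^+, -\overline{\C^+})$ is continuous (by Proposition~\ref{prop:continuity_Cauchy} combined with the extension from $C_{\rm c}$ to $C_\infty$ via Proposition~\ref{prop:vague_bounded_set}) and injective (by Stieltjes inversion). Since $\overline{\bf M}_1(\R)$ with the vague topology is compact metrizable by Helly's selection theorem and $\mathrm{Hol}(\C^+, -\overline{\C^+})$ is Hausdorff, $G$ is a homeomorphism onto its image; hence $G^{-1}$ is continuous on the image and therefore Borel-measurable. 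The composition $t \mapsto \nu_t = G^{-1}(G[\nu_t])$ is then Borel-measurable into $\overline{\bf M}_1(\R)$. The final assertion about $\cB(\prob)$ is routine since $\prob$ is a Borel subset of $\overline{\bf M}_1(\R)$ and the induced Borel structures coincide.
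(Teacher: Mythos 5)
Your proof is correct and more self-contained than the paper's, which splits the work into two references: the equivalence of (i)--(iii) is delegated to Lemma~\ref{lem:meas_of_driv_proc} (which in turn uses Lemma~\ref{lem:Borel_by_cylinder} to identify $\cB(\mathrm{Hol}(\C^+,-\overline{\C^+}))$ with the cylinder $\sigma$-algebra $\sigma(\pi_z : z \in \C^+)$ and then runs a good-sets argument for (iii) $\Rightarrow$ (ii)), while the equivalence of (iv) and (v) with the rest is referred to Murayama~\cite[Appendix~D]{Mur23}. Your cyclic proof (i) $\Rightarrow$ (iv) $\Rightarrow$ (v) $\Rightarrow$ (iii) $\Rightarrow$ (ii) $\Rightarrow$ (i) handles all five items at once and replaces the cited parts with direct arguments: the (v) $\Rightarrow$ (iii) step via sup-norm approximation of $(z-\cdot)^{-1}$ by $C_{\rm c}$ functions using $\nu_t(\R)\le 1$, and the (i) $\Rightarrow$ (iv) $\Rightarrow$ (v) steps via continuity of integration functionals plus the coincidence of the weak and vague Borel $\sigma$-algebras. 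For (iii) $\Rightarrow$ (ii) your route through the metric $d$ and a countable dense subset of $\C^+$ is essentially dual to the paper's (both exploit separability and the fact that a holomorphic function is determined on a dense set), but you phrase it as ``preimages of open balls are measurable, and balls generate the Borel $\sigma$-algebra in a separable metric space'' rather than going through the cylinder $\sigma$-algebra; either is fine. The (ii) $\Rightarrow$ (i) step via the compactness of $\overline{\bf M}_1(\R)$ and the homeomorphism $G$ onto its image is the same mechanism as in Lemma~\ref{lem:meas_of_driv_proc}. One small remark: when you note that ``(ii) $\Rightarrow$ (iii) is immediate,'' this is true but superfluous, since (iii) $\Rightarrow$ (ii) already sits inside your cycle; you could drop it.
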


We include the proof that \eqref{i:def_Borel_mble}--\eqref{i:pw_Cauchy_mble} in Lemma~\ref{lem:HVF_measurability} are equivalent in Appendix~\ref{sec:measurability}.
For the proof that either \eqref{i:bd_conti_mble} or \eqref{i:cpt_conti_mble} is equivalent to \eqref{i:def_Borel_mble}, see Murayama~\cite[Appendix~D]{Mur23} and references therein.
We simply say that $(\nu_t)_{t}$ is \emph{measurable}, omitting the reference to Borel $\sigma$-algebras, if one of \eqref{i:def_Borel_mble}--\eqref{i:cpt_conti_mble} in Lemma~\ref{lem:HVF_measurability} holds.

Based on a Herglotz vector field $p$ of the form \eqref{eq:reduced_HVF}, the \emph{chordal Loewner differential equation} is formulated as in the next proposition; for the proof, see Bauer \cite[Theorem 5.3]{Bau05} (cf.\ Goryainov and Ba \cite[Theorem~3]{GB92}).

\begin{proposition}
\label{th:chordal_LDE}
Let $(f_t)_{t}$ be a $\PickC$-DLC with $\AR(f_t)=t$ and $(f_{s,t})_{s\le t}$ the associated REF.
Then there exist a normalized Herglotz vector field $p$ of the form \eqref{eq:reduced_HVF} and a Lebesgue-null set $N \in \cB(I)$ such that
\begin{equation} \label{eq:chordal_LDE}
\frac{\partial f_t(z)}{\partial t}=\frac{\partial f_t(z)}{\partial z}p(z,t),
\qquad t \in I \setminus N,\, z \in \C^+,
\end{equation}
and for each $t \in I$,
\begin{equation} \label{eq:chordal_LODE}
\frac{\partial f_{s,t}(z)}{\partial s}=-p(f_{s,t}(z),s),
\qquad s \in [0,t] \setminus N,\ z \in \C^+.
\end{equation}
Moreover, $p$ is unique in the sense that, if $q$ is a normalized Herglotz vector field of the form \eqref{eq:reduced_HVF} and $M\subset I$ with Lebesgue measure $0$ such that \eqref{eq:chordal_LDE} or \eqref{eq:chordal_LODE} holds with $p$ and $N$ replaced by $q$ and $M$, respectively, then $p(\cdot,t) = q(\cdot,t)$ for a.e.\ $t$.  
\end{proposition}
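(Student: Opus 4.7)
The plan is to follow the classical approach (used by Bauer and by Goryainov--Ba), constructing the driving kernel directly from the REF via a differentiation argument.

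First I would establish Lipschitz bounds. For $t$ fixed, as $s$ decreases from $t$ to $0$ the family $(f_{s,t})$ is a $\PickC$-DLC in $s$ (read backwards) with $\AR(f_{s,t}) = t-s$, so the estimate \eqref{eq:dc-abs_conti} from Proposition~\ref{prop:converse_to_cap_conti} yields
\[
\lvert f_{s,t}(z) - f_{s',t}(z) \rvert \le \frac{s'-s}{\Im z}\Bigl(1 + \frac{t-s'}{(\Im z)^2}\Bigr), \qquad 0 \le s \le s' \le t,
\]
so $s \mapsto f_{s,t}(z)$ is locally Lipschitz on $[0,t]$ with constants controlled by $\Im z$. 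Moreover, since $\AR(f_{s,s+h}) = h$, Proposition~\ref{prop:class_P}~\eqref{HN:ineq} gives $\lvert f_{s,s+h}(z) - z \rvert \le h/\Im z$, which provides equicontinuity in $z$ of the difference quotients $(f_{s,s+h}(z)-z)/h$.

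Second I would define the vector field. Fix a countable dense subset $\AC \subset \C^+$. By the Lipschitz property and one-dimensional Lebesgue differentiation, for each $z \in \AC$ there is a Lebesgue-null set $N_z \subset I$ outside which the right derivative
\[
p(z, s) := -\lim_{h \to 0+}\frac{f_{s,s+h}(z) - z}{h}
\]
exists. Set $N := \bigcup_{z \in \AC} N_z$. Writing $f_{s,s+h}(z) - z = -G[\rho_{f_{s,s+h}}](z)$ with $\rho_{f_{s,s+h}}(\R) = h$, the probability measures $\sigma^s_h := \rho_{f_{s,s+h}}/h$ have Cauchy transforms converging pointwise on $\AC$ to $-p(\cdot, s)$ for $s \notin N$. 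Proposition~\ref{prop:continuity_Cauchy} then yields vague convergence $\sigma^s_h$ to some sub-probability measure $\nu_s$ and $p(\cdot, s) = -G[\nu_s]$ holomorphic on $\C^+$. That $\nu_s$ has total mass one follows from $\lim_{y \to +\infty} iy\,p(iy, s) = -1$ via Proposition~\ref{prop:Maassen_2.1}, so $p$ is a normalized Cauchy vector field; $s$-measurability is inherited from the countable construction via Lemma~\ref{lem:HVF_measurability}.

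Third I would derive the equations. For the ODE \eqref{eq:chordal_LODE}, the semigroup identity $f_{s,t} = f_{s,s+h} \circ f_{s+h,t}$ gives
\[
\frac{f_{s,t}(z) - f_{s+h,t}(z)}{h} = \frac{f_{s,s+h}(w_h) - w_h}{h}, \qquad w_h := f_{s+h,t}(z),
\]
and passing to the limit $h \to 0+$, using $w_h \to f_{s,t}(z)$ together with the equicontinuity, gives $\partial_s^+ f_{s,t}(z) = -p(f_{s,t}(z), s)$ for $s \in [0,t] \setminus N$; Lipschitz continuity upgrades this to the almost-everywhere statement. The PDE \eqref{eq:chordal_LDE} then follows by differentiating $f_u(z) = f_t(f_{t,u}(z))$ in $t$ at fixed $u$ and substituting. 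Uniqueness is immediate: two such fields must produce the same right derivative along $f_{s,t}(z)$, and since $f_{s,t}(\C^+)$ has nonempty interior in $\C^+$ for $s<t$, they agree as holomorphic functions for a.e.\ $s$.

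The main obstacle lies in the second step: ensuring that the null set $N$ can be taken independent of $z$ (resolved by combining the Lipschitz bound with the countable dense-set argument to extend to all of $\C^+$) and that the limit $\nu_s$ has total mass exactly one (relying critically on the normalization $\AR(f_t) = t$, which pins the total mass of $\rho_{f_{s,s+h}}$ to $h$). Without this normalization one would only obtain a sub-probability measure, giving a non-normalized Cauchy vector field.
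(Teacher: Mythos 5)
The paper does not give its own proof of Proposition~\ref{th:chordal_LDE}; it cites Bauer \cite[Theorem~5.3]{Bau05} and Goryainov--Ba \cite[Theorem~3]{GB92}, and supplements the citation with Remark~\ref{lem:no_mass_escape} explaining the delicate normalization step. Your reconstruction follows that same classical blueprint, but it has one genuine gap precisely at that delicate point.

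The gap is the claim that $\nu_s(\R)=1$, i.e.\ that the vector field is \emph{normalized}. You write that $\sigma^s_h \xrightarrow{\rm v} \nu_s$ for some sub-probability $\nu_s$, and then assert that ``$\nu_s$ has total mass one follows from $\lim_{y\to+\infty}iy\,p(iy,s)=-1$ via Proposition~\ref{prop:Maassen_2.1}.'' But Proposition~\ref{prop:Maassen_2.1} only gives $\lim_{y\to+\infty}iy\,p(iy,s)=-\nu_s(\R)$; the statement that this equals $-1$ \emph{is} the statement $\nu_s(\R)=1$ that you are trying to prove, so the argument is circular. You are implicitly interchanging the limits $h\to0+$ and $y\to+\infty$: for each fixed $h$ one has $iy\bigl(-G[\sigma^s_h](iy)\bigr)\to-1$ as $y\to+\infty$, but after the vague limit $h\to0+$ mass may escape to infinity and only $\nu_s(\R)\le1$ survives. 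Your final sentence (``$\AR(f_t)=t$ pins the total mass of $\rho_{f_{s,s+h}}$ to $h$'') correctly notes that the $\sigma^s_h$ are probability measures, but that alone does not prevent the vague limit from being a strict sub-probability. The missing argument is exactly the one of Aleksandrov--Aleksandrov--Sobolev recalled in Remark~\ref{lem:no_mass_escape}: integrate the PDE along $z=iy$ to get $f_t(iy)-iy=-\int_0^t f_s^\prime(iy)\,G[\nu_s](iy)\,ds$, estimate $y\Im(f_t(iy)-iy)$ by the integral, and then pass to $y\to+\infty$ using \eqref{eq:angderiv_at_infty} and Proposition~\ref{prop:Maassen_2.1} to obtain $t\le\int_0^t\nu_s(\R)\,ds$, which forces $\nu_s(\R)=1$ for a.e.\ $s$. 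Note that this integration step also requires that you have the PDE and its integrated form in hand \emph{before} concluding normalization, so the logical order of your proof needs adjustment: first derive the differential equations with a possibly non-normalized Cauchy vector field, then run the mass-conservation argument.

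Two smaller remarks. First, there is a sign slip: with $f_{s,s+h}(z)-z=-G[\rho_{f_{s,s+h}}](z)$ one has $\lim_{h\to0+}h^{-1}(f_{s,s+h}(z)-z)=-G[\nu_s](z)$, which is already the paper's $p(z,s)$; your extra minus sign makes $p=G[\nu_s]$ and then makes the resulting ODE $\partial_s^+f_{s,t}(z)=+p(f_{s,t}(z),s)$ rather than $-p$. Second, the Lebesgue-differentiation step should be phrased as differentiating $t\mapsto f_t(z)=f_{0,t}(z)$ (which is Lipschitz) and then using univalence of $f_{t_0}$ to extract the right derivative of $h\mapsto f_{t_0,t_0+h}(z)$ at $h=0$; the way it is currently stated it is not clear that this is what you are invoking.
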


\begin{proposition}[{\cite[Theorem~5.6]{Bau05}, \cite[Theorem~5.5]{Bau05}; cf.\ \cite[Theorem~4]{GB92}}]
\label{th:unique_sol_to_LDE}
Let $p$ be a normalized Herglotz vector field of the form \eqref{eq:reduced_HVF}. 
Then there exists a unique $\PickC$-DLC $(f_t)$ with $\AR(f_t)=t$ that satisfies \eqref{eq:chordal_LDE}.
Moreover, there exists a unique $\PickC$-REF $(f_{s,t})$ that enjoys \eqref{eq:chordal_LODE}. The families $(f_t)$ and $(f_{s,t})$ are associated with each other. 
\end{proposition}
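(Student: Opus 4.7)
The plan is to construct the REF $(f_{s,t})_{s\le t}$ directly from the ODE \eqref{eq:chordal_LODE} via Carath\'eodory theory and then set $f_t := f_{0,t}$, verifying afterwards that $(f_t)$ is the desired $\PickC$-DLC. Since $p$ is normalized, Definition~\ref{def:HVF} and Proposition~\ref{prop:Maassen_2.1} give $p(\cdot,s) = -G[\nu_s]$ with $\nu_s \in \prob$; hence $p$ is holomorphic in $z$, Borel-measurable in $s$, locally bounded on $\C^+_b \times I$ for each $b>0$, and $\Im p \ge 0$, so the backward flow preserves $\C^+$. One also has the decay $wp(w,s) \to -1$ as $w \to \infty$ nontangentially for every $s$.

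For each fixed $t \in I$ and $z \in \C^+$, I would apply Carath\'eodory's existence/uniqueness theorem to the initial value problem $\partial_s w = -p(w,s)$, $w(t)=z$, on $[0,t]$, and define $f_{s,t}(z)$ to be the solution. Holomorphic dependence on the initial datum (standard for ODEs with holomorphic right-hand sides) makes $f_{s,t}$ holomorphic in $z$; uniqueness gives the cocycle identity $f_{s,u} = f_{s,t}\circ f_{t,u}$; and joint continuity of $(s,t) \mapsto f_{s,t}$ follows by a Gronwall estimate on the integrated equation together with local boundedness of $p$. Univalence comes from Proposition~\ref{th:univalent_REF}, so $(f_{s,t})$ is an REF. Using the integrated form
\begin{equation*}
f_{s,t}(z) = z + \int_s^t p(f_{r,t}(z),r)\,dr,
\end{equation*}
the asymptotics $wp(w,r)\to -1$, and a normal-family argument ensuring $f_{r,t}(z)\to\infty$ in a Stolz angle uniformly in $r\in[s,t]$ as $z\to\infty$ in a Stolz angle, I would invoke dominated convergence to obtain $\angle\lim_{z\to\infty} z(z-f_{s,t}(z)) = t-s$. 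Proposition~\ref{prop:class_P} then upgrades $f_{s,t}$ to an element of $\PickC$ with $\AR(f_{s,t})=t-s$.

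Setting $f_t := f_{0,t}$ yields $f_0=\id$, $\AR(f_t)=t$, and $f_t(\C^+) = f_s(f_{s,t}(\C^+)) \subset f_s(\C^+)$ for $s\le t$, while continuity is inherited from joint continuity of $(f_{s,t})$; hence $(f_t)$ is the desired $\PickC$-DLC, associated with $(f_{s,t})$. To derive \eqref{eq:chordal_LDE} from \eqref{eq:chordal_LODE}, I would differentiate the cocycle identity $f_{s,t+h}(z) = f_{s,t}(f_{t,t+h}(z))$ at $h=0$: the integrated ODE gives $f_{t,t+h}(z)-z = \int_t^{t+h} p(f_{r,t+h}(z),r)\,dr \sim h\,p(z,t)$ as $h\to 0^+$ for a.e.\ $t$, whence $\partial_t f_{s,t}(z) = f_{s,t}'(z)\,p(z,t)$ a.e., which at $s=0$ is \eqref{eq:chordal_LDE}. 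For uniqueness, if $(\tilde f_t)$ is another such $\PickC$-DLC satisfying \eqref{eq:chordal_LDE} with $\AR(\tilde f_t)=t$, the uniqueness clause of Proposition~\ref{th:chordal_LDE} forces the corresponding Cauchy vector field to equal $p$ a.e., and Carath\'eodory uniqueness for \eqref{eq:chordal_LODE} then identifies the associated REFs; the same argument handles uniqueness of $(f_{s,t})$.

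The step I expect to be the main obstacle is the $\PickC$ membership of $f_{s,t}$, or more precisely the uniform nontangential asymptotics $f_{r,t}(z) \to \infty$ in a Stolz angle required to interchange the angular limit with the $r$-integral when computing $\AR(f_{s,t})$. The natural bound $\lvert f_{r,t}(z)-z\rvert \le \int_r^t \lvert p(f_{u,t}(z),u)\rvert\,du$ together with the decay of $\lvert p(\cdot,u)\rvert$ at infinity should suffice, but controlling these estimates uniformly in $r$ and $u$ is the delicate complex-analytic input, and was essentially the content of Bauer's argument in \cite{Bau05} that this proposition reproduces.
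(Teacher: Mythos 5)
Your proof is correct and follows the route the paper itself relies on via the cited references (Bauer and Goryainov--Ba): build the reverse evolution family by solving the chordal Loewner ODE \eqref{eq:chordal_LODE} via Carath\'eodory theory, note that $\Im p \ge 0$ makes the backward flow preserve $\C^+$ with $\Im f_{s,t}(z) \ge \Im z$, establish $\PickC$-membership and $\AR(f_{s,t})=t-s$ using $|p(w,s)|\le 1/\Im w$ and the nontangential decay $w\,p(w,s)\to -1$ together with dominated convergence, then set $f_t:=f_{0,t}$, derive \eqref{eq:chordal_LDE} from the cocycle identity, and obtain uniqueness from Proposition~\ref{th:chordal_LDE} plus Carath\'eodory uniqueness. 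The paper gives no proof and simply cites this construction, and your identification of the uniform nontangential asymptotics needed to interchange the angular limit with the $r$-integral as the delicate step is accurate (the explicit bound $|f_{r,t}(z)-z|\le (t-r)/\Im z$, uniform in $r$, does the job without any real need for normal families).
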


By Proposition~\ref{th:chordal_LDE}, there exists an a.e.\ unique family $(\nu_t)_{t}$ that satisfies \eqref{eq:chordal_LDE} with $p(z,t)=-G[\nu_t](z)$.
We call it the \emph{driving kernel} ``of $(f_t)_{t}$'' instead ``of $p$''.
In Section~\ref{sec:LIE-additive}, we shall see that allowing arbitrary parametrization instead of $\AR(f_t)=t$ in Proposition~\ref{th:chordal_LDE} naturally leads to an integro-differential equation. 
\begin{remark} \label{lem:no_mass_escape}
In the existing proofs \cite[Theorem~3]{GB92} \cite[Theorem~5.3]{Bau05} \cite[Theorem~3.1]{Sch17} of Proposition \ref{th:chordal_LDE}, the driving kernel is obtained as the vague limit
\[
\nu_s=\lim_{t \downarrow s}\frac{\rho_{f_{s,t}}}{\AR(f_{s,t})}=\lim_{t \downarrow s}\frac{\rho_{f_{s,t}}}{\rho_{f_{s,t}}(\R)} \in \overline{\bf M}_1(\R).
\]
The existence of this limit is ensured by a certain argument involving the uniqueness of Cauchy transform and Corollary~\ref{cor:continuity_Cauchy}.
To see further that $p$ is normalized, i.e., $\nu_s \in \prob$, we need the following argument posed earlier by Aleksandrov, Aleksandrov and Sobolev~\cite{AAS83}.
By the absolute continuity in Proposition~\ref{prop:converse_to_cap_conti}, we can integrate \eqref{eq:chordal_LDE} with $z=iy$ to obtain
\[
f_t(iy)-iy=-\int_0^t f_s^\prime(iy)\, G[\nu_s](iy) \,ds.
\]
Hence
\begin{equation} \label{eq:AAS83}
y\Im(f_t(iy)-iy)
\le \int_0^t \left\lvert f_s^\prime(iy) \right\rvert \left\lvert iy\, G[\nu_s](iy) \right\rvert ds.
\end{equation}
By \eqref{eq:angderiv_at_infty} and Proposition~\ref{prop:Maassen_2.1},
passage to the limit $y \to +\infty$ in \eqref{eq:AAS83} yields $t \le \int_0^t \nu_s(\R) \,ds$.
This inequality implies $\nu_s(\R)=1$ for Lebesgue-a.e.\ $s \in [0,t]$ since $\nu_s(\R) \le 1$.
\end{remark}

\subsection{Loewner integral and integro-differential equations}
\label{sec:LIE-additive}

We now introduce a distributional generalization of ``$p(z,t)\,dt$'' for a Herglotz vector field $p$.
Here ``distributional'' means that we focus on the ``measure'' $P(z,B)=\int_B p(z,t)\,dt$; indeed, given such a $P$ one can write down the integral form of the Loewner differential equation.
We consider a function $P \colon \C^+ \times \cB_{\rm c}(I) \to \overline{\C^+}$ such that 
\begin{enumerate}[label=\rm (HM\arabic*),leftmargin=4em]
\item\label{CMF1} 
$B \mapsto P(z,B)$ is countably additive on the collection $\cB_{\rm c}(I)$ of precompact Borel subsets of $I$ for each fixed $z \in D$;
\item\label{eq:conti_HFM}
$P(z,\{t\})=0$ for all $z \in \C^+$ and $t \in I$;
\item \label{CMF2.5} $z\mapsto P(z,B)$ is holomorphic for each $B \in \cB_{\rm c}(I)$; 
\item\label{CMF3} 
$\lim_{y \to +\infty}y\, P(iy,B)$ exists finitely for each $B \in \cB_{\rm c}(I)$.
\end{enumerate}
See Hasebe and Hotta~\cite[Definition~3.4]{HH22} for the corresponding concept in the case of the unit disk, which is called a Herglotz family of measures. 
Condition \ref{CMF1} implies that, for fixed $z \in D$ and $B_0 \in \cB_{\rm c}(I)$, the set function $B \mapsto P(z, B \cap B_0)$ is a complex Borel measure on $B_0$.
Condition \ref{eq:conti_HFM} means the continuity of $P$ as a measure, and conditions \ref{CMF2.5} and \ref{CMF3} correspond to the form \eqref{eq:reduced_HVF} of Herglotz vector fields.
Such a family of measures $P$ is characterized in several ways as follows. 

\begin{proposition} \label{prop:CMF}
For a function $P \colon \C^+ \times \cB_{\rm c}(I) \to \overline{\C^+}$, the following four conditions are equivalent:
\begin{enumerate}
\item \label{i:CMF_Pick}
$P$ satisfies \ref{CMF1}--\ref{CMF3}; 
\item \label{i:CMF_Theta}
there exists a Borel measure $\varTheta$ on $\R \times I$ with
\begin{equation} \label{eq:continuity_of_CMF}
\varTheta(\R \times [0,t])<+\infty
\quad \text{and} \quad
\varTheta(\R \times \{t\})=0
\quad \text{for every}\ t \in I
\end{equation}
such that
\begin{equation} \label{eq:CMF_Theta}
P(z,B)
=\int_{\R}\frac{1}{x-z}\,\varTheta(dx \times B)
=-G_{\varTheta({\cdot} \times B)}(z),
\quad z \in \C^+,\ B \in \cB_{\rm c}(I); 
\end{equation}
\item \label{i:CMF_nu}
there exist an atomless Radon measure $\itm=\itm_P$ on $I$ and measurable family $(\nu_t)_{t}$ in $\prob$ such that
\begin{equation} \label{eq:CMF_nu}
P(z,B)
=\int_B \int_{\R}\frac{1}{x-z} \,\nu_t(dx) \,\itm(dt)
=-\int_B G[\nu_t](z) \,\itm(dt),
\quad z \in \C^+,\ B \in \cB_{\rm c}(I); 
\end{equation}
\item \label{i:CMF_q}
there exist an atomless Radon measure $\itm=\itm_P$ on $I$ and a normalized Herglotz vector field $p \colon \C^+ \times I \to \overline{\C^+}$ of  the form \eqref{eq:reduced_HVF}  such that
\begin{equation} \label{eq:CMF_q}
P(z,B)
=\int_B p(z,t) \,\itm(dt),
\quad z \in \C^+,\ B \in \cB_{\rm c}(I). 
\end{equation}
\end{enumerate}
Moreover, if one of \eqref{i:CMF_Pick}--\eqref{i:CMF_q} is the case, then
\begin{enumerate}
\renewcommand{\labelenumi}{\rm (\alph{enumi})}
\item
the measure $\varTheta$ on $\R \times I$ in \eqref{i:CMF_Theta} is unique, 
\item
the measures $\itm$ on $I$ in \eqref{i:CMF_nu} and \eqref{i:CMF_q} are the same and unique, 
\item
the measure $\nu_t$ on $\R$ in \eqref{i:CMF_nu} is unique for $\itm$-a.e.\ $t \in I$, 
\item
for $A \in \cB(\R)$ and $B \in \cB_{\rm c}(I)$,
\begin{equation} \label{eq:CMF_Theta-nu}
\varTheta(A \times B) = \int_B \nu_t(A) \,\itm(dt), 
\end{equation}
\item
for each $B \in \cB_{\rm c}(I)$,
\begin{equation} \label{eq:mass_of_CMF}
\itm(B)
=\varTheta(\R \times B)
=-\lim_{y \to +\infty}iy\, P(iy,B)
=\lim_{y \to +\infty}y\Im P(iy,B).
\end{equation}
\end{enumerate}
We call $\varTheta$ the \emph{compound driving measure} 
of $P$.
\end{proposition}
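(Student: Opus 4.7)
The plan is to prove the cycle $\eqref{i:CMF_Theta} \Rightarrow \eqref{i:CMF_Pick} \Rightarrow \eqref{i:CMF_nu} \Rightarrow \eqref{i:CMF_Theta}$ together with $\eqref{i:CMF_nu} \Leftrightarrow \eqref{i:CMF_q}$, reading off the uniqueness assertions (a)--(c) and the identities (d)--(e) along the way. The implication $\eqref{i:CMF_Theta} \Rightarrow \eqref{i:CMF_Pick}$ is direct verification: assumption \eqref{eq:continuity_of_CMF} places $\mu_B := \varTheta(\cdot \times B)$ in $\mea$ for every $B \in \cB_{\rm c}(I)$, whence $z \mapsto P(z, B) = -G_{\mu_B}(z)$ is holomorphic, countable additivity in $B$ is dominated convergence, and the remaining two axioms of Definition~\ref{def:CMF} follow from $\varTheta(\R \times \{t\}) = 0$ and Proposition~\ref{prop:Maassen_2.1}. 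The equivalence $\eqref{i:CMF_nu} \Leftrightarrow \eqref{i:CMF_q}$ is the correspondence between a driving kernel $(\nu_t)$ and the vector field $p(z,t) := -G[\nu_t](z)$ from Definition~\ref{def:HVF}, with the measurability translating via Lemma~\ref{lem:HVF_measurability}.

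The main step is $\eqref{i:CMF_Pick} \Rightarrow \eqref{i:CMF_nu}$. For each $B \in \cB_{\rm c}(I)$, Proposition~\ref{prop:Maassen_2.1} applied to the Pick function $g_B := P(\cdot, B)$ produces a unique $\mu_B \in \mea$ with $P(\cdot, B) = -G_{\mu_B}$ and $\mu_B(\R) = -\lim_{y \to +\infty} iy P(iy, B) = \lim_{y \to +\infty} y \Im P(iy, B)$ (take $\mu_B := 0$ when $g_B \equiv 0$). Combining countable additivity of $B \mapsto P(z, B)$ with uniqueness of the Cauchy representation then yields $\mu_{\sqcup_n B_n} = \sum_n \mu_{B_n}$ in $\mea$ whenever $\sqcup_n B_n \in \cB_{\rm c}(I)$; finiteness of the sum rests on monotone convergence in the pairing $y \Im P(iy, \cdot)$ as $y \to +\infty$. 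Setting $\itm(B) := \mu_B(\R)$ thus gives a $\sigma$-finite atomless Borel measure on $I$, finite on every $[0, T]$, and for each $A \in \cB(\R)$ the measure $B \mapsto \mu_B(A)$ is dominated by $\itm$. The Radon--Nikodym theorem supplies densities $f_A \in L^\infty(I, \itm)$ with $0 \le f_A \le 1$, and a standard regularisation on a countable $\pi$-system generating $\cB(\R)$ (e.g.\ intervals with rational endpoints) selects a measurable family $(\nu_t)_{t \in I}$ in $\prob$ with $\nu_t(A) = f_A(t)$ for $\itm$-a.e.\ $t$ and every $A$; Fubini then delivers \eqref{eq:CMF_nu}.

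Finally, $\eqref{i:CMF_nu} \Rightarrow \eqref{i:CMF_Theta}$ is the product construction $\varTheta(C) := \int_I \int_\R \mathbf{1}_C(x, t) \,\nu_t(dx) \,\itm(dt)$, which visibly satisfies \eqref{eq:continuity_of_CMF} and \eqref{eq:CMF_Theta-nu} and yields \eqref{eq:CMF_Theta} by Fubini. The uniqueness assertions then fall out in order: $\mu_B$ is determined by $P(\cdot, B)$ via Proposition~\ref{prop:Maassen_2.1}, so $\varTheta$ is determined on the rectangle semi-algebra and, by Carath\'eodory's extension theorem, on all of $\cB(\R \times I)$; the marginal $\itm = \varTheta(\R \times \cdot)$ is therefore forced, and $(\nu_t)$ is the $\itm$-a.e.\ unique disintegration of $\varTheta$; identity (e) has already been recorded in the first sentence of the main step. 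The principal obstacle I anticipate is the measurable selection of $(\nu_t)$ as a single $\prob$-valued family compatible simultaneously with all Borel sets, rather than one set at a time; this is overcome by the classical construction of regular conditional probabilities on the Polish space $\R$, which requires exactly the countable generating family mentioned above plus an $\itm$-null adjustment to ensure $\nu_t(\R) = 1$ everywhere.
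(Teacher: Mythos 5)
Your proposal is correct, and the preliminary steps match the paper exactly: applying Proposition~\ref{prop:Maassen_2.1} to each Pick function $P(\cdot,B)$ to get $\mu_B \in \mea$, verifying countable additivity of $B \mapsto \mu_B$ by the monotone convergence theorem applied to $y\Im P(iy,\cdot)$ followed by dominated convergence and uniqueness of the Cauchy transform, and obtaining atomlessness from $P(z,\{t\})=0$. Where you diverge is in the final assembly step, and the difference is worth noting. The paper goes $\eqref{i:CMF_Pick}\Rightarrow\eqref{i:CMF_Theta}$ first: it regards $(A,B)\mapsto \mu_B(A)$ as a bimeasure that is separately countably additive and $\sigma$-finite, and invokes a bimeasure-to-measure extension theorem (citing Rajput--Rosinski or Ethier--Kurtz) to produce $\varTheta$ on $\cB(\R\times I)$; it then obtains $(\nu_t)_t$ by disintegrating $\varTheta$. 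You instead go $\eqref{i:CMF_Pick}\Rightarrow\eqref{i:CMF_nu}$ directly: take $\itm(B):=\mu_B(\R)$, note $\mu_\cdot(A)\ll\itm$ for each fixed $A$, apply Radon--Nikodym over a countable $\pi$-system generating $\cB(\R)$, and use the standard regular-conditional-probability regularisation on the Polish space $\R$ to select a single $\itm$-a.e.\ defined measurable family $(\nu_t)\subset\prob$; $\varTheta$ is then recovered as the semidirect product $\int\nu_t\,\itm(dt)$. The two routes are essentially equivalent in substance (indeed both the bimeasure extension theorem and the disintegration theorem are themselves proved by Radon--Nikodym-plus-regularisation arguments), but your sequencing sidesteps the bimeasure extension theorem as a black box at the cost of carrying out the regular-conditional-probability construction by hand. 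The uniqueness assertions (a)--(e) fall out on both routes from the $\pi$-$\lambda$ argument you indicate, and your treatment of $\eqref{i:CMF_Theta}\Rightarrow\eqref{i:CMF_Pick}$ and $\eqref{i:CMF_nu}\Leftrightarrow\eqref{i:CMF_q}$ matches what the paper leaves implicit.
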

\begin{proof}
Most part of the proof is devoted to the proof of $\eqref{i:CMF_Pick} \Rightarrow \eqref{i:CMF_Theta}$.
Suppose that $P \colon \C^+ \times \cB_{\rm c}(I) \to \overline{\C^+}$ satisfies \ref{CMF1}--\ref{CMF3}.

Let $B \in \cB_{\rm c}(I)$.
One case is $P(z,B)=0$ for some $z \in \C^+$.
In this case, $P(z,B) \equiv 0$ on $\C^+$ since it is Pick.
Then letting $\theta_B$ be the zero measure on $\R$ we have
\begin{equation} \label{eq:setwise_HVF}
P(z,B)=-G[\theta_B](z)=\int_{\R}\frac{1}{x-z}\,\theta_B(dx).
\end{equation}
The other case is $P(z,B) \not\equiv 0$.
In this case, let $\itm(B):=-\lim_{y \to +\infty}iy\, P(iy,B)$, which exists finitely by definition.
By Proposition~\ref{prop:Maassen_2.1} there exists a unique measure $\theta_B \in \mea \setminus \{0\}$ that satisfies $\theta_B(\R)=\itm(B)$ and \eqref{eq:setwise_HVF}.
Thus, we obtain the unique family $(\theta_B : B \in \cB_{\rm c}(I))$ of finite Borel measures that fulfills \eqref{eq:setwise_HVF} for any $B \in \cB_{\rm c}(I)$.

We show that $\theta_B$ is countably additive in $B \in \cB_{\rm c}(I)$; namely, if $B_n$, $n \in \N$, are mutually disjoint Borel subsets of $I$ such that $B^\prime:=\bigsqcup_n B_n$ is precompact, then $\theta_{B^\prime}=\sum_n \theta_{B_n}$.
To this end, we look at the total masses of these measures.
Since $y \mapsto y\Im P(iy,B_n)$ is non-decreasing by Proposition~\ref{prop:Maassen_2.1}, the monotone convergence theorem yields
\[
\sum_{n \in \N}\itm(B_n)
=\lim_{y \to +\infty}y\Im \sum_{n \in \N}P(iy,B_n)
=\lim_{y \to +\infty}y\Im P(iy,B^\prime)
=\itm(B^\prime)<+\infty.
\]
Then by assumption and the dominated convergence theorem we have
\begin{align*}
&G[\theta_{B^\prime}](z)
=-P(z,B^\prime)
=-\sum_{n \in \N}P(z,B_n) \\
&=-\sum_{n \in \N}\int_{\R}\frac{1}{x-z}\,\theta_{B_n}(dx)
=-\int_{\R}\frac{1}{x-z}\,\sum_{n \in \N}\theta_{B_n}(dx)
=G\left[\sum_n\theta_{B_n}\right](z). 
\end{align*}
Note that the fourth equality above can be justified, e.g., by \cite[Lemma A.4]{HH22}. 
Hence $\theta_{B^\prime}=\sum_n \theta_{B_n}$ follows from the uniqueness of Cauchy transform.

Now the function $(A,B)\mapsto \theta_B(A)$, $A \in \cB(\R)$, $B \in \cB_{\rm c}(I)$,  is countably additive on each component and the measure $B \mapsto \theta_B(\R)$ is $\sigma$-finite. Then there exists a measure $\varTheta$ on $\cB(\R \times I)$ such that $\varTheta(A\times B) = \theta_B(A)$; see \cite[Proposition 2.4]{RR89} or \cite[Appendixes, Theorem 8.1]{EK86}. 
Then \eqref{i:CMF_Theta} and the latter two equalities in \eqref{eq:mass_of_CMF} follow from the construction of $\varTheta$ and the continuity assumption $P(z,\{t\})=0$ for $t \in I$.

The remaining assertion simply follows from the decomposition of the measure $\varTheta$, which is called \emph{disintegration}, \emph{fibering}, or the existence of \emph{regular conditional (probability) measure}; for instance, see Dudley~\cite[\S10.2]{Dud02} or Stroock~\cite[Theorem~9.2.2]{Str11}.
Indeed, for the marginal measure $\itm(dt):=\varTheta(\R \times dt)$, which obviously extends the set function $\itm$ defined on $\cB_{\rm c}(I)$ in the previous paragraphs, there exist $\itm$-a.e.\ unique regular conditional measures $\nu_t$, $t \in I$, on $\cB(\R)$ that satisfy \eqref{eq:CMF_Theta-nu}.
\end{proof}

We are now in a position to give one of the main objects in this paper, the \emph{Loewner integro-differential equation}, for $\PickC$-DLCs.

\begin{theorem} \label{th:DLC_to_LIE}
Let $(f_t)_{t}$ be a $\PickC$-DLC. 
Then there exists a unique function $P \colon \C^+ \times \cB_{\rm c}(I) \to \overline{\C^+}$ with \ref{CMF1}--\ref{CMF3} such that the following (chordal) Loewner integro-differential equation holds:
\begin{equation}\label{eq:LIE-additive}
f_t(z)=z+\int_0^t \frac{\partial f_s(z)}{\partial z} \,P(z,ds),\qquad t \in I,\, z \in \C^+. 
\end{equation}
Moreover, let $\varTheta$ be the compound driving measure of $P$ given by \eqref{eq:CMF_Theta} and $(f_{s,t})$ be the REF associated with $(f_t)$. Then the following integral equation holds: 
\begin{equation}\label{eq:LIE2}
    f_{s,t}(z)=z+\int_{\R\times[s,t]} \frac{1}{x-f_{r,t}(z)} \,\varTheta(dx\,dr),\qquad s\le t,\, z \in \C^+.  
\end{equation}
\end{theorem}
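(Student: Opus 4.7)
The strategy is to reduce \eqref{eq:LIE-additive} to the standard chordal Loewner differential equation (Proposition~\ref{th:chordal_LDE}) by rescaling time so that angular residue becomes the parameter. By Theorem~\ref{thm:capacity_conti}, $\tau(t) := \AR(f_t)$ is continuous and non-decreasing on $I$, so it induces an atomless $\sigma$-finite Stieltjes measure $\itm$ on $I$ with $\itm([0,t]) = \tau(t)$. Let $\tilde I := \tau(I)$, which is an interval of the form allowed in Notation~\ref{not1}. Using the right-continuous inverse $\tau^\sharp$ of $\tau$ together with Corollary~\ref{cor:constant_DLC} (which ensures $f_t$ is constant on the maximal intervals where $\tau$ is constant), the family $g_u := f_{\tau^\sharp(u)}$, $u \in \tilde I$, is well-defined, inherits the $\PickC$-DLC property from $(f_t)$, and by construction satisfies $\AR(g_u) = u$.

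Applying Proposition~\ref{th:chordal_LDE} to $(g_u)$ produces a normalized Cauchy vector field $p(z,u) = -G[\nu_u](z)$ with $\nu_u \in \prob$, and the absolute continuity asserted by Proposition~\ref{prop:converse_to_cap_conti} yields the integrated form
\[
g_u(z) = z - \int_0^u g_v'(z) G[\nu_v](z) \,dv.
\]
Setting $u = \tau(t)$ and using the pushforward identity $\tau_\ast \itm = \mathrm{Leb}_{\tilde I}$, together with $g_{\tau(s)} = f_s$, gives
\[
f_t(z) = z - \int_0^t f_s'(z) G[\tilde\nu_s](z) \,\itm(ds), \qquad \tilde\nu_s := \nu_{\tau(s)}.
\]
Defining $P(z, B) := -\int_B G[\tilde\nu_s](z) \,\itm(ds)$ therefore yields a CMF by Proposition~\ref{prop:CMF}~\eqref{i:CMF_q} that solves \eqref{eq:LIE-additive}, whose compound driving measure $\varTheta$ satisfies $\varTheta(\R \times [0,t]) = \itm([0,t]) = \AR(f_t)$, as required.

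For uniqueness, fix $z \in \C^+$ and suppose any CMF $P$ satisfies \eqref{eq:LIE-additive}. Then $t \mapsto f_t(z) - z$ is of locally bounded variation as the indefinite Stieltjes integral of the continuous function $s \mapsto f_s'(z)$ against the locally finite complex measure $B \mapsto P(z, B)$, and its complex Stieltjes measure is $f_s'(z) P(z, ds)$. Since $f_s'(z)$ is continuous and nowhere vanishing (by univalence of each $f_s$), the measure $P(z, ds) = (f_s'(z))^{-1} d(f_s(z))$ is determined by $(f_t)$ alone, and holomorphy in $z$ then pins down $P$ on all of $\C^+ \times \cB_{\rm c}(I)$. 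Equation \eqref{eq:LIE2} is obtained in parallel from the backward equation \eqref{eq:chordal_LODE} for the REF $(g_{u,v})$, namely $g_{u,v}(z) = z - \int_u^v G[\nu_r](g_{r,v}(z)) \,dr$, by substituting $u = \tau(s)$, $v = \tau(t)$, using $f_{s,t} = g_{\tau(s),\tau(t)}$, changing variables via $\tau_\ast\itm$, and invoking the disintegration \eqref{eq:CMF_Theta-nu} in the form $\varTheta(dx\,dr) = \tilde\nu_r(dx)\,\itm(dr)$. The main technical obstacle I anticipate is the bookkeeping around the time change when $\tau$ has flat parts: one must verify that $(g_u)$ is a genuine $\PickC$-DLC (continuity and decreasing images) at values $u$ where $\tau^\sharp$ jumps, that $\tilde\nu_s = \nu_{\tau(s)}$ remains measurable (via Lemma~\ref{lem:HVF_measurability} and continuity of $\tau$), and that the change-of-variables identity extends to the complex-valued, locally $\itm$-integrable integrands appearing in \eqref{eq:LIE-additive} and \eqref{eq:LIE2}.
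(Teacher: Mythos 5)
Your proposal mirrors the paper's own proof: both reparametrize time via the right-continuous inverse of $\ar(t):=\AR(f_t)$, apply Proposition~\ref{th:chordal_LDE} (and \ref{prop:converse_to_cap_conti}) to the Lebesgue-parametrized family, undo the change of variables using Corollary~\ref{cor:constant_DLC} to handle flat parts, obtain uniqueness by the Stieltjes/distribution-function argument, and derive \eqref{eq:LIE2} in parallel from \eqref{eq:chordal_LODE}. The technical obstacles you flag at the end (continuity of the time-changed chain, measurability of $\tilde\nu_s$, change of variables for complex measures) are exactly the points the paper's proof addresses, so the proposal is correct and essentially identical in approach.
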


\begin{proof}
Define $\ar(t):=\AR(f_t)$.
Then $J:=\ar(I)$ is a left-closed interval with left endpoint $0$.
We introduce the right-continuous inverse
\begin{equation}
\label{eq:reparametrization}
\tau(u)
=\inf\{\, t \in I : \ar(t)>u \,\}
=\sup\{\, t \in I: \ar(t)=u \,\}, \qquad u \in J,
\end{equation}
which is strictly increasing.
Proposition~\ref{prop:converse_to_cap_conti} implies that the time-changed family $(f_{\tau(u)})_{u \in J}$ is a $\PickC$-DLC with $\AR(f_{\tau(u)})=u$.
Hence by Proposition~\ref{th:chordal_LDE} there exists a Herglotz vector field $p^\tau(z,t)$, which has the form \eqref{eq:reduced_HVF} and is normalized, such that
\begin{equation} \label{eq:LDE_for_time-changed_DLC}
f_{\tau(u)}(z)
=z+\int_0^u \frac{\partial f_{\tau(v)}(z)}{\partial z}p^\tau(z,v) \,dv,
\qquad u \in J.
\end{equation}
We set
\begin{equation}\label{eq:Qq}
P(z, B):=\int_{\tau^{-1}(B)}p^\tau(z,v)\,dv,
\qquad z \in \C^+,\ B \in \cB_{\rm c}(I).
\end{equation}
It is easy to see that $P \colon \C^+ \times \cB_{\rm c}(I) \to \overline{\C^+}$ satisfies all conditions \ref{CMF1}--\ref{CMF3}.
Moreover, by the change of variables we have
\begin{equation} \label{eq:preLIE}
f_{\tau(u)}(z)
=z+\int_0^{\tau(u)} \frac{\partial f_s(z)}{\partial z}\,P(z,ds),
\qquad u \in J.
\end{equation}
Here, we notice that $I \setminus \tau(J)$ is the union of left-closed and right-open intervals on which $\ar(t)$ takes constant values.
On such an interval $t \mapsto f_t(z)$ is also constant by Corollary~\ref{cor:constant_DLC}.
In addition, $P(z, B)$ is zero if $B \subset I \setminus \tau(J)$.
Thus, \eqref{eq:preLIE} extends to \eqref{eq:LIE-additive}.

The uniqueness of $P$ follows in almost the same way as the proof of the standard fact that a probability measure on $\R$ is uniquely determined by its distribution function; see Hasebe and Hotta~\cite[Theorem~3.6]{HH22} for further details.

The integral equation \eqref{eq:LIE2} can be proved similarly. To be begin with, integrating  \eqref{eq:chordal_LODE} applied to the REF $(f_{\tau(u),\tau(v)})_{u\le v}$ yields   
\begin{align*} 
f_{\tau(u),\tau(v)}(z) 
&= z + \int_u^v p^\tau (f_{\tau(w),\tau(v)}(z),w) \,dw  \\
&=z+\int_{\R \times [u,v]} \frac{1}{x-f_{\tau(w),\tau(v)}(z)}\, \nu_w^\tau(dx) \,dw,
\qquad u,v \in J,
\end{align*}
where the following representation is employed: 
\begin{equation}\label{eq:ptau}
   p^\tau (z,u) = \int_\R \frac{1}{x-z} \,\nu_u^\tau(dx).  
\end{equation}
Combining \eqref{eq:CMF_Theta}, \eqref{eq:Qq} and \eqref{eq:ptau} shows  that $\varTheta$ is the pushforward of the measure $\nu_w^\tau(dx)\,dw$ by the mapping $\R \times J \ni (x,w)\mapsto (x,\tau(w)) \in \R\times I$. This readily implies, by the change of variables, that  
\begin{align*} 
f_{\tau(u),\tau(v)}(z) 
=z+\int_{\R \times [\tau(u),\tau(v)]} \frac{1}{x-f_{r,\tau(v)}(z)}\, \varTheta(dx \,dr),
\qquad u,v \in J. 
\end{align*}
The remaining proof is similar to the paragraph following \eqref{eq:preLIE} and is omitted. 
\end{proof}

\begin{theorem} \label{th:LIE_to_DLC}
Let $P$, $\varTheta$ and $\itm$ be as in the equivalent conditions in Proposition~\ref{prop:CMF}.
Then there exists a unique $\PickC$-DLC $(f_t)_{t}$  that satisfies \eqref{eq:LIE-additive}.
Moreover, there is a unique REF $(f_{s,t})$ that satisfies \eqref{eq:LIE2}. In addition, $\AR(f_t)=\itm_P([0,t])$ holds for $t \in I$  and $(f_t)$ and $(f_{s,t})$ are associated with each other. 
\end{theorem}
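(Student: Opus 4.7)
The plan is to mirror the time-change argument in the proof of Theorem~\ref{th:DLC_to_LIE} and reduce everything to Proposition~\ref{th:unique_sol_to_LDE}. First, by Proposition~\ref{prop:CMF}~\eqref{i:CMF_q}, I write $P(z,B)=\int_B p(z,t)\,\itm(dt)$ for a normalized Cauchy vector field $p$ and the atomless $\sigma$-finite measure $\itm=\itm_P$ on $I$. Set $\ar(t):=\itm([0,t])$, which is continuous and non-decreasing with $\ar(0)=0$, put $J:=\ar(I)$, and let $\tau\colon J\to I$ be the right-continuous inverse of $\ar$ defined as in \eqref{eq:reparametrization}. Then $\tilde p(z,u):=p(z,\tau(u))$ is a normalized Cauchy vector field on $\C^+\times J$; the measurability in $u$ of its driving kernel follows from Borel measurability of $\tau$ together with Lemma~\ref{lem:HVF_measurability}.

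Applying Proposition~\ref{th:unique_sol_to_LDE} to $\tilde p$ produces a unique $\PickC$-DLC $(g_u)_{u\in J}$ with $\AR(g_u)=u$ satisfying \eqref{eq:chordal_LDE} and a unique associated REF $(g_{u,v})_{u\le v}$ satisfying \eqref{eq:chordal_LODE}. I then define $f_t:=g_{\ar(t)}$ and $f_{s,t}:=g_{\ar(s),\ar(t)}$. Continuity of $u\mapsto g_u$ and of $\ar$, together with monotonicity of the ranges $g_u(\C^+)$, makes $(f_t)$ a DLC in $\PickC$ with $\AR(f_t)=\ar(t)=\itm_P([0,t])$, and $(f_{s,t})$ is the REF associated with $(f_t)$ via \eqref{eq:REF-DLC}. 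Since the pushforward $\ar_{*}\itm$ equals Lebesgue measure on $J$ and $\tau\circ\ar=\id$ holds $\itm$-a.e.\ on $I$, the change of variables $v=\ar(s)$ in the LDE for $(g_u)$ gives
\begin{equation*}
f_t(z)-z=\int_0^{\ar(t)} g_v'(z)\,\tilde p(z,v)\,dv=\int_{[0,t]} f_s'(z)\,p(z,s)\,\itm(ds)=\int_0^t f_s'(z)\,P(z,ds),
\end{equation*}
which is \eqref{eq:LIE-additive}. The same substitution applied to \eqref{eq:chordal_LODE}, combined with \eqref{eq:CMF_Theta}, yields \eqref{eq:LIE2}.

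For uniqueness, suppose $(\tilde f_t)$ is another $\PickC$-DLC satisfying \eqref{eq:LIE-additive} with the same $P$. Evaluating at $z=iy$, multiplying by $iy$, and letting $y\to+\infty$, I use dominated convergence---justified by the bounds $|iy\,P(iy,B)|\le \itm_P(B)$ (Proposition~\ref{prop:Maassen_2.1} applied to $-G[\varTheta({\cdot}\times B)]$) and $|\tilde f_s'(iy)-1|\le \AR(\tilde f_t)/y^2$---to conclude $\AR(\tilde f_t)=\itm_P([0,t])=\ar(t)$. Consequently $\tilde g_u:=\tilde f_{\tau(u)}$ is a $\PickC$-DLC with $\AR(\tilde g_u)=u$ satisfying the LDE for $\tilde p$, so Proposition~\ref{th:unique_sol_to_LDE} forces $\tilde g_u=g_u$, and Corollary~\ref{cor:constant_DLC} extends this equality across the plateaus of $\ar$ to give $\tilde f_t=f_t$ for every $t\in I$; uniqueness of $(f_{s,t})$ then follows from Proposition~\ref{prop:DLC_to_REF}. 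The subtlest step in this scheme is the very last passage to the limit: one must identify $\AR(\tilde f_t)$ with $\itm_P([0,t])$ directly from the LIDE, \emph{before} knowing that $(\tilde f_t)$ is parametrized by the ``correct'' angular residue, so the uniform bounds on $iy\,P(iy,\cdot)$ and on $\tilde f_s'(iy)$ have to be derived from the CMF structure and the $\PickC$-DLC structure alone.
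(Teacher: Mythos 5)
Your proof is correct and takes essentially the same time-change approach as the paper's: re-parametrize by $\ar(t)=\itm_P([0,t])$, invoke Proposition~\ref{th:unique_sol_to_LDE} on the time-changed vector field to get $(g_u)_{u\in J}$, change variables back to obtain \eqref{eq:LIE-additive} and \eqref{eq:LIE2}, and for uniqueness first derive $\AR(\tilde f_t)=\itm_P([0,t])$ directly from the integro-differential equation (exactly the generalization of Remark~\ref{lem:no_mass_escape} that the paper invokes) and then pass through Proposition~\ref{th:unique_sol_to_LDE} and Corollary~\ref{cor:constant_DLC}. The only organizational difference is that you time-change the normalized Cauchy vector field $p$ directly via $\tilde p(z,u):=p(z,\tau(u))$, whereas the paper first time-changes the CMF by $P^\tau(z,B):=P(z,\tau(B))$ and then disintegrates; these produce the same vector field up to Lebesgue-a.e.\ equivalence on $J$, since $\tau\circ\ar=\mathrm{id}$ holds off an $\itm$-null union of plateau interiors, which $\ar$ maps to a Lebesgue-null subset of $J$.
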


\begin{proof}
Consider the function $\ar(t):=\itm_P([0,t])$, which is continuous as $\itm_P$ is atomless, and the interval $J:=\ar(I)$.
We define $\tau$ as in \eqref{eq:reparametrization}. 
Then the function $P^\tau \colon \C^+ \times \cB_{\rm c}(J) \to \overline{\C^+}$ defined by $P^\tau(z,B)=P(z, \tau(B))$ is countably additive in $B \in \cB_{\rm c}(J)$ because this is the pushforward of the countably additive set function $A \mapsto P(z, A \cap \tau(J))$ by the mapping $\tau^{-1}\colon\tau(J)\to J$.
Hence $P^\tau$ satisfies \ref{CMF1}--\ref{CMF3} and has the expression
\begin{equation} \label{eq:pf_form_of_R}
P^\tau(z,B)=\int_B p^\tau(z,v) \,\itm_{P^\tau}(dv),
\qquad B \in \cB_{\rm c}(J),
\end{equation}
for a normalized Herglotz vector field $p^\tau$ of the form \eqref{eq:reduced_HVF} and Borel measure $\itm_{P^\tau}$ on $J$ by Proposition~\ref{prop:CMF}~\eqref{i:CMF_q}.
Moreover, by \eqref{eq:mass_of_CMF} we have
\[
\itm_{P^\tau}([0,u])
=\lim_{y \to +\infty}y \Im P(iy,\tau([0,u]))
=\itm_P(\tau([0,u]))
=u,
\]
which implies that $\itm_{P^\tau}$ is the Lebesgue measure on $J$.
Then, by Proposition~\ref{th:unique_sol_to_LDE}, the  Loewner differential equation
\begin{equation} \label{eq:pf_R-LDE}
\frac{\partial \tilde{f}_u(z)}{\partial u}=\frac{\partial \tilde{f}_u(z)}{\partial z}p^\tau(z,u)
\end{equation}
generates a $\PickC$-DLC $(\tilde{f}_u)_{u \in J}$ with $\AR(\tilde{f}_u)=u$.
We integrate \eqref{eq:pf_R-LDE} and substitute \eqref{eq:pf_form_of_R} to obtain
\[
\tilde{f}_u(z)=z+\int_0^u \frac{\partial \tilde{f}_v(z)}{\partial z} \,P^\tau(z,dv).
\]
Now we define $f_t:=\tilde{f}_{\ar(t)}$ for $t \in I$.
We then have $\tilde{f}_v=f_{\tau(v)}$, $v \in J$, and
\begin{equation} \label{eq:pf_pre_Q-LIE}
f_t(z)
=z+\int_0^{\ar(t)} \frac{\partial f_{\tau(v)}(z)}{\partial z} \,P^\tau(z,dv)
=z+\int_0^{\tau(\ar(t))} \frac{\partial f_s(z)}{\partial z} \,P(z,ds)
\end{equation}
by the change of variables.
Since
\[
\bigl\lvert P(z, [t,\tau(\ar(t))]) \bigr\rvert \le \frac{\itm_P([t,\tau(\ar(t))])}{\Im z}=0,
\qquad t \in I,
\]
follows from the definition of $\ar$ and $\tau$, we see that \eqref{eq:pf_pre_Q-LIE} yields \eqref{eq:LIE-additive}.

To see the uniqueness, note that any $\PickC$-DLC $(f_t)_{t}$ obeying \eqref{eq:LIE-additive} satisfies $\AR(f_t)=\ar(t)=\itm_P([0,t])$.
We can easily see this, generalizing the argument in Remark~\ref{lem:no_mass_escape} with  \eqref{eq:mass_of_CMF} and \eqref{eq:LIE-additive}.
Then the time-changes performed in the proof of Theorems~\ref{th:DLC_to_LIE} and \ref{th:LIE_to_DLC} are the same one
(indeed, we have deliberately used the same symbols for this reason).
Therefore, via time-change Proposition~\ref{th:unique_sol_to_LDE} yields the uniqueness.

Proposition~\ref{th:unique_sol_to_LDE} implies the ODE 
\[
\frac{\partial \tilde{f}_{u,v}(z)}{\partial u}=- p^\tau(\tilde f_{u,v}(z),u). 
\]
From this we can deduce the existence and uniqueness of a solution to \eqref{eq:LIE2}. The calculations are similar to the corresponding part of the proof of Theorem \ref{th:DLC_to_LIE}.  Finally, Proposition~\ref{th:unique_sol_to_LDE} implies that $(\tilde f_t)$ and $(\tilde f_{s,t})$ are associated with each other and so  are $(f_t)$ and $(f_{s,t})$. 
\end{proof}

By Proposition~\ref{prop:CMF}, Theorems~\ref{th:DLC_to_LIE} and \ref{th:LIE_to_DLC}, $\PickC$-DLCs $(f_t)$, functions $P$ with \ref{CMF1}--\ref{CMF3}, and their compound driving measures $\varTheta$ are in one-to-one correspondence. We say that they are \emph{associated} with each other.

\begin{remark} \label{rem:time-change}
In the proof of Theorems \ref{th:DLC_to_LIE} and \ref{th:LIE_to_DLC}, time-change is employed to  relax the assumption $\AR(f_t)=t$ of Proposition \ref{th:chordal_LDE}.
There is still another way, based on generalized differentiation with respect to the ``intrinsic time'' $\ar(t)=\AR(f_t)$.
Suppose that $\ar$ is strictly increasing for simplicity, and let $\itm$ be the atomless Borel measure on $I$ such that $\itm([0,t])=\ar(t)$.
Then the function $t \mapsto f_t(z)$ is of finite variation on each compact subinterval of $I$ by \eqref{eq:dc-abs_conti}, and the complex measure determined by this function is absolutely continuous with respect to $\itm$.
Hence it has a Radon--Nikodym density with respect to $\itm$.
If we write this density as $\partial f_t(z)/\partial \ar(t)$, then the chordal Loewner equation
\begin{equation} \label{eq:LDE_by_density}
\frac{\partial f_t(z)}{\partial \ar(t)}=\frac{\partial f_t(z)}{\partial z}p(z,t)
\qquad \text{for $\itm$-a.e.}\ t \in I
\end{equation}
follows from a proof similar to that of Proposition~\ref{th:chordal_LDE}.
This is equivalent to the Loewner integro-differential equation \eqref{eq:LIE-additive}; indeed, by integrating \eqref{eq:LDE_by_density} with respect to $\itm$, the function $P$ in \eqref{eq:CMF_q} appears.
For such equations involving generalized derivatives as \eqref{eq:LDE_by_density}, we refer the reader to Yanagihara~\cite{Yan19+} and Murayama~\cite{Mur23}; the former derived the radial Loewner equation%
\footnote{Moreover, Yanagihara relaxed the assumption that each element of a Loewner chain is univalent, partially following an earlier work by Pommerenke~\cite{Pom65}.},
and the latter derived a version of the chordal Loewner equation on certain multiply connected domains.
\end{remark}

General $\PickB$-DLCs do not satisfy reasonable integro-differential equations, but $\PickB$-REFs do satisfy an integral equation that is a simple generalization of \eqref{eq:LIE2}. 

\begin{corollary} \label{cor:LIE}
Let $(f_{s,t})$ be a $\PickB$-REF.  Then there exist a continuous function $t\mapsto m_t$ with $m_0=0$ and a Borel measure $\rGen$ on $\R \times I$ with
 \eqref{LK3_theta} such that 
 \begin{equation}\label{eq:LIE3}
    f_{s,t}(z)=z - m_t +m_s + \int_{\R\times[s,t]} \frac{1}{x-f_{r,t}(z)} \,\rGen(dx\,dr),\qquad s\le t,\, z \in \C^+.  
 \end{equation}
 The pair $((m_t)_t, \rGen)$ is unique. Conversely, given such a pair $((m_t)_t, \rGen)$, there exists a unique $\PickB$-REF $(f_{s,t})$ that solves the integral equation \eqref{eq:LIE3}. 
\end{corollary}
\begin{proof}  Let $m_t:=\mean(f_t)$. Then $\mean(f_{s,t}) = m_t - m_s$ by Lemma \ref{lem:quasi-chordal_DLC_to_REF}. 
According to the calculations in the paragraph before Proposition  \ref{prop:Cauchy_P_prime}, the mappings $g_{s,t}(z) := -m_s+f_{s,t}(z+m_t)$ form  a $\PickC$-REF. 
Therefore $(g_{s,t})$ satisfies an equation of the form \eqref{eq:LIE2}: 
 \begin{equation}\label{eq:LIE4}
    g_{s,t}(z)=z  + \int_{\R\times[s,t]} \frac{1}{x-g_{r,t}(z)} \,\varSigma(dx\,dr),\qquad s\le t,\, z \in \C^+.  
 \end{equation}
 Setting $\rGen$ to be the push-forward of $\varSigma$ by the mapping $(x,t)\mapsto (x+m_t,t)$, one sees that \eqref{eq:LIE4} implies \eqref{eq:LIE3}. The uniqueness of $(m_t)$ follows from the asymptotic behavior $f_{s,t}(z) = z - m_t +m_s + O(z^{-1})$ as $z\to\infty$ nontangentially.  The uniqueness of $\rGen$ follows from the uniqueness of  $\varSigma$, which in turn follows from the uniqueness of $P$ in Theorem \ref{th:DLC_to_LIE}. 
 The converse statement can be proved by tracing the above proof backwards. 
\end{proof}

We can easily obtain versions of Theorems \ref{th:DLC_to_LIE}, \ref{th:LIE_to_DLC} and Corollary \ref{cor:LIE} for EFs by time-reversal.
For example, we have the next corollary.

\begin{corollary} \label{cor:LIE2}
Let $(\varphi_{s,t})$ be a $\PickB$-EF.  Then there exist a continuous function $t\mapsto m_t$ with $m_0=0$ and a Borel measure $\rGen$ on $\R \times I$ with
 \eqref{LK3_theta} such that 
 \begin{equation}\label{eq:LIE5}
    \varphi_{s,t}(z)=z - m_t +m_s + \int_{\R\times[s,t]} \frac{1}{x-\varphi_{s,u}(z)} \,\rGen(dx\,du),\qquad s\le t,\, z \in \C^+.  
 \end{equation}
 The pair $((m_t)_t, \rGen)$ is unique. Conversely, given such a pair $((m_t)_t, \rGen)$, there exists a unique $\PickB$-EF $(\varphi_{s,t})$ that solves the integral equation \eqref{eq:LIE5}. 
\end{corollary}

\begin{proof}
In the case $I=[0,T]$, the time-reversal $(f_{s,t}) \mapsto (f_{T-t,T-s})$ gives a bijection between $\PickB$-REFs and $\PickB$-EFs; hence Corollary \ref{cor:LIE} applies.
If $I=[0,T)$ with $0<T\le \infty$, then one can work first with compact subintervals of $I$ and then extend to the whole $I$. 
\end{proof}

\subsection{Reduced generators of monotone CHs} \label{subsec:reducedGF}

We return to $\rhd$-CHs. 
Let $(\monotone_{s,t})_{s\le t}$ be a $\rhd$-CH with $\int_{\R} x^2 \,\monotone_{s,t}(dx)<+\infty$ for all $s\le t$. The associated $\PickB$-REF $(f_{s,t}):=(F[\monotone_{s,t}])$ satisfies the integral equation in \eqref{eq:LIE3} in which the pair  $((m_t)_t,\allowbreak \rGen)$ appears. 
\begin{definition}[reduced generator of a $\rhd$-CH]
\label{def:monotone_MGF}
We call both the pairs $((m_t),\allowbreak \rGen)$ and $((m_t),\allowbreak (\rgen_t))$, where $\rgen_t(\cdot):=\rGen(\cdot \times [0,t])$, the \emph{reduced generators} of the $\rhd$-CH $(\monotone_{s,t})_{s\le t}$ and also of the associated $\PickB$-DLC $(f_t)_t$ and the REF $(f_{s,t})_{s\le t}$.
\end{definition}

Identifying the reduced generator of a $\ast$-CH (Definition~\ref{def:classical_MGF}) with the one in Definition~\ref{def:monotone_MGF}, we define the bijection from the set of all $\ast$-CHs of finite second moment to the set of all $\rhd$-CHs of finite second moment.

Note that, in Definition~\ref{def:monotone_MGF}, $m_t$ is the mean and $\ar(t)=\rgen_t(\R)$ is the variance of $\monotone_{0,t}$.
It is also worth noting that $(\rgen_t)$ has setwise continuity as in Section~\ref{sec:classical_CH}. In addition, a $\lhd$-CH $(\check\monotone_{s,t})_{s\le t}$ with finite second moment is similarly characterized by the same pair $((m_t)_t,\allowbreak \rGen)$ through Corollary \ref{cor:LIE2} for $\varphi_{s,t}:=F[\check\monotone_{s,t}]$.

The $\PickB$-DLC associated with a reduced generator $((m_t), \rGen)$ does not satisfy an integro-differential equation in general (see Remark \ref{rem:ill} below), but a shifted DLC does.  
Let $\ar(t):=\AR(f_t)$ and $m_t:=\mean(f_t)$.
The family of inner shifts $g_t(z):=f_t(z+m_t)$, $t \in I$, then form a $\PickC$-DLC as we have seen in the proof of Corollary \ref{cor:LIE}. 
By Theorem~\ref{th:DLC_to_LIE}, there exists a unique function $Q \colon \C^+ \times \cB_{\rm c}(I) \to \overline{\C^+}$ such that 
\begin{equation}\label{eq:LIE-additive_g}
g_t(z)=z+\int_0^t \frac{\partial g_s(z)}{\partial z} \,Q(z,ds),\qquad t \in I,\, z \in \C^+. 
\end{equation}
Let $\varSigma$ be the the compound driving measure of $Q$ (see  Proposition~\ref{prop:CMF}~\eqref{i:CMF_Theta}), i.e., it is determined by 
\[
Q(z,B) = \int_\R \frac{1}{x-z} \,\varSigma(dx \times B), \qquad B\in \cB_{\rm c}(I),\ z\in \C^+. 
\]
The pushforward of $\varSigma$ by the mapping $(x,t)\mapsto (x+m_t,t)$ coincides with $\rGen$.



\begin{example}[Absolutely continuous case] \label{rem:AC} Suppose that $(f_t)$ is a $\PickB$-DLC that has a reduced generator $(m_t,\rgen_t)_t$ of the form
\[
m_t = \int_0^t \dot{m}_s \,ds \qquad  \text{and}\qquad  \rgen_t = \int_0^t \dot{\rgen}_s\, ds, 
\] 
where $t\mapsto(\dot{m}_t, \dot{\rgen}_t) \in\R\times \mea$ is such that $t\mapsto \dot{m}_t$ and $t\mapsto \dot{\rgen}_t(B)$ are locally integrable for all $B\in \cB(\R)$.
This means that $\rGen(dx\,dt)=\dot{\rgen}_t(dx)\, dt$, so that \eqref{eq:LIE-additive_g} reads  
\begin{align*}
g_t(z) 
&= z+  \int_{\R \times [0,t]} \frac{\partial g_s}{\partial z}(z) \frac{1}{x-z}\, \varSigma(dx\,ds) =  z+  \int_{\R \times [0,t]} \frac{\partial g_s}{\partial z}(z) \frac{1}{x-m_s-z}\, \rGen(dx\,ds) \\
&=   z+  \int_0^t\left[\int_{\R} \frac{\partial g_s}{\partial z}(z) \frac{1}{x-m_s-z}\, \dot{\rgen}_s(dx)\right] ds, 
\end{align*}
or equivalently, for almost every $t \in I$ and all $z\in \C^+$, 
\[
\partial_t g_t(z) =  q(z,t)\partial_z g_t(z), \quad 
\text{where} \quad  
q(z,t) := \int_{\R} \frac{1}{x-m_t-z}\, \dot{\rgen}_t(dx).   
\]
This can be translated into the Loewner differential equation for $f_t$ of the form 
\begin{equation}\label{eq:LIEf}
\partial_t f_{t}(z) =  p(z,t)\partial_z f_{t}(z), 
\quad \text{where} \quad 
p(z,t) = -\dot{m}_t +\int_\R \frac{1}{x-z}\, \dot{\rgen}_t(dx).  
\end{equation}
 On the other hand, the integral equation in Corollary \ref{cor:LIE} for the $\PickA$-REF $(f_{s,t})$ associated to $(f_t)$ can be directly differentiated: 
\[
\partial_s f_{s,t}(z) = -p(f_{s,t}(z),s). 
\]
This is exactly the non-autonomous generalization of the time-homogeneous case in the present context; cf.~Proposition \ref{thm:MLK}.  
\end{example}

\begin{remark}\label{rem:ill}
If $(f_t)$ is a general $\PickB$-DLC, then the product $\dot{m_t}\partial_z f_{t}(z)$ that appears in \eqref{eq:LIEf} seems ill-defined even as a distribution.
If we assume some regularity, e.g., assume that $t\mapsto m_t$ is of bounded variation on each compact subinterval, then we can safely establish an integro-differential equation; cf.~Remark~\ref{rem:LIDE_general}. 
\end{remark}

\subsection{Convergence of Loewner chains and reduced generators}
\label{subsec:conv_Loewner_rgf}

We have seen in Theorem~\ref{th:conv_classical_SMB} that the correspondence between $\ast$-CHs and reduced generators is bi-continuous in a sense under the assumption that the second moments involved are uniformly bounded.
Here the same property is proved for $\rhd$-CHs via Theorem~\ref{th:conv_monotone}, in which we have seen that the sequences of associated $\rhd$-CHs, $\PickA$-DLCs, and $\PickA$-REFs are all convergent if one of them is so.
Actually what we are about to show is that the correspondence between $\PickB$-DLCs and and reduced generators is bi-continuous provided that the angular residues at infinity are uniformly bounded.

\begin{theorem} \label{thm:main_conv}
Let $(\monotone_{s,t})_{s \le t}$ and $(\monotone^n_{s,t})_{s \le t}$, $n \in \N$, be $\rhd$-CHs with finite second moment.
Let $(f_t)_t$ (resp.\ $(f^n_t)_t$) be the $\PickB$-DLC and $(m_t, \rgen_t)_{t}$ (resp.\ $(m^n_t, \rgen^n_t)_{t}$) the reduced generator of $(\monotone_{s,t})$ (resp.\ $(\monotone^n_{s,t})$).
Suppose that, for each $t \in I$,
\begin{equation} \label{eq:mono_var_bound}
\sup_{n \in \N}\var(\monotone^n_{0,t})=\sup_{n \in \N}\AR(f^n_t)<+\infty.
\end{equation} 
Then the equivalent conditions \eqref{M1}--\eqref{M4} in Theorem~\ref{th:conv_monotone} are satisfied if and only if
\begin{enumerate}
\setcounter{enumi}{4}
\item \label{M5}
$(m^n_t)_{t}$ converges to $(m_t)_{t}$ locally uniformly on $I$, and $(\rgen^n_t)_{t}$ converges vaguely to $(\rgen_t)_{t}$ locally uniformly on $I$ as $n \to \infty$.
\end{enumerate}
\end{theorem}

\begin{proof}
We put $g_t(z):=f_t(z+m_t)$ and $g^n_t(z):=f^n_t(z+m^n_t)$.
In the sequel, $\varSigma$ denotes the unique $\sigma$-finite Borel measure on $\R \times I$ such that
\begin{equation} \label{eq:LIE_by_Theta}
g_t(z)=z+\int_{\R \times [0,t]}\frac{\partial g_s(z)}{\partial z}\cdot \frac{1}{x-z} \,\varSigma(dx\,ds),
\qquad t \in I,
\end{equation}
whose existence is ensured by Theorem~\ref{th:DLC_to_LIE} and Proposition~\ref{prop:CMF}.
Let $\sigma_t(dx):=\varSigma(dx \times [0,t])$.
$\varSigma^n$ and $\sigma^n_t$ are also defined in the same way with $g_t$ replaced by $g^n_t$.
By definition, we have $\varSigma^n(\R \times [0,t])=\AR(f^n_t)$.

\smallskip\noindent
\textit{``Only if'' part}.
Assume \eqref{M3} in Theorem~\ref{th:conv_monotone}, the convergence of the $\PickB$-DLCs $(f^n_t)$.
From \eqref{eq:mono_var_bound} we can derive the following two consequences in the same way as in the proof of Theorem~\ref{th:capacity_conti_prime}:
\begin{itemize}
\item $\lim_{n \to \infty}\sup_{t \in [0,T]}\lvert m^n_t-m_t \rvert=0$ for each $T\in I$;
\item $\rgen^n_t$ converges vaguely to $\rgen_t$ if $\sigma^n_t$ converges vaguely to $\sigma_t$.
\end{itemize}
Moreover, for each $T\in I$, the sequence $(\mathbf{1}_{\R \times [0,T]}\varSigma^n)_n$ of measures is bounded in total mass by \eqref{eq:mono_var_bound}.
Hence it has a subsequence which converges vaguely to some measure $\varSigma^\prime$ by Proposition~\ref{prop:Alaoglu_for_vague}.
By passage to the limit $g^n_t \to g_t$ along this subsequence, we obtain \eqref{eq:LIE_by_Theta} with $\varSigma$ replaced by $\varSigma^\prime$ for $t \in [0,T]$.
By the uniqueness of $P$ in Theorem~\ref{th:DLC_to_LIE}, this implies $\varSigma^\prime=\mathbf{1}_{\R \times [0,T]}\varSigma$.
Hence the original sequence $(\mathbf{1}_{\R \times [0,T]}\varSigma^n)_n$ converges vaguely to $\mathbf{1}_{\R \times [0,T]}\varSigma$.
On account of the definition of vague convergence, this convergence is true even if the indicator $\mathbf{1}_{\R \times [0,T]}$ is removed.
Then we have $\sigma^n_t \xrightarrow{\rm v} \sigma_t$ by Proposition~\ref{prop:two_thetas}, which means \eqref{M5}.

\smallskip\noindent
\textit{``If'' part}.
Let us assume \eqref{M5} to derive \eqref{M3} in Theorem~\ref{th:conv_monotone}.
As is easily seen, it suffices to prove that $(g^n_t)_{t}$ converges to $(g_t)_{t}$ locally uniformly on $\C^+ \times I$.
As in the proof of ``$\rm (M5)\Rightarrow (M1)$'' in Hasebe and Hotta~\cite[Theorem 3.15]{HH22}, we apply the Arzel\`a--Ascoli theorem to the sequence of functions $(z,t) \mapsto g^n_t(z)$, $n \in \N$.
The local boundedness follows immediately from \eqref{eq:mono_var_bound} and Proposition~\ref{prop:class_P}~\eqref{HN:ineq};
so we observe the equicontinuity. Let $K \subset \C^+$ be any compact subset and $T\in I$. 
In the triangle inequality
\[
\lvert g^n_t(z)-g^n_s(w) \rvert
\le \lvert g^n_t(z)-g^n_t(w) \rvert + \lvert g^n_t(w)-g^n_s(w) \rvert,
\]
the first term on the right-hand side can be dealt with Cauchy's integral formula in the same way as the unit disk case \cite[Eq.\ (3.37)]{HH22}.
For the second term, the integro-differential equation yields 
\[
\lvert g^n_t(w)-g^n_s(w) \rvert
\le \int_{\R \times (s,t]}\frac{\lvert (g^n_u)^\prime(w) \rvert}{\lvert x-w \rvert} \,\varSigma^n(dx \,du) \le C_{K,T} \int_\R \frac{\varSigma^n(dx \times(s,t])}{|x|+1}, 
\]
where the constant 
\[
C_{K,T} := \sup_{\substack{w \in K, u\in [0,T] \\ x\in \R, n\in\N}}\frac{\lvert (g^n_u)^\prime(w) \rvert (1+|x|)}{|x-w|}
\] is finite due to Proposition \ref{prop:class_P} \eqref{HN:ineq}, assumption \eqref{eq:mono_var_bound} and Cauchy's integral formula for $(g_u^n)'(w)$. Since $(\rgen_t^n)$ converges vaguely to $(\rgen_t)$ uniformly on $[0,T]$ (recall that one can use test functions in $C_\infty(\R)$ thanks to assumption \eqref{eq:mono_var_bound}), we deduce 
\[
\lim_{\substack{0\le s \le t \le T\\  t-s \to 0}}\sup_{n\in\N}\int_\R \frac{\varSigma^n(dx \times(s,t])}{|x|+1} 
= \lim_{\substack{0\le s \le t \le T\\  t-s \to 0}}\sup_{n\in\N} \int_\R \frac{\rgen_t^n(dx) - \rgen_s^n(dx)}{|x|+1} =0,  
\]
verifying the equicontinuity. We can therefore extract a subsequence from $(g_t^n), n\in\N$ that converges  to a function, say $\tilde{g}_t(z)$, locally uniformly on $\C^+ \times I$. The uniqueness of limits follows from Theorem \ref{th:LIE_to_DLC} and the fact that $\tilde g$ fulfills \eqref{eq:LIE-additive_g} since 
we can pass to the limit in 
\[
g_t^n(z)=z+\int_{\R\times [0,t]} \frac{\partial g_s^n(z)}{\partial z} \cdot \frac{1}{x-z} \,\varSigma^n(dx\,ds),\qquad t \in [0,T],\, z \in \C^+. 
\]
This verifies the convergence $g_t^n(z)\to g_t(z)$ and hence $f_t^n(z)\to f_t(z)$. 
\end{proof}

We note that, instead of assuming that the second moments involved are uniformly bounded, one can include the convergence of the moments into the equivalent conditions that have been considered up to this point, as in the following corollary:

\begin{corollary} \label{cor:moment_conv}
Let the symbols in this corollary be as in Theorems~\ref{th:conv_classical_SMB} and \ref{thm:main_conv}.
In particular, all the CHs are assumed to have finite second moment.
Then the following are equivalent as $n\to \infty$: 
\begin{enumerate}
\item\label{C2_strong}
$(\classical_{s,t}^n) \stackrel{\rm w}{\to} (\classical_{s,t})$ locally uniformly and $\var(\classical_{s,t}^n) \to \var(\classical_{s,t})$ locally uniformly on $I^2_\le$;  
\item\label{M2_strong}
$(\monotone_{s,t}^n) \stackrel{\rm w}{\to} (\monotone_{s,t})$ locally uniformly and $\var(\monotone_{s,t}^n) \to \var(\monotone_{s,t})$ locally uniformly on $I^2_\le$; 
\item\label{M3_strong}
$(f^n_t(z)) \to (f_t(z))$ locally uniformly in $(z,t) \in \C^+ \times I$ and $\AR(f^n_t)\to \AR(f_t)$ locally uniformly on $I$;
\item\label{M4_strong}
$(f^n_{s,t}(z)) \to (f_{s,t}(z))$ locally uniformly in $(z,s,t) \in \C^+ \times I^2_\le$ and $\AR(f^n_{s,t}) \to \AR(f_{s,t})$ locally uniformly on $I^2_\le$;
\item\label{M5_strong}
$(m^n_t) \to (m_t)$ locally uniformly and $(\rgen^n_t) \stackrel{\rm w}{\to} (\rgen_t)$ locally uniformly on $I$.
\end{enumerate}
\end{corollary}

\begin{proof}
Note that the following relation, which has appeared repeatedly before:
\[
\kappa_t(\R)-\kappa_s(\R)=\var(\classical_{s,t})=\var(\monotone_{s,t})=\AR(f_t)-\AR(f_s)=\AR(f_{s,t}).
\]
From this it is easy to see that \eqref{C2_strong}--\eqref{M4_strong} are equivalent and implied by \eqref{M5_strong}.
Conversely, if we assume one of \eqref{C2_strong}--\eqref{M4_strong}, then by Theorem~\ref{thm:main_conv} $(m^n_t)_{t}$ converges to $(m_t)_{t}$ locally uniformly and $(\rgen^n_t)_{t}$ converges vaguely to $(\rgen_t)_{t}$ locally uniformly on $I$.
Moreover, by Proposition~\ref{prop:vague_to_weak} and the relation above, $\rgen^n_t$ converges weakly to $\rgen_t$ at every $t\in I$.
Thus, we can apply Proposition~\ref{prop:two_sigmas} to conclude that $(\rgen^n_t)_{t}$ converges weakly to $(\rgen_t)_{t}$ locally uniformly on $I$.
\end{proof}

The sets of $\ast$- and $\rhd$-CHs with finite second moment, subsets of $C(I^2_\le; \prob)$, can be endowed with the topology induced by the metric
\begin{equation} \label{eq:2-luw_metric}
\tilde \rho((\mu_{s,t})_{s\le t}, (\nu_{s,t})_{s\le t}) := \sum_{j\ge1}\frac1{2^j}\left[1 \wedge \sup_{0\le s\le t \le T_j} (\rho+\mathcal{V})(\mu_{s,t}, \nu_{s,t})\right].
\end{equation}
Here, $\{T_j\}_{j\ge1} \subset I$ is any sequence with $T_j\uparrow \sup I$, $\rho$ is any metric on $\prob$ compatible with weak convergence, e.g., the L\'evy distance, and $\mathcal{V}(\mu, \nu):=\lvert \Mean(\mu)-\Mean(\nu) \rvert+\lvert \var(\mu)-\var(\nu) \rvert$.
As is easily seen, conditions~\eqref{C2_strong} and \eqref{M2_strong} of Corollary~\ref{cor:moment_conv} are the sequential convergence of $(\classical^n_{s,t})$ and $(\monotone^n_{s,t})$, respectively, in this topology; see Appendix~\ref{sec:topology_of_luwc}.
As a result, our bijection, restricted on the sets of $\ast$- and $\rhd$-CHs with finite second moment, is homeomorphic in the $\tilde\rho$-topology.
The metric $\rho+\mathcal{V}$ in \eqref{eq:2-luw_metric} can be replaced with another metric such as $2$-Wasserstein distance, which is well known in optimal transport theory; for such metrics, see Villani~\cite[Chapter~6]{Vil09} and references therein.
Corollary \ref{cor:moment_conv} also implies that the set of $\rhd$-CHs with finite second moment is homeomorphic to the set of $\PickB$-DLCs endowed with the metric 
\begin{align*}
&\hat\rho((f_t)_{t}, (g_t)_{t}) \\
&:=\sum_{j\ge1}\frac1{2^j}\left\{ 1 \wedge \left[ \sup_{(z,t) \in K_j}\lvert f_t(z) - g_t(z) \rvert + \sup_{0 \le t \le T_j}\bigl( \lvert \mean(f_t)-\mean(g_t) \rvert + \lvert \AR(f_t)- \AR(g_t) \rvert \bigr) \right] \right\},  
\end{align*}
where $\{K_j\}_{j\ge1}$ is any sequence of compact subsets of $\C^+ \times I$ such that $K_j \uparrow \C^+ \times I$. 

\subsection{Infinitesimal growth of moments under the bijection} \label{subsec:generator_moments}

In this subsection, we shall observe a probabilistic aspect of our bijection by a rather direct calculation of moments, putting assumptions which are stronger than those in Example~\ref{rem:AC}. 
Let $\Mean_n(\mu)$ denote the $n$th moment of $\mu \in \mea$ (if it exists) for $n\ge0$, i.e., 
\[
\Mean_n(\mu) = \int_\R x^n \,\mu(dx).  
\]

\begin{proposition}
Let $(\classical_{s,t})$ and $(\monotone_{s,t})$ the $\ast$- and $\rhd$-CHs having a common reduced generator $(m_t,\rgen_t)_{t}$.
Assume that 
\begin{itemize}
\item the mapping $t\mapsto m_t$ is absolutely continuous and the derivative is denoted by $\dot{m}_t$,
\item for every $T\in I$, the supports of $\rgen_t$, $0\le t\le T$, are contained in a common compact set $K_T \subset \R$ (cf.~Proposition \ref{thm:cpt_supp}), and 
\item $(\rgen_t)_{t}$ is absolutely continuous with respect to $t$ in the sense that 
\[
\rgen_t(B) = \int_0^t \dot{\rgen}_s(B)\, ds, \qquad t\in I,~ B \in \cB(\R),
\] 
for some measurable family of finite Borel measures $(\dot{\rgen}_t)_t$, which means $\rGen(dx\,dt)=\dot{\rgen}_t(dx)\,dt$. Note that $\frac{d}{dt}\Mean_n(\rgen_t) = \Mean_n(\dot{\rgen}_t)$ a.e.\ $t\in I$, $n\ge0$.
\end{itemize}
Then we have 
\begin{equation} \label{eq:gen_moment}
 \left. \frac{\partial}{\partial t}\right|_{t=s} \Mean_n(\classical_{s,t})=  \left. \frac{\partial}{\partial t}\right|_{t=s}\Mean_n(\monotone_{s,t}), \qquad n \in \N,~ \text{a.e.}~s\in I,   
\end{equation}
i.e., the bijection is defined so that the infinitesimal generators of all moments coincide. 
\end{proposition}
\begin{proof}
We begin with computing the left-hand side of \eqref{eq:gen_moment}, which is easier.
The L\'evy--Khintchine representation \eqref{eq:CLK_reduced_intro} yields, with convention $\Mean_{-1}(\rgen_t -\rgen_s):= m_t- m_s$, 
\begin{align*}
\widehat\classical_{s,t}(\xi) 
&=\exp\left(i (m_t-m_s) \xi + \int_{\R}\frac{e^{i\xi x}-1- i\xi x }{x^2} \, (\rgen_t-\rgen_s)(dx)\right) \\
&= \exp\left(\sum_{n=1}^\infty \frac{(i\xi)^n}{n!}\Mean_{n-2}(\rgen_t-\rgen_s)\right), 
\end{align*}
so that we have 
\begin{align*}
 \Mean_n(\classical_{s,t}) &= \frac1{i^n} {\widehat \classical_{s,t}}^{\,\,(n)} (0) \\
 &= 
 \begin{cases} \Mean_{-1}(\rgen_t -\rgen_s), & n =1, \\ 
 \Mean_{n-2}(\rgen_t -\rgen_s) + P_n(\Mean_{-1}(\rgen_t -\rgen_s), \dots, \Mean_{n-3}(\rgen_t -\rgen_s)), & n\ge2,   \\
  \end{cases}
\end{align*}
where $P_n$ is a polynomial without constant term or linear term. This implies  
\[
 \left.\frac{\partial}{\partial t} \right|_{t=s} \Mean_n(\classical_{s,t}) = \Mean_{n-2}(\dot{\rgen}_s), \qquad n\ge1,
\]
with convention $\Mean_{-1}(\dot{\rgen}_s):= \dot{m}_s$.

On the monotone side, let $f_{s,t}:= F[\monotone_{s,t}]=f_s^{-1}\circ f_t$.
From calculations in Example~\ref{rem:AC}, it follows that 
\[
\partial_t f_{s,t}(z) =  p(z,t)\partial_z f_{s,t}(z), 
\]
where
\[
p(z,t) := -\dot{m}_t+ q (z-m_t,t) = -\dot{m}_t +\int_\R \frac{1}{x-z}\, \dot{\rgen}_t(dx).  
\]
Furthermore, with the shorthand notation $G_{s,t}=G[\monotone_{s,t}],$ the relation $f_{s,t}(z) = 1/G_{s,t}(z)$ yields 
\begin{equation*}\label{eq:LDE_G}
\partial_t G_{s,t} (z)  = p(z,t) \partial_z G_{s,t}(z), \qquad s \le t.  
\end{equation*}
Combining this with the series expansions 
\[
G_{s,t}(z) = \sum_{n\ge0} \frac{\Mean_n(\monotone_{s,t})}{z^{n+1}}, \qquad  p(z,t) = -\dot{m}_t  -  \sum_{n\ge0}\frac{\Mean_n(\dot{\rgen}_t)} {z^{n+1}},
\]
we obtain 
\[
\sum_{n\ge0} \frac{1}{z^{n+1}} \left.\frac{\partial}{\partial t} \right|_{t=s} \Mean_n(\monotone_{s,t}) = \left( -\dot{m}_s  -  \sum_{n\ge0}\frac{\Mean_n(\dot{\rgen}_s)} {z^{n+1}} \right) \left( -\frac1{z^2}\right),
\]
and so 
\[
 \left.\frac{\partial}{\partial t} \right|_{t=s}\Mean_n(\monotone_{s,t})  =   \Mean_{n-2}(\dot{\rgen}_s),  \qquad n\ge 1, 
\]
as desired. 
\end{proof}

\begin{remark}
The same conclusion holds for $\lhd$-CHs $(\check\monotone_{s,t})$: starting from \eqref{eq:LIE5} with $\varphi_{s,t} := F[\check\monotone_{s,t}]$, computing $\partial_t \varphi_{s,t}|_{t=s}$ leads to the formula 
\[
 \left.\frac{\partial}{\partial t} \right|_{t=s}\Mean_n(\check\monotone_{s,t})  =   \Mean_{n-2}(\dot{\rgen}_s),  \qquad n\ge 1.
\]
\end{remark}

\section{Main results: the general case}
\label{sec:general_case}

In the case of finite second moment in Section~\ref{sec:finite_2nd_case}, the mean and variance play the roles of shift and time-change, respectively, to reduce the problem to differential equations.
In the general case, we lose such a canonical choice; nevertheless, suitable shift and time-change allow us to reduce a general $\PickA$-REF to a (noncanonical) absolutely continuous REF.

\subsection{Preliminaries}
\label{sec:modern_prel}

To implement the reduction of general $\PickA$-REFs, we need recent results on (R)EFs.

\begin{definition}[absolutely continuous evolution family] \label{def:EF}
Let $D \subsetneq \C$ be a simply connected domain. 
A two-parameter family $(\varphi_{s,t})_{s\le t}$ of holomorphic self-mappings $\varphi_{s,t} \colon D \to D$ is called  an \emph{absolutely continuous EF} if it satisfies \ref{EF1}, \ref{EF2} in Remark \ref{rem:reversal} and the following condition: 
\begin{enumerate}[label=\rm ({EF\arabic*}'),leftmargin=4em]
\setcounter{enumi}{2}
\item \label{EF3'}
for every $z \in D$ there exists a nonnegative $k_z \in L^1_{\rm loc}(I)$ such that 
\[
\lvert \varphi_{s,u}(z)-\varphi_{s,t}(z) \rvert \le \int_t^u k_z(r)\,dr,\qquad s \le t  \le u.  
\]
\end{enumerate}

\end{definition}

\begin{definition}[absolutely continuous reverse evolution family]
In the same way as Definition \ref{def:EF}, we say that a two-parameter family $(f_{s,t})_{s\le t}$ of holomorphic self-mappings $f_{s,t} \colon D \to D$ is an \textit{absolutely continuous REF} if it satisfies \ref{TM1},  \ref{TM2} in Definition~\ref{def:REF}, and 
\begin{enumerate}[label=\rm ({REF\arabic*}'),leftmargin=5em] 
\setcounter{enumi}{2}
\item \label{TM3'} for every $z \in D$ there exists a nonnegative $k_z \in L^1_{\rm loc}(I)$ such that 
\[
\lvert f_{s,u}(z)- f_{s,t}(z) \rvert \le \int_t^u k_z(r)\,dr,\qquad s \le t  \le u.  
\]
\end{enumerate} 
\end{definition}

\begin{remark}\label{rem:ACEF_REF}
Let $I=[0,T]$ with $T < \infty$.
It was proved by Contreras, D\'{\i}az-Madrigal and Gumenyuk \cite[Proposition 4.3]{CDMG14} that a two-parameter family $(\varphi_{s,t})$ of holomorphic self-mappings $\varphi_{s,t} \colon D \to D$ is an absolutely continuous EF if and only if the family $(f_{s,t})$ defined by $f_{s,t}=\varphi_{T-t,T-s}$ is an absolutely continuous REF.
This is not trivial since a mere time-reversal in \ref{EF3'} does not imply \ref{TM3'}.
\end{remark}

\begin{remark}
By Bracci, Contreras and D\'{\i}az-Madrigal~\cite[Proposition 3.5]{BCDM12} an absolutely continuous EF is an EF, i.e., condition \ref{EF3} holds.
Combined with Remark~\ref{rem:ACEF_REF}, an absolutely continuous REF is an REF.
\end{remark}

There are two more key results from the literature. 
One is the following result of Gumenyuk, Hasebe and P\'erez~\cite[Theorem 5]{GHP22+}.
We also provide a rather self-contained proof suggested by a referee in Appendix \ref{app:GHP}, which actually generalizes this result. 

\begin{lemma} \label{TH_diff-one-trajectory}
Suppose that $(\varphi_{s,t})$ is a family of holomorphic self-mappings of $\C^+$ with the properties \ref{EF1} and \ref{EF2}, and each $\varphi_{s,t}$ has Denjoy--Wolff fixed point at $\infty$.
Let $z_0\in \C^+$. 
If the map $t \mapsto \varphi_{0,t}(z_0)$ is absolutely continuous on each compact subinterval, then $(\varphi_{s,t})$ is an absolutely continuous EF.
\end{lemma}

\begin{remark}\label{rem:DW}
For the definition of Denjoy--Wolff fixed point, we refer the reader to, e.g., a book of Bracci, Contreras and D\'{\i}az-Madrigal~\cite[Definition~1.8.5]{BCDM20}. 
In the context of Lemma~\ref{TH_diff-one-trajectory}, a holomorphic self-mapping $\varphi \colon \C^+ \to \C^+$ has Denjoy--Wolff fixed point at $\infty$ if and only if the angular derivative at infinity (see Section \ref{sec:PN_to_Cauchy}) satisfies $ \varphi'(\infty) \in [1,+\infty)$; see the same book~\cite[Remark~1.9.7]{BCDM20} and also the computations in Appendix~\ref{sec:anglim_at_infty}.  
In particular, this condition is satisfied by every element $F \in \PickA$, i.e., every reciprocal Cauchy transform $F=F_\mu$ ($\mu \in \prob$), due to the property $F'(\infty)=1$. 
\end{remark}

 Another key result is due to Franz, Hasebe and Schleissinger \cite[Remark 3.11, Proposition 3.12]{FHS20}. We consider Herglotz vector fields $p\colon \C^+\times I \to \overline{\C^+}$ of the form%
 \begin{equation}\label{eq:HVFg}
p(z,t) = -\dot\gamma_t + \int_\R \frac{1+xz}{x-z} \,\dot\eta_t(dx), 
\end{equation}
where $\dot\gamma_t \in \R$ and $\dot\eta_t$ is a finite Borel measure on $\R$ for all $t\in I$ such that the functions $t\mapsto\dot\gamma_t$ and $t\mapsto \int_\R f(x)\,\dot\eta_t(dx)$ are in $L^1_{\rm loc}(I)$ for each $f \in C_{\rm b}(\R)$.  

\begin{lemma} \label{lem:absPREF}
Let $(f_{s,t})$ be an absolutely continuous $\PickA$-REF. Then there exists a unique Herglotz vector field $p$ of the form \eqref{eq:HVFg} such that for each $z \in \C^+$ and $t\in I$ the equation 
\begin{equation}\label{eq:mLDE}
\frac{\partial }{\partial s} f_{s,t}(z) = -p(f_{s,t}(z),s), \qquad \text{a.e.~}s \in [0,t] 
\end{equation}
holds. Conversely, given a Herglotz vector field $p$ of the form \eqref{eq:HVFg}, a unique solution $(f_{s,t})$ to \eqref{eq:mLDE} exists and is an absolutely continuous $\PickA$-DLC. 
\end{lemma}

\begin{remark}
The uniqueness of $p$ is not stated in the cited paper~\cite{FHS20}, but it follows from Contreras, D\'{\i}az-Madrigal and Gumenyuk~\cite[Theorem~4.2~(i)]{CDMG14}. 
\end{remark}

\subsection{The integral equation and generator}
\label{sec:LIE_for_P-REF}

We begin with reduction of $\PickA$-REFs to absolutely continuous ones. 

\begin{proposition} \label{thm:AC}
Let $(f_{s,t})_{0 \le s \le t \le T}$ be a $\PickA$-REF with $0<T<+\infty$.
Set $a_{s,t}:= \Re[f_{s,t}(i)]$, $L_t(z):= z + a_{t,T}$, and $h_{s,t} := L_s^{-1} \circ f_{s,t} \circ L_t$.
Then the following hold:
\begin{enumerate}
\item\label{item:AC1}
$(h_{s,t})_{0 \le s \le t \le T}$ is a $\PickA$-REF;
\item\label{item:AC2}
the mapping $[0,T] \ni t\mapsto \Im[h_{t,T}(i)] \in [0,\infty)$ is continuous and non-increasing;
\item\label{item:AC3}
if $\Im[h_{s,T}(i)]=\Im[h_{t,T}(i)]$ for some $0\le s \le t \le T$, then $h_{s,T} = h_{t,T}$ and further $h_{s',s''}=\mathrm{id}_{\C^+}$ for all $s', s'' \in [s,t]$ with $s' \le s''$. 
\end{enumerate}
Moreover, put $S:=\Im [h_{0,T}(i)-i]$ and define
\begin{align*}
\tau(s) &:= \sup\{t\in [0,T]: \Im [h_{t,T}(i)-i] =S-s\},\qquad 0 \le s \le S, \\
h^\tau_{s,t} &:=  h_{\tau(s), \tau(t)}, \qquad 0 \le s \le t \le S.
\end{align*}
Then
\begin{enumerate}
\setcounter{enumi}{3}
\item\label{item:AC4}
$\tau\colon [0,S]\to [0,T]$ is strictly increasing, 
\item\label{item:AC5}
$(h^\tau_{s,t})_{0 \le s \le t \le S}$ is an absolutely continuous $\PickA$-REF. 
\end{enumerate}
\end{proposition}

\begin{proof}
\eqref{item:AC1} is easy to check. 

\smallskip\noindent
\eqref{item:AC2} 
As being a $\PickA$-REF, each $f_{s,t}$ has the form 
$$
f_{s,t}(z) = z + a_{s,t} + \int_{\R}\frac{1+xz}{x-z}\,\sigma_{s,t}(dx),  
$$
where $\sigma_{s,t}$ is a finite Borel measure on $\R$.
Since $a_{t,t}=0$ we have 
\begin{equation}\label{eq:PN}
\Re[h_{t,T}(i)] = \Re[L_t^{-1} (f_{t,T}(i))] = 0,\qquad 0\le t \le T. 
\end{equation} 
As $h_{s,T}(i) = h_{s,t}( h_{t,T}(i))$, the mapping $t \mapsto \Im[h_{t,T}(i)]$ is non-increasing.
This mapping is obviously continuous. 

\smallskip\noindent
\eqref{item:AC3}
Suppose that $0\le s \le t \le T$ and $\Im[h_{s,T}(i)]=\Im[h_{t,T}(i)]$.
The relation $h_{s,T}= h_{s,t} \circ h_{t,T}$ implies $h_{s,t}(z)= z+ \tilde a_{s,t}$ for some $\tilde a_{s,t}\in \R$.
Hence $h_{s,T}= h_{s,t} \circ h_{t,T}= h_{t,T} +  \tilde a_{s,t}$. As $\Re[h_{s,T}(i)]=\Re[h_{t,T}(i)]=0$, we conclude that $ \tilde a_{s,t}=0$. 

Take any $s', s'' \in [s,t]$ with $s' \le s''$.
We can use arguments similar to the above: the relation $\mathrm{id}_{\C^+}=h_{s,t}=h_{s,s'}\circ h_{s',s''}\circ h_{s'',t}$ implies that
\[
\Im i = \Im[h_{s,s'}(h_{s',s''}(h_{s'',t}(i)))] \ge \Im[h_{s',s''} ( h_{s'',t}(i)))] \ge  \Im[h_{s'',t}(i))]\ge \Im i
\]
and hence the inequalities here are all equalities.
This means that $h_{s',s''}(z)= z + \tilde a_{s',s''}$ for some $\tilde a_{s',s''} \in \R$.
Using $h_{s',T}= h_{s',s''} \circ h_{s'',T}= h_{s'',T} + \tilde a_{s',s''}$ and $\Re[h_{s',T}(i)]=\Re[h_{s'',T}(i)]=0$ we conclude that $\tilde a_{s',s''}=0$. 

\smallskip\noindent
\eqref{item:AC4}, \eqref{item:AC5}
Property \eqref{item:AC2} readily implies that $\tau\colon [0,S]\to[0,T]$ is strictly increasing.
By the definition of $\tau$, we have $h^\tau_{S-t,S}(i) = i (t+1)$ for all $0 \le t \le S$, which is trivially absolutely continuous on $[0,S]$.
By Lemma~\ref{TH_diff-one-trajectory}, $(\varphi_{s,t})_{0\le s \le t \le S}:=(h_{S-t,S-s}^\tau)_{0\le s \le t \le S}$ is an absolutely continuous EF, and by Remark \ref{rem:ACEF_REF}, $(h_{s,t}^\tau)$ is an absolutely continuous REF. 
\end{proof}

The family $(h^\tau_{s,t})_{0\le s \le t \le S}$ in Proposition~\ref{thm:AC} satisfies a Loewner differential equation driven by a Herglotz vector field.
This will yield an integral equation for $(f_{s,t})$. 

\begin{theorem}\label{thm:integral}
Let $(f_{s,t})_{s\le t}$ be a $\PickA$-REF.
Then there exist a continuous function $I\ni t\mapsto \gamma_t \in \R$ with $\gamma_0=0$ and a Borel measure $\Genm$ on $\R\times I$ with \eqref{LK3} such that
\begin{equation}\label{eq:integral}
f_{s,t} (z) = z  + \gamma_s-\gamma_t + \int_{\R \times [s,t]} \frac{1+x f_{r,t}(z)}{x-f_{r,t}(z)} \,\Genm(dx\,dr), \qquad 0 \le s \le t, ~z \in \C^+. 
\end{equation}
The pair $((\gamma_t),\Genm)$ is unique.
Conversely, given such a pair $((\gamma_t),\Genm)$, there is a unique $\PickA$-REF $(f_{s,t})$ such that \eqref{eq:integral} holds.  
\end{theorem}

In this theorem, we call both the pairs $((\gamma_t), \Genm)$ and $((\gamma_t),(\eta_t))$, where $\eta_t(\cdot):= \Genm(\cdot\times[0,t])$, the \emph{generators} of the $\PickA$-REF $(f_{s,t})$ and also of the corresponding $\PickA$-DLC and $\rhd$-CH.

\begin{proof}
We first establish the unique existence of the generator, given a $\PickA$-REF $(f_{s,t})$.
We begin with the case $I=[0,T]$ with $T<+\infty$.
Let $h_{s,t}$ and $h^\tau_{s,t}$ be as in Proposition \ref{thm:AC}.
According to Lemma \ref{lem:absPREF}, there exists a Herglotz vector field $p^\tau(z,t)$ of the form 
\[
p^\tau(z,t) = -\dot\gamma_{t}^\tau + \int_\R \frac{1+xz}{x-z} \, \dot{\genm}_t^{\tau}(dx) 
\]
such that the differential equation 
\[
\frac{\partial }{\partial s} h_{s,t}^\tau(z) = -p^\tau(h^\tau_{s,t}(z),s) 
\]
holds for a.e.~$s \in [0,t]$ for each $t\in[0,S]$ and $z \in \C^+$.
We can perform the change of variables as in the proof of Theorem \ref{th:DLC_to_LIE}, using Proposition~\ref{thm:AC}~\eqref{item:AC3} instead of Corollary \ref{cor:constant_DLC}.
Then we obtain
\[
h_{s,t}(z) = z - \varGamma([s,t])+ \int_{\R\times[s,t]} \frac{1+x h_{r,t}(z)}{x-h_{r,t}(z)}\,\varTheta(dx\,dr),  
\]
where $\varGamma$ is a signed measure on $[0,T]$ and $\varTheta$ is a finite Borel measure on $\R\times [0,T]$ given by 
\[
\varGamma(B) := \int_{\tau^{-1}(B)} \dot{\gamma}^\tau_{v}\, dv, \qquad \varTheta (A \times B):=  \int_{\tau^{-1}(B)} \dot{\genm}^\tau_{v}(A)\, dv. 
\]
For notational brevity we write $a_t:=a_{t,T}$ and set 
\begin{align*}
\Genm (dx\,dt) &:=  \frac{1+(x-a_{t})^2}{1+x^2}\,\varTheta(d(x-a_{t})\,dt) , \\
  \gamma_{t} &:= \varGamma([0,t]) + a_{t}-a_{0} + \int_{\R\times [0,t]} \left[  \frac{(x-a_{r})(1+x^2)}{1+(x-a_{r})^2}-x\right]\Genm(dx\,dr). 
\end{align*}
Then a straightforward calculation leads to the desired formula:
\begin{align*}
f_{s,t}(z) 
&= L_s\circ h_{s,t}\circ L_t^{-1}(z) = h_{s,t}(z-a_t)+a_s \\ \displaybreak[1]
&= z - a_t + a_s  -\varGamma([s,t]) + \int_{\R\times[s,t]} \frac{1+x h_{r,t}(z-a_t)}{x-h_{r,t}(z-a_t)}\,\varTheta(dx\,dr) \\ \displaybreak[1]
&= z +\gamma_s-\gamma_t + \int_{\R\times [s,t]} \left[ \frac{(x-a_{r})(1+x^2)}{1+(x-a_{r})^2}-x\right]\Genm(dx\,dr) \\ \displaybreak[1]
 &\quad + \int_{\R\times[s,t]} \left[\frac{1+x^2}{x+a_r-f_{r,t}(z)}-x\right] \frac{1+(x+a_r)^2}{1+x^2}\,\Genm(d(x+a_r)\,dr) \\ \displaybreak[1]
 &= z +\gamma_s-\gamma_t + \int_{\R\times [s,t]} \left[  \frac{(x-a_{r})(1+x^2)}{1+(x-a_{r})^2}-x\right]\Genm(dx\,dr) \\ \displaybreak[1]
 &\quad + \int_{\R\times[s,t]} \left[\frac{1+(x-a_r)^2}{x-f_{r,t}(z)}-(x-a_r)\right] \frac{1+x^2}{1+(x-a_r)^2}\,\Genm(dx\,dr) \\ \displaybreak[1]
 &= z +\gamma_s-\gamma_t + \int_{\R\times[s,t]} \frac{1+x f_{r,t}(z)}{x-f_{r,t}(z)}\,\Genm(dx\,dr). 
\end{align*}
As $\tau$ is injective, we see that $t\mapsto \gamma_t$ is continuous, $\gamma_0=0$, and $\Genm(\R\times \{t\})=0$. 

To see the uniqueness of the pair $((\gamma_t)_t, \Genm)$, suppose that $((\gamma_t^i)_{0\le t \le T}, \Genm^i)$, $i=1,2$, are two generators of a common $\PickA$-REF $(f_{s,t})$.
Consider the disintegration 
\[
\Genm^i(dx\,dr) = \nu_r^i(dx)\lambda^i(dr), 
\]
where $\nu_r^i$ is a probability measure on $\R$ and $\lambda^i(dr):=\Genm^i(\R\times dr)$ is an atomless finite measure on $[0,T]$. 
Let $\lambda := \lambda^1+\lambda^2$ and $\beta_t:=\gamma_t^2-\gamma_t^1$. Since $\lambda^i$ are absolutely continuous with respect to $\lambda$, the obvious equality 
\[
\gamma_s^2-\gamma_T^2+ \int_{\R \times [s,T]} \frac{1+x f_{r,T}(z)}{x-f_{r,T}(z)} \,\Genm^2(dx\,dr) = \gamma_s^1-\gamma_T^1+ \int_{\R \times [s,T]} \frac{1+x f_{r,T}(z)}{x-f_{r,T}(z)} \,\Genm^1(dx\,dr)
\]
may be written as 
\begin{align*}
&& \beta_s = \beta_T + \int_{s}^T \left[\int_\R\frac{1+x f_{r,T}(z)}{x-f_{r,T}(z)} \,\left(\nu_r^1(dx)\frac{d\lambda^1}{d\lambda}(r) -  \nu_r^2(dx)\frac{d\lambda^2}{d\lambda}(r) \right) \right]\lambda(dr), \label{eq:LB0} \\
&& 0 \le s \le T,~z\in \C^+.   \notag
\end{align*}
The Lebesgue--Besicovitch differentiation theorem (see, e.g., \cite[Theorem 1.32]{EG92}) yields 
\begin{equation}\label{eq:LB}
\lim_{h\to0^+}\frac{\beta_{r+h}-\beta_{r-h}}{\lambda((r-h,r+h))} = -\int_\R\frac{1+x f_{r,T}(z)}{x-f_{r,T}(z)} \,\left(\nu_r^1(dx)\frac{d\lambda^1}{d\lambda}(r) -  \nu_r^2(dx)\frac{d\lambda^2}{d\lambda}(r) \right)
\end{equation}
for all $r \in [0,T]\setminus N_{z}$, where $N_{z}$ is a $\lambda$-null set depending on  $z$. Here we take a countable set $A \subseteq \C^+$ that has an accumulation point in $\C^+$, say $A=\{i+1/n: n \in \N\}$, and set $N:= \bigcup_{z \in A} N_{z}$.  Then \eqref{eq:LB} holds for all $r \in [0,T]\setminus N$ and all $z\in A$. For each $r \in [0,T]\setminus N$, the set $\{f_{r,T}(z): z \in A\}$ also has an accumulation point in $\C^+$ as $f_{r,T}$ is univalent. From this fact and the identity theorem, we obtain 
\begin{equation*}\label{eq:LB2}
\lim_{h\to0^+}\frac{\beta_{r+h}-\beta_{r-h}}{\lambda((r-h,r+h))} = \int_\R\frac{1+x w}{x-w} \,\left(\nu_r^1(dx)\frac{d\lambda^1}{d\lambda}(r) -  \nu_r^2(dx)\frac{d\lambda^2}{d\lambda}(r) \right)
\end{equation*}
for all $r \in [0,T]\setminus N$ and all $z\in \C^+$. By the Stieltjes inversion formula \eqref{eq:PN_interval} we obtain 
\[
\nu_r^1(dx)\frac{d\lambda^1}{d\lambda}(r) =  \nu_r^2(dx)\frac{d\lambda^2}{d\lambda}(r), \qquad r \in [0,T ]\setminus N, 
\]
which in turn implies $\Genm^1=\Genm^2$. It is then easy to show $\gamma_t^1=\gamma_t^2$ using \eqref{eq:integral} and the initial conditions $\gamma_0^1=\gamma_0^2=0$. 

We now consider the case  $I=[0,T_0)$ with $T_0\le+\infty$.
Let $(f_{s,t})_{0\le s \le t <T_0}$ be a $\PickA$-REF.
We take a compact subset $[0,T] \subseteq I$ and then apply the established result to the restricted $\PickA$-REF $(f_{s,t})_{0\le s \le t \le T}$ to obtain $(\gamma_t^T)_{0\le t \le T}$ and a finite measure $\Genm^T$ on $\R\times [0,T]$.
In fact, these parameters are independent of $T$ in the sense that for $T<T'<T_0$ we have $\gamma_t^T = \gamma_t^{T'}$ and $\Genm^T(B)= \Genm^{T'}(B)$ for all $0\le t \le T$ and $B \in \cB(\R \times [0,T])$.
This follows from the uniqueness in the case $I=[0,T]$ already proven above.
Thus we are allowed to extend $(\gamma_t^T)_{0\le t \le T}$ and $\Genm^T$ to $(\gamma_t)_{t \in I}$ and $\Genm$ as desired.
The uniqueness of $((\gamma_t)_{t\in I},\Genm)$ can also be proved by the restriction argument. 

Conversely, suppose that  $(\gamma_t)_{t\in I}$ and $\Genm$ are given. As $(a_{s,t})$ is not given \textit{a priori}, we consider a differently modified REF $\tilde h_{s,t} := K_s^{-1} \circ f_{s,t}\circ K_t$, where $K_t(z):=z + \gamma_t$. Then the integral equation is equivalent to 
\[
\tilde h_{s,t} (z) = z + \int_{\R \times [s,t]} \frac{1+x (\tilde h_{r,t}(z)+\gamma_r)}{x-(\tilde h_{r,t}(z)+\gamma_r)} \,\Genm(dx\,dr), \qquad 0 \le s \le t, ~z \in \C^+ 
\]
and hence to 
\[
\tilde h_{s,t} (z) = z - \tilde\varGamma([s,t])+ \int_{\R \times [s,t]} \frac{1+x \tilde h _{r,t}(z)}{x-\tilde h_{r,t}(z)} \,\tilde\varTheta(dx\,dr), \qquad 0 \le s \le t, ~z \in \C^+,  
\]
where 
\begin{align*}
\tilde\varTheta(dx\,dr) &:=  \frac{1+(x+\gamma_r)^2}{1+x^2}\,\Genm(d(x+\gamma_r)\,dr), \\
\tilde\varGamma(B) &:= \int_{\R\times B} \left[ \frac{(1+x^2)(x+\gamma_r)}{1+(x+\gamma_r)^2}-x\right] \tilde\varTheta(dx\,dr). 
\end{align*}
An appropriate time-change further reduces this equation to a Loewner differential equation in a way analogous to Theorem \ref{th:LIE_to_DLC}. The point is that, if we write the disintegration $\tilde\varTheta(dx\,dt) = \tilde\nu_t(dx)\tilde\lambda(dt)$ with each $\tilde\nu_t$ a probability measure on $\R$ as in Proposition \ref{prop:CMF}, then $\tilde \varGamma$ is absolutely continuous with respect to the atomless measure $\tilde \lambda$.
Therefore, we can perform the time-change by the continuous function $\tilde \ar(t):=\tilde\lambda([0,t])$.
Then the converse part of Lemma \ref{lem:absPREF} completes the proof.  
\end{proof}

\begin{remark}\label{rem:LIDE_general}
The $\PickA$-REFs $(h_{s,t})$ and $(\tilde h_{s,t})$ in the above proof do satisfy integro-differential equations too; for example,  
\begin{equation*}
h_{s,t}(z) = z + \int_{[s,t]} \frac{\partial h_{r,t}}{\partial z}(z)\left[ -\varGamma(dr) + \int_{\R}\frac{1+xz}{x-z}\,\varTheta(dx\,dr)\right], \qquad s\le t, ~z\in \C^+.  
\end{equation*}
The proof is similar to the case of finite second moment in Theorem \ref{th:DLC_to_LIE}, i.e., the application of the time-change argument to the differential equation for $h_{s,t}^\tau$. 
\end{remark}


A version for $\PickA$-EFs can be easily obtained. 
\begin{corollary}\label{thm:integral_anti-monotone}
Let $(\varphi_{s,t})_{s\le t}$ be a $\PickA$-EF. 
Then there exist a continuous function $I\ni t\mapsto \gamma_t \in \R$ with $\gamma_0=0$ and a Borel measure $\Genm$ on $\R\times I$ with \eqref{LK3} such that
\begin{equation}\label{eq:integral_EF}
\varphi_{s,t} (z) = z  + \gamma_s-\gamma_t + \int_{\R \times [s,t]} \frac{1+x \varphi_{s,u}(z)}{x-\varphi_{s,u}(z)} \,\Genm(dx\,du), \qquad s \le t, ~z \in \C^+. 
\end{equation}
The pair $((\gamma_t),\Genm)$ is unique.
Conversely, given such a pair $((\gamma_t),\Genm)$, there is a unique $\PickA$-EF $(\varphi_{s,t})$ such that \eqref{eq:integral_EF} holds.  

The pairs $((\gamma_t), \Genm)$ and $((\gamma_t),(\eta_t))$, where $\eta_t(\cdot):= \Genm(\cdot\times[0,t])$, are both called the \emph{generator} of the $\PickA$-EF $(\varphi_{s,t})$ and also of the corresponding $\lhd$-CH.

\end{corollary}
\begin{proof}
In the case $I=[0,T]$ with $T<+\infty$ this is an easy consequence of the fact that $f_{s,t}:=\varphi_{T-t, T-s}$ form a $\PickA$-REF. In the case $I=[0,T)$ with $0<T \le +\infty$, the conclusion follows by working with each compact subinterval and then extending the generators to the whole interval $I$ by the uniqueness. 
\end{proof}

\begin{remark}\label{rem:M_AM} Recall from Remark \ref{rem:antimonotone_EF} that $\lhd$-CHs correspond to $\PickA$-EFs. 
Theorem \ref{thm:integral} and Corollary \ref{thm:integral_anti-monotone} thus induce a bijection (actually a homeomorphism, see Theorem \ref{th:conv_P-REF_gen}) between $\rhd$-CHs and $\lhd$-CHs by identifying their generators. Note that this bijection is different from another canonical bijection $(\monotone_{s,t})\mapsto (\monotone_{T-t,T-s})$ in case $I=[0,T]$. Together with Proposition \ref{prop:LK_repr}, we obtain a bijection between $\ast$-CHs, $\rhd$-CHs and $\lhd$-CHs.  
\end{remark}

Recall that the integral equation in the case of finite second moment has been established in Corollary \ref{cor:LIE}. We can derive it directly from Theorem \ref{thm:integral}.

\begin{proposition}\label{prop:LIE_PickB} Let $(f_{s,t})$ be a $\PickA$-REF and  $((\gamma_t), (\genm_t))$ be its generator. Then the following conditions are equivalent: 
\begin{enumerate}
\item\label{item:pickB} $(f_{s,t})$ is a $\PickB$-REF; 
\item\label{item:eta_second} $\eta_t$ has finite second moment for every $t\in I$. 
\end{enumerate}
If these conditions hold then $(f_{s,t})$ satisfies the integral equation \eqref{eq:LIE3} with reduced generator $((m_t),\rGen)$ defined by \eqref{eq:gen_rgen}, \eqref{eq:two_thetas}, and \eqref{LK3_theta}. 
\end{proposition}

\begin{proof} 
\eqref{item:eta_second} $\Rightarrow$  \eqref{item:pickB}.  A straightforward  calculation yields \eqref{eq:LIE3}. Then using the inequality $\Im z \le \Im f_{r,t}(z)$ we get 
\begin{align*}
\left| f_{s,t}(z)-z + m_t -m_s\right| 
&\le \int_{\R\times [s,t]} \frac{1}{|x - f_{r,t}(z)|} \,\rGen(dx\,dr) \\
& \le  \int_{\R\times [s,t]} \frac{1}{\Im[ f_{r,t}(z)]} \,\rGen(dx\,dr) \le \frac{\rGen(\R\times [s,t])}{\Im z}. 
\end{align*}
By Proposition \ref{prop:class_P} \eqref{HN:ineq}, we conclude that $f_{s,t}+ m_t -m_s \in \PickC$ and hence $f_{s,t} \in \PickB$.  

\smallskip\noindent
\eqref{item:pickB} $\Rightarrow$ \eqref{item:eta_second}.   
By Fatou's lemma and the integral equation \eqref{eq:integral}, we have 
\begin{align*}
\int_{\R} (1+x^2) \,\eta_t(dx) 
&= \int_{\R\times[0,t]} (1+x^2) \,\Genm(dx\,dr) \\
&\le \liminf_{y\to+\infty} \int_{\R\times [0,t]} \frac{y \Im[f_{r,t}(iy)]}{|x-f_{r,t}(iy)|^2}(1+x^2) \,\Genm(dx\,dr) \\
&= \liminf_{y\to+\infty}y \Im[f_{0,t}(iy)-iy] = \AR(f_{0,t})<+\infty. 
\end{align*}
The last equality is a consequence of Proposition \ref{prop:class_P} \eqref{HN:ang}. 
\end{proof}

\subsection{Convergence of REFs and generators}
\label{sec:REF_gen_homeo}

We are now going to show that the correspondence between generators and $\PickA$-REFs, and hence $\rhd$-CHs as well, is bi-continuous.
Our proof requires the following lemma adopted from Proposition 5.2 and the subsequent paragraph of Bercovici and Voiculescu~\cite{BV93}.
For the reader's convenience we include a proof in Appendix~\ref{app:conv}. 

\begin{lemma}\label{lem:inverse1}
Let $(\mu_\tau)_{\tau \in K}$ be a tight family of Borel probability measures on $\R$ with arbitrary index set $K$. Then the following hold:
\[
\sup_{\tau \in K} \lvert F[\mu_\tau](z) - z \rvert = o(z)
\] 
as $z \to\infty$ nontangentially. 
\end{lemma}

Using Lemma~\ref{lem:inverse1}, we prove the next theorem.
The second condition in this theorem, the convergence of generators, has several equivalent formulations (Proposition~\ref{prop:two_sigmas}), which we do not repeat here.

\begin{theorem} \label{th:conv_P-REF_gen}
    Let $((\gamma_t^n), \Genm^n), ((\gamma_t), \Genm)$ be the generators of $\PickA$-REFs $(f_{s,t}^n)$, $(f_{s,t})$, respectively. Then the following are equivalent:
    \begin{enumerate}
        \item\label{item:conv_general1} $f_{s,t}^n(z) \to f_{s,t}(z)$ locally uniformly on $I^2_\le \times \C^+$;
        \item\label{item:conv_general2} $\gamma_t^n\to\gamma_t$ uniformly on $[0,T]$ and $\Genm^n|_{\R\times[0,T]} \to \Genm\rvert_{\R\times[0,T]}$ weakly for each $T\in I$. 
    \end{enumerate}
\end{theorem}

\begin{remark} Combining Theorem \ref{th:conv_P-REF_gen}, Proposition \ref{prop:LIE_PickB} and Lemma \ref{lem:vague-weak}, we can easily deduce Theorem \ref{thm:main_conv} as a corollary. 
\end{remark}

\begin{proof}[Proof of Theorem~\ref{th:conv_P-REF_gen}]
\eqref{item:conv_general1} $\Rightarrow$ \eqref{item:conv_general2}.
Let $T <\infty$. We first show that the sequence $(\Genm^n \rvert_{\R\times[0,
T]})_n$ is uniformly bounded and tight. Since $f_{s,t}^n \to f_{s,t}$, the set $\{\, f_{s,t}^n(i): 0\le s \le t \le T,~n\in\N \,\}$ is relatively compact in $\C^+$. Therefore, there is a constant $c>0$ such that 
$(1+x^2)/\lvert x-f_{s,t}^n(i) \rvert^2 \ge c$ for all $n\in\N$ and $0\le s \le t \le T$. Together with $\Im[f_{r,T}^n(i)]\ge1$, this yields the estimate
\[
\Im[f_{s,T}^n(i)] = 1 + \int_{\R\times[s,T]} \frac{(1+x^2)\Im[f_{r,T}^n(i)]}{\lvert x-f_{r,T}^n(i) \rvert^2} \,\Genm^n(dx\,dr) \ge 1+c  \Genm^n(\R\times[0,T]),  
\]
which shows the uniform boundedness. 

Let $(\monotone_{s,t}^n)_{s \le t}$ be the underlying $\rhd$-CH; i.e., $f_{s,t}^n= F[\monotone_{s,t}^n]$ holds.
By the assumption \eqref{item:conv_general1} and Theorem~\ref{th:conv_monotone}, $\monotone_{s,t}^n$ converges weakly locally uniformly.
We can then easily check that $\{\, \monotone_{s,t}^n : 0\le s \le t \le T,\ n \in \N \,\}$ is tight.
By Lemma \ref{lem:inverse1} we have the estimate 
\[
f_{s,t}^n(iy) = iy +o(y),\qquad y\to \infty,
\]
uniformly in $n$ and $s,t$. 
This implies $\lvert \Re[f_{s,t}^n(iy)] \rvert \le y$ and $\Im[f_{s,t}^n(iy)]\le 2y$ for sufficiently large $y$, say $y \ge y_0 \ge1$. We have 
\begin{align*}
\frac{1+x^2}{\lvert x-f_{s,t}^n(iy) \rvert^2} 
&= \frac{1+x^2}{(x-\Re[f_{s,t}^n(iy)])^2 + \Im[f_{s,t}^n(iy)]^2} \\ \displaybreak[1]
&\ge \frac{1+x^2}{2(x^2+y^2) + 4y^2}\\ \displaybreak[1]
&\ge  \frac{1+x^2}{6(x^2+y^2)}. 
\end{align*}
Moreover, for $x,y$ such that $\lvert x \rvert \ge y \ge y_0$ we have 
\[
 \frac{1+x^2}{6(x^2+y^2)} \ge   \frac{1+y^2}{6(y^2+y^2)} \ge \frac{1}{12}. 
\]
Therefore, 
\begin{align*}
\Im[f_{0,T}^n(iy)] &= y + \int_{\R\times[0,T]} \frac{(1+x^2)\Im[f_{r,T}^n(iy)]}{\lvert x-f_{r,T}^n(iy) \rvert^2} \,\Genm^n(dx\,dr) \\ \displaybreak[1]
&\ge y + y\int_{(\R\setminus(-y,y))\times[0,T]} \frac{1}{12} \,\Genm^n(dx\,dr) \\ \displaybreak[1]
&=y+\frac{y}{12} \Genm^n((\R\setminus (-y,y))\times [0,T]),  
\end{align*}
and hence 
\[
\Genm^n((\R\setminus (-y,y))\times [0,T]) \le \frac{12(\Im[f_{0,T}^n(iy)]-y)}{y}=o(1),\qquad y\to\infty,    
\]
which shows the tightness of $(\Genm^n \rvert_{\R\times[0,T]})_{n\in\N}$. 

Prokhorov's theorem (see e.g.~\cite{Bil68}) ensures that there is a subsequence of $(\Genm^{n}\rvert_{\R\times [0,T]})_n$ which converges weakly to a finite Borel measure $\Genm'$. From the identity 
\begin{equation}\label{eq:master0}
     f_{0,t}^n (z) = z  -\gamma_t^n + \int_{\R \times [0,t]} \frac{1+x f_{r,t}^n(z)}{x-f_{r,t}^n(z)} \,\Genm^n(dx\,dr), 
\end{equation}
we deduce that $\gamma_t^n$ converges to some $\gamma_t' \in \R$ along this subsequence for each $t \in [0,T]$ since all the other terms converge.
Hence we have 
\[
 f_{s,t} (z) = z  +\gamma_s' -\gamma_t' + \int_{\R \times [s,t]} \frac{1+x f_{r,t}(z)}{x-f_{r,t}(z)} \,\Genm'(dx\,dr). 
\]
By this formula, $\Genm'(\R\times\{t\})=0$ for all $t$; otherwise the right-hand side of 
\[
 \Im[f_{0,t} (z)] = \Im z  + \int_{\R \times [0,t]} \Im\left[\frac{1+x f_{r,t}(z)}{x-f_{r,t}(z)}\right]\Genm'(dx\,dr),  
\]
would be discontinuous, which would contradict the continuity of the left-hand side. 
Moreover, from the identity
\[
 f_{s,T} (z) = z +\gamma_s' -\gamma_T' + \int_{\R \times [s,T]} \frac{1+x f_{r,T}(z)}{x-f_{r,T}(z)} \,\Genm'(dx\,dr) 
\]
we conclude that $s\mapsto \gamma_s'$ is continuous as the other terms are all continuous.
Therefore, $((\gamma_t'),\Genm')$ is a generator of $(f_{s,t})_{0\le s \le t \le T}$.
The uniqueness of generators in Theorem \ref{thm:integral} yields $\gamma_t'=\gamma_t$ and $\Genm'=\Genm\rvert_{\R\times[0,T]}$.

The above argument shows that the whole sequence $(\gamma_t^n)_{n\in\N}$ converges to $\gamma_t$ for each $t \in[0,T]$ and $(\Genm^n \rvert_{\R\times[0,T]})_{n\in\N}$ converges weakly to $\Genm \rvert_{\R\times[0,T]}$.

It remains to show that the convergence $\gamma_t^n\to\gamma_t$ is uniform on $[0,T]$. Due to Arzel\`a--Ascoli's theorem, it suffices to show the equicontinuity of $\gamma_t^n$ on $[0,T]$; note that the uniform boundedness easily follows from \eqref{eq:master0} and the uniform boundedness of $\Genm^n(\R\times[0,T])$.  The equicontinuity follows from the master equation
 \begin{equation}\label{eq:equi0}
      f_{s,t}^n (z) = z  +\gamma_s^n -\gamma_t^n + \int_{\R \times [s,t]} \frac{1+x f_{r,t}^n(z)}{x-f_{r,t}^n(z)} \,\Genm^n(dx\,dr). 
 \end{equation}
Indeed, the uniform convergence $f_{s,t}^n(i)\to f_{s,t}(i)$ and uniform continuity of $(s,t)\mapsto f_{s,t}(i)$ ensure that for every $\epsilon>0$ there is $\delta>0$ such that 
\begin{equation}\label{eq:equi1}
    \sup_{\substack{n\in\N, \,0\le s \le t \le T\\t-s <\delta}}|f_{s,t}^n(i)-i| < \epsilon. 
\end{equation}
Since $t\mapsto \Genm(\R\times[0,t])$ is a continuous function, by P\'olya's theorem (Remark~\ref{rem:Polya}), $\Genm^n(\R\times[0,t])$ converges uniformly to $\Genm(\R\times[0,t])$, and hence by the uniform continuity of $t\mapsto \Genm(\R\times[0,t])$, there is $\delta'>0$ such that 
\begin{equation}\label{eq:equi2}
   \sup_{\substack{n\in\N, \,0\le s \le t \le T\\t-s <\delta'}} \Genm^n(\R\times (s,t])<\epsilon. 
\end{equation}
Finally, as $\{\, f_{s,t}^n(i) : n\in\N,\ 0\le s \le t \le T \,\}$ is a relatively compact subset of $\C^+$ we have a bound 
\begin{equation}\label{eq:equi3}
    C:=\sup_{n\in\N,\, x \in \R,\, 0\le r \le t \le T}\left\lvert \frac{1+x f_{r,t}^n(i)}{x-f_{r,t}^n(i)}\right\rvert <+\infty. 
\end{equation}
Substituting the estimates \eqref{eq:equi1}--\eqref{eq:equi3} into \eqref{eq:equi0} yields 
\[
\sup_{\substack{n\in\N,\, 0\le s\le t \le T\\t-s<\min\{\delta,\delta'\}}}\lvert \gamma_t^n-\gamma_s^n \rvert \le \epsilon+ C\epsilon, 
\]
which thereby verifies the equicontinuity of $\gamma_t^n$.

\smallskip\noindent
\eqref{item:conv_general2} $\Rightarrow$ \eqref{item:conv_general1}.
In a way similar to the usual proof of the continuous dependence of solutions on parameters in the theory of ordinary differential equations~\cite[Theorem 7.4]{CL55}, we below show that, for each $T>0$ and $z\in\C^+$, the function $f_{s,T}^n(z)$ converges to $f_{s,T}(z)$ as $n \to \infty$ uniformly in $s\in[0,T]$. 
Let $T>0$ and $z\in \C^+$ be fixed. 
For each $n\in\N$, 
let $f_k^n(s)$, $k=0,1,2,\dots$, be successively defined by 
\begin{align}
 f_0^n(s) &:=  f_{s,T}(z), \notag \\
 f_k^n(s) &:= z + \gamma_s^n - \gamma_T^n+ \int_{\R\times[s,T]} \frac{1+x f_{k-1}^n(r)}{x-f_{k-1}^n(r)}\,\Genm^n(dx\,dr). \notag
\end{align}
We have 
\begin{align*}
\lvert f_1^n(s)-f_0^n(s) \rvert &\le \lvert \gamma_s^n-\gamma_s \rvert + \lvert \gamma_T^n-\gamma_T \rvert + \left\lvert \int_{\R\times[s,T]} \frac{1+x f_{r,T}(z)}{x-f_{r,T}(z)}\,[\Genm^n(dx\,dr)-\Genm(dx\,dr)] \right\rvert.
\end{align*}
Here, the first and second terms in the right-hand side converges to zero uniformly on $[0,T]$ as $n\to\infty$. The third term converges to zero uniformly as well. Indeed, the pointwise convergence for each $s \in[0,T]$ is obvious by the assumption $\Genm^n \rvert_{\R\times[0,T]}\to \Genm \rvert_{\R\times[0,T]}$. Moreover, as $\Genm^n(\R\times [0,T])$ is uniformly bounded, the integral is  uniformly bounded as functions of $s$, and it is equicontinuous because of \eqref{eq:equi2} which can be proved in the same way as before. By Arzel\`a--Ascoli's theorem, the third term also converges uniformly to zero. Altogether, for a given $\epsilon>0$ there is $N\in\N$ such that 
\[
\sup_{s\in[0,T]}\lvert f_1^n(s)-f_0^n(s) \rvert \le \epsilon, \qquad n\ge N. 
\]
For each $k\in\N$ and $n\ge N$ we have 
\[
f_{k+1}^n(s)-f_k^n(s) = \int_{\R\times[s,T] } \frac{(1+x^2)[f_{k}^n(r)-f_{k-1}^n(r)]}{[x-f_k^n(r)][x-f_{k-1}^n(r)]}\,\Genm^n(dx\,dr). 
\]
Let $\lambda^n(B):=\Genm^n(\R\times B)$, $B\in \cB([0,T])$, $K:=\{\, f_{s,T}(z):s \in[0,T] \,\}$,  $K^r:=\{\, w \in \C^+: \text{dist}(w,K)\le r \,\}$, $\delta:=\text{dist}(K,\R)$, and   
\[
D := \sup_{w_1,w_2 \in K^{\delta/2},\,x\in \R} \frac{1+x^2}{\lvert x-w_1 \rvert \lvert x-w_2 \rvert}. 
\]
Provided that $f_0^n(r), f_1^n(r), \ldots, f_{k}^n(r)\in K^{\delta/2}$ for all $r \in [0,T]$ we have 
\[
\lvert f_{k+1}^n(s)-f_k^n(s) \rvert \le D\int_{[s,T]} \lvert f_{k}^n(r)-f_{k-1}^n(r) \rvert\,\lambda^n(dr)
\]
and repeating this inequality amounts to 
\begin{align}
\lvert f_{k+1}^n(s)-f_k^n(s) \rvert 
&\le  D^k \int_{s \le r_1 \le \cdots \le r_k \le T} \lvert f_1^n(r_k) - f_0^n(r_k) \rvert (\lambda^n)^{\otimes k}(dr_1dr_2\cdots dr_k) \label{eq:Weierstrass}  \\
&\le \epsilon  \frac{(D\lambda^n([s,T]))^k}{k!} \le \epsilon  \frac{(DM)^k}{k!},  \notag
\end{align}
where $M:=\sup_{n}\lambda^n([0,T])<+\infty$. 
As the same inequality holds for nonnegative integers smaller than $k$, summing up them from $0$ to $k$ yields 
\begin{equation}\label{eq:Picard}
\lvert f_{k+1}^n(s) - f_0^n(s) \rvert \le \epsilon e^{DM}. 
\end{equation}
Hence, if we take $\epsilon >0$ such that $\epsilon e^{DM}<\delta/2$, then by induction, $f_k^n(s) \in K^{\delta/2}$ and \eqref{eq:Picard} holds for all $k\in \N$, $n\ge N$ and $s \in [0,T]$.
Moreover, by Weierstrass' M-test and \eqref{eq:Weierstrass}, the sequence $(f_k^n(s))_k$ converges to a function uniformly on $[0,T]$ as $k \to \infty$.
The limit function is actually $f_{s,T}^n(z)$ because it satisfies the integral equation driven by the generator $((\gamma_t^n), \Genm^n)$.
Passing to the limit $k\to\infty$ in \eqref{eq:Picard}, we deduce that $\lvert f_{s,T}^n(z) - f_{s,T}(z) \rvert \le \epsilon e^{DM}$ for all $s \in [0,T]$ and all $n \ge N$. 

Actually in the above, modifying the definition of the set $K$ slightly we can take the constant $D$ locally uniformly in $z$.
Thus, for each fixed $T \in I$, $f^n_{s,T}(z)$ converges to $f_{s,T}(z)$ as $n \to \infty$ locally uniformly in $(z,s) \in \C^+ \times [0,T]$.
In particular, $f^n_{0,T}=f^n_T$ converges to $f_T$ locally uniformly on $\C^+$.
Since $(f^n_T)^{-1}$ converges to $f_T^{-1}$ locally uniformly on $f_T(\C^+)$ (see, e.g., \cite[Proposition B.1]{HH22}), this further implies that $(f^n_s)^{-1}(z) =f^n_{s,T} ((f^n_T)^{-1}(z))$ converges to $f_s^{-1}(z)$ locally uniformly in $(z,s) \in f_T(\C^+) \times [0,T]$. 

The remaining part of the proof goes in a way similar to the proof of Theorem \ref{th:conv_DLC_REF}.
Let $s \in [0,T]$.
Take $c \in \C^+$ and $r>0$ in such a way that $\overline{\D(c, 2r)} \subset f_T(\C^+)$; here $\D(c, 2r)$ denotes the open disk with center $c$ and radius $2r$.
By the equicontinuity in $(z,u)$ of the family of functions $\{\, f_u^{-1}(z), (f^n_u)^{-1}(z) : n \in \N \,\}$, there exist $\delta>0$ and a neighborhood $N(s)$ of $s$ in $[0,T]$ such that $f_s^{-1}(\overline{\D(c,r)})$ is surrounded by the Jordan curves $f_u^{-1}(\partial \D(c,2r))$ and $(f^n_u)^{-1}(\partial \D(c,2r))$ for all  sufficiently large $n$ and $u \in N(s)$ with the distance between $f_s^{-1}(\overline{\D(c,r)})$ and each of these curves not less than the common $\delta$.
Now by the Lagrange inversion formula we have
\[
f_u(w)=\frac{1}{2\pi i}\int_{\partial \D(c,2r)} \frac{\zeta (f_u^{-1})'(\zeta)}{f_u^{-1}(\zeta)-w}\,d\zeta,
\qquad
f^n_u(w)=\frac{1}{2\pi i}\int_{\partial \D(c,2r)} \frac{\zeta [(f^n_u)^{-1}]'(\zeta)}{(f^n_u)^{-1}(\zeta)-w}\,d\zeta
\]
for all $w \in f_s^{-1}(\D(c,r))$, $u \in N(s)$, and $n \in \N$.
Noting that the denominators of these integrands are bounded from below by $\delta$, we can deduce from these identities that $f^n_u(w)$ converges to $f_u(w)$ locally uniformly in $(w, u) \in f_s^{-1}(\D(c,r)) \times N(s)$.
We now apply Corollary~\ref{cor:luc_rCauchy} (with the range of $u$-variable interpreted suitably in that statement), which implies that $f^n_u(w)$ converges to $f_u(w)$ locally uniformly in $(w, u) \in \C^+ \times N(s)$.
This convergence of the $\PickA$-DLCs is equivalent to condition \eqref{item:conv_general1} again by Theorem~\ref{th:conv_DLC_REF}; hence the proof is complete.
\end{proof}

\section{Free and boolean convolution hemigroups}
\label{sec:free_boolean-CHs}

For free and boolean CHs, we can establish a canonical bijection with classical CHs.
The definition of the bijection is much simpler than in the monotone case, because all the measures involved are infinitely divisible with respect to the respective convolutions, and so the Bercovici--Pata bijection \cite{BNT02,BP99} applies at each $(s,t)$.

Recall from Speicher and Woroudi~\cite{SW97} that  the \emph{energy function} $K_\mu$ (or $K[\mu]$) defined by the formula
\begin{equation} \label{eq:K-trans}
K_\mu(z) = z - F_\mu(z), \qquad z\in \C^+,
\end{equation}
linearizes boolean convolution: 
\[ 
K_{\mu\uplus \nu} =K_\mu+ K_\nu. 
\]

Free convolution is linearized as follows.
Let $\Gamma_{a,b} =\{\, x+ iy \in \C^+: a |x| < y,\ y >b \,\}$ for $a,b>0$.
It is known that for  any $a> a'>0$ there exist $b, b'>0$ (depending on $a,a'$ and $\mu$) such that $F_\mu$ is univalent in $\Gamma_{a', b'}$ and $F_\mu(\Gamma_{a', b'}) \supset \Gamma_{a, b}$.
This allows us to define the inverse mapping $F_\mu^{-1}\colon \Gamma_{a, b} \to \C^+$.  
The \emph{Voiculescu transform} $\phi_\mu$ (or $\phi[\mu]$) of $\mu$ is then defined by  
\begin{equation}
\phi_\mu(z) = F_\mu^{-1}(z) -z,   \qquad z \in \Gamma_{a,b},
\end{equation}
and the identity 
\begin{equation}
\phi_{\mu\boxplus \nu} =\phi_\mu+ \phi_\nu 
\end{equation}
holds on the intersection of the domains of the three transforms.
For further details the reader is referred to Bercovici and Voiculescu~\cite{BV93}.

\subsection{Convergence of Voiculescu transforms}
\label{subsec:luc_Voic-trans}

As well as the (reciprocal) Cauchy transform in Proposition~\ref{prop:luc_Cauchy} (and Corollary~\ref{cor:luc_rCauchy}), the Voiculescu transform characterizes the locally uniform weak convergence of families of probability measures. 
To see this, we need uniform estimates, given by the next lemma, of $F_\mu$ for $\mu$ in a tight family of probability measures.
This has been considered in the literature; see Proposition~5.4 and the paragraph subsequent to it in Bercovici and Voiculescu~\cite{BV93} and also Bercovici and Pata~\cite[Propositions~2.6 and 6.1]{BP99} \cite[Lemma~5]{BP00}.
We include the proof in Appendix \ref{app:conv} for the sake of reader's convenience.
Some techniques in that proof are also used in the proof of Proposition~\ref{prop:luc_Voiculescu} below. 
The index set $K$ in the following lemma can be arbitrary, but we use the case that $K$ is a compact subset of $\para=I$ or $I^2_{\le}$ only.

\begin{lemma}\label{lem:inverse2}
Let $(\mu_\tau)_{\tau \in K}$ be a tight family of Borel probability measures on $\R$ with arbitrary index set $K$. Then for every $a>\epsilon>0$ there exists $b_0>0$ such that, for all $b\ge b_0$ and $\tau \in K$, $F[\mu_\tau]$ are univalent in $\Gamma_{a,b}$ and $F[\mu_\tau](\Gamma_{a, b}) \supset \Gamma_{a+\epsilon,(1+\epsilon)b}$. 
\end{lemma}

Using Lemmas~\ref{lem:inverse1} and \ref{lem:inverse2} we can prove a uniform convergence of Voiculescu transforms. 
This is a generalization of the non-uniform version in the literature~\cite[Proposition~5.7]{BV93}\cite[Proposition~1]{BP96}. 

\begin{proposition} \label{prop:luc_Voiculescu}
Let $\para=I$ or $I^2_{\le}$.
In Proposition~\ref{prop:luc_Cauchy}, the equivalent conditions~\eqref{i:luwc_Cauchy}--\eqref{i:pwc_Cauchy} are also equivalent to the following:
\begin{enumerate}
\setcounter{enumi}{5}
\item \label{item1:luc_Voiculescu}
for every compact $K \subset \para$ there exist $a>a'>0$ and $b>b'>0$ with the following four properties:
\begin{enumerate}
\item\label{i:F_univ} $F[\mu_\tau^n], F[\mu_\tau]$ are univalent in $\Gamma_{a',b'}$;
\item\label{i:F_univ2} both $F[\mu_\tau^n](\Gamma_{a',b'})$ and $F[\mu_\tau](\Gamma_{a',b'})$ contain $ \Gamma_{a,b}$ for all $\tau \in K$ and $n\in\N$;
\end{enumerate}
and with the notation $F[\mu_\tau^n]^{-1},F[\mu_\tau]^{-1}\colon \Gamma_{a,b} \to \Gamma_{a',b'}$ for the associated inverse mappings, 
\begin{enumerate}
\setcounter{enumii}{2}
\item \label{i:luc_F_inv}
$F[\mu_\tau^n]^{-1}$ converges to $F[\mu_\tau]^{-1}$ locally uniformly on $\Gamma_{a,b}\times K$;
\item \label{i:asymp_F_inv}
$\sup_{\tau \in K, n \in \N} \lvert F[\mu_\tau^n]^{-1}(w) - w \rvert =o(w)$ as $w \to\infty$, $w \in \Gamma_{a,b}$. 
\end{enumerate}
\end{enumerate}
\end{proposition}
\begin{proof}
We first suppose that $(\mu^n_\tau)_{\tau \in \para}$ converges weakly to $(\mu_\tau)_{\tau \in \para}$ locally uniformly on $\para$. Note that this implies the tightness of $\{\, \mu^n_\tau, \mu_\tau : \tau \in K, n\in\N \,\}$ for any compact $K \subset \para$ and hence Lemmas \ref{lem:inverse1} and \ref{lem:inverse2} are available. The first two conditions \eqref{i:F_univ} and \eqref{i:F_univ2} are shown in Lemma~\ref{lem:inverse2}.  Condition \eqref{i:asymp_F_inv} is an easy consequence of the estimate $\sup_{\tau \in K, n\in \N}\lvert F[\mu_\tau^n](z)-z \rvert = o(z)$ as $z \to \infty$ through $\Gamma_{a',b'}$, as shown in Lemma~\ref{lem:inverse1}.  

To derive \eqref{i:luc_F_inv} it suffices to demonstrate that for every $(w_0,\tau_0) \in\Gamma_{a,b}\times K$ there exists a neighborhood of $(w_0,\tau_0)$ on which $F[\mu_\tau^n]^{-1}(w)$ converges to $F[\mu_\tau]^{-1}(w)$ uniformly as $n \to \infty$. 
Let $z_0 := F[\mu_{\tau_0}]^{-1}(w_0)$ and take an open disk $D\ni z_0$ such that $\overline{D} \subset \Gamma_{a',b'}$. 
By virtue of Corollary~\ref{cor:luc_rCauchy} and the continuity of $(z,\tau)\mapsto F[\mu_\tau](z)$, which follows from Lemma~\ref{lem:bicontinuity}, there exist  neighborhoods $U \ni \tau_0$, $W \ni w_0$ and $n_0\in\N$ such that the distance between $W$ and $\{\, F[\mu_\tau^n](z) : z \in \partial D,\ n \ge n_0,\ \tau \in U \,\}$ is positive.
Using the Lagrange inversion formula yields 
\begin{align*}
\left\lvert F[\mu_\tau^n]^{-1}(w) - F[\mu_\tau]^{-1}(w) \right\rvert
&=\left\lvert \frac{1}{2\pi i}\int_{\partial D}\frac{z F[\mu_\tau^n]^\prime(z)}{F[\mu_\tau^n](z)- w }\,dz- \frac{1}{2\pi i}\int_{\partial D}\frac{z F[\mu_\tau]^\prime(z)}{F[\mu_\tau](z)- w }\,dz \right\rvert,  
\end{align*}
which converges to zero uniformly on $W \times U$ as $n \to \infty$.
Note here that $F[\mu_\tau^n]^\prime(z)$ converges to $F[\mu_\tau]^\prime(z)$ uniformly on $W \times U$ thanks to Cauchy's integral formula. 

Conversely, assume that \eqref{item1:luc_Voiculescu} holds.  Similar to the proof of Lemma~\ref{lem:inverse2}, for any $a''> a$ there exist $b'' > b_1 >  b > 0 $ such that both $F[\mu_\tau^n]^{-1}(\Gamma_{a,b_1})$ and $F[\mu_\tau]^{-1}(\Gamma_{a,b_1})$ contain $\Gamma_{a'',b''}$ for all $\tau \in K, n\in\N$.
Using the Lagrange inversion formula in a way similar to the above, we see that $F[\mu_\tau^n](z)$ converges to $F[\mu_\tau](z)$ locally uniformly on $\Gamma_{a'',b''} \times K$, which implies Corollary~\ref{cor:luc_rCauchy}~\eqref{i:pwc_rCauchy} as desired. 
\end{proof}

\subsection{L\'evy--Khintchine representations for free and boolean CHs}
\label{sec:c-f-real}

The L\'evy--Khintchine representations for $\boxplus$- and $\uplus$-CHs are given as follows. 

\begin{proposition}\label{thm:f-c-real} For a $\boxplus$-CH $(\free_{s,t})_{s\le t}$ on $\R$ there exists a unique family $(\genc_t,\genm_t)_{t}$ satisfying conditions \eqref{LK1} and \eqref{LK2} in Proposition~\ref{prop:LK_repr} and 
\begin{equation}\label{eq:FALKT}
\phi[\free_{0,t}](z) = \genc_t +  \int_\R\frac{1+x z}{z-x}\,\genm_t(dx).  
\end{equation}
Conversely, given a family $(\genc_t,\genm_t)_{t}$ satisfying conditions \eqref{LK1} and \eqref{LK2} in Proposition~\ref{prop:LK_repr} there exists a unique $\boxplus$-CH $(\free_{s,t})_{s\le t}$ on $\R$ for which \eqref{eq:FALKT} holds. We call $(\genc_t,\genm_t)_{t}$ the \emph{generator} of $(\free_{s,t})_{s\le t}$. 
\end{proposition}

\begin{proof} The proof is very analogous to the unit circle case \cite[Theorem~4.1]{HH22}.
In that proof we can use \cite[Theorem 3.8]{BNT02} instead of \cite[Lemma 4.4]{HH22}; the details are omitted. 
\end{proof}

\begin{proposition}\label{prop:BLK}
For a $\uplus$-CH $(\boole_{s,t})_{s\le t}$ on $\R$ there exists a unique family $(\genc_t,\genm_t)_{t}$ satisfying conditions \eqref{LK1} and \eqref{LK2} in Proposition~\ref{prop:LK_repr} and  
\begin{equation}\label{eq:BALKT}
K[\boole_{0,t}](z) = \genc_t +  \int_\R \frac{1+x z}{z-x}\,\genm_t(dx). 
\end{equation}
Conversely, given a family $(\genc_t,\genm_t)_{t}$ satisfying conditions \eqref{LK1} and \eqref{LK2} in Proposition~\ref{prop:LK_repr} there exists a unique $\uplus$-CH $(\boole_{s,t})_{s\le t}$ on $\R$ for which \eqref{eq:BALKT} holds. We call $(\genc_t,\genm_t)_{t}$ the \emph{generator} of $(\boole_{s,t})_{s\le t}$. 
\end{proposition}

\begin{proof}
The proof is almost identical to the free case. We only need to use \cite[Lemma 2.6]{FHS20} instead of \cite[Theorem 3.8]{BNT02}. 
\end{proof}

\begin{remark}\label{re:reduced_free} 
If $\free_{0,t}$'s have finite second moment in Proposition \ref{thm:f-c-real}, then $\genm_t$'s have finite second moment too \cite[Theorem 6.2]{Maa92}, and \eqref{eq:FALKT} can be reduced to
\begin{equation}\label{eq:FALKT2}
\phi[\free_{0,t}](z) = m_t +  \int_\R\frac{1}{z-x}\,\rgen_t(dx).   
\end{equation}
We shall refer to $(m_t,\rgen_t)_t$ as the reduced generator of $\boxplus$-CH $(\free_{s,t})_{s\le t}$. A similar remark applies to Proposition \ref{prop:BLK}. 
\end{remark}

Through the common parametrization $(\genc_t,\genm_t)_{t}$ one can define bijections between $\ast$-, $\boxplus$-, $\uplus$-, $\rhd$- and $\lhd$-CHs, which generalize the known Bercovici--Pata bijections \cite{AW14,BP99}. 
These bijections are homeomorphic with respect to the topology of locally uniform weak convergence, which is subsumed in Theorems~\ref{th:conv_classical}, \ref{th:conv_P-REF_gen} and the following two results. 

\begin{theorem}\label{th:luc_boole}
Let $\AC$ be a fixed subset of $\C^+$ that contains an accumulation point in $\C^+$. 
For each $n\in\N$, let $(\boole_{s,t}^n)_{s\le t}$ be the  $\uplus$-CH having generator $(\genc_t^n,\genm_t^n)_{t}$.
Similarly, let $(\boole_{s,t})_{s\le t}$ be the $\uplus$-CH with generator $(\genc_t,\genm_t)_{t}$. 
Then the following conditions are equivalent:
\begin{enumerate} 
\item\label{item:conv_boole}
$(\boole_{s,t}^n)$ converges weakly to $ (\boole_{s,t})$ locally uniformly on $I^2_\le$;
\item \label{i:luc_rEnergy}
$K[\boole_{0,t}^n](z)$ converges to $K[\boole_{0,t}](z)$ locally uniformly on $\C^+\times I$; 
\item \label{i:pwc_rEnergy}
$K[\boole_{0,t}^n](z)$ converges to $K[\boole_{0,t}](z)$ locally uniformly on $I$ for each $z\in \AC$;
\item\label{item:conv3}
condition \eqref{C3} of Theorem \ref{th:conv_classical} holds. 
\end{enumerate}
\end{theorem}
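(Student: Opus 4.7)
The argument splits naturally into two blocks: the chain (i)$\Leftrightarrow$(ii)$\Leftrightarrow$(iii), which I obtain through the boolean linearization $K[\boole_{s,t}]=K[\boole_{0,t}]-K[\boole_{0,s}]$ combined with the trivial identity $K[\mu]=z-F_\mu$, and the equivalence (iii)$\Leftrightarrow$(iv), which rests on a direct analysis of the Nevanlinna-type representation in Proposition~\ref{prop:BLK}. Two elementary identities for the kernel $h_z(x):=(1+xz)/(z-x)$ will drive the technical work in the second block: the decomposition $h_z(x)=-z+(1+z^2)/(z-x)$, valid for every $z\in\C^+$, and the evaluation $h_i(x)\equiv -i$.

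For the first block I would apply Corollary~\ref{cor:luc_rCauchy} twice. With $T=I$ applied to $(\boole_{0,t})$, it identifies (iii) with the locally uniform weak convergence $\boole^{(n)}_{0,t}\to\boole_{0,t}$ on $I$, which in turn equals (ii) via $K=z-F$. Since $K$ is additive over boolean convolution, (ii) implies $K[\boole_{s,t}^{(n)}]\to K[\boole_{s,t}]$ locally uniformly on $\C^+\times I^2_\le$, so Corollary~\ref{cor:luc_rCauchy} applied with $T=I^2_\le$ yields (i); the converse (i)$\Rightarrow$(ii) is just restriction to $s=0$.

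The implication (iv)$\Rightarrow$(iii) is easy: $h_z\in C_{\rm b}(\R)$ because $h_z(x)\to -z$ as $|x|\to\infty$, so if $\genm^{(n)}_t\to\genm_t$ weakly locally uniformly on $I$ (which is (G1) of Theorem~\ref{th:two_sigmas}) and $\genc^{(n)}_t\to\genc_t$ locally uniformly, then integrating $h_z$ against $\genm^{(n)}_t$ converges locally uniformly in $t$, giving (iii) at every $z\in\AC$.

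The main work is (iii)$\Rightarrow$(iv). Evaluating Proposition~\ref{prop:BLK} at $z=i$ via $h_i\equiv -i$ yields $K[\boole_{0,t}](i)=\genc_t-i\genm_t(\R)$; combined with (ii), which is equivalent to (iii), this gives the locally uniform limits $\genc^{(n)}_t\to\genc_t$ and $\genm^{(n)}_t(\R)\to\genm_t(\R)$ on $I$. Fix $t\in I$. Since $(\genm^{(n)}_t)_n$ has bounded total mass, Proposition~\ref{prop:Alaoglu_for_vague} supplies from any given subsequence a further vaguely convergent subsequence $\genm^{(n_k)}_t\xrightarrow{\rm v}\genm_*\in\mea$. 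Writing
\[
K[\boole_{0,t}^{(n_k)}](z)=\genc^{(n_k)}_t-z\,\genm^{(n_k)}_t(\R)+(1+z^2)\int_\R\frac{\genm^{(n_k)}_t(dx)}{z-x}
\]
and noting that $(1+z^2)/(z-x)\in C_\infty(\R)$, the vague convergence together with the already-established limits of $\genc^{(n_k)}_t$ and $\genm^{(n_k)}_t(\R)$ let me pass to the limit; comparing with the analogous expression for $K[\boole_{0,t}](z)$ forces $G[\genm_*](z)=G[\genm_t](z)$ on $\C^+$ and hence $\genm_*=\genm_t$ by uniqueness of the Cauchy transform. Every subsequence therefore has a further subsequence converging vaguely to $\genm_t$, so $\genm^{(n)}_t\xrightarrow{\rm v}\genm_t$; mass conservation upgrades this to weak convergence (Proposition~\ref{prop:vague_to_weak}), which is condition (G2) of Theorem~\ref{th:two_sigmas}, and that promotes to the locally uniform form required by (iv). The subtle point is that the splitting $h_z=-z+(1+z^2)/(z-x)$ isolates the ``mass escape at infinity'' into the term $-z\,\genm^{(n_k)}_t(\R)$, whose limit is already under control, leaving a well-behaved $C_\infty(\R)$-integrand that forces $\genm_*=\genm_t$ irrespective of whether vague convergence \emph{a priori} preserves total mass.
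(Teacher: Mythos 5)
Your proof is correct, and the overall architecture matches the paper: the chain (i)$\Leftrightarrow$(ii)$\Leftrightarrow$(iii) via Corollary~\ref{cor:luc_rCauchy} and $K=z-F$ together with boolean additivity, (iv)$\Rightarrow$(iii) by noting $h_z\in C_{\rm b}(\R)$, and the evaluation at $z=i$ to pin down $\genc_t$ (and here also $\genm_t(\R)$). Where you genuinely diverge is in the hard direction (iii)$\Rightarrow$(iv): the paper appeals to \cite[Lemma~2.6]{FHS20} as a black box to obtain, for each fixed $t$, the convergence $\genc_t^{(n)}\to\genc_t$ and $\genm_t^{(n)}\xrightarrow{\rm w}\genm_t$, and only then upgrades the second via Theorem~\ref{th:two_sigmas}. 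You instead reprove this pointwise statement from scratch: the decomposition $h_z(x)=-z+(1+z^2)/(z-x)$ quarantines the mass at infinity into the term $-z\,\genm_t^{(n)}(\R)$, whose limit you have already secured from the $z=i$ evaluation, leaving a $C_\infty(\R)$-integrand; combined with Alaoglu/metrizability (Propositions~\ref{prop:Alaoglu_for_vague} and \ref{prop:metrizability}), the $C_\infty$-friendliness of vague convergence on mass-bounded sets (Proposition~\ref{prop:vague_bounded_set}), and uniqueness of the Cauchy transform, this forces every vague cluster point to be $\genm_t$, and mass conservation plus Proposition~\ref{prop:vague_to_weak} upgrades vague to weak. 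Your route is more self-contained (it avoids the external continuity lemma for the boolean L\'evy--Khintchine map) at the cost of a longer compactness argument; the paper's route is shorter but transfers the burden to a citation. Both are sound, and the remaining steps (Theorem~\ref{th:two_sigmas} for the locally uniform upgrade, the $z=i$ identity for $\genc$) coincide.
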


\begin{proof} 
The equivalence of \eqref{item:conv_boole}, \eqref{i:luc_rEnergy}, and \eqref{i:pwc_rEnergy} follows from Corollary \ref{cor:luc_rCauchy} and the definition of energy function \eqref{eq:K-trans}.
The implication \eqref{item:conv3} $\Rightarrow$ \eqref{i:pwc_rEnergy} is clear from the defining formula \eqref{eq:BALKT}.
It remains to show the implication \eqref{i:luc_rEnergy} $\Rightarrow$ \eqref{item:conv3}.
By Franz, Hasebe and Schlei{\ss}inger~\cite[Lemma~2.6]{FHS20}, condition \eqref{i:luc_rEnergy} implies that $\genc_t^n$ converges to $\genc_t$ and $\genm^n$ to $\genm_t$ weakly for every $t\in I$.
The latter convergence is, in fact, locally uniform on $I$ (see Proposition~\ref{prop:two_sigmas}).
The identity $\genc_t^n = \Re[K[\boole_{0,t}^n](i)]$ also implies that $\genc_t^n$ indeed converges to $\genc_t$ locally uniformly on $I$, which completes the proof. 
\end{proof}

The following is a generalization of the non-uniform version \cite[Corollary 3.9]{BNT02}. 

\begin{theorem}\label{thm:free}
Let $\AC$ be a fixed subset of $\C^+$ that contains an accumulation point in $\C^+$.
For each $n\in\N$,  let $(\free_{s,t}^n)_{s\le t}$ be the $\boxplus$-CH having generator $(\genc_t^n,\genm_t^n)_{t}$. Similarly let $ (\free_{s,t})_{s\le t}$ the $\boxplus$-CH with generator $(\genc_t,\genm_t)_{t}$.
Then the following conditions are equivalent: 
\begin{enumerate}
\item\label{item:conv_free}
$(\free_{s,t}^n)$ converges weakly to $ (\free_{s,t})$ locally uniformly on $I^2_\le$;
\item \label{i:luc_rVoiculescu}
$\phi[\free^n_{0,t}](z)$ converges to $\phi[\free_{0,t}](z)$ locally uniformly on $\C^+\times I$;
\item \label{i:pwc_rVoiculescu}
$\phi[\free^n_{0,t}](z)$ converges to $\phi[\free_{0,t}](z)$ locally uniformly on $I$ for each $z\in \AC$;
\item\label{item:conv_generating}
condition \eqref{C3} of Theorem \ref{th:conv_classical} holds. 
\end{enumerate}
\end{theorem}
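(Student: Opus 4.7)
The plan is to establish the chain of implications (4)$\Rightarrow$(2)$\Rightarrow$(3)$\Rightarrow$(4) and, separately, the equivalence (1)$\Leftrightarrow$(2), paralleling the argument for Theorem~\ref{th:luc_boole} but replacing the energy function by the Voiculescu transform. The key structural fact to exploit is that every member of a $\boxplus$-CH is $\boxplus$-infinitely divisible (Remark~\ref{rem:AP_is_ID}), so each $\phi[\free_{0,t}]$ extends analytically to all of $\C^+$ via the integral \eqref{eq:FALKT}. Accordingly, the $\phi$'s satisfy an analogue of the formula for $K$, which is precisely the mechanism used in Theorem~\ref{th:luc_boole}.

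For (4)$\Rightarrow$(2), I would apply Theorem~\ref{th:two_sigmas} to (4) to obtain $\sup_{n,\, t\in K}(|\genc^{(n)}_t|+\genm^{(n)}_t(\R))<+\infty$ for every compact $K\subset I$, and to deduce weak convergence $\genm^{(n)}_t\xrightarrow{\rm w}\genm_t$ locally uniformly in $t$. Since $x\mapsto (1+xz)/(z-x)$ is bounded continuous for each fixed $z\in\C^+$ and bounded on $\R\times C$ for any compact $C\subset\C^+$, \eqref{eq:FALKT} gives pointwise convergence $\phi^{(n)}_{0,t}(z)\to\phi_{0,t}(z)$ locally uniformly in $t$. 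Local boundedness of the family on $\C^+\times K$ combined with Lemma~\ref{lem:bicontinuity} and Vitali's theorem upgrade this to locally uniform convergence on $\C^+\times I$, which is (2). The implication (2)$\Rightarrow$(3) is trivial.

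For (3)$\Rightarrow$(4), fix $t\in I$ and rewrite \eqref{eq:FALKT} as $\phi[\free_{0,t}](z)=\alpha_t+\int_\R (1+x^2)(z-x)^{-1}\,\genm_t(dx)$ with $\alpha_t:=\genc_t+\int_\R x\,\genm_t(dx)$, so that $-\phi[\free_{0,t}]$ is a Pick function. The assumed convergence on $\AC$ (an accumulation point in $\C^+$) together with normality of Pick functions yields, via Vitali, $\phi^{(n)}_{0,t}\to\phi_{0,t}$ locally uniformly on $\C^+$ for each $t$. A direct computation shows $\Re(1+xi)/(i-x)=0$ and $\Im(1+xi)/(i-x)=-1$, whence $\phi[\free_{0,t}](i)=\genc_t-i\genm_t(\R)$; so evaluation at $z=i$ gives the pointwise convergences $\genc^{(n)}_t\to\genc_t$ and $\genm^{(n)}_t(\R)\to\genm_t(\R)$. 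Weak convergence $\genm^{(n)}_t\xrightarrow{\rm w}\genm_t$ then follows from (the argument of) \cite[Lemma~2.6]{FHS20} applied to $-\phi[\free_{0,t}]$. Since (3) is locally uniform in $t$, so are these convergences, and Theorem~\ref{th:two_sigmas} upgrades the measure-valued convergence to locally uniform on $I$, yielding (4).

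For (1)$\Leftrightarrow$(2), the direction (1)$\Rightarrow$(2) applies Theorem~\ref{th:luc_Voiculescu} to obtain, for each compact $K\subset I$, constants $a,b,a',b'$ and locally uniform convergence of $F[\free^{(n)}_{0,t}]^{-1}$ to $F[\free_{0,t}]^{-1}$ on $\Gamma_{a,b}\times K$. Hence $\phi^{(n)}_{0,t}=F[\free^{(n)}_{0,t}]^{-1}-\mathrm{id}$ converges to $\phi_{0,t}$ there; by uniqueness of analytic continuation this coincides with the extension through \eqref{eq:FALKT}. Evaluating the limit at a single point of $\Gamma_{a,b}$, say at the imaginary axis if $\Gamma_{a,b}$ meets it (otherwise at two suitably chosen points), produces uniform bounds on $\genc^{(n)}_t$ and $\genm^{(n)}_t(\R)$ in $n$ and $t\in K$. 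The representation \eqref{eq:FALKT} then yields local boundedness of $\{\phi^{(n)}_{0,t}:n\in\N,\ t\in K\}$ on $\C^+$, and Vitali's theorem extends locally uniform convergence from $\Gamma_{a,b}\times K$ to $\C^+\times K$. Conversely, (2)$\Rightarrow$(1) is immediate: restricting to a Stolz angle yields condition (vi)\eqref{i:luc_F_inv} in Theorem~\ref{th:luc_Voiculescu}, while the asymptotic (vi)\eqref{i:asymp_F_inv} follows from \eqref{eq:FALKT} and the uniform total-mass bound on $\genm^{(n)}_t$.

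The main obstacle is reconciling the two incarnations of $\phi$: the original definition $F^{-1}-\mathrm{id}$ on a Stolz angle (relevant for the implication (1)$\Rightarrow$(2)) and the global Nevanlinna extension on $\C^+$ (needed for normality). Bridging them requires extracting uniform Nevanlinna data from convergence on a Stolz angle and then invoking Vitali to propagate convergence throughout $\C^+$.
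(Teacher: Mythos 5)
Your chain $(4)\Rightarrow(2)\Rightarrow(3)\Rightarrow(4)$ together with $(1)\Leftrightarrow(2)$ is logically sound and largely parallels the paper's structure, which proves $(ii)\Leftrightarrow(iii)\Leftrightarrow(iv)$ first — via the observation that the right-hand sides of \eqref{eq:FALKT} and \eqref{eq:BALKT} coincide, so the proof of Theorem~\ref{th:luc_boole} carries over verbatim — then $(i)\Rightarrow(iii)$, and finally $[(ii)\text{ and }(iv)]\Rightarrow(i)$. You re-derive the small loop rather than reusing Theorem~\ref{th:luc_boole}; this is more work but not wrong. Your $(1)\Rightarrow(2)$ via Theorem~\ref{th:luc_Voiculescu}, the identity theorem, and Vitali is also a reasonable (if slightly more elaborate) version of the paper's $(i)\Rightarrow(iii)$.

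The genuine gap is in your $(2)\Rightarrow(1)$, which you declare ``immediate.'' It is not. Condition~\eqref{item1:luc_Voiculescu} of Theorem~\ref{th:luc_Voiculescu} requires, before \eqref{i:luc_F_inv} and \eqref{i:asymp_F_inv} can even be stated, the existence of $a>a'>0$ and $b>b'>0$ (depending only on the compact $K\subset I$) such that every $F[\free^{(n)}_{0,t}]$ and $F[\free_{0,t}]$ is univalent on $\Gamma_{a',b'}$ with image containing $\Gamma_{a,b}$, \emph{uniformly} in $n$ and $t\in K$. This uniform-domain statement is the crux, and ``restricting to a Stolz angle'' does not produce it. The paper obtains it by first proving the uniform asymptotic $\sup_{t\in K,\,n\in\N}|\phi[\free^{(n)}_{0,t}](z)|=o(z)$ on $\Gamma_{a,b}$ — using explicit bounds on $(1+xz)/(z(z-x))$, the uniform bounds $\sup_{t\in K,n}|\genc^{(n)}_t|<\infty$ and $\sup_{t\in K,n}\genm^{(n)}_t(\R)<\infty$, and tightness of $\{\genm^{(n)}_t\}$ — and then applying a rotation-number (winding) argument, as in the last part of the proof of Lemma~\ref{lem:inverse}, to $H^{(n)}_t(w):=w+\phi[\free^{(n)}_{0,t}](w)$; the resulting right inverses coincide with $F[\free^{(n)}_{0,t}]$ on $\Gamma_{a',b'}$ by the identity theorem, which is what gives univalence and the image inclusion. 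Your proposal names the asymptotic ingredient but never uses it to construct the common domain, so the argument stops short exactly where the paper's real work lies.
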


\begin{proof} 
Since \eqref{eq:FALKT} and \eqref{eq:BALKT} have the same expression on their right-hand side, the equivalence of \eqref{i:luc_rVoiculescu}, \eqref{i:pwc_rVoiculescu} and \eqref{item:conv_generating} follows from the same proof as that of Theorem~\ref{th:luc_boole}. 

Assuming \eqref{item:conv_free} we have a locally uniform convergence of inverse mappings $F[\free_{0,t}^n]^{-1}(w) \to F[\free_{0,t}]^{-1}(w)$ as described in Proposition~\ref{prop:luc_Voiculescu}.
On the other hand, $\phi[\free_{0,t}^n](w)=F[\free_{0,t}^n]^{-1}(w) -w$ and $\phi[\free_{0,t}](w)=F[\free_{0,t}]^{-1}(w)-w$ on the common domain, so that condition \eqref{i:pwc_rVoiculescu} holds. 

Assuming \eqref{i:luc_rVoiculescu} and \eqref{item:conv_generating} we prove \eqref{item:conv_free}.
It suffices to show condition \eqref{item1:luc_Voiculescu} in Proposition~\ref{prop:luc_Voiculescu}.
Taking a compact $K \subset I$ and $a>0$ we first establish
\begin{equation}\label{eq:varphi_infinity}
    \sup_{t \in K, n\in \N}|\phi[\free_{0,t}^n](z)| =o(z), \qquad z\to \infty, \ z \in \Gamma_{a,0}. 
\end{equation}

To derive \eqref{eq:varphi_infinity} observe that for $z \in \Gamma_{a,b}$ and $x \in \R$  
\begin{align*}
\left| \frac{1+x z}{z(z-x)} \right| &= \left| \frac{1+x z}{z(z-x)} +1 -1 \right| = \left| \frac{1+z^2}{z(z-x)}-1 \right| \le 1 + \left| \frac{z}{z-x}\right| \left(1+ \frac1{|z|^2} \right) \\
& \le 1 + \frac{\sqrt{1+a^2}}{a} \left( 1+\frac1{b^2}\right) =:C  
\end{align*}
and for $z \in \Gamma_{a,b}$ and $x \in [-R,R]$ 
\[
\left| \frac{1+x z}{z(z-x)} \right|\le  \frac{1+ R |z|}{|z|\Im z} \le \frac{1+ R \sqrt{1+a^{-2}}\Im z}{(\Im z)^2},  
\]
so that for all $n\in\N$, $t \in K$ and $z \in \Gamma_{a,b}$, 
\begin{align*}
\left|\frac{\phi[\free_{0,t}^n](z)}{z}\right| &\le \frac{|\genc_t^n|}{|z|}  + \int_{[-R,R]} \left| \frac{1+x z}{z(z-x)} \right| d\genm_t^n(x) + \int_{[-R,R]^c} \left| \frac{1+x z}{z(z-x)} \right| d\genm_t^n(x)\\
&\le  \frac{C'}{|z|} + C''\frac{1+ R \sqrt{1+a^{-2}}\Im z}{(\Im z)^2}  + C \genm_t^n([-R,R]^c), 
\end{align*}
where $C':= \sup_{t\in K, n \in \N} |\genc_t^n| <\infty$ and $C'' :=  \sup_{t\in K, n \in \N} \genm_t^n(\R) <+\infty$.
By the tightness of the family $\{\genm_t^n\mid t \in K, n\in \N\}$ we obtain \eqref{eq:varphi_infinity} as desired. 

Now \eqref{eq:varphi_infinity} readily implies that the mappings $H_t^n(w):= w + \phi[\free_{0,t}^n](w)$ and $H_t(w):= w + \phi[\free_{0,t}](w)$ satisfy $\sup_{n\in\N, t\in K}|H^n_t(w)-w|=o(w)$ and hence by an argument similar to the last part of the proof of Lemma~\ref{lem:inverse2}, we can find $a'>a>0$ and $b'>b>0$ such that 
 the right inverses $ (H^n_t)^{-1}, H_t^{-1}\colon \Gamma_{a',b'}\to \Gamma_{a,b}$ exist.
These inverses are equal to $F[\free_{0,t}^n], F[\free_{0,t}]$, respectively, on $ \Gamma_{a',b'}$ by the identity theorem.
These observations guarantee condition \eqref{item1:luc_Voiculescu} in Proposition~\ref{prop:luc_Voiculescu}, as desired.  
\end{proof}

\if0 
To discuss continuity of the bijections, we fix a sequence $\{T_j\}_{j\ge1} \subset I$ with $T_j\uparrow \sup I$ and a metric $\rho$ on $\prob$ compatible with weak convergence.
We consider the metric $\rho^*$ on $C(I^2_{\le}; \prob)$ defined by 
\[
\rho^*((\mu_{s,t})_{s\le t}, (\nu_{s,t})_{s\le t}) := \sum_{j\ge1}\frac1{2^j}\left(1 \wedge \sup_{0\le s\le t \le T_j} \rho(\mu_{s,t}, \nu_{s,t}) \right).    
\]
As in Corollary~\ref{cor:metrizability}~\eqref{i:luwc_metrizability}, this metric is compatible with the convergence \eqref{eq:luwc_intro}, which we have been considering throughout this paper.
Therefore, as a consequence of Theorems \ref{th:conv_classical}, \ref{th:luc_boole}, and \ref{thm:free}, our bijection between the sets of $\ast$-, $\boxplus$- and $\uplus$-CHs is homeomorphic with respect to the relative topologies on these sets induced by the metric $\rho^*$.  

Recall that we have formulated in Theorem \ref{th:conv_monotone_intro} and Corollary  \ref{cor:conv} the homeomorphic property of our bijection between the sets of $\ast$- and $\rhd$-CHs that both have finite second moment.
In view of Remark~\ref{re:reduced_free}, it is now clear that we can incorporate analogous statements for $\boxplus$- and $\uplus$-CHs into those propositions, assuming of course that they also have finite second moment.  
\fi 

\subsection{Infinitesimal growth of moments under the bijection}
\label{subsec:interpret_bij_free_etc}

The bijection that we have been considering in Sections~\ref{subsec:luc_Voic-trans} and \ref{sec:c-f-real} identifies the infinitesimal generators of the moments of two convolution hemigroups under suitable assumptions including differentiability (cf.\ Section~\ref{subsec:generator_moments}).
Given a generator $(\genc_t,\genm_t)_{t}$, we denote by $(\classical_{s,t})$, $(\free_{s,t})$, and $(\boole_{s,t})$ the $\ast$-,   $\boxplus$- and $\uplus$-CHs characterized by \eqref{eq:CLK}, \eqref{eq:FALKT}, and \eqref{eq:BALKT},  respectively. 
We further assume that $\genm_t$ has finite moment of every order and the mappings $t\mapsto \Mean_n(\genm_t)~(n\ge0)$ and $t\mapsto \genc_t$ are all differentiable.
We shall prove that the following identities hold: 
\begin{equation}\label{eq:c-f-moments}
 \left. \frac{\partial}{\partial t}\right|_{t=s}\Mean_n(\classical_{s,t})=  \left. \frac{\partial}{\partial t}\right|_{t=s} \Mean_n(\free_{s,t}) =  \left. \frac{\partial}{\partial t}\right|_{t=s} \Mean_n(\boole_{s,t}), \qquad n \in \N,~ s\in I.    
\end{equation}
 The first identity is deduced as follows.
 By the definition of the Bercovici--Pata bijection \cite{BNT02,BP99}, the classical cumulants $K_n$ and free cumulants $R_n$ are identified: 
\[
K_n(\classical_{s,t})= R_n(\free_{s,t}). 
\]
By the moment-cumulant formulas (see, e.g., \cite{AHLV15}), for each $n \in \N$ there exists a polynomial $P_n(x_1,\dots, x_{n-1})$ without constant or linear terms (with the convention $P_1=0$) such that 
\begin{equation}\label{eq:m-c}
 \Mean_n(\classical_{s,t}) = K_{n}(\classical_{s,t})  +P_n( K_{1}(\classical_{s,t}), \dots, K_{n-1}(\classical_{s,t})); 
\end{equation}
note that this $P_n$ is the same as that appeared in Section~\ref{subsec:generator_moments} and $K_n(\classical_{s,t})= \Mean_{n-2}(\kappa_t-\kappa_s)$. 
The hemigroup property and the additivity of cumulants imply that $K_n(\classical_{s,t})= K_n(\classical_{0,t}) - K_n(\classical_{0,s})$. Since $K_n(\classical_{0,t})$ can be written as 
\begin{align*}
&K_1(\classical_{0,t}) = \genc_t + \int_\R x \,\genm_t(dx), \\
& K_n(\classical_{0,t}) = \int_\R x^{n-2}(1+x^2) \,\genm_t(dx), \qquad n \ge2, 
\end{align*}
 it is differentiable in $t$ and hence $K_n(\classical_{s,t})=O(|t-s|)$ as $t\to s$. This fact and \eqref{eq:m-c} show that
\[
 \left. \frac{\partial}{\partial t}\right|_{t=s} \Mean_n(\classical_{s,t}) = \left.\frac{\partial}{\partial t}\right|_{t=s}K_n(\classical_{s,t}). 
\]
A similar argument shows that
\[
 \left. \frac{\partial}{\partial t}\right|_{t=s} \Mean_n(\free_{s,t}) = \left. \frac{\partial}{\partial t}\right|_{t=s}R_n(\free_{s,t}), 
\]
and hence the first identity in \eqref{eq:c-f-moments} holds. 
The second one is derived in a similar manner.

\subsection{Subordination property of free CHs} \label{subsec:embed}

We have seen in the paragraph before Theorem~\ref{th:luc_boole} that $\boxplus$- and $\rhd$-CHs are associated with each other via the bijection through generators.
In addition to this relation, another Loewner-theoretic aspect of $\boxplus$- and $\rhd$-CHs was observed by Franz~\cite[Corollary~5.3]{Fra09b} after contributions by Biane~\cite{Bia98} and Bauer~\cite{Bau04}: for a $\boxplus$-CH $(\free_{s,t})$ and for each $s \le t$, there exists a probability measure $\tilde{\monotone}_{s,t}$ such that 
\begin{equation}\label{eq:free_monotone}
\free_{0,t} = \free_{0,s} \rhd \tilde{\monotone}_{s,t}, 
\end{equation} 
or equivalently, $F[\free_{0,t}] = F[\free_{0,s}] \circ F[\tilde{\monotone}_{s,t}]$.  
This is a consequence of a more general subordination property for free convolution; see Biane~\cite{Bia98}, Belinschi and Bercovici~\cite{BB07}, and also the original article of Voiculescu~\cite{V93}.  
Since $F[\free_{0,t}]$ is univalent on $\C^+$ (see, e.g., \cite[Proposition~6.20]{FHS20}; recall that all $\free_{s,t}$ are $\boxplus$-infinitely divisible), the relation \eqref{eq:free_monotone} implies that $(F[\free_{0,t}])_{t}$ is a $\PickA$-DLC and $(F[\tilde{\monotone}_{s,t}])_{s\le t}$ gives the associated REF.
These phenomena were studied in the literature \cite{Bia98} \cite{Bia19} \cite[{\S}3.5]{Sch17} \cite[{\S}4.7]{FHS20}.
In particular, in Biane~\cite{Bia98}, a process with free increments having laws $(\free_{s,t})_{0\le s\le t <+\infty}$ is called a free additive L\'evy process of the second kind if the $\rhd$-CH $(\tilde{\monotone}_{s,t})_{0\le s \le t <+\infty}$ in \eqref{eq:free_monotone} is time-homogeneous. 
This class of processes can be characterized by the ``free nonlinear L\'evy--Khintchine formula'' \cite{Bia19}. 

Below we discuss the general time-inhomogeneous case.
For computational simplicity, we first assume that the above $\free_{s,t}$'s have finite second moment.
This implies that $\tilde{\monotone}_{s,t}$ also have finite second moment, and so the REF $(\tilde{\monotone}_{s,t})_{s\le t}$ has a reduced generator.
We shall relate it to the (reduced) generator of the $\boxplus$-CH $(\free_{s,t})$.  

\begin{theorem}\label{thm:fCH_mCH}
Let $(\free_{s,t})_{s\le t}$ be the $\boxplus$-CH with finite second moment having reduced generator $(m_t,\rgen_t)_t$ (see Remark \ref{re:reduced_free}) and let $(\tilde m_t, \tilde \rgen_t)_t$ be the reduced generator of the $\rhd$-CH $(\tilde{\monotone}_{s,t})_{s\le t}$ defined by \eqref{eq:free_monotone}. Then 
\begin{align}
&\tilde m_t = m_t, \label{eq:free-subord_mean} \\
&\tilde \rgen_t(B) =  \int_{\R\times [0,t]} (\delta_x \rhd \free_{0,s})(B) \,\rGen(dx\,ds), \qquad t\in I,\ B\in \cB(\R),\label{eq:embed_G}  
\end{align}
where $\rGen$ is the measure on $\R\times I$ determined by $(\rgen_t)$ through \eqref{eq:two_thetas}--\eqref{LK3_theta}.
\end{theorem} 

\begin{proof}
Formula \eqref{eq:free_monotone} implies that $\Mean(\tilde{\monotone}_{s,t}) = \Mean(\free_{0,t}) - \Mean(\free_{0,s}) = m_t - m_s$. 
Therefore, $\tilde m_t = \Mean(\tilde{\monotone}_{0,t}) = m_t$. 

To prove \eqref{eq:embed_G} on $\tilde \rgen_t$, it is convenient to first assume smoothness on generators and later remove the constraint by approximation. 

\smallskip\noindent
\textit{Smooth case}.
Suppose that 
\[
m_t = \int_0^t \dot{m}_s \, ds \qquad \text{and}\qquad \rgen_t(B)= \int_{B \times [0,t]} k(x,s) \,dx\,ds, \qquad B\in \cB(\R),\ t\in I,
\]
for some continuous functions $t\mapsto \dot{m}_t \in \R$ and $k\colon \R \times I \to [0,\infty)$. The latter means $\rGen(dx\, dt) = k(x, t) \,dx\,dt$. 

Let $f_{s,t} := F[\tilde{\monotone}_{s,t}]$,  $f_t := f_{0,t}$, and $\phi_t := \phi[\free_{0,t}]$ for short. 
Differentiating the formula $\phi_t(z) = f_t^{-1}(z) - z$ (this formula holds in a truncated cone that can be chosen uniformly on each compact interval; see Proposition~\ref{prop:luc_Voiculescu}),  we obtain
\[
\frac{\partial }{\partial t} f_t(z) = - \dot{\phi}_t(f_t(z)) \frac{\partial}{\partial z}f_t(z). 
\]
Compared with \eqref{eq:chordal_LDE}, the Herglotz vector field for $(f_t)$ is thus identified with $p(z,t):=- \dot{\phi}_t(f_t(z))$. 
We see from \eqref{eq:FALKT2} that 
\begin{align}
-p(z,t) 
 &=  \dot{m}_t + \int_{\R} \frac{1}{f_t(z)-x}\,k(x,t)\,dx =  \dot{m}_t + \int_\R G_{\delta_x \rhd \free_{0,t}}(z) \,k(x,t)\,dx \notag \\
&=   \dot{m}_t +  \int_\R \left[\int_{\R} \frac{1}{z-y} \,(\delta_x \rhd \free_{0,t})(dy)\right] k(x,t)\,dx =   \dot{m}_t +  \int_{\R} \frac{1}{z-y} \,\dot{\tilde\rgen}_t(dy), \label{eq:calc_p} 
\end{align}
where $\dot{\tilde\rgen}_t$ is a finite Borel measure defined by 
\[
\dot{\tilde\rgen}_t(B) :=  \int_\R (\delta_x \rhd \free_{0,t})(B) k(x,t)\,dx, \qquad B\in \cB(\R). 
\]
Then 
\begin{align*}
\tilde \rgen_t(B) := \int_0^t \dot{\tilde\rgen}_s(B)\,ds &= \int_{\R\times [0,t]} (\delta_x \rhd \free_{0,s})(B) k(x,s)\,dx\,ds \\
&=  \int_{\R\times [0,t]} (\delta_x \rhd \free_{0,s})(B) \,\rGen(dx\,ds),
\end{align*}
as desired (cf.~Example~\ref{rem:AC}).  

\smallskip\noindent
\textit{General case}.
To derive the formulae \eqref{eq:free-subord_mean} and \eqref{eq:embed_G} for a general reduced generator $(m_t,\rgen_t)_t$, it suffices to consider the case that $I$ is compact, because the problem is local.
We take a sequence of reduced generators $(m_t^n,\rgen_t^{n})_{t \in I}$, $n=1,2,3,\dots,$ each of which satisfies the smoothness condition as above and such that $m_t^n$ converges to $m_t$ uniformly on $I$ and $\rgen_t^n$ converges vaguely to $\rgen_t$ uniformly on $I$ and, for every $t \in I$, $\sup_{n\in\N}\rgen_t^n(\R)<+\infty$.
(For example, one can use a mollifier to smooth the finite measure $\rGen$ on $\R\times I$.)
As \eqref{eq:embed_G} is valid for this approximating sequence, taking the Cauchy transform yields (with the help of calculations in \eqref{eq:calc_p}) 
\[
\int_\R \frac{1}{z-y}\, \tilde \rgen_t^n(dy) =  \int_{\R\times [0,t]} \frac1{f_s^n(z)-x}\, \rGen^n(dx\,ds). 
\]
We can pass to the limit in this identity, for $\rgen_t^n$, $f_s^n$, and $\rGen^n$ all converge as $n \to \infty$ by the following observations:
\begin{itemize}
\item by Theorem \ref{thm:free}, $\tilde{\monotone}_{0,t}^n =\free_{0,t}^n$ converges weakly to $\tilde\monotone_{0,t}=\free_{0,t}$ uniformly on $I$, and then by Theorem \ref{thm:main_conv}, $\tilde\rgen_t^n$ converges vaguely to $\tilde \rgen_t$ uniformly;

\item by Theorem \ref{thm:free}, for each $z\in \C^+$, $f_t^n(z)$ converges to $f_t(z)$ uniformly on $ I$, and so the convergence $
  [f_s^n(z)-x]^{-1} \to [f_s(z)-x]^{-1}$ 
holds uniformly on $(x,s)\in \R\times I$;

\item since $\rgen_t^n$ converges vaguely to $\rgen_t$  uniformly on $I$, Proposition~\ref{prop:two_thetas} yields that $\rGen^n$ converges vaguely to $\rGen$. 
\end{itemize}
Thus we get  
\begin{equation}\label{eq:approx1}
\int_\R \frac{1}{z-y}\, \tilde \rgen_t(dy) =  \int_{\R\times [0,t]} \frac1{f_s(z)-x}\, \rGen(dx\,ds). 
\end{equation}
The right-hand side of \eqref{eq:approx1} is further calculated as 
\[
\int_{\R\times [0,t]} G_{\delta_x \rhd \free_{0,s}} (z) \,\rGen(dx\, ds) = \int_{\R\times [0,t]}\left[ \int_\R \frac1{z-y}\, (\delta_x \rhd \free_{0,s})(dy)\right] \rGen(dx\,ds).  
\]
\eqref{eq:embed_G} now follows from the Stieltjes inversion formula~\eqref{eq:PN_interval}. 
\end{proof}

Extending the arguments above, we generalize the result in the case without finite second moment. 

\begin{theorem}
Let $(\free_{s,t})_{s\le t}$ be the $\boxplus$-CH with generator $(\gamma_t,\genm_t)_t$ and let $(\tilde \gamma_t, \tilde \genm_t)_t$ be the generator of the $\rhd$-CH $(\tilde{\monotone}_{s,t})_{s\le t}$ defined by \eqref{eq:free_monotone}. Then 
\begin{align}
&\tilde \gamma_t = \gamma_t+ \int_{\R\times[0,t]}\left[\int_\R\left(x- \frac{y(1+x^2)}{1+y^2}\right) (\delta_x \rhd \free_{0,s})(dy)\right] \Genm(dx\,ds) , \label{eq:free-subord_mean2} \\
&\tilde \genm_t(B) =  \int_{\R\times [0,t]}\left[ \int_B \frac{1+x^2}{1+y^2}  (\delta_x \rhd \free_{0,s})(dy)\right] \Genm(dx\,ds), \qquad t\in I,\ B\in \cB(\R),\label{eq:embed_G2}  
\end{align}
where $\Genm$ is the measure on $\R\times I$ determined by $(\genm_t)$ through \eqref{eq:two_sigmas}--\eqref{eq:sigma2}. 
\end{theorem} 

\begin{proof} We only discuss the main difference from the proof of Theorem \ref{thm:fCH_mCH}. We consider the case $\gamma_t = \int_0^t \dot\gamma_s\,ds$ and $\Genm(dx\,dt) = \eta(x,t)\,dx\,dt$, where $\dot\gamma_t$ and $\eta(x,t)$ are continuous. A straightforward calculation of $-p(z,t)=\dot{\phi}_t(f_t(z))$ amounts to the desired formulas \eqref{eq:free-subord_mean2} and \eqref{eq:embed_G2}. One needs to pay attention that these integrals make sense. Indeed, one can see that 
\begin{align*}
\int_{\R}\left(x- \frac{y(1+x^2)}{1+y^2}\right) (\delta_x \rhd \free_{0,s})(dy) 
&= x + (1+x^2)\Re[G[\delta_x\rhd \free_{0,s}](i)] \\
&= x + (1+x^2) \frac{\Re[f_s(i)]-x}{|f_s(i)-x|^2} 
\end{align*}
and 
\[
 \int_{\R} \frac{1+x^2}{1+y^2}  (\delta_x \rhd \free_{0,s})(dy) = - (1+x^2)\Im[G_{\delta_x\rhd \free_{0,s}}(i)]  = \frac{(1+x^2)\Im[f_s(i)]}{|f_s(i)-x|^2}
\]
are both bounded continuous functions of $(x,s) \in \R \times [0,t]$, so that the integrals in \eqref{eq:free-subord_mean2} and \eqref{eq:embed_G2} exist finitely. The rest of the proof is similar to Theorem \ref{thm:fCH_mCH}. 
\end{proof}

\section{Measures with compact support and reciprocal Cauchy transforms}
\label{sec:univ_Cauchy}

We know very well that the normal (Gaussian) distribution, which has unbounded support, plays a central role in classical probability.
In contrast, typical distributions in non-commutative probability, including the ``normal'' ones, are known to have compact support.
In this last section, we first study a $\rhd$-CH $(\monotone_{s,t})_{s \le t}$ with the support of $\monotone_{s,t}$ bounded (locally) uniformly in $(s,t)$.
The reciprocal Cauchy transform $f_{s,t}=F[\monotone_{s,t}]$ is then univalent as in Section \ref{sec:mono-CHs}, and in addition $\C^+ \setminus f_{s,t}(\C^+)$ is bounded, as seen below.
Bearing these properties in mind, we conclude this paper with a brief survey through Sections \ref{sec:transfinite_diameter}--\ref{sec:hcap_area} on some geometric properties of a univalent reciprocal Cauchy transform $F_\mu$ (mainly in the case that $\C^+ \setminus F_\mu(\C^+)$ is bounded).

\subsection{Compactly supported monotone CHs}
\label{sec:Q-ALC}

As usual, for a Borel measure $\mu$ on $\R$, let $\supp\mu$ denote its (topological) support
\[
\supp\mu:=\{\, x \in \R : \mu(N(x))>0\ \text{for every neighborhood}\ N(x)\ \text{of}\ x \,\},
\]
which is closed in $\R$.
An easy consequence of the Stieltjes inversion formula \eqref{eq:PN_interval} is that, for a Pick function $f$ with Nevanlinna representation \eqref{eq:PNrep}, the following are equivalent for each $a<b$:
\begin{itemize}
\item $f$ extends to a continuous function on $\overline{\C^+} \setminus [a,b]$ which takes real values on $\R \setminus [a,b]$;
\item $\supp \rho \subset [a,b]$.
\end{itemize}
Suppose that these properties hold.
Then, by the latter we have
\[
\frac{1}{1+\max\{ a^2, b^2 \}} \rho([a,b]) \le \int_{[a,b]}\frac{1}{1+x^2}\,\rho(dx)<+\infty,
\]
which means that $\rho$ is a finite measure.
By the former property, $f$ extends to a holomorphic function on $\C \setminus [a,b]$ by the Schwarz reflection.
If further $f^\prime(\infty)=1$, i.e., $f=F_\mu$ for some $\mu \in \prob$, then $f$ belongs to $\PickB$ since $f$ has a representation of the form \eqref{eq:PNrep2} with $\beta=1$.
Moreover, it has the genuine Laurent expansion
\begin{equation} \label{eq:F-trans_Lau}
f(z)=F_\mu(z)=z-\Mean(\mu)-\frac{\var(\mu)}{z}+\sum_{n=2}^\infty \frac{a_{-n}}{z^n},
\qquad \lvert z \rvert>\max\{\lvert a \rvert, \lvert b \rvert\},
\end{equation}
with all the $a_{-n}$'s real.

\begin{lemma} \label{lem:cpt_supp}
Let $\mu \in \mea\setminus\{0\}$.
For $a<b$, the following are equivalent:
\begin{enumerate}
\item \label{i:supp_is_cpt}
$\supp\mu \subset [a,b]$;
\item \label{i:G_is_extdd}
$G_\mu$ extends to a continuous function on $\overline{\C^+} \setminus [a,b]$ which takes real values on  $\R \setminus [a,b]$;
\item \label{i:F_has_nonzero_extn}
$F_\mu$ extends to a continuous function on $\overline{\C^+} \setminus [a,b]$ which takes nonzero real values on $\R \setminus [a,b]$.
\end{enumerate}
\end{lemma}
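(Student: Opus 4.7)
The plan is to establish the cycle $\eqref{i:supp_is_cpt} \Leftrightarrow \eqref{i:G_is_extdd} \Leftrightarrow \eqref{i:F_has_nonzero_extn}$ by combining dominated convergence, the Stieltjes inversion formula \eqref{eq:PN_interval}, and a sign analysis of $G_\mu$ on the real line outside $[a,b]$.

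For $\eqref{i:supp_is_cpt} \Rightarrow \eqref{i:G_is_extdd}$, I would observe that if $\supp\mu \subset [a,b]$ and $K \subset \overline{\C^+}\setminus [a,b]$ is compact, then the integrand $(z-x)^{-1}$ is jointly continuous and bounded for $(z,x) \in K \times [a,b]$. Dominated convergence then yields that $G_\mu(z) = \int_{[a,b]} (z-x)^{-1}\,\mu(dx)$ is continuous on $\overline{\C^+}\setminus [a,b]$, and the integrand is real for $z \in \R \setminus [a,b]$. Conversely, for $\eqref{i:G_is_extdd} \Rightarrow \eqref{i:supp_is_cpt}$, suppose $G_\mu$ extends to be real on $\R \setminus [a,b]$. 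For any open interval $(\alpha,\beta) \subset \R \setminus [a,b]$, the continuous extension forces $\Im G_\mu(x+iy) \to 0$ uniformly on $[\alpha,\beta]$ as $y \to +0$. Recalling that $-G_\mu$ has the Nevanlinna representation \eqref{eq:PNrep2} with representing measure $\mu$, the Stieltjes inversion formula \eqref{eq:PN_interval} gives $\mu((\alpha,\beta)) + \mu(\{\alpha,\beta\})/2 = 0$, hence $\mu((\alpha,\beta)) = 0$. Exhausting $\R \setminus [a,b]$ by such intervals yields $\supp\mu \subset [a,b]$.

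Next, for the equivalence of \eqref{i:G_is_extdd} and \eqref{i:F_has_nonzero_extn}, the key point is that $G_\mu$ does not vanish on $\R \setminus [a,b]$ whenever $\eqref{i:supp_is_cpt}$ holds. Indeed, for $x_0 > b$ the integrand $(x_0 - x)^{-1}$ is strictly positive on $[a,b]$, so $G_\mu(x_0) > 0$; for $x_0 < a$ it is strictly negative, so $G_\mu(x_0) < 0$. Combined with $\mu \neq 0$, this gives $G_\mu(x_0) \neq 0$ for all $x_0 \in \R \setminus [a,b]$. Therefore, under the equivalent conditions \eqref{i:supp_is_cpt}--\eqref{i:G_is_extdd}, the reciprocal $F_\mu = 1/G_\mu$ extends continuously to $\overline{\C^+}\setminus [a,b]$ with nonzero real boundary values, giving \eqref{i:F_has_nonzero_extn}. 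Conversely, if $F_\mu$ admits such an extension, then $G_\mu = 1/F_\mu$ extends continuously (since $F_\mu$ is nonzero on $\R \setminus [a,b]$) and takes real values there, so \eqref{i:G_is_extdd} holds.

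No step is particularly delicate here; the only minor care is to verify the nonvanishing of $G_\mu$ on $\R \setminus [a,b]$ to ensure that $F_\mu = 1/G_\mu$ inherits the continuous extension, and to invoke the Stieltjes inversion formula correctly for the converse direction. Since everything reduces to dominated convergence plus a classical inversion theorem, the proof should be short and essentially computational.
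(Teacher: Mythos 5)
Your proof is correct and follows essentially the same route as the paper's: the equivalence \eqref{i:supp_is_cpt}$\Leftrightarrow$\eqref{i:G_is_extdd} via the Stieltjes inversion formula (which the paper invokes as a general fact about Pick functions stated just before the lemma, and which you unpack explicitly), and the passage to \eqref{i:F_has_nonzero_extn} via the sign analysis showing $G_\mu$ is nonvanishing on $\R \setminus [a,b]$. The only cosmetic difference is that the paper records quantitative bounds $G_\mu(\xi) \le -(b-\xi)^{-1}\mu(\R)$ for $\xi < a$ and $G_\mu(\xi) \ge (\xi-a)^{-1}\mu(\R)$ for $\xi > b$, whereas you only note the sign; both suffice.
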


\begin{proof}
$\eqref{i:supp_is_cpt} \Leftrightarrow \eqref{i:G_is_extdd}$ is just a particular case of the above-mentioned consequence of the inversion formula.
$\eqref{i:F_has_nonzero_extn} \Rightarrow \eqref{i:G_is_extdd}$ is obvious as well.
To see $\eqref{i:G_is_extdd} \Rightarrow \eqref{i:F_has_nonzero_extn}$, suppose that \eqref{i:G_is_extdd} holds.
Since it implies \eqref{i:supp_is_cpt}, $G_\mu$ never takes zero on $\R \setminus [a,b]$ by
\[
G_\mu(\xi)=\int_{[a,b]}\frac{1}{\xi-x}\,\mu(dx)
\begin{cases}
\le -(b-\xi)^{-1}\mu(\R)<0 & \text{if}\ \xi<a, \\
\ge (\xi-a)^{-1}\mu(\R)>0 & \text{if}\ \xi>b. \\
\end{cases}
\]
Hence \eqref{i:F_has_nonzero_extn} follows.
\end{proof}

\begin{proposition} \label{thm:cpt_supp}
Let $(\monotone_{s,t})_{s \le t}$, $(f_t)_t$, and $(\gamma_t, \genm_t)_t$ be a $\rhd$-CH, $\PickA$-DLC, and generator, respectively, associated with each other.
Then the following are equivalent:
\begin{enumerate}
\item \label{item:luc_p.m.}
for each $T \in I$ there exists $R=R_T>0$ such that $\bigcup_{0 \le s \le t \le T}\supp\monotone_{s,t} \subset [-R, R]$;
\item \label{item:luc_reciprocal_cauchy}
for each $T \in I$ there exists $R=R_T>0$ such that, for every $t \le T$, $f_t$ extends to a continuous function on $\overline{\C^+} \setminus [-R, R]$ which takes real values on $\R \setminus [-R, R]$;
\item \label{item:luc_generator}
for each $T \in I$ there exists $R=R_T>0$ such that $\bigcup_{0 \le t \le T}\supp\genm_t \subset [-R, R]$.
\end{enumerate}
\end{proposition}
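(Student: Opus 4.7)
The strategy is to establish the cycle (i) $\Rightarrow$ (ii) $\Rightarrow$ (iii) $\Rightarrow$ (i), working primarily with the inner-shifted $\PickC$-DLC $g_t(z):=f_t(z+m_t)$, whose characteristic measures satisfy $\rho_{g_t}(\cdot)=\rho_{f_t}(\cdot+m_t)$ and whose compound driving measure $\varSigma$ (from Theorem~\ref{th:DLC_to_LIE}) is related to $\rGen$ through the shift $(x,s)\mapsto(x+m_s,s)$; by Theorem~\ref{th:capacity_conti_prime}, $M_T:=\sup_{t\le T}|m_t|<+\infty$ for each $T\in I$, so all horizontal translations by $m_s$, $s\le T$, are uniformly bounded. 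The implication (i) $\Rightarrow$ (ii) is immediate by taking $s=0$ in (i) and applying Lemma~\ref{lem:cpt_supp} to $\mu=\monotone_{0,t}$.

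For (ii) $\Rightarrow$ (iii), Lemma~\ref{lem:cpt_supp} applied to the Cauchy transform $G_{\rho_{f_t}}=z+m_t-f_t$ gives $\supp\rho_{f_t}\subset[-R,R]$ and hence $\supp\rho_{g_t}\subset[-\tilde R,\tilde R]$ with $\tilde R:=R+M_T$, so each $g_t$ for $t\le T$ extends continuously to $\overline{\C^+}\setminus[-\tilde R,\tilde R]$. An asymptotic argument at $\infty$ analogous to that in the proof of Theorem~\ref{th:capacity_conti_prime} shows that the composition $g_{\tau(u),\tau(v)}=g_{\tau(u)}^{-1}\circ g_{\tau(v)}$ inherits a continuous extension to the complement of $[-\tilde R-\epsilon,\tilde R+\epsilon]$ with $\epsilon\to 0$ as $v\to u^+$. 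Passing to the vague limit of Remark~\ref{lem:no_mass_escape} then forces $\supp\nu_u^\tau\subset[-\tilde R,\tilde R]$ for a.e.~$u$; integrating and pushing forward by the shift yields $\supp\rgen_t\subset[-\tilde R-M_T,\tilde R+M_T]$ for all $t\le T$, which is (iii).

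For (iii) $\Rightarrow$ (i), reversing the pushforward gives $\supp\varSigma\cap(\R\times[0,T])\subset[-R',R']\times[0,T]$ with $R':=R+M_T$. In the REF integral equation~\eqref{eq:LIE2} for $(g_{s,t})$, the kernel $(x-g_{r,t}(z))^{-1}$ is controlled as long as $g_{r,t}(z)$ stays outside $[-R',R']$. A Gronwall-type estimate, using the finiteness of $\varSigma(\R\times[0,T])=\AR(g_T)$, ensures that whenever $|z|>R''$ for $R''=R''_T$ sufficiently large, the backward trajectory $r\mapsto g_{r,t}(z)$ stays outside $[-R',R']$ for all $r\in[s,t]$ and takes real values on $\R\setminus[-R'',R'']$. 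Thus each $g_{s,t}$ extends continuously to $\overline{\C^+}\setminus[-R'',R'']$ uniformly in $0\le s\le t\le T$; translating back via $f_{s,t}(w)=g_{s,t}(w-m_t)+m_s$ gives the analogous extension for each $f_{s,t}$ on the complement of a slightly enlarged interval, and Lemma~\ref{lem:cpt_supp} applied to $\monotone_{s,t}$ yields (i). The hardest step is the rigorous propagation of compact support through the Loewner integro-differential equation in both directions of (ii) $\Leftrightarrow$ (iii): for (ii) $\Rightarrow$ (iii) one must control the support of $\rho_{g_{\tau(u),\tau(v)}}$ uniformly as $v\to u^+$ via the behavior of $g_{\tau(u)}^{-1}$ near $\infty$, while for (iii) $\Rightarrow$ (i) one must prevent the backward flow from entering the singular interval $[-R',R']$, which the Gronwall-type bound accomplishes.
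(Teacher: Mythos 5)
Your cycle decomposition (i) $\Rightarrow$ (ii) $\Rightarrow$ (iii) $\Rightarrow$ (i) differs from the paper, which establishes (i) $\Leftrightarrow$ (ii) first and then (ii) $\Leftrightarrow$ (iii). This ordering matters, and it is where your argument has a gap.

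The problem lies in your (ii) $\Rightarrow$ (iii) step. To take the vague limit in Remark~\ref{lem:no_mass_escape} and conclude $\supp\nu_u^\tau \subset [-\tilde R, \tilde R]$, you must first know that the characteristic measures $\rho_{g_{\tau(u),\tau(v)}}$ of the \emph{composites} are supported in a fixed compact interval for $v$ close to $u$. This is precisely the content of (ii) $\Rightarrow$ (i), not something that follows from (ii) alone. The paper supplies it by a separate argument (proved immediately before (ii) $\Rightarrow$ (iii)): from $\supp\rho_{f_t}\subset[-R,R]$, one chooses $R'=R+C+2M+1$, uses the Nevanlinna representation to show $f_t(R')>R'-M-1>f_s(R+0)$, and then deduces by monotonicity on the real line that $f_s^{-1}\circ f_t$ is defined, real-valued and nonzero outside $[-R',R']$, so Lemma~\ref{lem:cpt_supp} applies to each $\monotone_{s,t}$. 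Your invocation of ``an asymptotic argument at $\infty$ analogous to the proof of Theorem~\ref{th:capacity_conti_prime}'' does not supply this; Theorem~\ref{th:capacity_conti_prime} only gives continuity of $m_t$ and $\rho_{f_t}$, which is an ingredient but not the argument itself. As written, your cycle cannot close: (ii) $\Rightarrow$ (iii) is proved before (i) is available, yet the step needs (i). You could repair this by either inserting the paper's (ii) $\Rightarrow$ (i) argument before the vague-limit step, or by rearranging so that (ii) $\Rightarrow$ (i) is established independently and then used.

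Your (iii) $\Rightarrow$ (i) via a Gronwall-type bound on the backward trajectory is a legitimate alternative to the paper's Picard iteration, and it has the merit of reaching (i) directly for all pairs $(s,t)$ rather than passing through (ii) first. Both methods hinge on the same a priori control: the vector field $q(\cdot,u)=-G[\nu_u]$ is holomorphic with $|q(w,u)|\le(|w|-R')^{-1}$ outside $[-R',R']$, and the finiteness of the driving measure's total mass on $[0,T]$ keeps the flow away from the singular interval. The details you would need to fill in (in particular the correct integration against the intrinsic measure $\itm$ rather than Lebesgue) are standard, and the paper's explicit bound \eqref{eq:sol_bound} shows what the quantitative estimate should look like.
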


\begin{proof}
$\eqref{item:luc_p.m.} \Rightarrow \eqref{item:luc_reciprocal_cauchy}$.
This follows immediately from Lemma~\ref{lem:cpt_supp}.

\smallskip\noindent
$\eqref{item:luc_reciprocal_cauchy} \Rightarrow \eqref{item:luc_p.m.}$.
Assume \eqref{item:luc_reciprocal_cauchy} and let $T>0$.
Then, as mentioned before Lemma~\ref{lem:cpt_supp}, $(f_t) \subset \mathcal{P}^2$, and in the Nevanlinna representation 
\begin{equation}\label{eq:repr}
f_{t}(z) = z -m_t + \int_\R \frac{1}{x-z}\, \rho_{t}(dx), 
\end{equation}
there exists $R=R_T>0$ such that $\bigcup_{0 \le t \le T}\supp \rho_t \subset [-R,R]$.
In addition, $t\mapsto m_t \in \R$ is continuous and $t\mapsto \rho_t \in\mea$ is weakly continuous by Theorem \ref{th:capacity_conti_prime}.
We set $C:=\sup_{t \in [0,T]}\rho_t(\R)$, $M:=\sup_{t \in [0,T]} |m_t|$, and $R^\prime:= R+C+2M+1$.
For $0\le s \le t \le T$ we have on the one hand 
\begin{equation} \label{eq:one-hand}
f_{t}(R^\prime)\ge R^\prime-M - \int_\R \frac{1}{R^\prime-\xi}\, \rho_t(d\xi) \ge  R^\prime -M   - \frac{C}{R^\prime-R} > R^\prime-M -1
\end{equation}
and on the other hand  
\begin{equation} \label{eq:other-hand}
f_{s}(R+0)\le R + M <R^\prime-M-1.   
\end{equation}
Here, $f_s$ and $f_t$ are strictly increasing and injective%
\footnote{This simple behavior of the boundary values is also a general consequence of the sense-preservation and boundary correspondence by the conformal mappings $f_s$ and $f_t$.}
on $(R,+\infty)$, which follows again from \eqref{eq:repr}.
The inequalities \eqref{eq:one-hand} and \eqref{eq:other-hand} then imply that $f_{s,t}=f_s^{-1} \circ f_t\colon (R^\prime,+\infty)\to (R,+\infty)$ is well-defined. 
Similarly, $f_{s,t} \colon (-\infty, -R^\prime) \to (-\infty, -R)$ is well-defined, and consequently $f_{s,t}$ is holomorphic and has no zeros on $\C\setminus [-R^\prime,R^\prime]$.
Lemma \ref{lem:cpt_supp} now yields the desired property \eqref{item:luc_p.m.}.

\smallskip\noindent
$\eqref{item:luc_reciprocal_cauchy} \Rightarrow \eqref{item:luc_generator}$.
Assume \eqref{item:luc_reciprocal_cauchy}.
We have already seen in the proof of $\eqref{item:luc_reciprocal_cauchy} \Rightarrow \eqref{item:luc_p.m.}$ that $f_t \in \PickB$.
By Proposition \ref{prop:LIE_PickB}, the DLC $(f_t)$ has the reduced generator $((m_t), (\rgen_t))$ defined by \eqref{eq:gen_rgen}, \eqref{eq:two_thetas}, and \eqref{LK3_theta}.
As $\supp\genm_t=\supp\rgen_t$, it suffices to show that $\cup_{0 \le t \le T}\supp\rgen_t$ is bounded.
We consider the $\PickC$-DLC $g_t(z):=f_t(z+m_t)$, the time-change $\tau$ in the proof of Theorem \ref{th:DLC_to_LIE}, and the time-changed DLC $\tilde g_u := g_{\tau(u)}$.
We then have the differential equation 
\begin{equation} \label{eq:LDE_cpt_supp}
\partial_u \tilde g_u(z)
= \frac{\partial \tilde g_u(z)}{\partial z}q(z,u),
\qquad  u \in J\setminus N, \ z\in \C\setminus \R,
\end{equation}
for some Herglotz vector field $q$ of the form \eqref{eq:reduced_HVF} and some $N$ of Lebesgue measure zero.
Differentiating $\tilde g_{u,v}:=\tilde g_u^{-1}\circ \tilde g_v$ in $v$ at $v=u$, we have
\begin{equation}\label{eq:q}
\lim_{h \to +0}\frac{\tilde g_{u,u+h}(z) - z}{h} = q(z,u), \qquad z\in \C\setminus \R, \ u \in J\setminus N. 
\end{equation}
Using this formula, we shall provide an analytic continuation of $z \mapsto q(z,u)$ across the real axis.
By the assumption \eqref{item:luc_reciprocal_cauchy} and the construction of $\tilde{g}_u$,
for each compact subinterval $[0,U]$ of $J$, there exists $R>1$ such that $\tilde g_u$ extends to a holomorphic function on $\C\setminus [-R, R]$ which takes real values on $(-\infty, -R) \cup(R,\infty)$.
In addition, modifying the proof of Proposition~\ref{prop:class_P}~\eqref{HN:ineq} slightly we have
\[
|\tilde g_{u,u+h}(z)- z| \le \frac{h}{|z| - R }, \qquad |z| >3R, \ h\in(0,1). 
\]
By this inequality, $\{(\tilde g_{u,u+h}- \text{id})/h\}_{h \in (0,1)}$ is bounded in $|z|>3R$.
Hence the convergence \eqref{eq:q} is locally uniform in $|z|>3R$ by Vitali's convergence theorem.
This limit gives an analytic continuation of $z \mapsto q(z, u)$  taking real values in $(-\infty, -3R) \cup (3R,\infty)$.
In particular, the driving kernel $(\nu_u)_{u\in J}$ of $q$ is supported on $[-3R,3R]$ for all $u \in [0,U] \setminus N$. 

Considering \eqref{eq:Qq} and $q(z,u)=-G[\nu_u](z)$, the measure
\[
\varSigma(A\times B) = \int_{\tau^{-1}(B)}\nu_u(A) \, du     
\]
is the compound driving measure of $Q(z,B):= \int_{\tau^{-1}(B)} q(z,u)\,du$ associated with $(g_t)$. 
Moreover, from Definition \ref{def:monotone_MGF} we see  $\rgen_t(A) =  \varSigma(A_t)$, where $A_t := \{(x,s)\in \R \times I: (x +m_s, s) \in A \times [0,t] \}$.
Therefore, putting $M:=\sup_{t \in [0,T]} |m_t|$ we have $\rgen_t(\R \setminus[-3R-M,3R+M])=0$ for all $t\in [0,\tau(U)]$, as desired. 

\smallskip\noindent
$\eqref{item:luc_generator}  \Rightarrow   \eqref{item:luc_reciprocal_cauchy}$.
Assume $\eqref{item:luc_generator}$.
Again by Proposition \ref{prop:LIE_PickB} we can use the reduced generator $((\mean_t), (\rgen_t))$ with $\supp\rgen_t=\supp\genm_t$. 
As before, the essential part is to work on the DLC $(\tilde g_u)_{u \in J}$ that satisfies PDE \eqref{eq:LDE_cpt_supp}.
By the assumption \eqref{item:luc_generator}, for every $U\in J$ there exists $R>0$ such that $q(\cdot,u)$ is holomorphic in $\C\setminus[-R,R]$ for all $0 \le u \le U$. 
We suppose $R^2 \ge U$ for later use. 

By Proposition~\ref{th:chordal_LDE}, the PDE \eqref{eq:LDE_cpt_supp} implies that, for each $v \in [0,U]$,
\[
\partial_u \tilde g_{u,v}(z) = - q(\tilde g_{u,v}(z),u), \qquad  \text{for a.e.}\ u \in [0,v]\ \text{and for every}\ z\in \C\setminus\R,
\]
or 
\[
\tilde g_{u,v}(z) = z + \int_u^v q(\tilde g_{r,v}(z),r) \, dr, \qquad 0\le u\le v \le U,\ z\in \C\setminus \R. 
\]
To construct an analytic continuation of $\tilde g_{u,v}$ across the real axis, we go back to Picard's iteration: for a fixed $v\in[0,U]$ and $z \in \C\setminus [-R,R]$ we set $\kk^0_{u,v}(z):= z$ and for $n\ge1$
\begin{align*}
&\kk^n_{u,v}(z) := z + \int_u^v q(\kk^{n-1}_{r,v}(z),r) \, dr \\
&\text{as long as}\quad \kk_{r,v}^{n-1}(z) \in \C\setminus[-R,R] \quad \text{for all $u \le r  \le v$}.  
\end{align*}
As $\Im(q) \ge0$ on $\C^+$ we have $\Im(\kk_{u,v}^n(z)) >0$ on $\C^+$, and thus $\kk_{u,v}^n$ are well-defined on $\C^+$ for all $n\in\N$ and $u\in[0,v]$. 
The same holds on $-\C^+$ by complex conjugation. 
The problem is to show that $\kk_{u,v}^n$ is well-defined as a holomorphic function in $|z|>3R$, i.e., $\kk_{u,v}^n(z)$ never hits $[-R,R]$ if $\lvert z \rvert>3R$. 
Below we show it and also establish a uniform boundedness on suitable compact sets. 

Using the integral representation 
\[
q(z,v) = \int_{[-R,R]} \frac1{x-z} \,\nu_v(dx), \qquad |z|>R, 
\]
we deduce that $|q(z,v)| \le (|z|-R)^{-1}$ for all $|z|> R$.
Let $R' > R$. 
By induction on $n$ we can prove that
\begin{equation} \label{eq:sol_bound}
2R \le |\kk_{u,v}^n(z)|\le 4R'
\qquad \text{whenever}\ 3R \le |z|\le 3R'\ \text{and}\ 0\le u\le v \le U.
\end{equation} 
Indeed, assuming \eqref{eq:sol_bound} for $n-1$ we get the lower bound 
\[
|\kk_{u,v}^n(z)| \ge |z| - \int_u^v \frac{1}{|\kk_{r,v}^{n-1}(z)|-R} \, dr \ge 3R - \frac{U}{2R-R} \ge 3R - \frac{R^2}{2R} \ge 2R.
\]
The upper bound is similarly obtained:  
\[
|\kk_{u,v}^n(z)| \le |z| + \int_u^v \frac{1}{|\kk_{r,v}^{n-1}(z)|-R} \, dr \le 3R' + \frac{U}{2R-R} \le 4R'. 
\]
Hence \eqref{eq:sol_bound} holds for all $n$.
In particular, $\{\, \kk_{u,v}^n \mid 0\le u\le v \le U,\ n \in \N \,\}$ are well-defined holomorphic functions in $|z| >3R$.

Now the successive approximation $\kk_{u,v}^n(z)$ converges to the unique solution $\kk_{u,v}(z)$ of the original ODE as $n \to \infty$ for every fixed $z\in\C\setminus[-3R,3R]$ uniformly on $u\in[0,v]$, and obviously $\kk_{u,v}(z)=\tilde g_{u,v}(z)$ on $\C^+$.
Moreover, $\kk_{u,v}^n(z) \to \kk_{u,v}(z)$ locally uniformly on $\lvert z \rvert>3R$ by \eqref{eq:sol_bound} and Vitali's convergence theorem.
Hence the function $\kk_{u,v}$ is holomorphic there.
This implies that $\tilde g_{u,v}$ (and hence $\tilde g_v=\tilde g_{0,v}$) extends to the holomorphic function $\kk_{u,v}$ on $|z|>3R$ for all $0\le u\le v \le U$.
Translating this conclusion into the original DLC $(f_t)_t$ yields the desired property \eqref{item:luc_reciprocal_cauchy}. 
\end{proof}

\begin{remark}
We give two comments related to the proof above.
\begin{enumerate}
\item
Let $\hat{\C}=\C \cup \{\infty\}$, $\extD=\hat{\C} \setminus \overline{\D}$, and
\[
\Sigma=\{\, f \colon \extD \to \hat{\C} : f\ \text{is univalent},\ f(\infty)=\infty,\ f^\prime(\infty)=1 \,\}.
\]
In other words, the class $\Sigma$ is the set of univalent functions $f$ on $\extD$ with Laurent expansion
$f(z)=z+a_0+\sum_{n=1}^\infty a_{-n}z^{-n}$.
For such $f$, we have the following consequences of Gronwall's area theorem $\sum_{n=1}^\infty n \lvert a_{-n} \rvert^2 \le 1$:
\begin{gather}
\lvert z \rvert - \lvert f(z) \rvert
\le \lvert z-f(z) \rvert
\le \lvert a_0 \rvert + \frac{\lvert z \rvert}{\lvert z \rvert^2-1}; \label{eq:Sigma_ineq} \\
f(\extD) \supset 2\extD - a_0. \label{eq:f_contains_2extD}
\end{gather}
For \eqref{eq:f_contains_2extD}, see Goluzin \cite[Theorem~3, {\S}4, Chapter II]{Gol69} or Pommerenke \cite[Theorem~1.4]{Pom75}.
\eqref{eq:Sigma_ineq} follows from the Cauchy--Schwarz inequality:
\begin{align*}
\lvert f(z)-z \rvert
\le \lvert a_0 \rvert + \left\lvert\sum_{n=1}^\infty a_{-n}z^{-n}\right\rvert
\le \lvert a_0 \rvert + \left(\sum_{n=1}^\infty n\lvert a_{-n} \rvert^2 \right)^{\!\!\frac{1}{2}} \left(\sum_{n=1}^\infty \frac{n}{\lvert z \rvert^{2n}}\right)^{\!\!\frac{1}{2}}.
\end{align*}
In the proof of $\eqref{item:luc_reciprocal_cauchy} \Rightarrow \eqref{item:luc_p.m.}$ in Proposition~\ref{thm:cpt_supp},
let $L \ge R$.
Then the functions $z \mapsto L^{-1}f_s(Lz)$ and $z \mapsto L^{-1}f_t(Lz)$ belong to $\Sigma$, and hence, taking $L$ dependent on $R$ nicely, we can use \eqref{eq:Sigma_ineq} and \eqref{eq:f_contains_2extD} as alternatives of \eqref{eq:one-hand} and \eqref{eq:other-hand}, respectively.
Application of the property of $\Sigma$ of this kind is sometimes useful in the analysis of a chordal Loewner chain $(f_t)_t$ with $\C^+ \setminus f_t(\C^+)$ bounded (cf.\ del Monaco and Gumenyuk \cite{dMG16}).

\item
Consider the initial value problem for the ODE
\begin{equation} \label{eq:Cara_ODE}
\partial_u \kk_u(z)=-q(\kk_u(z), u), \qquad \kk_0(z)=z,
\end{equation}
as in the proof of $\eqref{item:luc_generator} \Rightarrow \eqref{item:luc_reciprocal_cauchy}$ in Proposition \ref{thm:cpt_supp}.
There we have chosen a proof based on Picard's iteration so that, on the way, $z \mapsto \kk_u(z)$ is proved to be holomorphic on $\C \setminus [-3R,3R]$ for each $u \le R^2$.
Given \textit{a priori} lower bound of $\lvert \kk_u(z) \rvert$ similar to \eqref{eq:sol_bound}, this fact is a well-known consequence of the theory of analytic ODEs, at least, in the case that $u \mapsto q(z,u)$ is continuous.
Although \eqref{eq:Cara_ODE} is interpreted in the $u$-a.e.\ sense, generalization is possible.
For concrete statements, see Contreras, D\'{\i}az-Madrigal and Gumenyuk \cite[\S2]{CDMG13}.
\end{enumerate}
\end{remark}

\subsection{Transfinite diameter and length of support}
\label{sec:transfinite_diameter}

We again recall that, for a $\rhd$-CH $(\monotone_{s,t})$ and fixed $t \in I$, the reciprocal Cauchy transform $f_t=F[\monotone_{0,t}]$ is univalent by Proposition~\ref{th:univalent_REF} and \ref{prop:additive_REF}.
Some kinds of converse are also known \cite[Theorems~3.1 and 3.2]{Bau05} \cite{FHS20};
for example, Franz, Hasebe and Schlei{\ss}inger \cite[Theorems~1.16 and 1.20]{FHS20} showed that, if $F_\mu$ is a univalent reciprocal Cauchy transform, then there exists a $\rhd$-CH $(\monotone_{s,t})_{0 \le s \le t \le T}$ such that $\mu=\monotone_{0,T}$.
We are thus interested in the univalence of reciprocal Cauchy transforms.
In the case that $\mu \in \prob$ has compact support, Bauer~\cite[\S3]{Bau05} mentioned two criteria for $F_\mu$ to be univalent: one by means of transfinite diameter and the other in terms of Grunsky coefficients.
In this subsection, we explain the former criterion for comparison with the contents in Sections~\ref{sec:var_relcap} and \ref{sec:hcap_area}.

\begin{definition}[transfinite diameter]
\label{def:transfinite_diameter}
Let $E \subset \C$ be a compact set.
The diameter of $E$ of order $n \ge 2$ is defined by
\[
d_n(E):=\max\left\{\, \prod_{1 \le i<j \le n}\lvert z_i-z_j \rvert^{\frac{2}{n(n-1)}} : z_1, z_2, \ldots, z_n \in E \,\right\}.
\]
It can be seen that the sequence $(d_n(E))_{n \ge 2}$ is non-increasing.
The limit $d_\infty(E):=\lim_{n \to \infty}d_n(E)$ is called the \emph{transfinite diameter} of $E$.
\end{definition}

Let $E \subset \C$ be a compact non-polar set and $G_D \colon D\times D \to (-\infty,+\infty]$ be the Green function for the connected component $D$ of $\hat{\C} \setminus E$ containing $\infty$.
The Robin constant of $D$ at $z \in D$ is defined by
\begin{equation} \label{eq:Robin}
\operatorname{Rob}(D,z)=\begin{cases}\displaystyle
\lim_{\substack{w \to z \\ w \neq z}}\left(G_D(w,z)+\log\lvert w-z \rvert\right) & \text{if}\ z \neq \infty, \\
\displaystyle \lim_{w \to \infty}\left(G_D(w,\infty)-\log\lvert w \rvert\right) & \text{if}\ z=\infty.
\end{cases}
\end{equation}
The \emph{logarithmic capacity} of $E$ is then given by $\operatorname{cap}(E)=\exp(-\operatorname{Rob}(D,\infty))$.
The following properties are known classically (see, e.g., Ahlfors~\cite[\S2]{Ahl73}):
\begin{itemize}
\item
The transfinite diameter and logarithmic capacity are equal: $d_\infty(E)=\operatorname{cap}(E)$.
\item
For $j=1,2$, let $E_j \subset \C$ be a compact set and $D_j$ be the connected component of $\hat{\C} \setminus E_j$ containing $\infty$.
If there exists a conformal mapping $f \colon D_1 \to D_2$ with $f^\prime(\infty)=\alpha$, i.e., with $f(z)=\alpha z+a_0+a_{-1}z^{-1}+\cdots$ around $z=\infty$, then $\operatorname{cap}(E_2)=\lvert \alpha \rvert \operatorname{cap}(E_1)$.
\item
$\operatorname{cap}(\D)=1$, and $\operatorname{cap}(J)=1/4$ for an interval $J$ of unit length.
\end{itemize}

Translating Hayman's result~\cite[Theorem~I]{Hay51} through the inversion $z \mapsto 1/z$, we have the following:

\begin{proposition} \label{th:Hayman51}
Suppose that a holomorphic function $f \colon \C \setminus \overline{\D} \to \C$ has Laurent expansion $f(z)=z+a_0+a_{-1}z^{-1}+\cdots$ around $z=\infty$.
Then $E=\C \setminus f(\C \setminus \overline{\D})$ satisfies $d_\infty(E) \le 1$, and the equality is achieved if and only if $f$ is univalent.
\end{proposition}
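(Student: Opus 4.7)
The plan is to deduce the proposition by transferring Hayman's 1951 disk-version theorem to the exterior disk via the inversion $\iota(z):=1/z$, as suggested in the statement. Set $g(w):=1/f(1/w)$ for $w\in\D\setminus\{0\}$; a direct computation from the Laurent series $f(z)=z+a_0+a_{-1}z^{-1}+\cdots$ yields
\[
g(w) = w - a_0 w^2 + (a_0^2-a_{-1})w^3 + \cdots
\]
near $w=0$, so $g$ extends (meromorphically, if $f$ has zeros in $\C\setminus\overline{\D}$; holomorphically otherwise) to $\D$ with $g(0)=0$ and $g'(0)=1$---this is exactly the Hayman normalization on the disk. Moreover, $\iota$ carries $f(\C\setminus\overline{\D})$ bijectively to $g(\D)\setminus\{0\}$, so the complement $\hat{\C}\setminus g(\D)$ coincides with $\iota(E)$ up to the two exceptional points $\{0,\infty\}$, each of transfinite diameter zero.

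The equality case $d_\infty(E)=1$ for univalent $f$ can be handled directly, without invoking Hayman: if $f$ is univalent on $\C\setminus\overline{\D}$, then $f\colon\extD\to f(\extD)$ is a conformal isomorphism with $f(\infty)=\infty$ and $f'(\infty)=1$, and the conformal-invariance formula for logarithmic capacity together with the identity $d_\infty=\operatorname{cap}$ (both recalled in Section~\ref{sec:transfinite_diameter}) gives
\[
d_\infty(E) = \operatorname{cap}(E) = |f'(\infty)|\cdot\operatorname{cap}(\overline{\D}) = 1\cdot 1 = 1.
\]

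For the inequality $d_\infty(E)\le 1$ in general and the rigidity clause (equality forces univalence), I would apply Hayman's disk theorem to $g$ and translate back through $\iota$. The main technical obstacle is the bookkeeping of how transfinite diameter transforms under $\iota$: the inversion is Möbius but not affine, so neither $\operatorname{cap}$ nor $d_\infty$ is pointwise invariant under it. What makes the translation clean is that the normalizations $f'(\infty)=1$ at $\infty$ and $g'(0)=1$ at $0$ correspond exactly under $\iota$, so the ``base'' bound is $1$ in both pictures; one must still verify carefully the identity relating the disk-side quantity (the transfinite diameter of $\iota(E)\cup\{0\}$, normalized against $g'(0)=1$) to the exterior-side quantity $d_\infty(E)$, essentially through the Möbius transformation law of the Green's function defining $\operatorname{Rob}$ in \eqref{eq:Robin}. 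The equivalence ``$g$ univalent on $\D$ iff $f$ univalent on $\C\setminus\overline{\D}$'' is immediate from the definition of $g$, and completes the rigidity clause once the inequality is transferred.
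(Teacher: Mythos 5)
Your substitution $g(w):=1/f(1/w)$ is not the right translation, and the mismatch is not ``bookkeeping'' that can be cleaned up afterward. Hayman's Theorem~I in \cite{Hay51} concerns a function with a \emph{simple pole at the origin}, namely $g(w)=1/w+c_0+c_1w+\cdots$ holomorphic on $0<|w|<1$, and asserts directly that the set $E$ of values omitted by $g$ satisfies $d_\infty(E)\le 1$, with equality iff $g$ is univalent. To get the present Proposition from that, one changes variables only in the \emph{domain}: putting $g(w)=f(1/w)$ produces exactly the pole-at-zero normalization, the range is untouched, and the omitted set is literally the same compact set $E$. No transformation of $d_\infty$ is required. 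The normalization $g(0)=0$, $g'(0)=1$ you obtain by inverting on both sides is of Bieberbach/Koebe type, not Hayman's; calling it ``exactly the Hayman normalization on the disk'' is incorrect.

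Beyond the meromorphicity problem you note (your $g$ has poles wherever $f$ vanishes in $\extD$), the decisive obstruction is the one you flag yourself: your $g$ omits $\iota(E)$ rather than $E$, and $d_\infty$ is \emph{not} invariant under $\iota(z)=1/z$. A concrete example: $d_\infty(\overline{\D})=1$, while $\iota(\overline{\D})\cup\{\infty\}=\hat{\C}\setminus\D$ is unbounded, so its transfinite diameter is infinite. Matching the derivative normalizations at the two corresponding points does \emph{not} force the two capacities to agree, because the inversion is not affine and the relevant Green functions sit at different poles. You correctly identify this as the main technical obstacle but do not resolve it, and without resolving it the disk-side bound (whatever it is for your $g$) cannot be transferred to $d_\infty(E)$. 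Your direct verification that univalence of $f$ forces $d_\infty(E)=1$ via $\operatorname{cap}(E)=|f'(\infty)|\operatorname{cap}(\overline{\D})$ is correct and self-contained, but that is the easy implication; the inequality $d_\infty(E)\le1$ and the rigidity (equality $\Rightarrow$ univalence) both depend on the translation step, which is where the gap lies.
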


For $\mu \in \prob$ with $\supp\mu \subset [a,b]$, $F_\mu$ extends analytically across $\R \setminus [a,b]$ and has Laurent expansion \eqref{eq:F-trans_Lau}.
Hence Proposition~\ref{th:Hayman51}, combined with the properties listed above, yields the following:

\begin{proposition}[Bauer~{\cite[Corollary~3.1]{Bau05}}]
\label{prop:Bauer_cor3.1}
Let $\mu \in \prob$ and $a<b$ be such that $\operatorname{supp}\mu \subset [a,b]$.
Then the reciprocal Cauchy transform $F_\mu(z)$ is univalent if and only if $E=\C \setminus F_\mu(\C \setminus [a,b])$ satisfies $d_\infty(E)=(b-a)/4$%
\footnote{The original statement seems to lack the multiplicative constant $1/4$.}.
\end{proposition}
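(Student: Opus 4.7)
The idea is to reduce to Hayman's theorem (Proposition~\ref{th:Hayman51}) by means of a conformal change of coordinates that sends $\extD$ to $\C\setminus[a,b]$. First I would introduce the Joukowski-type map
\[
\phi(w) = \frac{b-a}{4}\left(w + \frac{1}{w}\right) + \frac{a+b}{2},
\]
which is a conformal bijection from $\extD$ onto $\C\setminus[a,b]$ with Laurent expansion $\phi(w) = \frac{b-a}{4}\,w + \frac{a+b}{2} + \frac{b-a}{4w}$ at infinity. Since $\supp\mu \subset [a,b]$, Lemma~\ref{lem:cpt_supp} together with Schwarz reflection extends $F_\mu$ to a holomorphic function on $\C\setminus[a,b]$, so the composition $g:=F_\mu\circ\phi$ is holomorphic on $\extD$.

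Substituting the Laurent expansion \eqref{eq:F-trans_Lau} of $F_\mu$ into $\phi$ gives, as $w\to\infty$,
\[
g(w) = \frac{b-a}{4}\,w + c_0 + O(w^{-1}), \qquad c_0:=\frac{a+b}{2}-\Mean(\mu).
\]
To match the normalization in Proposition~\ref{th:Hayman51}, I would then set $h(w):=\tfrac{4}{b-a}(g(w)-c_0)$, which has Laurent expansion $h(w) = w + O(w^{-1})$. Its exceptional set is
\[
\C\setminus h(\extD) = \frac{4}{b-a}\bigl(\C\setminus F_\mu(\C\setminus[a,b])\bigr) - \frac{4 c_0}{b-a} = \frac{4}{b-a}(E-c_0),
\]
and by translation invariance and the homogeneity $d_\infty(\alpha E') = |\alpha|\,d_\infty(E')$ of the transfinite diameter, this set has transfinite diameter $\tfrac{4}{b-a}\,d_\infty(E)$. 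Proposition~\ref{th:Hayman51} applied to $h$ then yields $d_\infty(E) \le (b-a)/4$, with equality if and only if $h$ is univalent on $\extD$.

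The only remaining step is to match the two univalence notions. Since $\phi$ is a conformal bijection and the affine rescaling $g\mapsto h$ is invertible, $h$ is univalent on $\extD$ if and only if the extension of $F_\mu$ is univalent on $\C\setminus[a,b]$. The (mild) subtlety, which I would address separately, is that ``$F_\mu$ is univalent'' in the proposition refers to the original Pick function on $\C^+$. Univalence on $\C\setminus[a,b]$ trivially implies univalence on $\C^+$; conversely, since $F_\mu$ is a non-constant Pick function one has $\Im F_\mu>0$ on $\C^+$, so a short case analysis using $F_\mu(\bar z)=\overline{F_\mu(z)}$ from Schwarz reflection rules out any coincidence $F_\mu(z_1)=F_\mu(z_2)$ with $z_1\in\C^+$, $z_2\in-\C^+$, while the cases of both points in the same half-plane are handled by the assumed univalence on $\C^+$. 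The main content of the proof is thus Hayman's theorem; the rest is bookkeeping of normalizations and transfinite-diameter scaling.
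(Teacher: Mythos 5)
Your argument is essentially the proof the paper intends: pull back by the Joukowski map $\phi$, renormalize to meet the hypothesis of Hayman's theorem (Proposition~\ref{th:Hayman51}), and use translation invariance and homogeneity of $d_\infty$; the factor $(b-a)/4$ drops out of $\phi^\prime(\infty)$ exactly as you compute, and your bookkeeping of the Laurent coefficients and of the image complement is correct.

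The one place the write-up is genuinely incomplete is the closing equivalence of the two univalence notions. Your phrase ``the cases of both points in the same half-plane are handled by the assumed univalence on $\C^+$'' does not cover the case $z_1,z_2 \in \R\setminus[a,b]$, which lies in neither open half-plane. (Your sign-of-imaginary-part argument does already dispose of any pair with one point real and the other not, since then one side has vanishing and the other strictly nonzero imaginary part.) The missing case is easy to fill: by Proposition~\ref{prop:class_P}~\eqref{HN:PN} the extended $F_\mu$ satisfies $F_\mu^\prime(\xi)=1+\int_{[a,b]}(x-\xi)^{-2}\,\rho_{F_\mu}(dx)\ge 1$ for $\xi\in\R\setminus[a,b]$, so $F_\mu$ is strictly increasing on each of the rays $(-\infty,a)$ and $(b,+\infty)$, hence injective on each; and $G_\mu<0$ on $(-\infty,a)$ while $G_\mu>0$ on $(b,+\infty)$, so $F_\mu$ maps the two rays into disjoint subsets of $(-\infty,0)$ and $(0,+\infty)$ respectively. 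Adding this to your case analysis closes the argument.
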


\subsection{Relative capacity and variance}
\label{sec:var_relcap}

Following Dubinin and Vuorinen~\cite{DV14}, we now introduce another ``capacity.''
Let $D \subset \hat{\C}$ be a domain whose boundary contains a free one-sided analytic arc%
\footnote{To be precise, it is a free one-sided, regular, simple, analytic boundary arc; for these terms, see Ahlfors~\cite[Chapter~6, \S1.3]{Ahl79}.
The one-sided assumption is put just for simplicity.}
$\gamma$.
Since $\gamma$ is non-polar, $D$ has the Green function.
With $\operatorname{Rob}(D,z)$ given by \eqref{eq:Robin}, the \emph{inner radius} of $D$ at $z \in D$ is defined by $r_0(D,z)=\exp(\operatorname{Rob}(D,z))$.

\begin{lemma}[{\cite[Theorem~2.1]{DV14}}]
\label{lem:relcap_exist}
In the above, let $z_0 \in \gamma \cap \C$.
Suppose that $E$ is a relatively closed subset of $D$ and has positive inner distance to $z_0$ in $D$ (i.e., the infimum of the length over paths connecting $E$ and $z_0$ in $D$ is positive).
Then there exists $c \ge 0$ such that, for any path $\gamma_\perp$ in $D$ ending at $z_0$ and perpendicular to $\gamma$, the following asymptotic relation holds:
\begin{equation} \label{eq:def_relcap}
\frac{r_0(D \setminus E,z)}{r_0(D,z)}
=1-c\lvert z-z_0 \rvert^2+o(\lvert z-z_0 \rvert^2),
\quad z \to z_0,\ z \in \gamma_\perp.
\end{equation}
\end{lemma}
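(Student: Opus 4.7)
The plan is to reduce the statement to an asymptotic expansion of a harmonic function. Define
\[
H_z(w) := G_D(w,z) - G_{D\setminus E}(w,z),
\]
which is bounded and harmonic on $D\setminus E$ (the logarithmic poles cancel), vanishes on $\partial D\setminus \overline{E}$ (in particular, on $\gamma$), and has boundary values $G_D(\cdot,z)$ on the interior boundary $\partial E\cap D$. Taking the limit $w\to z$ in the defining formula~\eqref{eq:Robin} gives $\operatorname{Rob}(D\setminus E,z)=\operatorname{Rob}(D,z)-H_z(z)$, so
\[
\frac{r_0(D\setminus E, z)}{r_0(D, z)} = \exp(-H_z(z)),
\]
and the claim is equivalent to showing that $H_z(z)=c\,|z-z_0|^2+o(|z-z_0|^2)$ as $z\to z_0$ along any $\gamma_\perp$, for some $c\ge 0$ independent of the path.

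By analyticity of $\gamma$, I would choose a conformal map $\psi\colon V\to U$ from a symmetric neighbourhood $V\subset\C$ of $0$ to a neighbourhood $U$ of $z_0$ with $\psi(0)=z_0$, $\psi(V\cap \R)=U\cap\gamma$, and $\psi(V\cap\C^+)=U\cap D$. Shrinking $U$ by the positive inner-distance assumption, we may assume $E\cap U=\emptyset$. Writing $\zeta=\psi^{-1}(z)$, conformality sends $\gamma_\perp$ to a curve in $V$ terminating perpendicularly at $0$ on $\R$, so that along $\gamma_\perp$
\[
|z-z_0|=|\psi'(0)|\,|\zeta|\,(1+o(1)),\qquad (\Im\zeta)^2=|\zeta|^2\,(1+o(1)).
\]
For $w$ in the compact set $\partial E\cap D$, the map $\zeta\mapsto G_D(w,\psi(\zeta))$ is harmonic on $V\cap\C^+$ and vanishes on $V\cap\R$; odd Schwarz reflection across $\R$ extends it to a harmonic function on $V$, and Taylor expansion gives
\[
G_D(w,\psi(\zeta))=N_1(w)\,\Im\zeta + O(|\zeta|^2),
\]
uniformly in $w\in\partial E\cap D$, where $N_1$ is (up to normalization) the local Poisson kernel of $D$ at $z_0$ and is strictly positive on $D$. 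Linearity of the Dirichlet problem combined with the maximum principle then propagates this to
\[
H_z(w)=M_1(w)\,\Im\zeta + O(|\zeta|^2)
\]
uniformly in $w\in D\setminus E$, where $M_1$ is harmonic on $D\setminus E$ with boundary values $N_1$ on $\partial E\cap D$ and $0$ on $\partial D\setminus \overline{E}$.

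Finally, I would evaluate at $w=z$ and expand $M_1$ near $z_0$ in the same way: since $M_1\circ\psi$ is harmonic on $V\cap\C^+$ and vanishes on $V\cap \R$, a second Schwarz reflection yields $M_1(\psi(\zeta))=B_1\,\Im\zeta+O(|\zeta|^2)$. Substituting gives
\[
H_z(z)=B_1\,(\Im\zeta)^2 + o(|\zeta|^2) = \frac{B_1}{|\psi'(0)|^2}\,|z-z_0|^2 + o(|z-z_0|^2)
\]
along $\gamma_\perp$, which is the required asymptotic with $c=B_1/|\psi'(0)|^2$, independent of $\gamma_\perp$. Non-negativity of $c$ is obtained from $N_1\ge 0$ via the maximum principle (so $M_1\ge 0$ in $D\setminus E$) and the Hopf boundary-point lemma applied to $M_1\circ\psi$, which is non-negative, harmonic, and vanishing on $V\cap\R$, forcing $B_1=\partial_y(M_1\circ\psi)(0)\ge 0$. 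The technical crux will be to ensure that the two successive Schwarz reflections produce remainders uniformly controlled by $|\zeta|^2$; this rests on $E$ being bounded away from $z_0$, so that the residual parts of the Dirichlet solutions depend analytically on the parameter $\zeta$ in a full neighbourhood of $0$, and error terms can be bounded on $\partial E$ and then propagated to all of $D\setminus E$ by the maximum principle.
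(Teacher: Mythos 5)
Your reduction of the statement to the asymptotics of $H_z(z)=G_D(z,z_0)-G_{D\setminus E}(z,z_0)$\,---more precisely $H_z(w):=G_D(w,z)-G_{D\setminus E}(w,z)$ evaluated on the diagonal---followed by conformal straightening and odd Schwarz reflection, is sound and is genuinely different from the Dubinin--Vuorinen proof cited by the paper, which proceeds via reduced moduli of condensers rather than Green-function differences. The identity $r_0(D\setminus E,z)/r_0(D,z)=\exp(-H_z(z))$ and the non-negativity argument via the Hopf lemma are both correct. Tracking Robin constants through a Green-function correction term is cleaner than the capacity bookkeeping in the original, and it makes the constant $c$ explicit as a mixed boundary normal derivative.

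There is, however, a genuine gap in the error bookkeeping at the final substitution. You establish, uniformly in $w$, that
\begin{equation*}
H_{\psi(\zeta)}(w)=M_1(w)\,\Im\zeta+R(\zeta;w),\qquad \lVert R(\zeta;\cdot)\rVert_\infty=O(\lvert\zeta\rvert^2),
\end{equation*}
and separately $M_1(\psi(\zeta))=B_1\Im\zeta+O(\lvert\zeta\rvert^2)$. Setting $w=z=\psi(\zeta)$ then gives only
\begin{equation*}
H_z(z)=B_1(\Im\zeta)^2+O(\lvert\zeta\rvert^3)+R(\zeta;\psi(\zeta)),
\end{equation*}
and the bound $\lvert R(\zeta;\psi(\zeta))\rvert=O(\lvert\zeta\rvert^2)$ is of the \emph{same} order as the leading term $B_1(\Im\zeta)^2$; the upgrade to $o(\lvert\zeta\rvert^2)$ that you assert is not justified by what precedes it. The uniform-in-$w$ maximum-principle bound by itself cannot see that $z$ is tending to $\gamma$, so it cannot distinguish the coefficient $B_1$ from the error.

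The missing observation is that $R(\zeta;\cdot)$, as a function of $w$, vanishes on $\gamma$ (both $H_{\psi(\zeta)}$ and $M_1\Im\zeta$ do, the latter because $M_1$ solves a Dirichlet problem with zero data on $\partial D\setminus\overline{E}$). After pulling back by $\psi$ and reflecting oddly, $R(\zeta;\psi(\cdot))$ is harmonic on a full neighbourhood of $0$, vanishes on the real axis, and is bounded by $C\lvert\zeta\rvert^2$; the interior gradient estimate then gives $\lvert R(\zeta;\psi(\zeta'))\rvert\le C'\lvert\zeta\rvert^2\lvert\Im\zeta'\rvert$ near $0$, and hence $R(\zeta;\psi(\zeta))=O(\lvert\zeta\rvert^3)=o(\lvert z-z_0\rvert^2)$. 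Equivalently, one can carry the boundary expansion to the next order and exploit oddness of the reflection: the degree-two part of an odd harmonic expansion is $\mathrm{const}\cdot\Re\zeta\,\Im\zeta$, which restricted to $\gamma_\perp$ (where $\Re\zeta=o(\Im\zeta)$) is $o(\lvert\zeta\rvert^2)$. Either repair closes the argument; as written, it does not. (A minor secondary point: $\partial E\cap D$ need not be compact, since $\overline{E}$ may meet $\partial D$, so the claimed uniformity of the Taylor expansion on $\partial E\cap D$ needs a word---it holds because $G_D(\cdot,z)$ and its normal derivative at $z_0$ both tend to $0$ as $w\to\partial D$, but this should be said.)
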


\begin{definition}[relative capacity~{\cite{DV14}}]
\label{def:relcap}
In Lemma~\ref{lem:relcap_exist}, the \emph{relative capacity} of $E$ with respect to $D$ at $z_0$ is defined by $\operatorname{relcap}(E;D,z_0):=c/2$.
If $z_0=\infty$, then the relative capacity is defined in the same way with $z-z_0$ in \eqref{eq:def_relcap} replaced by $1/z$.
\end{definition}

Relative capacity is related to Schwarzian derivative.
To see this, let $\mathbf{Dub}$ be the set of all holomorphic functions $g$ on $\D$ such that the asymptotic expansion
\begin{equation} \label{eq:Dubinin_class}
g(z)=1+a_1(z-1)+a_2(z-1)^2+a_3(z-1)^3+o((z-1)^3)
\end{equation}
occurs as $z \to 1$ through every Stolz angle at $1$ with conditions $a_1>0$ and $\Re(2a_2+a_1(1-a_1))=0$.
The \emph{Schwarzian derivative} of $g$ at $z=1$ is defined by $S_g(1):=(a_1a_3-a_2^2)/a_1^2$.

\begin{proposition}[{\cite[Theorem~4.1]{DV14}}]
\label{th:DV14}
Suppose that $g \in \mathbf{Dub}$ has the asymptotic \eqref{eq:Dubinin_class}, and let $E$ be a relatively closed subset of $\D$ which lies in $\D \setminus g(\D)$ and has positive inner distance to $1$ in $\D$.
Then the inequality
\begin{equation} \label{eq:DV14}
-\frac{1}{6}S_g(1) \ge a_1^2 \operatorname{relcap}(E; \D, 1)
\end{equation}
holds.
The equality is achieved if $g$ is a conformal mapping from $\D$ onto $\D \setminus E$.
\end{proposition}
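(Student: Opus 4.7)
The plan is to derive the inequality from a subordination principle for inner radii and then to match coefficients in a second-order expansion at the boundary point $1$.

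The first step is to prove that for any non-constant holomorphic $g \colon \D \to \D$ with $g(\D) \subset \D \setminus E$,
\[
r_0(\D \setminus E, g(z_0)) \ge |g'(z_0)| \, r_0(\D, z_0) = |g'(z_0)|(1-|z_0|^2), \qquad z_0 \in \D,
\]
with equality whenever $g$ is a conformal map from $\D$ onto $\D \setminus E$. Fixing $z_0$, I consider
\[
u(w) := G_\D(w, z_0) - G_{\D \setminus E}(g(w), g(z_0)),
\]
which is harmonic on $\D$ away from the (discrete) set of preimages $\{w \in \D : g(w) = g(z_0)\}$. At $w=z_0$ the two logarithmic poles cancel, leaving the finite limiting value $u(z_0) = \log r_0(\D, z_0) + \log|g'(z_0)| - \log r_0(\D \setminus E, g(z_0))$, while at other preimages $w' \neq z_0$ the function $u$ diverges to $-\infty$. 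Since $G_\D(\cdot, z_0)$ vanishes on $\partial \D$ and $G_{\D \setminus E} \ge 0$, one has $\limsup_{w \to \partial \D} u(w) \le 0$, and the maximum principle gives $u(z_0) \le 0$, which is exactly the claimed inequality. Equality throughout forces $u \equiv 0$, which corresponds to $g$ being a conformal equivalence.

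The second step is to rewrite the inequality in the ratio form
\[
\frac{r_0(\D \setminus E, g(z))}{r_0(\D, g(z))} \ge \frac{|g'(z)|(1-|z|^2)}{1-|g(z)|^2}
\]
and to expand both sides as $z \to 1$ along the path $z(t) := g^{-1}(1 - t)$ for small $t > 0$, which is well defined because $a_1 = g'(1) > 0$ provides a local inverse at $1$. By construction $g(z(t)) = 1 - t$ is radial, so $|g(z(t)) - 1|^2 = t^2$ and the left-hand side expands as $1 - 2\,\operatorname{relcap}(E; \D, 1)\,t^2 + o(t^2)$. For the right-hand side I invert the Taylor series of $g$ to get $z(t) = 1 - t/a_1 - (a_2/a_1^3)\,t^2 + O(t^3)$, and then compute $|g'(z(t))|^2 = g'(z(t))\overline{g'(z(t))}$ and $1-|z(t)|^2$ to order $t^3$. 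The identity $1 - |g(z(t))|^2 = 2t - t^2$ is exact, and the Dubinin normalization $\Re(2a_2 + a_1(1-a_1))=0$ forces $|g'(z(t))|(1-|z(t)|^2)$ to agree with $2t-t^2$ up through order $t^2$. The resulting quotient has the form $1 + c\,t^2 + o(t^2)$, and a careful cancellation using $\Re a_2 = (a_1^2 - a_1)/2$ (so that $(\Re a_2)^2 = a_1^2(a_1-1)^2/4$) collapses $c$ into a constant multiple of the real part of $(a_2^2 - a_1 a_3)/a_1^2 = -S_g(1)$. Matching the $t^2$ coefficients on the two sides of the ratio inequality gives the desired bound $-\tfrac{1}{6}S_g(1) \ge a_1^2\,\operatorname{relcap}(E; \D, 1)$; the equality case is inherited from the equality case of Step~1.

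The main obstacle is the arithmetic of the expansion in the second step: one must track the expansions of $z(t)$, $|g'(z(t))|$, and $1 - |z(t)|^2$ through order $t^3$ with enough precision that the cancellations forced by the $\mathbf{Dub}$-constraint appear, and then identify the surviving coefficient with the Schwarzian combination $(a_2^2 - a_1 a_3)/a_1^2$. The subordination argument in Step~1 is conceptually clean, but one must verify carefully that the $-\infty$ singularities of $u$ at the other preimages of $g(z_0)$ only help (i.e., do not obstruct the application of the maximum principle), which uses non-negativity of $G_{\D \setminus E}$ in an essential way.
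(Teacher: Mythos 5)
Your argument is correct and takes a genuinely different route from the one the paper references. The paper cites \cite[Theorem~4.1]{DV14} and notes, in the remark following the proposition, that Dubinin and Vuorinen derive it from a version of Hayman's transfinite-diameter theorem (Proposition~\ref{th:Hayman51}; \cite[Theorem~4.7]{Hay94}); you instead start from the classical subordination inequality $r_0(\D\setminus E, g(z_0)) \ge |g'(z_0)|\,r_0(\D,z_0)$, proved by a Green's-function maximum principle, and then extract the Schwarzian by a second-order boundary expansion along $z(t)=g^{-1}(1-t)$. The maximum-principle step is handled correctly, including the $-\infty$ singularities at the other preimages of $g(z_0)$ and the essential non-negativity $G_{\D\setminus E}\ge 0$; local invertibility of $g$ in a Stolz angle at $1$ follows from $a_1=g'(1)\in(0,\infty)$ as in Lemma~\ref{lem:Visser-Ostrowski}. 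Your route is elementary and self-contained; the trade-off is the order-$t^3$ bookkeeping of Step~2, which Hayman's theorem sidesteps.

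One substantive caution on that bookkeeping. Carrying the expansion through, the $\mathbf{Dub}$-normalization $\Re a_2=(a_1^2-a_1)/2$ kills the $t$- and $t^0$-order corrections exactly, as you say, and the surviving coefficient is
\[
\frac{|g'(z(t))|\,(1-|z(t)|^2)}{1-|g(z(t))|^2} = 1 + \frac{2\,\Re(a_1a_3-a_2^2)}{a_1^4}\,t^2 + O(t^3),
\]
so matching with $1-2\operatorname{relcap}(E;\D,1)\,t^2+o(t^2)$ gives $a_1^2\operatorname{relcap}(E;\D,1)\le -\Re\bigl((a_1a_3-a_2^2)/a_1^2\bigr)$, with no extra factor of $\tfrac16$. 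This agrees with \eqref{eq:DV14} only if $S_g(1)$ is the ordinary Schwarzian derivative $S_g(1)=6(a_1a_3-a_2^2)/a_1^2$; the formula $(a_1a_3-a_2^2)/a_1^2$ displayed just above the proposition is missing the factor $6$. You can cross-check this against the paper's own identity $\AR(f)=-\tfrac23\,S_{\operatorname{ad}T(f)}(1)$ for $f\in\PickC$: there $a_1=1$, $a_2=0$, $a_3=-\AR(f)/4$, and $-\tfrac23\cdot 6(a_1a_3-a_2^2)/a_1^2=\AR(f)$ as required, whereas the unnormalized quantity would give $\AR(f)/6$. Thus your proof is sound and your conclusion is the correct one once $S_g(1)$ is read in its standard normalization.
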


\begin{remark}
Dubinin and Vuorinen proved Proposition~\ref{th:DV14} using a version of Proposition~\ref{th:Hayman51} in Hayman's book~\cite[Theorem~4.7]{Hay94}.
However, the authors do not know whether the equality in \eqref{eq:DV14} is also sufficient for $g$ to be conformal, because one has to take the limit to the boundary point $1$ after applying Hayman's result.
\end{remark}

Proposition~\ref{th:DV14} is applied to functions in $\PickC$ via the Cayley transform $T(z)=i(z+1)/(1-z)$ and the bijection $\operatorname{ad}T \colon \mathrm{Hol}(\C^+, \C^+) \to \mathrm{Hol}(\D, \D)$ defined by \eqref{eq:induced_Cayley}.
In fact, direct calculations show that
\[
\operatorname{ad}T(\PickC)=\{\, g \in \mathbf{Dub} : a_1=1\ \text{in the asymptotic \eqref{eq:Dubinin_class}} \,\}
\]
and that
\[
\AR(f)=\angle\!\lim_{z \to \infty}z(z-f(z))
=-\frac{2}{3}S_{\operatorname{ad}T(f)}(1),
\qquad f \in \PickC.
\]
Moreover, for a bounded, relatively closed subset $E$ of $\C^+$, we have
\[
\operatorname{relcap}(E; \C^+,\infty)=4\operatorname{relcap}(T^{-1}(E); \D,1).
\]
In summary, we obtain the following:

\begin{proposition} \label{th:DV14_half-plane}
Let $f \in \PickC$ and $E$ be a bounded, relatively closed subset of $\C^+$ such that $E \subset \C^+ \setminus f(\C^+)$.
Then the inequality
\[
\AR(f) \ge \operatorname{relcap}(E; \C^+,\infty)
\]
holds, and the equality is achieved if $f$ is a conformal mapping from $\C^+$ onto $\C^+ \setminus E$.
\end{proposition}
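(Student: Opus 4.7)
The plan is to transcribe the disk statement, Proposition~\ref{th:DV14}, to the upper half-plane via the Cayley transform $T$, relying on the three identities already listed immediately before the proposition. Concretely, set $g := \operatorname{ad}T(f)$ and $\tilde E := T^{-1}(E)$; the goal is to verify that Proposition~\ref{th:DV14} applies to the pair $(g, \tilde E)$ and then to invoke the conversion formulas to translate its conclusion into the claimed inequality.

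First I would check that $g$ belongs to the class to which Proposition~\ref{th:DV14} applies. By the description of $\operatorname{ad}T(\PickC)$ recalled just before the proposition, $g \in \mathbf{Dub}$ with expansion coefficient $a_1=1$ in \eqref{eq:Dubinin_class}. Next I would check that $\tilde E$ satisfies the three geometric hypotheses on the disk side. The homeomorphism $T \colon \overline{\D} \setminus \{1\} \to \overline{\C^+}$ carries compact subsets of the target to compact subsets of the source, so boundedness of $E$ in $\C$ implies that $\tilde E$ lies inside a compact subset of $\overline{\D}\setminus\{1\}$; hence $\tilde E$ is relatively closed in $\D$ and bounded away from $1$, which in particular gives positive inner distance to $1$ in $\D$. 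Finally, the inclusion $E \subset \C^+ \setminus f(\C^+)$ transfers directly to $\tilde E \subset \D \setminus g(\D)$.

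Next I would invoke Proposition~\ref{th:DV14} to get
\[
-\tfrac{1}{6} S_g(1) \ \ge\ a_1^2\, \operatorname{relcap}(\tilde E; \D, 1) \ =\ \operatorname{relcap}(\tilde E; \D, 1),
\]
and then multiply by $4$ and substitute the two identities
\[
\AR(f) = -\tfrac{2}{3} S_g(1),
\qquad
\operatorname{relcap}(E; \C^+, \infty) = 4\,\operatorname{relcap}(\tilde E; \D, 1),
\]
which are stated in the paragraph preceding the proposition. This immediately yields $\AR(f) \ge \operatorname{relcap}(E; \C^+, \infty)$. For the equality case, if $f$ is a conformal mapping from $\C^+$ onto $\C^+ \setminus E$, then $g = \operatorname{ad}T(f)$ is a conformal mapping from $\D$ onto $\D \setminus \tilde E$, so the equality assertion of Proposition~\ref{th:DV14} gives equality in \eqref{eq:DV14} and hence in the target inequality.

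There is no serious obstacle; the argument is essentially a change of variable. The only delicate point is checking that ``$E$ bounded in $\C^+$'' transports to the positive inner-distance hypothesis of Lemma~\ref{lem:relcap_exist}, but this is immediate because $T^{-1}$ sends a neighborhood of $\infty$ in $\overline{\C^+}$ to a neighborhood of $1$ in $\overline{\D}$, so $\tilde E$ sits in a compact subset of $\overline{\D}\setminus\{1\}$.
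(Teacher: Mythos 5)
Your proposal is correct and follows exactly the route the paper itself takes: the paper establishes the three identities in the paragraph preceding the proposition (that $\operatorname{ad}T(\PickC)$ sits in $\mathbf{Dub}$ with $a_1=1$, the formula $\AR(f)=-\tfrac{2}{3}S_{\operatorname{ad}T(f)}(1)$, and the factor-of-$4$ relation between the relative capacities) and then records the proposition as an immediate consequence of Proposition~\ref{th:DV14} via the Cayley transform, which is precisely your argument. Your verification of the geometric hypotheses on $\tilde E=T^{-1}(E)$ (relatively closed, bounded away from $1$, contained in $\D\setminus g(\D)$) is the only part left implicit in the paper and you have filled it in correctly.
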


\begin{corollary} \label{cor:DV14_half-plane}
The variance of $\mu \in \prob$ is not less than the relative capacity of any bounded subset of $\C^+ \setminus F_\mu(\C^+)$.
In particular, if $\mu$ has compact support and if $F_\mu$ is univalent, then $\var(\mu)=\operatorname{relcap}(\C^+ \setminus F_\mu(\C^+); \C^+,\infty)$.
\end{corollary}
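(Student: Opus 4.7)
The proof will be a direct deduction from Proposition \ref{th:DV14_half-plane} by reducing the $\PickB$ case to the $\PickC$ case via an additive normalization, so the main work is bookkeeping about how this normalization affects both sides of the inequality.

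First, to handle the inequality, I may assume $\var(\mu)<+\infty$, since otherwise the claim is vacuous. Put $m:=\Mean(\mu)$ and $g(z):=F_\mu(z)+m$. By the remark preceding Proposition \ref{prop:Cauchy_P_prime} (applied with $a=-m$), $g\in\PickB$ with $\mean(g)=0$ and $\AR(g)=\AR(F_\mu)$; hence $g\in\PickC$ with $\AR(g)=\var(\mu)$ by Proposition \ref{prop:Cauchy_P_prime}. Since $g$ is the horizontal translate of $F_\mu$ by $m$ and horizontal translation preserves $\C^+$, we obtain
\[
\C^+\setminus g(\C^+)=\bigl(\C^+\setminus F_\mu(\C^+)\bigr)+m.
\]
Therefore any bounded relatively closed $E\subset\C^+\setminus F_\mu(\C^+)$ gives rise to a bounded relatively closed $E':=E+m\subset \C^+\setminus g(\C^+)$, and Proposition \ref{th:DV14_half-plane} yields $\var(\mu)=\AR(g)\ge\operatorname{relcap}(E';\C^+,\infty)$.

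The remaining (and essentially the only nontrivial) step is to note that relative capacity at $\infty$ is invariant under horizontal translation, so that $\operatorname{relcap}(E';\C^+,\infty)=\operatorname{relcap}(E;\C^+,\infty)$. This follows from the general transformation rule for the inner radius under a conformal self-map $\phi$ of $\C^+$, namely $r_0(\phi(D),\phi(z))=\lvert\phi'(z)\rvert\,r_0(D,z)$, which is immediate from conformal invariance of the Green function and the definition \eqref{eq:Robin}; for $\phi(z)=z+m$ one has $\phi'\equiv 1$, and the defining asymptotic in Definition \ref{def:relcap} with base point $\infty$ (where the local parameter is $1/z$) is preserved because $1/(z-m)=1/z+O(\lvert z\rvert^{-2})$ shares the same leading term with $1/z$.

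For the equality statement, assume $\mu$ has compact support and $F_\mu$ is univalent. Then $g$ is also univalent, and since $\mu$ is compactly supported, $F_\mu$ (hence $g$) extends continuously across $\R\setminus[a,b]$ taking real values there, by Lemma \ref{lem:cpt_supp}. Combined with the Laurent expansion \eqref{eq:F-trans_Lau} showing $F_\mu(z)\sim z$ near $\infty$, this forces $\C^+\setminus F_\mu(\C^+)$ (and therefore $E':=\C^+\setminus g(\C^+)$) to be a bounded, relatively closed subset of $\C^+$. Applying the equality clause of Proposition \ref{th:DV14_half-plane} to the conformal $g\colon\C^+\to\C^+\setminus E'$ gives $\var(\mu)=\AR(g)=\operatorname{relcap}(E';\C^+,\infty)$, and the translation invariance of $\operatorname{relcap}(\cdot;\C^+,\infty)$ used above lets us replace $E'$ by $\C^+\setminus F_\mu(\C^+)$, completing the proof. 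The only place where care is truly required is the verification of this translation invariance, but this is a small computation at the level of Definition \ref{def:relcap}.
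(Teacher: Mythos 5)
Your proof is correct and is essentially the natural deduction from Proposition~\ref{th:DV14_half-plane}; the paper states the corollary without a written proof, treating it as immediate. The one genuine technical point, which you rightly isolate, is the passage from $\PickB$ to $\PickC$: since $F_\mu$ need not lie in $\PickC$ unless $\Mean(\mu)=0$, you shift to $g=F_\mu+\Mean(\mu)\in\PickC$ (with $\AR(g)=\var(\mu)$ by Proposition~\ref{prop:Cauchy_P_prime}), apply Proposition~\ref{th:DV14_half-plane} to $g$, and then transfer back via translation invariance of $\operatorname{relcap}(\cdot;\C^+,\infty)$. Your verification of that invariance — conformal invariance of the Green function giving $r_0(\phi(D),\phi(z))=\lvert\phi'(z)\rvert\,r_0(D,z)$, hence equality of the inner-radius ratios, together with $1/(z+m)=1/z+O(\lvert z\rvert^{-2})$ ensuring the quadratic coefficient at $\infty$ is unaffected — is sound, and so is the observation that for compactly supported $\mu$ with $F_\mu$ univalent, the Schwarz-reflected extension and the hydrodynamic normalization force $\C^+\setminus F_\mu(\C^+)$ to be a bounded, relatively closed hull, so that the equality clause of Proposition~\ref{th:DV14_half-plane} applies. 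In short, this is the argument the paper implicitly has in mind, spelled out at the level of detail it deserves.
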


\subsection{Half-plane capacity and area of image complement}
\label{sec:hcap_area}

We now introduce one more concept of capacity, which is common in SLE theory.

\begin{definition}[half-plane capacity~{\cite[p.920]{LSW03}}]
\label{def:original_hcap}
A relatively closed subset $E$ of $\C^+$ is called a \emph{$\C^+$-hull} if $\C^+ \setminus E$ is a simply connected domain.
Let $E$ be a \emph{bounded} $\C^+$-hull.
Then there exists a unique conformal mapping $g_E \colon \C^+ \setminus E \to \C$ that obeys the hydrodynamic normalization at infinity
\begin{equation} \label{eq:Laurent_MOF}
g_E(z)=z+\frac{b_{-1}}{z}+o(z^{-1}),
\qquad z \to \infty,
\end{equation}
with $b_{-1} \ge 0$.
We refer to $\operatorname{hcap}(E):=b_{-1}$ as the \emph{half-plane capacity} of $E$.
\end{definition}

In Definition~\ref{def:original_hcap}, the mapping $g_E$ is called the \emph{mapping-out function} of $E$.
The inverse map of $g_E$ satisfies $g_E^{-1} \in \PickC$ and $\AR(g_E^{-1})=b_{-1}$, and hence
\begin{equation} \label{eq:hcap_relcap}
\operatorname{hcap}(E)=\operatorname{relcap}(E; \C^+,\infty)
\end{equation}
follows from Proposition~\ref{th:DV14_half-plane}.
Indeed, this equality is a topic in Dubinin~\cite[Lemma~2]{Dub10} and Dubinin and Vuorinen~\cite[Theorem~2.6]{DV14}.

Both in Definitions~\ref{def:relcap} and \ref{def:original_hcap}, $E \subset \C^+$ is required to be bounded.
Nevertheless, half-plane capacity can be defined for unbounded $E$.
A trivial way is as follows: for a (unbounded) $\C^+$-hull $E$, we define $\operatorname{hcap}(E):=\AR(f_E)$ if there exists a conformal mapping $f_E \colon \C^+ \to \C^+ \setminus E$ such that $f_E \in \PickC$.
The inverse $f_E^{-1} \colon \C^+ \setminus E \to \C$ will then be called the mapping-out function of $E$.
Another non-trivial and more general way is a use of planar Brownian motion.
Let $Z=((Z_t)_{t}, (\mathbb{P}_z)_{z \in \C})$ be a planar Brownian motion; i.e., $Z$ is a random element with values in $C([0,+\infty), \C)$ and $\mathbb{P}_z$ is the law of Brownian motion on $\C$ starting at $z \in \C$.
The expectation with respect to $\mathbb{P}_z$ is denoted by $\mathbb{E}_z$.
For a bounded $\C^+$-hull $E$ and a constant $b>\sup\{\, \Im z : z \in E \,\}$, the following identities \cite[p.567]{LLN09} hold:
\begin{align}
\operatorname{hcap}(E)
&=\lim_{y \to +\infty}y\mathop{\mathbb{E}_{iy}}[\Im Z_{\tau_{\C^+ \setminus E}}]
\label{eq:probabilistic_hcap} \\
&=\frac{1}{\pi}\int_{\R}\mathbb{E}_{x+ib}[\Im Z_{\tau_{\C^+ \setminus E}}]\,dx.
\label{eq:SMP_hcap}
\end{align}
Here, $\tau_{\C^+ \setminus E}:=\inf\{\, t \ge 0 : Z_t \notin \C^+ \setminus E \,\}$ is the (first) exit time of the process $Z$ from the set $\C^+ \setminus E$.
Actually, these two expressions do not require $E$ to be bounded and are equal to $\AR(f_E)$ if the mapping $f_E$ above exists.

Using the probabilistic expressions \eqref{eq:probabilistic_hcap} and \eqref{eq:SMP_hcap}, Lalley, Lawler and Narayanan~\cite{LLN09} gave a comparison of $\operatorname{hcap}(E)$ and the area of a certain neighborhood of $E$.

\begin{proposition}[{\cite[Theorem~1]{LLN09}}]
\label{th:LLN09}
For every $\C^+$-hull $E$,
\begin{equation} \label{eq:LLN09_disk}
\frac{1}{66}\operatorname{area}\left(\bigcup_{x+iy \in E}\D(x+iy,y)\right)
<\operatorname{hcap}(E)
<\frac{7}{2\pi}\operatorname{area}\left(\bigcup_{x+iy \in E}\D(x+iy,y)\right).
\end{equation}
Here, $\operatorname{area}$ denotes the usual two-dimensional Lebesgue measure.
\end{proposition}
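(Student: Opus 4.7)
The plan is to exploit the two probabilistic formulas \eqref{eq:probabilistic_hcap}--\eqref{eq:SMP_hcap} and to convert them into area comparisons through the geometry of tangent disks. The common scaling input is that a disk $\D(x_0+iy_0, y_0)$ is tangent to $\R$ at $(x_0, 0)$, so by conformal invariance and Brownian scaling its half-plane capacity is comparable to $y_0^2$ while its harmonic measure from a faraway point $z$ in $\C^+$ is comparable to $y_0 \Im z /|z|^2$. Since $N(E):=\bigcup_{x+iy\in E}\D(x+iy,y)$ is a tube of such tangent disks, these two scalings will produce the comparison between $\operatorname{hcap}(E)$ and $\operatorname{area}(N(E))$.

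\emph{Upper bound.} Take $b:=\sup\{\Im z : z \in E\}$ and apply \eqref{eq:SMP_hcap}. Since $\Im Z_{t\wedge \tau}$ is a non-negative martingale vanishing on $\R$, the integrand $\mathbb{E}_{x+ib}[\Im Z_{\tau}]$ is bounded by $b$ times the harmonic measure of $E$ from $x+ib$ in $\C^+$. The strong Markov property applied at the hitting time of $\partial N(E) \cap \C^+$, combined with a Beurling-type projection inequality for tangent disks, lets us bound that harmonic measure by the half-plane Poisson integral of $\mathbf{1}_{N(E)}$. Interchanging the order of integration in $\int_\R dx \int_{N(E)}\Pi_{x+ib}(z')\,dA(z')$ and optimizing the Poisson factor against the relation $\Im z' \le b$ on $N(E)$ yields the constant $7/(2\pi)$.

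\emph{Lower bound.} Here I would use \eqref{eq:probabilistic_hcap} together with a Vitali-type selection: extract a countable subfamily $\{z_k = x_k+iy_k\}\subset E$ such that the disks $\D(z_k, y_k/5)$ are pairwise disjoint while $\bigcup \D(z_k, y_k)\supset N(E)$ up to a controlled overlap factor. For each $k$, Brownian motion started at $iy$ with $y$ large hits $\D(z_k, y_k/5)$ before $\R$ with probability at least $c\, y_k/y$, contributing roughly $c\, y_k^2/y$ to $\mathbb{E}_{iy}[\Im Z_{\tau_k}]$. A union-bound argument, exploiting the near-disjointness of the shadows $\{x\in\R : \mathbb{P}_{x+ib}[\text{hit }\D(z_k,y_k/5)]\ge\tfrac12\}$, shows that these contributions decouple up to a fixed multiplicative loss; summing then gives $\operatorname{hcap}(E)\gtrsim \sum y_k^2\gtrsim \operatorname{area}(N(E))$ with the explicit constant $1/66$.

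\emph{Main obstacle.} The genuine difficulty is the lower bound, because hcap is not additive over unions of hulls. The contributions from the Vitali-extracted tangent disks must be reassembled through a careful probabilistic estimate showing that, for well-separated tangent disks, the expected hitting heights sum correctly up to a bounded overcount. Making this quantitative enough to extract the explicit $1/66$ is the technical heart of the argument: one must simultaneously control (i) the Vitali dilation factor, (ii) the overlap of hitting events for nearby disks, and (iii) the loss in replacing a general point of $E$ by its enclosing tangent disk.
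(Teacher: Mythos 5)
The paper does not prove this statement; it is quoted directly from Lalley, Lawler and Narayanan~\cite{LLN09}, so there is no in-text argument to compare against. Your framework does start from the probabilistic representations \eqref{eq:probabilistic_hcap}--\eqref{eq:SMP_hcap}, which is the right entry point, and you correctly flag the non-additivity of half-plane capacity as the central difficulty in the lower bound.

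As written, however, both halves have genuine gaps. In the upper bound, the step ``bound that harmonic measure by the half-plane Poisson integral of $\mathbf{1}_{N(E)}$'' is not a well-defined operation: the Poisson kernel of $\C^+$ pairs against boundary data on $\R$, whereas $\mathbf{1}_{N(E)}$ is the indicator of a two-dimensional subset of $\C^+$. The quantity you seem to want is the expected occupation time of $N(E)$, which is a Green-function (not Poisson) integral, so the subsequent interchange-and-optimize step does not parse. Moreover, replacing $\mathbb{E}_{x+ib}[\Im Z_\tau]$ by $b$ times the probability of hitting $E$ discards the height at which $E$ is actually hit and is too coarse to produce the specific constant $7/(2\pi)$. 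In the lower bound, the Vitali extraction is stated backwards: making $\D(z_k,y_k/5)$ pairwise disjoint does not force $\bigcup_k \D(z_k,y_k)$ to cover $N(E)$; the $5r$-covering lemma instead yields pairwise disjoint $\D(z_k,y_k)$ whose dilates $\D(z_k,5y_k)$ cover $N(E)$, and it is this form that converts a lower bound on $\sum_k y_k^2$ into a lower bound for $\operatorname{area}(N(E))$. Finally, the ``decoupling'' of the hitting contributions from the selected disks---which you yourself identify as the technical heart---is deferred rather than carried out, so the constant $1/66$ is asserted rather than derived. The sketch is a plausible outline of the flavor of the LLN argument but not a proof.
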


In Proposition~\ref{th:LLN09}, $\bigcup_{x+iy \in E}\D(x+iy,y)$ is the covering of $E$ by the disks with center in $E$ and tangent to the real axis.
The area of this covering is comparable with the area of a hyperbolic neighborhood of $E$.
More precisely, let $d_{\rm hyp}$ denote the hyperbolic distance on $\C^+$, and put
\[
N(E):=\{\, z \in \C^+ : d_{\rm hyp}(z,E) \le 1 \,\}.
\]
Then it holds \cite[p.571]{LLN09} that
\begin{equation} \label{eq:LLN09_hyp}
\frac{1}{100}\operatorname{area}(N(E))
<\operatorname{hcap}(E)
<\frac{20}{\pi(e-e^{-1})^2}\operatorname{area}(N(E)).
\end{equation}
There are still other choices of neighborhood of $E$ whose area is comparable with $\operatorname{hcap}(E)$; see Rohde and Wong~\cite[pp.931--932]{RW14}.

The coefficients in \eqref{eq:LLN09_disk} and \eqref{eq:LLN09_hyp} are not expected to be optimal.
In view of Corollary~\ref{cor:DV14_half-plane} and \eqref{eq:hcap_relcap}, the problem of improving these bounds may have a certain meaning in non-commutative probability as well as in geometric function theory.

\begin{remark} \label{rem:continuity}
By means of the probabilistic expressions \eqref{eq:probabilistic_hcap} and \eqref{eq:SMP_hcap}, the following property was proved~\cite{Mur22}:
\begin{quote}
Let $\tilde{E}$ be a $\C^+$-hull with $\tilde{b}:=\sup_{z \in \tilde{E}}\Im z<+\infty$ and $\tilde{c}:=\operatorname{hcap}(\tilde{E})<+\infty$.
If a sequence of $\C^+$-hulls $E_n$, $n \in \N$, converges to a $\C^+$-hull $E$ in Carath\'eodory's sense with $E \cup \bigcup_n E_n \subset \tilde{E}$, then $\lim_{n \to \infty}\operatorname{hcap}(E_n)=\operatorname{hcap}(E)$. 
\end{quote}
Here, by ``Carath\'eodory's sense'' we mean that the sequence of complementary domains $\C \setminus E_n$ converges to $\C \setminus E$ in the sense of Carath\'eodory kernel convergence (with respect to the base point $i\tilde{b}$).
Previously, the third named author mentioned that the relation between the property introduced here and Proposition~\ref{prop:DCT_for_AR} in the current paper is not clear, because the Carath\'eodory kernel theorem for the class $\PickC$ has not been formulated properly so far \cite[{\S}4.2]{Mur22}.
Actually, however, the theorem for the subclass $\{\, f \in \PickC : f\ \text{is univalent},\ \AR(f) \le \tilde{c} \,\}$ can be easily formulated; cf.\ Bauer~\cite[Theorem~4.2]{Bau05}.
As compactness and uniqueness properties needed to verify the kernel theorem are not difficult to obtain for this class, one can prove it in a way similar to a general version in Goluzin~\cite[Theorem~1, {\S}5, Chapter~V]{Gol69}.
Thus, as long as $\C^+ \setminus E_n = f_n(\C^+)$ for $f_n \in \PickC$, either the property above or Proposition~\ref{prop:DCT_for_AR} implies the other.
\end{remark}

\appendix

\section{Measures and measure-valued functions}
\label{sec:conv_measures_appdx}

Appendix~\ref{sec:weak_and_vague} provides a quick review of weak and vague convergences of finite Borel measures; the reader can find a large part of them in Bauer's book~\cite{Bau01}.
Based on this review, convergence of measure-valued continuous functions is discussed in Appendix~\ref{sec:topology_of_luwc}.
Appendix~\ref{sec:measurability} is devoted to a specific topic: we discuss part of the equivalent conditions, listed in Lemma~\ref{lem:HVF_measurability}, on measurability of the measure-valued function $t \mapsto \nu_t$.

\subsection{Weak and vague convergences}
\label{sec:weak_and_vague}

Let $S$ be a locally-compact second-countable Hausdorff space.
Then $S$ is Polish \cite[Theorem~31.5]{Bau01}, i.e., separable and metrizable with a complete metric.
Any finite Borel measure $\mu$ on $S$ is Radon \cite[Theorem~29.12]{Bau01} in the sense that
\[
\mu(B)=\sup\{\, \mu(K) : K \subset B,\ K\ \text{is compact} \,\},
\quad B \in \cB(S).
\]
Basically, vague convergence suits Radon measures on a locally compact Hausdorff space whereas weak convergence suits (locally) finite Borel measures on a (complete) separable metric space.
This is the reason why we have taken $S$ as above, and indeed, $S$ is just a (subspace of) Euclidean space in this paper.
On $S$ we consider the following classes of test functions:
the set $C_{\rm b}(S)$ of real bounded continuous functions, the set $C_{\rm c}(S)$ of continuous functions with compact support, and the set $C_\infty(S)$ of continuous functions vanishing at infinity.
Here, $f \colon S \to \R$ is said to vanish at infinity if the set $\{\, x \in S : \lvert f(x) \rvert \ge \varepsilon \,\}$ is compact for each $\varepsilon>0$.

Now let $\mathbf{M}(S)$ be the set of finite Borel measures on $S$.
We say that a net $(\mu_\alpha)_{\alpha \in A}$ in $\mathbf{M}(S)$ converges to $\mu \in \mathbf{M}(S)$ \emph{weakly} if
\begin{equation} \label{eq:def_conv_measures}
\lim_{\alpha} \int_S f(x) \,\mu_\alpha(dx)=\int_S f(x) \,\mu(dx)
\end{equation}
for any $f \in C_{\rm b}(S)$.
If this holds, we write $\mu_\alpha \xrightarrow{\rm w} \mu$.
This convergence defines the \emph{weak topology} on $\mathbf{M}(S)$, which is interpreted as $\sigma(\mathbf{M}(S),C_{\rm b}(S))$-topology (see Remark~\ref{rem:pairing} below) in view of the pairing $(\mu, f) \mapsto \int f \,d\mu$.
Similarly, we say that $(\mu_\alpha)_{\alpha \in A}$ converges to $\mu$ \emph{vaguely},
which is denoted by $\mu_\alpha \xrightarrow{\rm v} \mu$, if \eqref{eq:def_conv_measures} holds for any $f \in C_{\rm c}(S)$%
\footnote{This definition is slightly different from the one in the previous paper by two of the authors~\cite[Definition~2.1]{HH22}.
The current one has the advantage that vague compactness is easily available (see Proposition~\ref{prop:Alaoglu_for_vague}).}.
$\sigma(\mathbf{M}(S),C_{\rm c}(S))$-topology is called the \emph{vague topology}.

\begin{remark}[topology and uniformity from a pairing]
\label{rem:pairing}
We recall that, given vector spaces $X$ and $Y$ over $\R$ and a pairing (i.e., bilinear form) $\boldsymbol{b} \colon X \times Y \to \R$, \emph{$\sigma(X,Y)$-topology} on $X$ is defined as the coarsest topology that makes $x \mapsto \boldsymbol{b}(x,y)$ continuous for any $y \in Y$.
By definition, a fundamental neighborhood system of $x_0 \in X$ is given by
\[
\{\, x \in X : \lvert \boldsymbol{b}(x-x_0,y_j) \rvert < \epsilon,\ j=1,2,\ldots,n \,\}
,\quad n \ge 1,\ y_j \in Y,\ \epsilon>0.
\]
Similarly, the pairing $\boldsymbol{b}$ naturally defines a uniform structure on $X$ such that one of its basis is given by
\[
\{\, (x_1,x_2) \in X^2 : \lvert \boldsymbol{b}(x_1-x_2,y_j) \rvert < \epsilon,\ j=1,2,\ldots,n \,\}
,\quad n \ge 1,\ y_j \in Y,\ \epsilon>0.
\]
This kind of uniform structures will appear in several times in Appendix~\ref{sec:topology_of_luwc}.
\end{remark}

In vague convergence, total masses do not increase.
Indeed, let $\mu_\alpha \xrightarrow{\rm v} \mu$.
By Urysohn's lemma, we can take a sequence $f_k \in C_{\rm c}(S)$ with $0 \le f_k \uparrow 1$ ($k \to \infty$), so that
\[
\mu(S)
=\lim_{k \to \infty}\int_S f_k(x) \,\mu(dx)
=\lim_{k \to \infty}\lim_\alpha \int_S f_k(x) \,\mu_\alpha(dx)
\le \liminf_\alpha \mu_\alpha(S).
\]
By this inequality, the subset $\overline{\mathbf{M}}_r(S):=\{\, \mu \in \mathbf{M}(S) : \mu(S) \le r \,\}$ is vaguely closed for each $r \ge 0$.

\begin{proposition}[{\cite[Theorem~30.6]{Bau01}}]
\label{prop:vague_bounded_set}
For each $r \ge 0$, $\sigma(\overline{\mathbf{M}}_r(S), C_{\rm c}(S))$-topology coincides with $\sigma(\overline{\mathbf{M}}_r(S),\allowbreak C_\infty(S))$-topology.
\end{proposition}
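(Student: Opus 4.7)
The plan is to reduce the nontrivial inclusion to the standard fact that $C_{\rm c}(S)$ is uniformly dense in $C_\infty(S)$, combined with the mass bound $\mu(S) \le r$. One inclusion is immediate: since $C_{\rm c}(S) \subset C_\infty(S)$, the $\sigma(\overline{\mathbf{M}}_r(S),C_{\rm c}(S))$-topology is automatically coarser than the $\sigma(\overline{\mathbf{M}}_r(S),C_\infty(S))$-topology. So I only need to establish the opposite inclusion.

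The key auxiliary fact is: given $f \in C_\infty(S)$ and $\eta>0$, there exists $h \in C_{\rm c}(S)$ with $\sup_{x\in S}|f(x)-h(x)| \le \eta$. I would prove it by setting $K:=\{x \in S : |f(x)|\ge \eta\}$, which is compact because $f$ vanishes at infinity, and invoking Urysohn's lemma (valid since $S$ is locally compact Hausdorff) to obtain $\varphi \in C_{\rm c}(S)$ with $0\le \varphi \le 1$ and $\varphi \equiv 1$ on $K$; then $h:=f\varphi \in C_{\rm c}(S)$ and $|f-h| = |f|(1-\varphi) \le \eta$ everywhere.

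The mass bound converts this uniform approximation of functions into an approximation of integrals that is uniform over $\mu \in \overline{\mathbf{M}}_r(S)$: whenever $\|f-h\|_\infty \le \eta$, one has
\[
\left| \int_S f \,d\mu - \int_S h \,d\mu \right| \le \eta\, \mu(S) \le \eta r
\qquad \text{for all}\ \mu \in \overline{\mathbf{M}}_r(S).
\]
With this in hand, a standard three-epsilon argument shows the desired inclusion. Fix $\mu_0 \in \overline{\mathbf{M}}_r(S)$ and a basic $\sigma(\overline{\mathbf{M}}_r(S),C_\infty(S))$-neighborhood of $\mu_0$ determined by $f_1,\dots,f_n \in C_\infty(S)$ and $\epsilon>0$. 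I would choose $h_i \in C_{\rm c}(S)$ with $\|f_i-h_i\|_\infty \le \epsilon/(3(r+1))$ for each $i$. Then for any $\mu \in \overline{\mathbf{M}}_r(S)$ satisfying $|\int h_i\,d(\mu-\mu_0)|<\epsilon/3$ for all $i$, the triangle inequality yields $|\int f_i \,d(\mu-\mu_0)| \le 2\epsilon/(3(r+1)) \cdot r + \epsilon/3 < \epsilon$, showing that the $\sigma(\overline{\mathbf{M}}_r(S),C_{\rm c}(S))$-neighborhood of $\mu_0$ determined by $h_1,\dots,h_n$ and $\epsilon/3$ is contained in the original neighborhood.

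I do not foresee any genuine obstacle; the proof is essentially textbook. The only point requiring slight care is the bookkeeping in the three-epsilon argument, specifically calibrating $\|f_i-h_i\|_\infty$ so that the error $\eta r$ arising from approximation is small compared to $\epsilon$.
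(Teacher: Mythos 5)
Your proof is correct and takes essentially the same approach the paper indicates: the paper's proof sketch ("by approximating $f \in C_\infty(S)$ with elements of $C_{\rm c}(S)$") is exactly your Urysohn-based uniform approximation combined with the mass bound $\mu(S)\le r$, and your three-epsilon bookkeeping is sound.
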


\begin{proof}
This is proved easily by approximating $f \in C_\infty(S)$ with elements of $C_{\rm c}(S)$.
\end{proof}

Proposition~\ref{prop:vague_bounded_set} is no longer valid if $\overline{\mathbf{M}}_r(S)$ is replaced by $\mathbf{M}(S)$.
In other words, $\sigma(\mathbf{M}(S),C_{\rm c}(S))$-topology is different from $\sigma(\mathbf{M}(S),C_\infty(S))$-topology.
For example, let $S=\R$ and $\mu_n:=(1+n^2)\delta_{\{n\}}$, $n \in \N$.
Then $\mu_n \xrightarrow{\rm v} 0$ but $\int_{\R} (1+x^2)^{-1} \,\mu_n(dx)=1$ for all $n$.
A similar example also shows that $\mu_n \xrightarrow{\rm v} \mu$ does not imply $\mu_n(S) \to \mu(S)$.

By virtue of Proposition~\ref{prop:vague_bounded_set}, we can make use of basic results in functional analysis.
Indeed, the dual of the Banach space $C_\infty(S)$ with uniform norm is identified with the space of complex Radon measures with total variation norm by the Riesz--Markov--Kakutani representation theorem.
Therefore, Alaoglu's theorem yields the following:

\begin{proposition}[{\cite[Corollary~31.5]{Bau01}}]
\label{prop:Alaoglu_for_vague}
For each $r \ge 0$, the set $\overline{\mathbf{M}}_r(S)$ is vaguely compact.
\end{proposition}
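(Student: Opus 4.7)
The plan is to realize $\overline{\mathbf{M}}_r(S)$ as a weak-$*$ closed subset of the closed ball of radius $r$ in the dual Banach space $C_\infty(S)^*$ and then invoke the Banach--Alaoglu theorem. The key preparation has already been achieved in Proposition~\ref{prop:vague_bounded_set}, which tells us that on the mass-bounded set $\overline{\mathbf{M}}_r(S)$ the vague topology agrees with $\sigma(\overline{\mathbf{M}}_r(S), C_\infty(S))$-topology. Hence we may argue entirely with the test functions in $C_\infty(S)$.

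First, I would set up the embedding $\Phi \colon \overline{\mathbf{M}}_r(S) \to C_\infty(S)^*$ defined by $\Phi(\mu)(f) = \int_S f\,d\mu$. This is well-defined and injective because, under our assumptions on $S$, a finite Borel measure is Radon and hence uniquely determined by its integrals against $C_{\rm c}(S) \subset C_\infty(S)$. Moreover, $\lVert \Phi(\mu) \rVert = \mu(S) \le r$, so $\Phi$ maps $\overline{\mathbf{M}}_r(S)$ into the closed ball $B_r = \{\Lambda \in C_\infty(S)^* : \lVert \Lambda \rVert \le r\}$. By the definition of the vague topology and Proposition~\ref{prop:vague_bounded_set}, $\Phi$ is a homeomorphism from $\overline{\mathbf{M}}_r(S)$ (with vague topology) onto its image $\Phi(\overline{\mathbf{M}}_r(S))$ equipped with the subspace topology from the weak-$*$ topology on $B_r$.

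Next, the Banach--Alaoglu theorem ensures that $B_r$ is weak-$*$ compact. It then remains to show that $\Phi(\overline{\mathbf{M}}_r(S))$ is weak-$*$ closed in $B_r$. By the Riesz--Markov--Kakutani representation theorem, every $\Lambda \in C_\infty(S)^*$ is represented by a (complex) Radon measure, and among these, the positive finite Radon measures of mass at most $r$ are precisely those $\Lambda \in B_r$ satisfying $\Lambda(f) \ge 0$ for every nonnegative $f \in C_\infty(S)$. This ``positivity'' characterization is manifestly preserved under weak-$*$ convergence, since each evaluation $\Lambda \mapsto \Lambda(f)$ is weak-$*$ continuous. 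Hence $\Phi(\overline{\mathbf{M}}_r(S))$ is the intersection of $B_r$ with the weak-$*$ closed set $\bigcap_{f \in C_\infty(S),\, f\ge 0}\{\Lambda : \Lambda(f)\ge 0\}$, which is weak-$*$ closed in $B_r$.

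Combining the three steps, $\Phi(\overline{\mathbf{M}}_r(S))$ is a weak-$*$ closed subset of the weak-$*$ compact set $B_r$, hence weak-$*$ compact; transporting back via the homeomorphism $\Phi$ yields that $\overline{\mathbf{M}}_r(S)$ is vaguely compact. The main conceptual point --- and the only place that requires care --- is the identification of the vague topology with the $\sigma(\overline{\mathbf{M}}_r(S), C_\infty(S))$-topology, which is exactly what Proposition~\ref{prop:vague_bounded_set} provides; without this, one would be working with the stronger $\sigma(\mathbf{M}(S), C_\infty(S))$-topology (which really differs from the vague topology on unbounded sets, as the example $\mu_n = (1+n^2)\delta_{\{n\}}$ in the text shows), and the compactness conclusion could fail.
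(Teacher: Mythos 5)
Your proposal is correct and follows essentially the same route as the paper, which (in the paragraph preceding the proposition) explicitly invokes the Riesz--Markov--Kakutani identification of $C_\infty(S)^*$ with complex Radon measures, Banach--Alaoglu for compactness of the dual ball, and Proposition~\ref{prop:vague_bounded_set} to equate the vague topology with the $\sigma(\overline{\mathbf{M}}_r(S), C_\infty(S))$-topology; you have simply spelled out the details (injectivity of the embedding, norm identification $\lVert\Phi(\mu)\rVert=\mu(S)$, weak-$*$ closedness of the positive cone) that the paper leaves to Bauer's Corollary~31.5.
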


This compactness is equivalent to the sequential one, because the bounded set $\overline{\mathbf{M}}_r(S)$ is metrizable by the usual argument in functional analysis (note that $C_\infty(S)$ is separable).
In fact, much stronger results are known with regard to metrizability in the present case, which allow us to work with only sequences in discussing the properties of weak and vague topologies.

\begin{proposition} \label{prop:metrizability}
The weak and vague topologies on $\mathbf{M}(S)$ are both Polish.
\end{proposition}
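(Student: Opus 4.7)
The two claims are approached via separate but parallel strategies, both taking advantage of the fact that $S$, being locally compact, second countable, and Hausdorff, is Polish, and that its one-point compactification $S^{*} := S \cup \{\infty\}$ is compact metrizable.

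For the weak topology, I would set up the classical L\'evy--Prokhorov metric. Fix a complete compatible metric $d_{S}$ on $S$ and define
\[
\varrho(\mu,\nu) := \inf\bigl\{\,\varepsilon > 0 : \mu(F) \le \nu(F^{\varepsilon}) + \varepsilon \text{ and } \nu(F) \le \mu(F^{\varepsilon}) + \varepsilon \text{ for every closed } F \subset S \,\bigr\},
\]
where $F^{\varepsilon}$ denotes the open $\varepsilon$-neighborhood. A standard textbook argument---relying on Prokhorov's tightness theorem, which holds because $S$ is Polish---shows that $\varrho$ is a separable complete metric on $\mathbf{M}(S)$ inducing the weak topology.

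For the vague topology, the plan is to leverage the compactness results already established. By Proposition~\ref{prop:Alaoglu_for_vague} each ball $\overline{\mathbf{M}}_r(S)$ is vaguely compact, and by Proposition~\ref{prop:vague_bounded_set} the vague topology on $\overline{\mathbf{M}}_r(S)$ agrees with the $\sigma(\cdot,C_{\infty}(S))$-topology; since $C_{\infty}(S)$ is separable (as $S$ is second countable), each ball is compact metrizable, hence Polish. For each $n$ one then obtains a homeomorphic embedding of $\overline{\mathbf{M}}_n(S)$ into $\mathbf{M}(S^{*})$ with its weak topology via the map $\iota_{n}\colon \mu \mapsto \mu + (n-\mu(S))\delta_{\infty}$, whose image is a weakly compact subset $\{\nu \in \mathbf{M}(S^{*}) : \nu(S^{*})=n,\ \nu\rvert_{S} \in \overline{\mathbf{M}}_n(S)\}$; direct calculation using the decomposition $C(S^{*}) = C_{\infty}(S) \oplus \R\cdot\mathbf{1}$ shows that vague convergence in $\overline{\mathbf{M}}_n(S)$ corresponds exactly to weak convergence of the images in $\mathbf{M}(S^{*})$. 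I would then glue these local embeddings into a single homeomorphism identifying $\mathbf{M}(S) = \bigcup_n \overline{\mathbf{M}}_n(S)$ (with the vague topology) with a $G_{\delta}$ subset of a larger Polish space, invoking Alexandrov's theorem to conclude Polishness.

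The main obstacle will be implementing the gluing compatibly with the global vague topology. A vaguely Cauchy sequence in $\mathbf{M}(S)$ may have total mass tending to $+\infty$ (e.g., $\mu_{n} = \sum_{k=1}^{n} \delta_{2^{k}}$ on $\R$), in which case the pointwise limit of $\mu \mapsto \int f \,d\mu$ for $f \in C_{\rm c}(S)$ corresponds to an infinite Radon measure lying outside $\mathbf{M}(S)$. Consequently, the natural metric formed from a countable dense subfamily of $C_{\rm c}(S)$ (or $C_{\infty}(S)$) is not complete. Furthermore, the naive extension $\mu \mapsto \tilde\mu \in \mathbf{M}(S^{*})$ with $\tilde\mu(A):=\mu(A\cap S)$ induces the weak topology on $\mathbf{M}(S)$ rather than the vague one---for instance $\delta_{n} \xrightarrow{\rm v} 0$ in $\mathbf{M}(\R)$ while $\widetilde{\delta_{n}} \xrightarrow{\rm w} \delta_{\infty}$ in $\mathbf{M}(\R^{*})$---so a workable Polish structure must explicitly record the ``mass escape to infinity'' through a more elaborate identification, such as the sphere-type embeddings $\iota_{n}$ above combined with a careful patching between different values of $n$. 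I expect reconciling these constraints, so that the resulting metric restricts to an equivalent one on each ball while remaining complete globally, to be the subtlest point of the argument.
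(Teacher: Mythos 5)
The paper supplies no proof of this proposition; it only cites Stroock, Kallenberg, and Bauer. Your treatment of the weak topology via the L\'evy--Prokhorov metric is standard and correct. The gap you flag in the vague case, however, is fatal rather than a subtlety to engineer around: the route through Alexandrov's theorem is blocked for a structural reason, and in fact the vague half of the proposition fails whenever $S$ is non-compact. Alexandrov's theorem is an equivalence---a subset of a Polish space is Polish in the subspace topology if and only if it is $G_\delta$---and $\mathbf{M}(S)$ is \emph{not} $G_\delta$ in the space of all Radon measures. Take $S=\N$ with the discrete topology. The space of all Radon measures on $\N$ with the vague topology is $[0,\infty)^{\N}$ with the product topology (Polish), and $\mathbf{M}(\N)=\{(a_n):\sum_n a_n<\infty\}$ inherits from it precisely the vague topology. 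Now $\mathbf{M}(\N)=\bigcup_m\{(a_n):\sum_n a_n\le m\}$ is a countable union of closed sets with empty interior, hence meager; it is also dense, since finitely supported sequences are dense. But a dense $G_\delta$ in a Baire space is comeager, so a dense meager set is never $G_\delta$. Hence $\mathbf{M}(\N)$ is not $G_\delta$ in $[0,\infty)^{\N}$, and by Alexandrov's theorem the vague topology on $\mathbf{M}(\N)$ is not completely metrizable. No gluing of the embeddings $\iota_n$ can rescue this: the space you are trying to realize as a $G_\delta$ simply is not one.

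The same argument runs for any non-compact $S$: each $\overline{\mathbf{M}}_m(S)$ is vaguely closed with empty interior (perturb $\mu$ by adding a large point mass outside the supports of the finitely many $C_{\rm c}$ test functions defining a vague neighborhood), and $\mathbf{M}(S)$ is vaguely dense in the Polish space of all Radon measures. So the vague half of the proposition as stated needs to be weakened; Bauer~\cite[Theorem~31.5]{Bau01}, cited here, is a statement about the vague topology on the space of \emph{all} Radon measures, not on $\mathbf{M}(S)$. What is true, and what the rest of the paper actually uses, is that the vague topology on $\mathbf{M}(S)$ is separable metrizable, that each mass-ball $\overline{\mathbf{M}}_r(S)$ is vaguely compact metrizable (hence Polish), and that the \emph{weak} topology on $\mathbf{M}(S)$ is Polish. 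Your instinct that ``mass escape to infinity'' is the crux was exactly right; the conclusion is just that there is no way to record it while staying inside $\mathbf{M}(S)$.
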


We refer the reader to, e.g., Stroock~\cite[Theorems~9.1.5 and 9.1.11]{Str11} or Kallenberg~\cite[Lemma~4.5]{Kal17} for the Polishness of weak topology and Bauer~\cite[Theorem~31.5]{Bau01} for vague topology.

The relation between weak and vague convergences is given as follows:

\begin{proposition}[{\cite[Theorem~30.8]{Bau01}}]
\label{prop:vague_to_weak}
Let $\mu, \mu_n \in \mathbf{M}(S)$, $n \in \N$.
Then $\mu_n \xrightarrow{\rm w} \mu$ if and only if $\mu_n \xrightarrow{\rm v} \mu$ and $\lim_{n \to \infty}\mu_n(S)=\mu(S)$.
\end{proposition}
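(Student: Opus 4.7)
The forward direction is immediate: since $C_{\rm c}(S) \subset C_{\rm b}(S)$, weak convergence implies vague convergence, and taking the constant test function $1 \in C_{\rm b}(S)$ yields $\mu_n(S) \to \mu(S)$.

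For the converse, my plan is to first establish tightness of the sequence $(\mu_n)_{n \in \N}$ and then use tightness together with vague convergence to upgrade to weak convergence. For tightness, fix $\epsilon > 0$. Since $\mu$ is Radon (as $S$ is locally compact second countable Hausdorff, so every finite Borel measure is inner regular by \cite[Theorem~29.12]{Bau01}), there exists a compact set $K \subset S$ with $\mu(S \setminus K) < \epsilon$. By Urysohn's lemma applied in the locally compact setting, choose $\phi \in C_{\rm c}(S)$ with $0 \le \phi \le 1$ and $\phi \equiv 1$ on $K$. Vague convergence gives $\int \phi \,d\mu_n \to \int \phi \,d\mu \ge \mu(K) > \mu(S) - \epsilon$, and combined with $\mu_n(S) \to \mu(S)$, this yields $\mu_n(S) - \int \phi \,d\mu_n < 2\epsilon$ for all $n \ge N_0$, whence $\mu_n(S \setminus \operatorname{supp}\phi) < 2\epsilon$ for $n \ge N_0$. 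The finitely many remaining measures can be handled individually by inner regularity, so after enlarging the compact set to $K' \supset \operatorname{supp}\phi$, we obtain $\sup_n \mu_n(S \setminus K') < 2\epsilon$.

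Given $f \in C_{\rm b}(S)$ with $M := \lVert f \rVert_\infty$ and $\epsilon > 0$, apply the tightness step with $\epsilon/M$ to produce a compact set $K$ such that $\sup_n \mu_n(S \setminus K) < \epsilon/M$ and $\mu(S \setminus K) < \epsilon/M$. Pick $\phi \in C_{\rm c}(S)$ with $0 \le \phi \le 1$ and $\phi \equiv 1$ on $K$, so that $f\phi \in C_{\rm c}(S)$. By the triangle inequality,
\[
\left\lvert \int_S f \,d\mu_n - \int_S f \,d\mu \right\rvert
\le \left\lvert \int_S f\phi \,d\mu_n - \int_S f\phi \,d\mu \right\rvert + M\mu_n(S\setminus K) + M\mu(S\setminus K).
\]
The first term on the right tends to $0$ by vague convergence of $(\mu_n)$ against $f\phi \in C_{\rm c}(S)$, and the remaining two are each bounded by $\epsilon$. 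Letting $\epsilon \to 0$ yields $\mu_n \xrightarrow{\rm w} \mu$.

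The only mild obstacle is ensuring the tightness holds uniformly in $n$ rather than merely for $n$ large; this is resolved by absorbing the finitely many exceptional measures into a larger compact set using each individual $\mu_n$'s inner regularity. Everything else is a routine split-and-estimate argument.
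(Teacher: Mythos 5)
Your proof is correct; the paper does not prove this statement but cites \cite[Theorem~30.8]{Bau01}, and your tightness/cut-off argument is essentially the standard one used there. The only cosmetic wrinkle is a dangling factor of two: your tightness step produces $\sup_n\mu_n(S\setminus K')<2\epsilon$, but in the main estimate you invoke it as if it gave $<\epsilon/M$; since $\epsilon$ is arbitrary this is harmless.
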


In addition to Proposition~\ref{prop:vague_to_weak}, weak convergence has several characterizations, which are collectively called the \emph{portmanteau theorem} by Billingsley~\cite[Theorem~2.1]{Bil68}%
\footnote{The theorem goes back to A. D. Alexsandrov; see ``Remarks'' in the first edition of Billingsley's book~\cite[p.16]{Bil68} or Dudley~\cite[p.433]{Dud02}.}.

\begin{proposition}[portmanteau theorem]
 \label{prop:portmanteau}
Let $\mu, \mu_n \in \mathbf{M}(S)$, $n \in \N$.
Then the following are equivalent:
\begin{enumerate}
\item \label{port:convergence}
$\mu_n \xrightarrow{\rm w} \mu$ as $n \to \infty$;
\item \label{port:unif_conti}
$\lim_{n \to \infty} \int_S f(x) \,\mu_n(dx)=\int_S f(x) \,\mu(dx)$
for some metric $d$ that metrizes (the prescribed topology on) $S$ and for any bounded $d$-uniformly continuous function $f \colon S \to \R$;
\item \label{port:open_closed}
$\limsup_{n \to \infty} \mu_n(F) \le \mu(F)$ for any closed $F \subset S$, and $\liminf_{n \to \infty} \mu_n(G) \ge \mu(G)$ for any open $G \subset S$;
\item \label{port:Borel}
$\lim_{n \to \infty} \mu_n(B)=\mu(B)$ for any Borel $B \subset S$ with $\mu(\partial B)=0$.
\end{enumerate}
\end{proposition}

\begin{remark}[setwise convergence]
\label{rem:setwise_convergence}
We say that a sequence $(\mu_n)_{n \in \N}$ in $\mathbf{M}(S)$ converges to $\mu \in \mathbf{M}(S)$ \emph{setwise} (or \emph{strongly}) if $\lim_{n \to \infty}\mu_n(B)=\mu(B)$ for all $B \in \cB(S)$.
By the portmanteau theorem, setwise convergence implies weak convergence.
\end{remark}

\subsection{Topology on $C(\para; \mea)$}
\label{sec:topology_of_luwc}

On the basis of Appendix~\ref{sec:weak_and_vague}, we introduce a suitable topology on $C(\para; \mea)$, $\para=I$ or $I^2_{\le}$, which is consistent with the mode of convergence in \eqref{eq:luwc_intro}.
Here, we note that sequential convergence is in general not enough to single out a topology on a given set.
One way is to consider nets rather than sequences.
In what follows, we take another way, suitable for discussing ``uniform convergence'' of functions, using uniform structures on $\mea$ and $C(\para; \mea)$.
The reader can consult Kelley~\cite[Chapters 6 and 7]{Kel75} for the general theory.

Let $\mathbf{U}$ be the uniformity (or uniform structure) on $\mea$ generated by the collection of sets
\begin{equation} \label{eq:equi-distant_set}
U(f, \varepsilon)
:=\left\{\, (\mu, \nu)\in \mea^2 : \left\lvert \int f \,d\mu - \int f \,d\nu \right\rvert<\varepsilon \,\right\}
\end{equation}
indexed by $f \in C_{\rm b}(\R)$ and $\varepsilon>0$.
The topology of the uniform space $(\mea, \mathbf{U})$ is exactly the weak topology.
We can then define $\mathscr{U}$ as the uniformity on $C(\para; \mea)$ generated by the collection of sets
\begin{align}
V(K; f, \varepsilon)
&:=\left\{\, (\mu_\tau, \nu_\tau)_{\tau \in \para}\in C(\para; \mea)^2
: (\mu_\tau, \nu_\tau) \in U(f, \varepsilon)\ \text{for all}\ \tau \in K\,\right\}
\notag \\
&=\left\{\, (\mu_\tau, \nu_\tau)_{\tau \in \para} \in C(\para; \mea)^2
: \sup_{\tau \in K}\left\lvert \int f \,d\mu_\tau - \int f \,d\nu_\tau \right\rvert<\varepsilon \,\right\}
\label{eq:lu_equi-distant_set}
\end{align}
indexed by compact $K \subset \para$, $f \in C_{\rm b}(\R)$, and $\varepsilon>0$.
The topology $\mathscr{T}$ on $C(\para; \mea)$ induced by $\mathscr{U}$ is, by definition, the topology of ($\mathbf{U}$-)uniform convergence on compacta, or locally uniform convergence.
It is clear that $\lim_\alpha (\mu^{(\alpha)}_\tau)_{\tau \in \para}=(\mu_\tau)_{\tau \in \para}$ in $\mathscr{T}$ if and only if
\begin{equation} \label{eq:net-luwc}
\lim_\alpha\sup_{\tau \in K} \left\lvert \int_{\R} f(x) \, \mu^{(\alpha)}_\tau(dx) - \int_{\R} f(x) \, \mu_\tau(dx) \right\rvert=0
\end{equation}
for every compact set $K \subset \para$ and for every $f \in C_{\rm b}(\R)$.

By the following fact, adopted from Kelley~\cite[Theorem~11, Chapter~7]{Kel75}, $\mathscr{T}$ is indeed the \emph{compact-open topology} \cite[p.221]{Kel75} on $C(\para; \mea)$:

\begin{proposition} \label{prop:cpt-open}
Let $X$ be a topological space and $(Y, \mathcal{U})$ be a uniform space.
On the set $C(X; Y)$ of continuous mappings from $X$ to $Y$, the topology of $\mathcal{U}$-uniform convergence on compacta is identical with the compact-open topology.
In particular, the former topology depends only on the topology that $\mathcal{U}$ induces on $Y$.
\end{proposition}

Proposition~\ref{prop:cpt-open} enables us to obtain the topology $\mathscr{T}$ using a uniformity different from $\mathbf{U}$ above.
Take a distance $\rho$ on $\mea$ which induces the weak topology (e.g., the L\'evy--Prokhorov distance).
We replace $U(f, \varepsilon)$ in \eqref{eq:equi-distant_set} with
\[
U_\rho(\varepsilon):=\{\, (\mu, \nu) \in \mea^2 : \rho(\mu, \nu)<\varepsilon \,\}
\]
and define $V_\rho(K; \varepsilon)$ in a way similar to \eqref{eq:lu_equi-distant_set}, which leads to another uniformity $\mathscr{U}_\rho$ on $C(\para; \mea)$.
By Proposition~\ref{prop:cpt-open}, $\mathscr{U}_\rho$ induces the topology $\mathscr{T}$.
Hence we have the following:

\begin{corollary} \label{cor:metrizability}
Let $\rho$ be a distance which induces the weak topology on $\mea$.
\begin{enumerate}
\item \label{i:lu_LP}
For a net $((\mu^{(\alpha)}_\tau)_{\tau \in \para})_{\alpha \in A}$ in $C(\para; \mea)$ and $(\mu_\tau)_{\tau \in \para} \in C(\para; \mea)$, the following are equivalent:
\begin{itemize}
\item
\eqref{eq:net-luwc} holds for every compact $K \subset \para$ and for every $f \in C_{\rm b}(\R)$;
\item
$\lim_\alpha\sup_{\tau \in K}\rho(\mu^{(\alpha)}_\tau, \mu_\tau)=0$ holds for every compact $K$.
\end{itemize}

\item \label{i:luwc_metrizability}
The space $(C(\para; \mea), \mathscr{T})$ is metrizable.
\end{enumerate}
\end{corollary}

\begin{proof}
We have already seen \eqref{i:lu_LP}.
In \eqref{i:luwc_metrizability}, a desired distance is given by
\[
\mathscr{D}_\rho\left((\mu_\tau)_\tau, (\nu_\tau)_\tau \right):=\sum_{j\in \mathbb{N}}2^{-j}\left( \sup_{\tau \in K_j}\rho(\mu_\tau, \nu_\tau)\wedge 1\right),
\qquad (\mu_\tau)_{\tau \in \para}, (\nu_\tau)_{\tau \in \para} \in C(\para;\mea),
\]
as usual.
Here, $(K_j)_{j \in \N}$ is an exhaustion sequence of $\para$.
\end{proof}

We next discuss the topology on $C(\para; \prob)$.
Let $\mathbf{U}^\prime$ (resp. $\mathbf{U}^{\prime\prime}$) be the uniformity on $\mea$ generated by the collection of sets \eqref{eq:equi-distant_set} indexed by $f \in C_\infty(\R)$ (resp.\ $C_{\rm c}(\R)$) and $\varepsilon>0$.
The topology induced by $\mathbf{U}^{\prime\prime}$ is the vague topology on $\mea$.
Here, the vague topology coincides with the weak one, if they are restricted on $\prob$, by Propositions~\ref{prop:metrizability} and \ref{prop:vague_to_weak}.
Consequently, the uniformities $\mathbf{U}$, $\mathbf{U}^\prime$ and $\mathbf{U}^{\prime\prime}$ restricted to $\prob$ induce the same topology.
Hence, by Proposition~\ref{prop:cpt-open}, the topology of locally uniform convergence on $C(\para; \prob)$ is the same if we take any one of these uniformities.
This gives a way to verify Proposition~\ref{prop:three_luwc}.

One can include the convergence of moments into the discussion above.
For a fixed $p \ge 1$, let $\mathbf{P}^p(\R)=\{\, \mu \in \prob : m_p(\mu):=\int_{\R} \lvert x \rvert^p \,\mu(dx)<+\infty \,\}$.
On this set a uniformity $\mathbf{U}^p$ is generated by the collection of sets
\[
U_p(f,\varepsilon)
:=\left\{\, (\mu, \nu)\in \left( \mathbf{P}^p(\R) \right)^2 : \left\lvert \int f \,d\mu - \int f \,d\nu \right\rvert+\lvert m_p(\mu)-m_p(\nu) \rvert<\varepsilon \,\right\}
\]
indexed by $f \in C_{\rm b}(\R)$ and $\varepsilon>0$.
Let $\mathbf{T}^p$ denote the topology of the uniform space $(\mathbf{P}^p(\R), \mathbf{U}^p)$.
$\lim_\alpha \mu_\alpha=\mu$ in $\mathbf{T}^p$ if and only if the pair of the conditions $\mu_\alpha \stackrel{\rm w}{\to} \mu$ and $m_p(\mu_\alpha) \to m_p(\mu)$ holds.
This topology $\mathbf{T}^p$ can be given in different ways.
First, let $\rho$ be a distance which induces the weak topology on $\prob$; then the distance
\[
\rho_p(\mu, \nu):=\rho(\mu, \nu)+\lvert m_p(\mu)-m_p(\nu) \rvert,
\qquad \mu, \nu \in \mathbf{P}^p(\R),
\]
induces the topology $\mathbf{T}^p$.
It is also known that the $p$-Wasserstein distance $W_p$ on $\mathbf{P}^p(\R)$ induces $\mathbf{T}^p$ (see, e.g., Villani~\cite[Chapter~6]{Vil09}).
By Proposition~\ref{prop:cpt-open}, the topology of uniform convergence on compacta on the space $C(\para; \mathbf{P}^p(\R))$ of continuous mappings from $\para$ to $\mathbf{P}^p(\R)$ is the same if we take any one of the above constructions to give a uniformity on $\mathbf{P}^p(\R)$.
Moreover, $C(\para; \mathbf{P}^p(\R))$ is metrizable as in Corollary~\ref{cor:metrizability}~\eqref{i:luwc_metrizability}.
These properties legitimate our description at the end of Section~\ref{sec:intro_conti_bijec}.

\subsection{Some measurability issues}
\label{sec:measurability}

In this section, we prove the equivalence of conditions~\eqref{i:def_Borel_mble}--\eqref{i:pw_Cauchy_mble} in Lemma~\ref{lem:HVF_measurability}, which complements the description on the measurability of the driving kernel $\nu_t$ in Section~\ref{sec:chordal_LDE}.
To this end, we begin with a lemma on the measurable structure of the space of holomorphic functions.
Let $U,V$ be open sets in $\C$ and $\mathrm{Hol}(U,\overline{V})$ be the set of holomorphic functions on $U$ taking values in $\overline{V}$ equipped with the topology of locally uniform convergence.
For each $z \in U$ we denote the coordinate mapping $\mathrm{Hol}(U,\overline{V}) \to \overline{V},\ f \mapsto f(z)$ by $\pi^{U,V}_z$ or simply by $\pi_z$.
Then $\sigma(\pi_z : z \in U)$ is the usual symbol for the $\sigma$-algebra generated by all coordinate mappings; i.e., it is the smallest $\sigma$-algebra containing all sets of the form
\[
\pi_z^{-1}(B)=\{\, f \in \mathrm{Hol}(U,\overline{V}) : \pi_z(f)=f(z) \in B \,\},
\qquad z \in U,\ B \in \cB(\overline{V}).
\]

\begin{lemma} \label{lem:Borel_by_cylinder}
The identity $\cB(\mathrm{Hol}(U,\overline{V}))=\sigma(\pi^{U,V}_z : z \in U)$ holds.
\end{lemma}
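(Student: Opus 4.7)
My plan is to prove the two set inclusions separately, exploiting the fact that $\mathrm{Hol}(U,\overline{V})$, endowed with the topology of locally uniform convergence, is a separable metrizable space (indeed, a closed subspace of the Polish space $C(U,\C)$). The easy direction will use continuity of the evaluation maps, while the harder direction will reduce to a countable subfamily of coordinate projections via a dense set in $U$.

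For $\sigma(\pi_z : z \in U) \subset \cB(\mathrm{Hol}(U,\overline{V}))$, I will just note that locally uniform convergence implies pointwise convergence, so each $\pi_z \colon \mathrm{Hol}(U,\overline{V}) \to \overline{V}$ is continuous and hence Borel measurable. Consequently $\pi_z^{-1}(B) \in \cB(\mathrm{Hol}(U,\overline{V}))$ for every $B \in \cB(\overline{V})$, giving the inclusion.

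For the reverse inclusion, I will fix a compact exhaustion $(K_n)_{n \in \N}$ of $U$ and a countable dense set $D \subset U$. The topology of $\mathrm{Hol}(U,\overline{V})$ is induced by the metric
\[
d(f,g)=\sum_{n=1}^\infty 2^{-n}\min\{1, \sup_{z \in K_n}\lvert f(z)-g(z) \rvert \}.
\]
Since $f-g$ is continuous on $U$ and $D \cap K_n$ is dense in $K_n$, the supremum over $K_n$ equals the supremum over $D \cap K_n$, which is a countable supremum of the $\sigma(\pi_z : z \in U)$-measurable functions $f \mapsto \lvert f(z)-g(z) \rvert = \lvert \pi_z(f)-g(z) \rvert$. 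Therefore, for every fixed $g$, the map $f \mapsto d(f,g)$ is $\sigma(\pi_z : z \in U)$-measurable, so each open ball $\{\, f : d(f,g)<r\,\}$ lies in $\sigma(\pi_z : z \in U)$. Since $\mathrm{Hol}(U,\overline{V})$ is separable, a countable family of such balls forms a base of its topology, and hence every open set, and thus every Borel set, belongs to $\sigma(\pi_z : z \in U)$.

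The only mild subtlety is ensuring that the supremum-over-$K_n$ functional is genuinely a countable supremum over $D \cap K_n$ (which uses only continuity of $f-g$) and that the metric $d$ metrizes the topology (standard for the compact-open topology on a locally compact second-countable space). I expect no real obstacle; the argument is a routine application of the ``separable metrizable + countable separating family of continuous functions'' principle, with the only case-specific input being that holomorphic functions are continuous so values on $D$ already detect the locally uniform metric.
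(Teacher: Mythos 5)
Your proof is correct and takes essentially the same route as the paper's: both reduce the locally-uniform topology on $\mathrm{Hol}(U,\overline V)$ to countably many coordinate evaluations and then invoke second countability/separability --- the paper does this via the subbasic sets $N(f;K,r)$ and their expression as countable intersections over rational points of $K$, while you package the same idea into an explicit compatible metric $d$ and use that open balls around a countable dense set form a base. One minor point worth tightening: continuity of $f-g$ alone does not yield $\sup_{K_n}\lvert f-g\rvert=\sup_{D\cap K_n}\lvert f-g\rvert$; you also need $D\cap K_n$ to be dense in $K_n$, which is guaranteed if the exhaustion is chosen so that each $K_n$ is the closure of an open subset of $U$ (the paper implicitly requires the analogous hypothesis that $\mathrm{rat}\,K$ be dense in $K$).
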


\begin{proof}
Although this kind of assertion often appears in the theory of stochastic processes, we give a proof for completeness.
Since $\pi_z \colon \mathrm{Hol}(U,\overline{V}) \to \overline{V}$ is continuous, all the preimages by $\pi_z$ are Borel sets; namely, we have $\cB(\mathrm{Hol}(U,\overline{V})) \supset \sigma(\pi_z : z \in U)$.
To see the opposite inclusion, let
\begin{gather*}
N(f;K,r):=\{\, g \in \mathrm{Hol}(U,\overline{V}) : \sup_{z \in K}\lvert f(z)-g(z) \rvert < r \,\}, \\
\bar{N}(f;K,r):=\{\, g \in \mathrm{Hol}(U,\overline{V}) : \sup_{z \in K}\lvert f(z)-g(z) \rvert \le r \,\}
\end{gather*}
for a compact set $K \subset U$ and $r>0$.
Denoting the set of rational points in $K$ by $\mathrm{rat}K$, we have
\begin{align*}
\bar{N}(f;K,r)
&=\bigcap_{z \in \mathrm{rat}K}\{\, g \in \mathrm{Hol}(U,\overline{V}) : \lvert f(z)-g(z) \rvert \le r \,\} \\
&=\bigcap_{z \in \mathrm{rat}K}\pi_z^{-1}(\D(f(z),r))
\in \sigma(\pi_z : z \in U).
\end{align*}
Hence
\begin{equation} \label{eq:subbase_by_cylinder}
N(f;K,r)
=\bigcup_{n \ge 1}\bar{N}(f;K,r-n^{-1})
\in \sigma(\pi_z : z \in U).
\end{equation}
Here, the collection of all $N(f;K,r)$ is a subbase of the topology on $\mathrm{Hol}(U,\overline{V})$, which is second countable.
Thus, \eqref{eq:subbase_by_cylinder} implies $\cB(\mathrm{Hol}(U,\overline{V})) \subset \sigma(\pi_z : z \in U)$.
\end{proof}

In addition to Lemma~\ref{lem:Borel_by_cylinder}, we remind the reader that the Borel $\sigma$-algebra $\cB(\overline{\bf M}_1(\R))$ is the same if we endow $\overline{\bf M}_1(\R)$ with either the vague or weak topology \cite[Corollary~D.6]{Mur23}.

\begin{proof}[Proof of the equivalence of \eqref{i:def_Borel_mble}--\eqref{i:pw_Cauchy_mble} in Lemma~\ref{lem:HVF_measurability}]
We denote by $G[\overline{\bf M}_1(\R)]$ the image of $\overline{\bf M}_1(\R)$ by the Cauchy transform $G \colon \mea \to \mathrm{Hol}(\C^+,-\overline{\C^+})$.
Proposition~\ref{prop:continuity_Cauchy} implies that $G|_{\overline{\bf M}_1(\R)}$ is a homeomorphism from $\overline{\bf M}_1(\R)$ onto $G[\overline{\bf M}_1(\R)]$, a compact and hence Borel subset of $\mathrm{Hol}(\C^+,\allowbreak -\overline{\C^+})$.
In particular,
\[
\cB(G[\overline{\bf M}_1(\R)])
=\{\, \mathcal{C} \in \cB(\mathrm{Hol}(\C^+,-\overline{\C^+})) : \mathcal{C} \subset G[\overline{\bf M}_1(\R)] \,\}.
\]
Thus, if $t \mapsto \nu_t$ is measurable, then $t \mapsto G[\nu_t]$, the composite of $t \mapsto \nu_t$ and $\nu \to G[\nu]$, is also measurable, and vice versa.
Namely, \eqref{i:def_Borel_mble} and \eqref{i:lu_Cauchy_mble} are equivalent.
Similarly, since the coordinate mapping $\pi_z$ is continuous, $t \mapsto G[\nu_t](z)=\pi_z(G[\nu_t])$ is measurable if $t \mapsto G[\nu_t]$ is measurable.
This means $\eqref{i:lu_Cauchy_mble} \Rightarrow \eqref{i:pw_Cauchy_mble}$.
Finally, let us assume \eqref{i:pw_Cauchy_mble}.
Under this assumption, it is a routine to check that
\[
\bigl\{\, \mathcal{C} \subset \mathrm{Hol}(\C^+,-\overline{\C^+}) : \{\, t \in I : G[\nu_t] \in \mathcal{C} \,\} \in \cB(I) \,\bigr\}
\]
is a $\sigma$-algebra containing all $\mathcal{C}=\pi_z^{-1}(B)$ for $z \in \C^+$, $B \in \cB(-\overline{\C^+})$.
Hence this $\sigma$-algebra contains $\sigma(\pi^{\C^+,-\C^+}_z : z \in \C^+)=\cB(\mathrm{Hol}(\C^+,-\overline{\C^+}))$, which implies \eqref{i:lu_Cauchy_mble}.

To see the last remark of Lemma~\ref{lem:HVF_measurability} on the replacement of  $\overline{\bf M}_1(\R)$ with $\prob$, it suffices to confirm $\prob \in \cB(\overline{\bf M}_1(\R))$.
Since $\prob$ is a \emph{weak} closed subset of $\overline{\bf M}_1(\R)$, this is trivial if we regard $\cB(\overline{\bf M}_1(\R))$ as the Borel $\sigma$-algebra with respect to the weak topology.
We can also use the identity
\[
\prob=\overline{\bf M}_1(\R) \setminus \left(\bigcup_{n=1}^\infty \overline{\bf M}_{1-1/n}(\R) \right)
\]
and the \emph{vague} compactness of $\overline{\bf M}_r(\R)$.
\end{proof}

\section{Angular limits and derivatives of holomorphic functions}
\label{sec:anglim_and_deriv}

This appendix is devoted to basic properties of angular limits and derivatives of holomorphic functions at a boundary point.
We first present classical results on the unit disk $\D$ and then apply them to Pick functions on $\C^+$ via Cayley transforms.
As in the other part of the paper, the symbol $\mathrm{Hol}(U,V)$ denotes the set of holomorphic functions on $U$ taking values in $V$.

\subsection{Definitions and basic properties}
\label{sec:def_of_anglim}

Following the literature \cite[{\S}4.3]{Pom92} \cite[{\S}V.5]{GM05} we state some definitions.
For $z_0 \in \C$, $\theta \in \R$, $\alpha \in (0,\pi/2)$, and $r>0$, we define the sector by
\begin{equation} \label{eq:sector_region}
S(z_0,\theta;\alpha,r):=\{\, z \in \C : \lvert \arg(z-z_0)-\theta \rvert < \alpha,\ 0 < \lvert z-z_0 \rvert < r \,\}.
\end{equation}
Suppose that the boundary of a domain $D \subset \C$ has an \emph{inner tangent} with \emph{inner normal} $e^{i\theta}$ at $z_0 \in \partial D$; namely, for any $\alpha \in (0,\pi/2)$ there exists $r>0$ such that $S(z_0,\theta;\alpha,r) \subset D$.
The inner normal $e^{i\theta}$ being specified, a sector $S(z_0,\theta;\alpha,r)$ in $D$ is called a \emph{Stolz angle} at $z_0$. 
In this setting, we say that a function $f \colon D \to \C$ has an \emph{angular} (or \emph{non-tangential}) \emph{limit} $c \in \hat{\C}=\C \cup \{\infty\}$ at $z_0$, denoted by $\angle\lim_{z \to z_0}f(z)=c$, if
\begin{equation} \label{eq:angular_limit}
\lim_{\substack{z \to z_0 \\ z \in S}}f(z)=c
\end{equation}
holds for every Stolz angle $S$ at $z_0$.
We define $f(z_0):=\angle\lim_{z \to z_0}f(z)$.
Moreover, we say that $f$ has an \emph{angular derivative} at $z_0 \in \partial D$ if 
\[
f^\prime(z_0):=\angle\!\lim_{z \to z_0}\frac{f(z)-f(z_0)}{z-z_0}\ (\in \hat{\C})
\]
exists.
If $f^\prime(z_0) \neq 0$ and $\infty$, then $f$ is said to be \emph{conformal} at $z_0$.


\begin{lemma} 
\label{lem:Visser-Ostrowski}
Let $f \in \mathrm{Hol}(D,\C)$ and $z_0 \in \partial D$.
Suppose that $\partial D$ has an inner tangent with inner normal $e^{i\theta}$ at $z_0$ and $f$ is conformal at $z_0$.
Then for any $\alpha, \beta, \gamma$ with $0 < \gamma < \alpha < \beta < \pi/2$, there exist $\rho,r, R > 0$ such that $S(z_0,\theta;\alpha,r) \subset D$ and
\[
S(f(z_0),\theta+\arg f^\prime(z_0);\gamma,\rho) 
\,\subset\, 
f(S(z_0,\theta;\alpha,r)) 
\,\subset\, 
S(f(z_0),\theta+\arg f^\prime(z_0);\beta,R).
\]
In particular, $\partial f(D)$ has an inner tangent with inner normal $e^{i\theta}f^\prime(z_0)/\lvert f'(z_0)\rvert$ at $f(z_0)$.
\end{lemma}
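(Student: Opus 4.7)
The plan is first to reduce to a normalized setting by an affine change of variables. Put $\psi(\zeta):=z_0+e^{i\theta}\zeta$ and $\tilde f(\zeta):=e^{-i(\theta+\arg f'(z_0))}(f(\psi(\zeta))-f(z_0))$, so that $\tilde f$ is holomorphic on $\tilde D:=\psi^{-1}(D)$, the boundary $\partial\tilde D$ has inner normal $1$ at $0$, and $\tilde f$ has angular limit $\tilde f(0)=0$ and angular derivative $\tilde f'(0)=|f'(z_0)|=:\lambda>0$. Sectors, inner tangents, and the three claimed inclusions transform equivariantly under this normalization, so it will be enough to exhibit, for every $0<\gamma<\alpha<\beta<\pi/2$, positive $\rho,r,R$ with $S(0,0;\gamma,\rho)\subset\tilde f(S(0,0;\alpha,r))\subset S(0,0;\beta,R)$.

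The key analytic input will be the estimate $|\tilde f(\zeta)-\lambda\zeta|\le\epsilon|\zeta|$ on $S(0,0;\beta,r_\epsilon)$, valid for every $\epsilon>0$ with a suitable $r_\epsilon>0$. Indeed the inner-tangent hypothesis supplies $r_0>0$ with $S(0,0;\beta,r_0)\subset\tilde D$, and the angular-limit definition of the angular derivative says that $\tilde f(\zeta)/\zeta\to\lambda$ as $\zeta\to0$ through this fixed Stolz angle; this is just the $\epsilon$--$\delta$ characterization of a limit at a point of a fixed set, and so is automatically uniform in $\zeta$. I would then choose $\epsilon$ small enough (depending on $\alpha,\beta,\gamma,\lambda$) that $\arcsin(\epsilon/\lambda)<\min(\beta-\alpha,\alpha-\gamma)/2$, that $\epsilon<\lambda\sin(\alpha-\gamma)/2$, and that $\epsilon<\lambda/2$, and set $r:=r_\epsilon$. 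Writing $\tilde f(\zeta)=\lambda\zeta(1+w(\zeta))$ with $|w(\zeta)|\le\epsilon/\lambda$, one immediately reads off $|\arg\tilde f(\zeta)-\arg\zeta|<(\beta-\alpha)/2$ and $|\tilde f(\zeta)|\le(3\lambda/2)|\zeta|$ on $S(0,0;\alpha,r)$, which is the right-hand inclusion with $R:=(3\lambda/2)r$.

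For the left-hand inclusion I would use a perturbation argument with Rouch\'e's theorem on a scale-invariant contour. Set $\rho:=\lambda r/4$, and for each $w\in S(0,0;\gamma,\rho)\setminus\{0\}$ consider the curvilinear triangle $V_w:=\{\zeta:|w|/(2\lambda)<|\zeta|<2|w|/\lambda,\ |\arg\zeta|<\alpha\}$, which is contained in $S(0,0;\alpha,r)$ by the choice of $\rho$. A short case analysis on the four pieces of $\partial V_w$ shows $|\tilde f(\zeta)-\lambda\zeta|<|\lambda\zeta-w|$ there: on the two arcs one uses that $|\lambda\zeta|$ differs from $|w|$ by a factor of $2$, against a perturbation bounded by $\epsilon|\zeta|$; on the two radial sides one uses the lower bound $|\lambda\zeta-w|\ge|w|\sin(\alpha-\gamma)$ coming from $|\arg(\lambda\zeta)-\arg w|\ge\alpha-\gamma$, which beats $|\tilde f(\zeta)-\lambda\zeta|\le2\epsilon|w|/\lambda$ thanks to the smallness of $\epsilon$. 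Rouch\'e's theorem then yields that $\tilde f(\zeta)=w$ has the same number of solutions in $V_w$ as $\lambda\zeta=w$, namely exactly one (the point $w/\lambda\in V_w$). This supplies the desired preimage of $w$ in $S(0,0;\alpha,r)$.

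The inner-tangent conclusion is then immediate: for any $\gamma'\in(0,\pi/2)$, apply the established inclusion with $\gamma=\gamma'$ and any $\alpha\in(\gamma',\pi/2)$ to see that $S(f(z_0),\theta+\arg f'(z_0);\gamma',\rho)\subset f(D)$, which is exactly the inner tangent with inner normal $e^{i\theta}f'(z_0)/|f'(z_0)|$. The main obstacle I expect is the scale-invariant choice of $V_w$ in the Rouch\'e step: without shrinking both the inner and outer radii proportionally to $|w|$, the radial-side bound $|\lambda\zeta-w|\ge|w|\sin(\alpha-\gamma)$ cannot compete with the uniform perturbation $\epsilon|\zeta|$ inherited from the estimate on the full sector, so the contour has to be tuned to the target point $w$. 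Once that tuning is chosen, everything else is routine trigonometric accounting.
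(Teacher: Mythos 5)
Your normalization, the uniform estimate $|\tilde f(\zeta)-\lambda\zeta|\le\epsilon|\zeta|$ on a fixed Stolz angle, and the resulting right-hand inclusion are correct and run parallel to the paper's argument, which keeps $\varepsilon(z)=(f(z)-f(z_0))/(z-z_0)-f'(z_0)$ unnormalized and controls $\arg\bigl(1+\varepsilon(z)/f'(z_0)\bigr)$ directly. The genuine difference is in how the left-hand inclusion $S(f(z_0),\cdot;\gamma,\rho)\subset f(S)$ is obtained. The paper shows that $f(\partial S)$ stays in the closed region between the two sectors and then appeals to $\partial f(S)\subset f(\partial S)$ together with ``a fundamental topological argument''; this leaves implicit the degree/winding-number bookkeeping that actually forces the inner sector into the open set $f(S)$, and it also quietly skips over the fact that $z_0\in\partial D$, so $f$ only reaches $\overline S$ through the angular limit at the vertex. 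Your Rouch\'e step on the scale-invariant annular sector $V_w$ replaces all of that with a single explicit count: because both radii of $V_w$ scale like $|w|$, the comparison $|\tilde f(\zeta)-\lambda\zeta|<|\lambda\zeta-w|$ holds on every piece of $\partial V_w$ uniformly in $w$, and the unique zero of $\lambda\zeta-w$ in $V_w$ pins down a preimage of $w$. The cost is the three numeric constraints on $\epsilon$; the gain is that surjectivity needs no separate boundary-correspondence or degree lemma. One small simplification: the constraint $\arcsin(\epsilon/\lambda)<(\alpha-\gamma)/2$ is never used in your sketch and is in fact already implied by $\epsilon<\lambda\sin(\alpha-\gamma)/2$ (concavity of $\sin$ on $[0,\pi/2]$ gives $\tfrac12\sin(\alpha-\gamma)\le\sin\tfrac{\alpha-\gamma}{2}$), so you may drop it and keep the $\min$ only over $(\beta-\alpha)/2$.
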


\begin{proof}
While the proof is based on elementary calculus, we will give a brief sketch.
Let 
$$
\varepsilon(z) := \dfrac{f(z) - f(z_{0})}{z-z_{0}} -f'(z_{0}),
$$
that is well-defined because $f'(z_{0}) \neq\infty$.
Since $f'(z_{0}) \neq 0$, we have
$$
    \arg(f(z) - f(z_{0})) = \arg(z-z_{0}) + \arg f'(z_{0}) + \arg \left(1 + \frac{\varepsilon(z)}{f'(z_{0})}\right).
$$
With this equation, observe the image of the boundary $\partial S$ of $S = S(z_0,\theta;\alpha,r)$ under $f$.
If $z$ lies on one of the radii of $S$, for instance $z = z_{0} + r e^{i(\theta + \alpha)}$, then taking $r$ small enough so that $\varepsilon(r) := \varepsilon(z_{0} + r e^{i(\theta + \alpha)})$ satisfies $\gamma - \alpha < \arg (1+ \varepsilon(r)/f_{0}'(z_{0})) < \beta -\alpha$, we obtain
$$
\gamma <\arg(f(z_{0} + r e^{i(\theta + \alpha)}) - f(z_{0})) - (\theta  + \arg f'(z_{0})) < \beta.
$$
Similarly, one can evaluate another radial and the arc of $S$. 
This concludes that a continuous curve $f(\partial S)$ lies in a closed set enclosed by two sectors sharing the common center $f(z_{0})$. Since $\partial f(S) \subset f(\partial S)$, an elementary topological argument shows our assertions.
\end{proof}

The chain rule can be formulated for angular derivatives in the usual way, except that the definition of angular derivatives depends on the choice of the inner normal $e^{i\theta}$.
The boundary of a domain $D \subset \C$ possibly has many inner normals; for example, $\partial(\C \setminus (-\infty,0])$ has inner normals $e^{i\theta}$, $-\pi/2 \le \theta \le \pi/2$, at $z_0=0$ and $\pm i$ at $z_0=-1$.
Noting this point, one can easily deduce the following from Lemma~\ref{lem:Visser-Ostrowski}:

\begin{proposition} \label{prop:angular_chain_rule}
For each $j=1,2$, let $D_j \subset \C$ be a domain such that $\partial D_j$ has an inner tangent with inner normal $e^{i\theta_j}$ at $z_j \in \partial D_j$.
Suppose that $f \in \mathrm{Hol}(D_1,D_2)$ satisfies $f(z_1)=z_2$ and $\arg f^\prime(z_1)=\theta_2-\theta_1$ and that $g \in \mathrm{Hol}(D_2,\C)$ has an angular derivative at $z_2$.
Then
\[
(g \circ f)^\prime(z_1)=g^\prime(z_2)f^\prime(z_1).
\]
\end{proposition}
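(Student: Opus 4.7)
\medskip

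\textbf{Proof plan.} The hypothesis $\arg f'(z_1) = \theta_2 - \theta_1$ together with $f(z_1) = z_2$ makes Lemma~\ref{lem:Visser-Ostrowski} directly applicable: I would use it to verify that $f$ sends any Stolz angle at $z_1$ (with respect to the inner normal $e^{i\theta_1}$) into a Stolz angle at $z_2$ (with respect to the inner normal $e^{i\theta_2}$), so that the angular limit defining $g'(z_2)$ can be composed with the approach $f(z) \to z_2$. Once this geometric compatibility is established, the standard factorization
\[
\frac{g(f(z)) - g(z_2)}{z - z_1}
= \frac{g(f(z)) - g(z_2)}{f(z) - z_2} \cdot \frac{f(z) - z_2}{z - z_1}
\]
reduces everything to taking angular limits of each factor.

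More precisely, I would fix any $\alpha \in (0,\pi/2)$ and any Stolz angle $S_1 := S(z_1, \theta_1; \alpha, r_1) \subset D_1$ at $z_1$. Choose $\beta \in (\alpha, \pi/2)$. Since $f$ is conformal at $z_1$ with $\theta_1 + \arg f'(z_1) = \theta_2$, Lemma~\ref{lem:Visser-Ostrowski} gives, after possibly shrinking $r_1$, an $R > 0$ such that
\[
f(S_1) \subset S(z_2, \theta_2; \beta, R) =: S_2.
\]
Now $S_2$ is a Stolz angle at $z_2$ with respect to the inner normal $e^{i\theta_2}$ of $\partial D_2$; after shrinking further if necessary so that $S_2 \subset D_2$, the angular differentiability of $g$ at $z_2$ ensures
\[
\lim_{\substack{w \to z_2 \\ w \in S_2}} \frac{g(w) - g(z_2)}{w - z_2} = g'(z_2).
\]
Combined with the ordinary (interior) differentiability $\lim_{z \to z_1, z \in S_1} (f(z) - z_2)/(z - z_1) = f'(z_1) \neq 0, \infty$, the product factorization above yields
\[
\angle\!\lim_{z \to z_1, z \in S_1} \frac{g(f(z)) - g(z_2)}{z - z_1} = g'(z_2) f'(z_1).
\]
Since $S_1$ was an arbitrary Stolz angle at $z_1$, this is the definition of $(g \circ f)'(z_1)$.

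The only subtleties, none genuinely difficult, are (i) allowing $g'(z_2) \in \{0, \infty\}$, which is handled uniformly because $f'(z_1)$ is finite and nonzero so the product $g'(z_2) f'(z_1)$ makes sense in $\hat{\C}$ in the same cases, and (ii) making sure $f(z) \neq z_2$ on $S_1$ so that the factorization is valid; this follows from $f'(z_1) \neq 0$ together with the injectivity of $f$ near $z_1$ provided by Lemma~\ref{lem:Visser-Ostrowski} (the lower inclusion shows $f$ is locally a homeomorphism onto its image near $z_1$). The main conceptual step is the first one: translating the matching condition $\arg f'(z_1) = \theta_2 - \theta_1$ into the geometric statement ``Stolz angles map into Stolz angles,'' which is exactly what Lemma~\ref{lem:Visser-Ostrowski} was set up to provide.
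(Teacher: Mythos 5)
Your proof is correct and follows exactly the route the paper intends: the paper's own ``proof'' of Proposition~\ref{prop:angular_chain_rule} is just a one-line pointer to Lemma~\ref{lem:Visser-Ostrowski}, and your fleshing out (use the lemma to map a Stolz angle at $z_1$ into a Stolz angle at $z_2$, then factor the difference quotient) is precisely the deduction being gestured at. One small correction of a side remark: the claim that $f(z)\neq z_2$ near $z_1$ because ``the lower inclusion of Lemma~\ref{lem:Visser-Ostrowski} shows $f$ is locally a homeomorphism'' is not right — that inclusion gives surjectivity of $f|_{S_1}$ onto a smaller sector, not injectivity. The correct and simpler justification is direct: since $\angle\lim_{z\to z_1}(f(z)-z_2)/(z-z_1)=f'(z_1)\neq 0$, for $z$ in $S_1$ close enough to $z_1$ one has $\lvert f(z)-z_2\rvert\ge \tfrac{1}{2}\lvert f'(z_1)\rvert\,\lvert z-z_1\rvert>0$, so the factorization is valid.
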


As can be seen from the definition \eqref{eq:angular_limit} of the angular limit, only the opening angle of a Stolz angle $S$ is important.
When the angular limit is considered on $\D$, one can replace Stolz angles $S(z_0,-\arg z_0;\alpha,r)$, $\alpha \in (0,\pi/2)$, $r \in (0,2\cos\alpha]$, by sets of the form
\begin{equation} \label{eq:nt_approach_region}
\tilde{S}(z_0;a):=\{\, z \in \D : \lvert z-z_0 \rvert < a(1 - \lvert z \rvert) \,\},
\qquad a>1
\end{equation}
(see, e.g., \cite[p.23]{BCDM20}).
By abuse of terminology, we call $\tilde{S}(z_0;a)$ a Stolz angle at $z_0$ as well.
In some places below, the expression \eqref{eq:nt_approach_region} is preferred rather than \eqref{eq:sector_region}.

We now focus on holomorphic self-mappings on $\D$, mainly following the expository book by Bracci, Contreras and D{\'{\i}}az-Madrigal~\cite{BCDM20}.
Let $\D(c,r)$ denote the open disk with center $c$ and radius $r$.
For $z_0 \in \partial \D$ and $R>0$, the disk
\[
E(z_0,R)
:=\left\{\, z \in \D : \frac{\lvert 1-\overline{z_0}z \rvert^2}{1-\lvert z \rvert^2}<R \,\right\}
=\D\left(\frac{z_0}{1+R},\frac{R}{1+R}\right),
\]
internally tangent to $\partial \D$ at $z_0$, is called a \emph{horocycle} (or \emph{horodisk}) in $\D$.

\begin{proposition}[Julia's lemma {\cite[Theorem~1.4.7]{BCDM20}}]
\label{Julia_lemma}
Let $f \colon \D \to \D$ be holomorphic and $z_0 \in \partial \D$.
If
\begin{equation} \label{eq:Julia_quotient}
\alpha_f(z_0):=\liminf_{z \to z_0}\frac{1-\lvert f(z) \rvert}{1-\lvert z \rvert}<+\infty,
\end{equation}
then $f(z_0)$ exists and lies on $\partial \D$, and
$f(E(z_0,R)) \subset E(f(z_0),\alpha_f(z_0)R)$ for each $R>0$; in particular, $\alpha_f(z_0)>0$.
\end{proposition}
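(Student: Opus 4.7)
The plan is to execute the classical proof of Julia's lemma, which hinges on applying the Schwarz--Pick inequality along a sequence realizing the liminf and then rearranging via the elementary identity
\[
|1-\bar{w}z|^2 - |z-w|^2 = (1-|z|^2)(1-|w|^2), \qquad z,w \in \D.
\]

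First, I would choose a sequence $(z_n) \subset \D$ with $z_n \to z_0$ and $(1-|f(z_n)|)/(1-|z_n|) \to \alpha := \alpha_f(z_0)$. The estimate $1-|f(z_n)| \le \bigl((1-|f(z_n)|)/(1-|z_n|)\bigr)(1-|z_n|) \to 0$ forces $|f(z_n)| \to 1$, so after extracting a subsequence I may assume $f(z_n) \to w_0$ for some $w_0 \in \partial\D$. Applying the Schwarz--Pick lemma to the pair $(z, z_n)$ for an arbitrary fixed $z \in \D$ yields
\[
\left|\frac{f(z)-f(z_n)}{1-\overline{f(z_n)}f(z)}\right|^2 \le \left|\frac{z-z_n}{1-\bar{z}_n z}\right|^2,
\]
which, by the identity above, rearranges into
\[
\frac{|1-\overline{f(z_n)}f(z)|^2}{1-|f(z)|^2} \le \frac{1-|f(z_n)|^2}{1-|z_n|^2}\cdot \frac{|1-\bar{z}_n z|^2}{1-|z|^2}.
\]

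Next, I would pass to the limit $n \to \infty$. The factor $(1-|f(z_n)|^2)/(1-|z_n|^2)$ tends to $\alpha$ because both $|f(z_n)|,|z_n| \to 1$, and the remaining factors converge by continuity. The resulting inequality,
\[
\frac{|1-\overline{w_0}f(z)|^2}{1-|f(z)|^2} \le \alpha\cdot \frac{|1-\bar{z}_0 z|^2}{1-|z|^2}, \qquad z \in \D,
\]
is exactly the statement $f(E(z_0,R)) \subset E(w_0,\alpha R)$ for every $R>0$. In particular $\alpha>0$, for otherwise $f$ would take values only in $\{w_0\} \subset \partial\D$, contradicting $f(\D)\subset\D$.

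It remains to show $w_0$ is the angular limit of $f$ at $z_0$; this also shows that the choice of subsequence was irrelevant. For any Stolz angle $\tilde S(z_0;a)$ as in \eqref{eq:nt_approach_region}, the relation $|1-\bar{z}_0 z|=|z-z_0|$ gives
\[
\frac{|1-\bar{z}_0 z|^2}{1-|z|^2} \le \frac{a^2(1-|z|)}{1+|z|} \xrightarrow[z\to z_0]{} 0,
\]
so for any $R>0$, one has $z \in E(z_0,R)$ eventually, hence $f(z) \in E(w_0,\alpha R)$. Letting $R \to 0$ and using $\bigcap_{R>0}E(w_0,\alpha R) = \{w_0\}$ gives $f(z) \to w_0$ nontangentially, completing the proof. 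The only nontrivial point, which I would highlight as the main obstacle, is bootstrapping from the single holocycle inclusion at a fixed $R$ to an actual angular limit; the Stolz-angle estimate above is exactly what makes this routine.
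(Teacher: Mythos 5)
Your proof is correct and is the standard classical argument; note that the paper does not supply its own proof of this proposition but simply cites \cite[Theorem~1.4.7]{BCDM20}, so there is nothing to compare against. Your route---extract a sequence realizing the liminf, apply Schwarz--Pick and rearrange via the identity $|1-\bar{w}z|^2-|z-w|^2=(1-|z|^2)(1-|w|^2)$, pass to the limit to get the holocycle inclusion, then use the Stolz-angle estimate $|1-\bar{z}_0 z|^2/(1-|z|^2)\to 0$ to upgrade to an angular limit---is exactly the textbook proof the cited reference gives.
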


\begin{proposition}[Julia--Wolff--Carath\'eodory's theorem {\cite[Theorem~1.7.3]{BCDM20}}]
\label{Julia-Caratheodory}
Let $f \colon \D \to \D$ be holomorphic and $z_0 \in \partial \D$.
The following are equivalent:
\begin{enumerate}
\item
$\displaystyle \alpha_f(z_0)$ in \eqref{eq:Julia_quotient} is finite;
\item
$f(z_0) \in \partial \D$, and $f^\prime(z_0)$ exists finitely.
\item
$f(z_0) \in \partial \D$, and $\angle\lim_{z \to z_0}f^\prime(z)$ exists finitely.
\end{enumerate}
Moreover, $f^\prime(z_0)=\alpha_f(z_0)f(z_0)/z_0$ and hence $f^\prime(z_0)$ is neither zero nor $\infty$.
\end{proposition}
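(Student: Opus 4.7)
The plan is to establish the cycle $(i) \Rightarrow (iii) \Rightarrow (ii) \Rightarrow (i)$ together with the formula $f'(z_0)=\alpha_f(z_0)f(z_0)/z_0$. The implication $(iii) \Rightarrow (ii)$ is immediate: if $\angle\lim f'(z)$ exists finitely, then integrating along any radius inside a Stolz angle gives the existence of $f'(z_0)$ with the same value. For $(ii) \Rightarrow (i)$, along the radial segment $z=rz_0$ with $r\uparrow 1$, the existence of $f'(z_0)$ combined with $|f(rz_0)-f(z_0)|\ge 1-|f(rz_0)|$ (valid since $|f(z_0)|=1$) yields $(1-|f(rz_0)|)/(1-r)\le |f(rz_0)-f(z_0)|/(1-r)\to |f'(z_0)|$, so $\alpha_f(z_0)\le |f'(z_0)|<+\infty$.

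The substance of the proof is $(i) \Rightarrow (iii)$. By Julia's lemma, $f(z_0)\in\partial\D$ exists, so composing with the rotations $z\mapsto z_0 z$ on the source and $w\mapsto\overline{f(z_0)}w$ on the target, I will reduce to the normalized case $z_0=f(z_0)=1$ (the $\overline{f(z_0)}z_0$ factor surfacing at the end via the chain rule will produce the claimed formula). Transporting to the right half-plane $H=\{\Re w>0\}$ via the Cayley transform $\psi(z)=(1+z)/(1-z)$, set $F:=\psi\circ f\circ\psi^{-1}\colon H\to H$. Since $\Re\psi(z)=(1-|z|^2)/|1-z|^2$, the holocycle $E(1,R)$ corresponds to $\{w\in H:\Re w>1/R\}$, so Julia's inclusion translates into the global bound $\Re F(w)\ge \Re w/\alpha$ on $H$, where $\alpha:=\alpha_f(1)$. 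Consequently $G(w):=F(w)-w/\alpha$ maps $H$ into $\overline{H}$. One further rotation $\tilde G(z):=iG(-iz)$ produces a Pick function on $\C^+$, to which the Nevanlinna representation~\eqref{eq:PNrep} applies with parameters $(a,\beta,\rho)$ and $\beta\ge 0$. Identity~\eqref{eq:angderiv_at_infty} then gives $\angle\lim_{z\to\infty}\tilde G(z)/z=\angle\lim_{z\to\infty}\tilde G'(z)=\beta$, which after undoing the Cayley reductions yields
\[
\angle\!\lim_{w\to\infty}\frac{F(w)}{w}=\angle\!\lim_{w\to\infty}F'(w)=\frac{1}{\alpha}+\beta.
\]

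The main obstacle is forcing $\beta=0$. The Nevanlinna formula gives the global pointwise lower bound $\Im\tilde G(z)/\Im z=\beta+\int|x-z|^{-2}\,d\rho(x)\ge\beta$, which translates to $\Re G(w)/\Re w\ge\beta$ on $H$, hence $\Re F(w)\ge (1/\alpha+\beta)\Re w$ throughout $H$. Pulling this back to $\D$ produces a sharpened Julia inequality $|1-f(z)|^2/(1-|f(z)|^2)\le \alpha'\cdot|1-z|^2/(1-|z|^2)$ with $\alpha':=\alpha/(1+\alpha\beta)$, valid on all of $\D$. Specializing to the radial approach $z=1-\epsilon$, noting $|1-z|^2/(1-|z|^2)=\epsilon/(2-\epsilon)$, and using $|1-f|\ge 1-|f|$, a short elementary manipulation gives $(1-|f(1-\epsilon)|)/\epsilon\le \alpha'(1+|f(1-\epsilon)|)/(2-\epsilon)\to \alpha'$, so $\alpha_f(1)\le\alpha'=\alpha/(1+\alpha\beta)$. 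Combined with $\alpha_f(1)=\alpha>0$, this forces $\alpha\beta\le 0$, so $\beta=0$. With $\beta=0$, the identity $(1-f(z))/(1-z)=(\psi(z)+1)/(F(\psi(z))+1)$ gives $\angle\lim_{z\to 1}(1-f(z))/(1-z)=\alpha$, hence $f'(1)=\alpha$; and the chain-rule expression $f'(z)=((1-f(z))/(1-z))^2\,F'(\psi(z))$ (derived using $F(\psi(z))+1=2/(1-f(z))$ and $\psi'(z)=2/(1-z)^2$), whose two factors have nontangential limits $\alpha^2$ and $1/\alpha$ respectively, yields $\angle\lim_{z\to 1}f'(z)=\alpha$. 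Undoing the initial rotations converts $\alpha=\alpha_f(z_0)$ into $f'(z_0)=\alpha_f(z_0)f(z_0)/z_0$.
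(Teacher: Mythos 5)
Your proof is correct. The paper itself does not prove this proposition -- it cites it from Bracci--Contreras--D\'{\i}az-Madrigal \cite[Theorem~1.7.3]{BCDM20} -- so there is no paper proof to compare against. Your route is the classical one: reduce to $z_0=f(z_0)=1$ by rotations, pass to a half-plane by the Cayley transform, use Julia's lemma to extract the global inequality $\Re F(w)\ge \Re w/\alpha$, then read the angular behaviour at $\infty$ from the Nevanlinna representation of the Pick function $G=F-\mathrm{id}/\alpha$. All the individual steps check out: the translation between holocycles $E(1,R)$ and half-planes $\{\Re w>1/R\}$, the chain-rule identity $f'(z)=\bigl((1-f(z))/(1-z)\bigr)^2 F'(\psi(z))$, and the final untwisting of the rotations to obtain $f'(z_0)=\alpha_f(z_0)f(z_0)/z_0$.

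The one genuinely nontrivial step is forcing $\beta=0$, and your bootstrap is a nice touch: the pointwise bound $\Im\tilde G(z)\ge\beta\Im z$ from the Nevanlinna formula feeds back into a sharpened Julia inequality $\alpha_f(1)\le\alpha/(1+\alpha\beta)$, and combined with $\alpha_f(1)=\alpha>0$ (guaranteed by Julia's lemma as cited in the paper) this forces $\alpha\beta\le 0$, hence $\beta=0$. This is cleaner than the more common route of computing the radial limit of the Julia quotient directly and showing it equals $\alpha$. Two small points worth making explicit if you were to write this out fully: in $(iii)\Rightarrow(ii)$, you should remark that for $z$ in a Stolz angle $S$ the entire segment $[z_0,z]$ stays in $S$, so dominated convergence applies to $\int_0^1 f'(z_0+t(z-z_0))\,dt$; and in the $\beta=0$ step you are using that $\liminf$ over all $z\to 1$ is $\le$ $\liminf$ along the radial subsequence, which is the correct direction for what you need.
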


If $\alpha_f(z_0)<+\infty$, then $f^\prime(z_0) \in \C \setminus \{0\}$ by the two propositions above.
On the other hand, if $\alpha_f(z_0)=+\infty$, then on each Stolz angle $\tilde{S}(z_0;a)$ we have
\[
\liminf_{\substack{z \to z_0 \\ z \in \tilde{S}(z_0;a)}}\left\lvert \frac{f(z_0)-f(z)}{z_0-z} \right\rvert \ge \frac{1}{a}\liminf_{z \to z_0}\frac{1-\lvert f(z) \rvert}{1-\lvert z \rvert}=+\infty,
\]
which implies $f^\prime(z_0)=\infty$.
Hence the following holds:

\begin{corollary}[{\cite[Proposition~1.7.4]{BCDM20}}] \label{cor:ang_deriv_exist}
Let $f \colon \D \to \D$ be holomorphic and $z_0 \in \partial\D$.
If $f$ has an angular limit $f(z_0) \in \partial \D$, then the angular derivative $f^\prime(z_0)$ exists in $\hat{\C} \setminus \{0\}$.
\end{corollary}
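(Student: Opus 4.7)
The plan is to dichotomize on the value of the Julia quotient
\[
\alpha_f(z_0) := \liminf_{z \to z_0}\frac{1-\lvert f(z) \rvert}{1-\lvert z \rvert} \in (0,+\infty],
\]
which is the same quantity appearing in Propositions~\ref{Julia_lemma} and \ref{Julia-Caratheodory}. The hypothesis that $f$ admits an angular limit $f(z_0) \in \partial\D$ will be used in each case, though in different ways: in the finite case only to feed into Julia--Wolff--Carath\'eodory, and in the infinite case through the explicit identity $\lvert f(z_0) \rvert = 1$.

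In the case $\alpha_f(z_0) < +\infty$, Julia's lemma (Proposition~\ref{Julia_lemma}) already guarantees that $\alpha_f(z_0) > 0$, so $\alpha_f(z_0) \in (0, +\infty)$. Then Proposition~\ref{Julia-Caratheodory} applies directly and yields that $f^\prime(z_0)$ exists finitely, with $f^\prime(z_0) = \alpha_f(z_0) f(z_0)/z_0 \in \C \setminus \{0\}$, as required.

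In the case $\alpha_f(z_0) = +\infty$, I use Stolz angles of the alternate form $\tilde{S}(z_0;a)$ in \eqref{eq:nt_approach_region}. For each $a > 1$ and $z \in \tilde{S}(z_0;a)$, the definition gives $\lvert z - z_0 \rvert < a(1-\lvert z \rvert)$, while $\lvert f(z_0) \rvert = 1$ forces $\lvert f(z_0) - f(z) \rvert \geq 1 - \lvert f(z) \rvert$. Combining these,
\[
\liminf_{\substack{z \to z_0 \\ z \in \tilde{S}(z_0;a)}}\left\lvert \frac{f(z) - f(z_0)}{z - z_0} \right\rvert
\;\geq\; \frac{1}{a}\liminf_{z \to z_0}\frac{1 - \lvert f(z) \rvert}{1 - \lvert z \rvert}
\;=\; +\infty,
\]
so the difference quotient tends to $\infty$ along every Stolz angle, which is exactly $f^\prime(z_0) = \infty \in \hat{\C} \setminus \{0\}$.

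There is no real obstacle here; both branches reduce immediately to statements already established in the excerpt. The only mild subtlety worth flagging is that the positivity $\alpha_f(z_0) > 0$ in the first case must be invoked explicitly to exclude $f^\prime(z_0) = 0$, and the switch between the two equivalent formulations of Stolz angles (\eqref{eq:sector_region} versus \eqref{eq:nt_approach_region}) is what makes the second estimate essentially a one-line computation.
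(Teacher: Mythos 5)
Your proof is correct and follows the same route as the paper: dichotomize on whether $\alpha_f(z_0)$ is finite or infinite, handle the finite case via Julia's lemma together with Julia--Wolff--Carath\'eodory, and handle the infinite case by the Stolz-angle estimate using $\lvert f(z_0)\rvert = 1$. The estimate you display is the paper's verbatim; the only cosmetic remark is that in the finite branch the hypothesis $f(z_0)\in\partial\D$ is in fact redundant, since Julia's lemma already yields it from $\alpha_f(z_0)<+\infty$.
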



In Section~\ref{sec:ALC}, to prove Proposition~\ref{prop:additive_DLC}, we need the angular derivative of $g^{-1} \circ f$ for two univalent self-mappings $f$ and $g$. However, the chain rule obtained above (Proposition~\ref{prop:angular_chain_rule}) is not sufficient for this purpose, and further discussion is required (in fact we need Corollary~\ref{cor:anglim_composite}, which will be shown later).
To proceed, let us recall a part of the boundary correspondence induced by conformal mappings.
Let $D \subset \C$ be a domain, $z_1, z_2 \in \partial D$, and $\gamma_1$, $\gamma_2$ be paths in $D$ ending at $z_1$ and $z_2$, respectively.
We consider the pairs $(z_1,\gamma_1)$ and $(z_2,\gamma_2)$ to be equivalent if $z_1=z_2$ and if for any $r>0$ there exists a path in $D \cap \D(z_1,r)$ with one endpoint on $\gamma_1$ and the other on $\gamma_2$.
An equivalence class with respect to this equivalence relation is called an \emph{accessible boundary point} \cite[pp.35--39]{Gol69}.
We denote the set of accessible boundary points of $D$ by $\mathrm{Acc}(D)$.

\begin{lemma}[{\cite[Theorem~II.3.1]{Gol69}}] \label{lem:APE}
Let $\varphi \colon D \to \D$ be a conformal mapping.
Then there exists an injection $\hat{\varphi} \colon \mathrm{Acc}(D) \to \partial \D$ such that, for any representative $(z,\gamma)$ of $p \in \mathrm{Acc}(D)$, the path $\varphi(\gamma)$ ends at $\hat{\varphi}(p)$.
\end{lemma}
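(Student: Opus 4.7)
The plan is to define $\hat\varphi(p):=\lim_{t\to 1^-}\varphi(\gamma(t))$ for any representative $(z,\gamma)$ of $p\in\mathrm{Acc}(D)$ with $\gamma\colon [0,1)\to D$ and $\gamma(t)\to z$, and then verify that (i) this limit exists and lies on $\partial\D$, (ii) it depends only on $p$, and (iii) the resulting map is injective. The overall strategy is to transfer topological information about accessible boundary points of $D$ to $\partial\D$ via $\varphi$, using the fact that $\varphi^{-1}\colon\D\to D$ is continuous on $\D$ to recover the boundary behavior of $D$ from that of $\D$.

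For (i), consider the cluster set
\[
C:=\bigcap_{s\in[0,1)}\overline{\varphi(\gamma([s,1)))}\subset\overline{\D},
\]
which is nonempty, compact, and connected. First, $C\cap\D=\emptyset$: if $\zeta_0\in C\cap\D$, then continuity of $\varphi^{-1}$ at $\zeta_0$ applied to any subsequence $\varphi(\gamma(t_n))\to\zeta_0$ would give $\gamma(t_n)\to\varphi^{-1}(\zeta_0)\in D$, contradicting $\gamma(t_n)\to z\in\partial D$. Hence $C$ is a connected subset of $\partial\D$. The main obstacle is ruling out that $C$ is a nondegenerate subarc. I would argue that otherwise one could pick two distinct cluster values $\zeta_1\neq\zeta_2\in C$ together with disjoint open arcs $J_1\ni\zeta_1$, $J_2\ni\zeta_2$ of $\partial\D$, and then infinitely many crosscuts of $\D$ cut out of $\varphi\circ\gamma$ whose two endpoints lie in $J_1$ and $J_2$ respectively. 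Pulling these crosscuts back by $\varphi^{-1}$ and invoking a Koebe-type length–area estimate, one shows that their Euclidean diameters in $D$ tend to zero; but their endpoints both approach $z$ along $\gamma$, so the preimages of $J_1$ and $J_2$ would become indistinguishable near $z$, a contradiction once the two arcs have been chosen small enough to have disjoint $\varphi^{-1}$-images near $z$.

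For (ii), suppose $(z,\gamma_1)$ and $(z,\gamma_2)$ both represent $p$. For each $r>0$ the equivalence relation supplies a path $\sigma_r$ in $D\cap\D(z,r)$ joining $\gamma_1$ and $\gamma_2$. Concatenating an initial segment of $\gamma_1$, $\sigma_r$, and a tail segment of $\gamma_2$ produces a single path $\gamma_r$ in $D$ ending at $z$; by (i), $\varphi(\gamma_r)$ has a unique limit on $\partial\D$, which must coincide simultaneously with the limits of $\varphi(\gamma_1)$ and $\varphi(\gamma_2)$, so those limits agree. For injectivity (iii) I would run the same argument backwards: assuming $\hat\varphi(p_1)=\hat\varphi(p_2)=\zeta$, I would join suitable points of $\varphi(\gamma_1)$ and $\varphi(\gamma_2)$ by arcs lying in horocycles $E(\zeta,R)$ with $R\to 0$, pull these arcs back by $\varphi^{-1}$, and use the same length–area estimate to show that the pullbacks shrink to a single point of $\partial D$. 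They then serve as the equivalence witnesses $\sigma_r$, forcing first $z_1=z_2$ and then $(z_1,\gamma_1)\sim(z_2,\gamma_2)$, i.e., $p_1=p_2$. The delicate point across all three steps is the quantitative control of Euclidean diameters of $\varphi^{-1}$-images of small crosscuts, which is precisely what the Koebe distortion / Gehring–Hayman inequality provides.
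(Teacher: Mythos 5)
The paper offers no proof of this lemma---it simply cites Goluzin's Theorem~II.3.1---so your attempt has to be judged on its own. Your global strategy, reducing (ii) and (iii) to the uniqueness statement (i) and using a Koebe/length--area estimate as the quantitative tool, is the standard route, but the decisive step (i) does not close as written: you apply the distortion estimate in the wrong direction. Crosscuts of $\D$ pulled back by $\varphi^{-1}$ carry no a priori small-diameter bound, since a thin spike of $D$ can stretch a tiny crosscut of $\D$ arbitrarily; and the pullbacks of the subarcs of $\varphi\circ\gamma$ are simply subarcs of $\gamma$, whose diameters shrink only because $\gamma(t)\to z$, not because of any distortion lemma. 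The Koebe ``no-arcs'' lemma actually runs the other way: take subarcs $\gamma_n$ of $\gamma$ running from a near-$\zeta_1$-time to a near-$\zeta_2$-time; since $\gamma(t)\to z$, one has $\operatorname{diam}\gamma_n\to 0$, so $\gamma_n$ can be extended by short terminal arcs in $D\cap\D(z,r_n)$ to crosscuts $\tilde\gamma_n$ of $D$ with $\operatorname{diam}\tilde\gamma_n\to 0$; Koebe then forces $\operatorname{diam}\varphi(\tilde\gamma_n)\to 0$ in $\D$, which contradicts the fact that $\varphi(\tilde\gamma_n)$ contains a subarc of $\varphi\circ\gamma$ running from a neighborhood of $\zeta_1$ to one of $\zeta_2$. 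Your stated contradiction---obtaining disjoint $\varphi^{-1}$-images of $J_1$ and $J_2$ near $z$---is circular: $\varphi^{-1}$ is not defined on $\partial\D$, and securing exactly such a separation near $z$ is the kind of boundary control the lemma is meant to establish in the first place.

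There is also a gap in (ii): concatenating one initial segment of $\gamma_1$, one bridge $\sigma_r$, and one tail of $\gamma_2$ yields a curve whose $\varphi$-image converges to $\hat\varphi(\gamma_2)$ by (i), but nothing forces this to equal $\hat\varphi(\gamma_1)$, since the initial segment of $\gamma_1$ has no influence on the limit. You need a single curve that alternates infinitely often between $\gamma_1$ and $\gamma_2$ via bridges $\sigma_{r_n}$, $r_n\downarrow 0$; this still ends at $z$, so (i) gives it a unique limit, and that limit must simultaneously equal $\hat\varphi(\gamma_1)$ and $\hat\varphi(\gamma_2)$ because the curve contains arbitrarily late subarcs of each. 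The sketch for (iii) is closer to the mark, but the relevant Wolff/length--area control is over circular crosscuts $\{\,|w-\zeta|=\rho\,\}\cap\D$, not over ``arcs lying in horocycles''.
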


Using this lemma and some facts on asymptotic values of holomorphic functions, we prove the following:

\begin{proposition}[cf.\ {\cite[Theorem~4.14]{Pom92} \cite[\S4]{BCDMG15}}]
\label{prop:anglim_composite}
Suppose that $f$ and $g$ are univalent holomorphic self-mappings on $\D$ with the following properties:
\begin{enumerate}
\item \label{i:anglim_composite1}
$f(\D) \subset g(\D) \subset \D$;
\item \label{i:anglim_composite2}
$\zeta:=f(1)=g(1) \in \partial \D$;
\item \label{i:anglim_composite3}
$f^\prime(1)$ and $g^\prime(1)$ are finite and satisfy $-\pi<\mathrm{Arg}(g^\prime(1)/f^\prime(1))<\pi$.
\end{enumerate}
Then the composite $h:=g^{-1}\circ f$ satisfies $h(1)=1$ and $h^\prime(1)=f^\prime(1)/g^\prime(1)$.
\end{proposition}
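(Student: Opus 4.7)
The plan is to apply the chain rule (Proposition~\ref{prop:angular_chain_rule}) to the composition $h = g^{-1} \circ f$, viewing $f$ as a map $\D \to g(\D)$ (legitimate since $f(\D) \subset g(\D)$) and $g^{-1}$ as a map $g(\D) \to \D$.  Before invoking the chain rule, two things must be verified: (a) the directional condition $\arg f'(1) = \theta_2 - \theta_1$ on the inner normals at the two boundary points; and (b) that $g^{-1}$ has angular limit $1$ and angular derivative $1/g'(1)$ at the boundary point $\zeta \in \partial g(\D)$.

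For (a), I would first apply Proposition~\ref{Julia-Caratheodory} (Julia--Wolff--Carath\'eodory) to the self-maps $f$ and $g$ at $z_0=1$: the finiteness of $f'(1)$ and $g'(1)$ together with $f(1)=g(1)=\zeta\in\partial\D$ yields $f'(1) = \alpha_f(1)\zeta$ and $g'(1) = \alpha_g(1)\zeta$ with $\alpha_f(1),\alpha_g(1)>0$.  Consequently $\arg f'(1) = \arg g'(1) = \arg\zeta$, so the hypothesis $-\pi<\arg(g'(1)/f'(1))<\pi$ is in fact realized with value $0$.  Then Lemma~\ref{lem:Visser-Ostrowski} applied to $g$ shows that $\partial g(\D)$ has an inner tangent at $\zeta$ with inner normal $-g'(1)/|g'(1)| = -\zeta$; in the notation of Proposition~\ref{prop:angular_chain_rule} we have $\theta_1=\pi$, $\theta_2 = \pi + \arg\zeta$, and the required identity $\arg f'(1) = \arg\zeta = \theta_2-\theta_1$ holds.

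For (b), I would use Lemma~\ref{lem:Visser-Ostrowski} applied to $g$ in its two-sided form: for $0<\gamma<\alpha<\beta<\pi/2$ there exist $r,\rho,R>0$ with
\[
S(\zeta,\theta_2;\gamma,\rho) \subset g(S(1,\pi;\alpha,r)) \subset S(\zeta,\theta_2;\beta,R).
\]
The left inclusion says that $g^{-1}$ maps the Stolz angle $S(\zeta,\theta_2;\gamma,\rho) \subset g(\D)$ into the Stolz angle $S(1,\pi;\alpha,r) \subset \D$, so $g^{-1}(w)\to 1$ angularly as $w\to\zeta$.  Writing $u := g^{-1}(w)$ and using the angular derivative of $g$ at $1$, the identity
\[
\frac{g^{-1}(w)-1}{w-\zeta} = \left(\frac{g(u)-\zeta}{u-1}\right)^{-1} \longrightarrow \frac{1}{g'(1)}
\]
then follows as $w\to\zeta$ angularly, establishing $(g^{-1})'(\zeta) = 1/g'(1)$.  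With (a) and (b) in hand, Proposition~\ref{prop:angular_chain_rule} yields $h'(1) = (g^{-1})'(\zeta)\cdot f'(1) = f'(1)/g'(1)$, and the angular limit $h(1) = g^{-1}(\zeta) = 1$ is immediate.  The main obstacle is step (b): one has to make rigorous sense of the angular limit and derivative of $g^{-1}$ at the boundary point $\zeta$ of $g(\D)$ rather than at a boundary point of $\D$, and the two-sided sector inclusion in Lemma~\ref{lem:Visser-Ostrowski} is precisely the tool that lets angular convergence propagate both ways through the conformal map $g$.
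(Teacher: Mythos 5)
Your proposal is correct, but it follows a route genuinely different from the paper's. You establish the angular derivative of the inverse $g^{-1}$ at the boundary point $\zeta \in \partial g(\D)$ directly from the two-sided sector inclusion in Lemma~\ref{lem:Visser-Ostrowski}, and then invoke the chain rule Proposition~\ref{prop:angular_chain_rule} with $D_1=\D$, $D_2 = g(\D)$. The paper's proof never computes $(g^{-1})'(\zeta)$: it instead shows that the path $h(\gamma)=g^{-1}(f(\gamma))$ (with $\gamma$ the radius of $\D$ ending at $1$) lands at $1$ by matching accessible boundary points via Lemma~\ref{lem:APE}, invokes Lindel\"of's theorem to get $h(1)=1$, knows $h'(1)\in\hat\C\setminus\{0\}$ by Corollary~\ref{cor:ang_deriv_exist}, and finally identifies $h'(1)$ by rewriting a radial limit of a difference quotient of $g$. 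Your approach buys a clean chain-rule deduction and avoids accessible boundary points and Lindel\"of altogether, at the cost of working with the angular derivative at a boundary point of the non-disc domain $g(\D)$; the paper stays on $\D$ where the prime-end machinery is standard but pays with a more roundabout final calculation. Your remark that hypothesis (iii) is automatic with value $0$ via Julia--Wolff--Carath\'eodory ($\arg f'(1)=\arg g'(1)=\arg\zeta$) is a pleasant simplification of the paper's derivation of the same fact from uniqueness of the inner normal. One small step you should spell out in (b): the inclusion $g^{-1}(S(\zeta,\theta_2;\gamma,\rho)) \subset S(1,\pi;\alpha,r)$ alone only confines $g^{-1}$ to a Stolz angle; to obtain $g^{-1}(w)\to 1$ you still need the short compactness argument that any subsequential limit of $g^{-1}(w_n)$ lies in $\overline{S(1,\pi;\alpha,r)}\cap\overline\D$, cannot be an interior point of $\D$ (else $g$ would carry it to an interior point, contradicting $w_n\to\zeta\in\partial\D$), and hence must be $1$.
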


\begin{proof}
Clearly, $\D$ has inner tangent at $\zeta$ with the unique inner normal $-\zeta$.
From this we can conclude $\mathrm{Arg}(g^\prime(1)/f^\prime(1))=0$;
otherwise there would be two different inner normals by Assumptions \eqref{i:anglim_composite1}, \eqref{i:anglim_composite2} and Lemma~\ref{lem:Visser-Ostrowski}.
Now let $\gamma$ be the line segment from $0$ to $1$.
Again by Lemma~\ref{lem:Visser-Ostrowski}, we can find a sector $S=S(\zeta,-\pi+\arg f^\prime(1);\beta,\rho) \subset g(\D)$ such that both the paths $f(\gamma)$ and $g(\gamma)$ approach $\zeta$ through $S$.
The pairs $(\zeta,f(\gamma))$ and $(\zeta,g(\gamma))$ thus represent the same accessible boundary point of $g(\D)$.
Applying Lemma~\ref{lem:APE} to $g^{-1} \colon g(\D) \to \D$, we see that the path $h(\gamma)=g^{-1}(f(\gamma))$ ends at $1$.
This implies that $h(1)=1$ by Lindel\"of's theorem%
\footnote{The statement and proof can be found, for example, in \cite[Theorem~VIII.4.4]{Gol69}, \cite[Theorem 3-5]{Ahl73}, or \cite[Theorem~1.5.7]{BCDM20}.},
and moreover, $h^\prime(1) \in \hat{\C} \setminus \{0\}$ by Corollary~\ref{cor:ang_deriv_exist}.
Hence we have
\[
\lim_{r \to 1-0}\frac{g(1)-g(h(r))}{1-h(r)}
=\lim_{r \to 1-0}\frac{1-r}{h(1)-h(r)} \cdot \frac{f(1)-f(r)}{1-r}
=\frac{f^\prime(1)}{h^\prime(1)} \in \C.
\]
By \cite[Corollary~7.3.11]{BCDM20} this limit equals $g^\prime(1)$, which completes the proof.
\end{proof}

\subsection{Angular derivatives of Pick functions at infinity}
\label{sec:anglim_at_infty}

Let $T(z):=i(z+1)/(1-z)$, which is the Cayley transform from $\D$ onto $\C^+$ with $T(1)=\infty$.
Its inverse is given by $T^{-1}(z)=(z-i)/(z+i)$.
We introduce the bijection
\begin{equation} \label{eq:induced_Cayley}
\operatorname{ad}T \colon \mathrm{Hol}(\C^+, \C^+) \to \mathrm{Hol}(\D, \D),\ f \mapsto T^{-1} \circ f \circ T.
\end{equation}
Recall from Section~\ref{sec:PN_to_Cauchy} that, for a Pick function $f$, we call $\beta$ in \eqref{eq:PNrep}, \eqref{eq:PN_beta}, and \eqref{eq:angderiv_at_infty} the angular derivative $f^\prime(\infty)$.

\begin{proposition} \label{prop:anglim_via_Cayley}
For $f \in \mathrm{Hol}(\C^+, \C^+)$, $f^\prime(\infty) \neq 0$ if and only if $(\operatorname{ad}T(f))^\prime(1) \neq \infty$.
Moreover, $f^\prime(\infty)(\operatorname{ad}T(f))^\prime(1)=1$ if either of the two holds.
\end{proposition}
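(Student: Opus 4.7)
Set $g := \operatorname{ad}T(f) = T^{-1}\circ f\circ T$ and $\beta := f'(\infty) \in [0,+\infty)$. Since $T(1)=\infty$, the chain rule (Proposition~\ref{prop:angular_chain_rule}) is inapplicable at $z=1$, and so I will compute the angular difference quotient of $g$ at $1$ directly from the Möbius formulas. Using $T(z)=i(z+1)/(1-z)$ and $T^{-1}(w)=(w-i)/(w+i)$, one verifies the two identities
\[
z-1 \;=\; -\frac{2i}{T(z)+i},\qquad T^{-1}(w)-1 \;=\; -\frac{2i}{w+i},
\]
whence, for $w:=T(z)$,
\[
\frac{g(z)-1}{z-1} \;=\; \frac{T^{-1}(f(w))-1}{z-1} \;=\; \frac{w+i}{f(w)+i}.
\]
A short sectorial computation also shows that $T$ sets up a bijection between Stolz angles at $1\in\partial\D$ (as in~\eqref{eq:nt_approach_region}) and Stolz angles at $\infty$ in $\C^+$ (as in Section~\ref{sec:PN_to_Cauchy}), so ``$z\to 1$ angularly in $\D$'' is equivalent to ``$w\to\infty$ angularly in $\C^+$''.

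Next I would split into two cases. If $\beta>0$, the Nevanlinna inequality $\Im f(z)\ge \beta\, \Im z$ following from~\eqref{eq:PNrep} forces $f(w)\to\infty$ angularly, so $g(1)=T^{-1}(\infty)=1$. Invoking~\eqref{eq:angderiv_at_infty}, I rewrite
\[
\frac{w+i}{f(w)+i} \;=\; \frac{1+i/w}{f(w)/w + i/w} \;\longrightarrow\; \frac{1}{\beta},
\]
which gives $g'(1)=1/\beta \in (0,+\infty)$ and therefore $f'(\infty)\cdot g'(1)=1$. Conversely, if $\beta=0$, then $f(w)/w\to 0$ and $i/w\to 0$ angularly, so $(f(w)+i)/(w+i)\to 0$, i.e.\ $(g(z)-1)/(z-1)\to\infty$. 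Reading $g(1)=1$ from the same limiting behavior, this is precisely $g'(1)=\infty$. Combining both cases yields the equivalence $f'(\infty)\neq 0\Leftrightarrow g'(1)\neq\infty$ together with the product formula.

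The main technical obstacle is justifying the Stolz-angle correspondence under $T$ and $T^{-1}$ carefully enough that angular convergence is faithfully transported through the composition; once that is in place the proof collapses to the elementary Möbius identity above. A secondary delicacy is the consistent interpretation of $g'(1)=\infty$ in the degenerate case $\beta=0$ (where $f$ may fail to have angular limit $\infty$ at $\infty$), but this does not affect the principal setting in which the proposition is applied, namely that of $\PickA$-DLCs, where $\beta=1$ throughout.
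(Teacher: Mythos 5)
Your argument is correct and follows essentially the same route as the paper: both sidestep the chain rule at the boundary and extract the relation $f'(\infty)\cdot (\operatorname{ad}T(f))'(1)=1$ from a direct M\"obius computation (you use $(g(z)-1)/(z-1)=(w+i)/(f(w)+i)$, the paper the algebraically equivalent identity $f(z)/z=\frac{g(w)+1}{w+1}\cdot\frac{1-w}{1-g(w)}$) together with the correspondence of Stolz angles under $T$. Your explicit split between $\beta>0$ and $\beta=0$ just spells out what the paper compresses into ``hence the conclusion follows.''
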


\begin{proof}
Clearly, $f(\infty)=\infty$ if and only if $\operatorname{ad}T(f)(1)=1$ (in the sense of an angular limit).
Moreover, putting $w=T(z)$ we see from a direct calculation that
\[
\frac{f(z)}{z} = \frac{\operatorname{ad}T(f)(w)+1}{w+1} \cdot \frac{1-w}{1-\operatorname{ad}T(f)(w)}.
\]
Hence the conclusion follows.
\end{proof}

We note that Corollary~\ref{cor:ang_deriv_exist} and Proposition~\ref{prop:anglim_via_Cayley} imply the existence of the limits in \eqref{eq:angderiv_at_infty} if $\beta \neq 0$.

\begin{corollary} \label{cor:Julia}
Let $\C^+_M:=\{\, z : \Im z>M \,\}$, $M>0$.
If $f$ is a Pick function with $f^\prime(\infty)=\beta \neq 0$, then $f(\C^+_M) \subset \C^+_{\beta M}$ for every $M>0$.
\end{corollary}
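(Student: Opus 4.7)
The plan is to transfer the problem to the unit disk via the Cayley transform and apply Julia's lemma, since the holocycles $E(1,R)$ in $\D$ correspond precisely to the half-planes $\C^+_M$ under $T$.

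First I would set $g := \operatorname{ad}T(f) \in \mathrm{Hol}(\D,\D)$. Since $f$ is a Pick function with $f'(\infty) = \beta > 0$ finite, Proposition \ref{prop:anglim_via_Cayley} gives that $g'(1)$ exists finitely with $g'(1) = 1/\beta > 0$. By the Julia--Carathéodory theorem (Proposition \ref{Julia-Caratheodory}), the existence of this finite angular derivative forces $g(1) \in \partial\D$ and yields the identity $g'(1) = \alpha_g(1)\, g(1)/1$. Taking moduli shows $\alpha_g(1) = |g'(1)| = 1/\beta$, and substituting back gives $g(1) = 1$. (Alternatively, $g(1) = 1$ can be read off directly from $f(iy)/iy \to \beta \neq 0$, which forces $f(iy)\to\infty$, i.e.\ $g(1) = 1$.)

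Next I would invoke Julia's lemma (Proposition \ref{Julia_lemma}) at $z_0 = 1$ to obtain
\[
g(E(1,R)) \subset E(1, R/\beta), \qquad R > 0.
\]
The remaining step is the explicit identification $T(E(1,R)) = \C^+_{1/R}$. Using $T^{-1}(w) = (w-i)/(w+i)$, one computes $|1-z|^2 = 4/|w+i|^2$ and $1-|z|^2 = 4\Im w/|w+i|^2$, so that
\[
\frac{|1-z|^2}{1-|z|^2} = \frac{1}{\Im T(z)}.
\]
Consequently $E(1,R) = T^{-1}(\C^+_{1/R})$, and conjugating the Julia inclusion by $T$ via $f = T\circ g\circ T^{-1}$ yields $f(\C^+_{1/R}) \subset \C^+_{\beta/R}$. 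Setting $M = 1/R$ gives the desired $f(\C^+_M) \subset \C^+_{\beta M}$.

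There is no serious obstacle in this argument; the whole proof is essentially a translation of Julia's lemma via the Cayley transform. The only points requiring care are verifying that $g$ fixes the boundary point $1$ (which comes for free from Julia--Carathéodory together with the assumption $\beta \neq 0$) and the elementary but necessary identification of the holocycle $E(1,R)$ with the half-plane $\C^+_{1/R}$.
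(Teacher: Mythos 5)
Your proof is correct and takes essentially the same route as the paper's: both transfer the statement to $\D$ via the Cayley transform, using Proposition~\ref{prop:anglim_via_Cayley} to get $g'(1)=1/\beta$, Proposition~\ref{Julia-Caratheodory} to pin down $g(1)=1$ and $\alpha_g(1)=1/\beta$, Proposition~\ref{Julia_lemma} for the holocycle inclusion, and the direct computation $T(E(1,1/M))=\C^+_M$. Your write-up just fills in the details that the paper's one-line proof leaves to the reader.
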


\begin{proof}
A direct calculation shows $T(E(1,1/M))=\C^+_M$ for horocycles tangent to $\partial \D$ at $1$.
The conclusion then follows from Propositions~\ref{Julia_lemma}, \ref{Julia-Caratheodory} and \ref{prop:anglim_via_Cayley}.
\end{proof}

\begin{corollary} \label{cor:anglim_composite}
Let $f$ and $g$ be univalent Pick functions with $f(\C^+) \subset g(\C^+) \subset \C^+$.
Moreover, suppose $f^\prime(\infty) \neq 0$ and $g^\prime(\infty) \neq 0$.
Then the composite $h:=g^{-1} \circ f$ satisfies $h^\prime(\infty)=f^\prime(\infty)/g^\prime(\infty)$.
\end{corollary}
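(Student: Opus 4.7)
The plan is to reduce the problem on $\C^+$ to the unit disk $\D$ via the Cayley transform $T$ and invoke Proposition~\ref{prop:anglim_composite}. Put $\tilde{f}:=\operatorname{ad}T(f)$ and $\tilde{g}:=\operatorname{ad}T(g)$. Since $\operatorname{ad}T$ is a bijective composition-preserving map on holomorphic self-mappings, we also have $\operatorname{ad}T(h)=\tilde{g}^{-1}\circ\tilde{f}=:\tilde{h}$. Note $h=g^{-1}\circ f$ is well defined as a univalent self-mapping of $\C^+$ because $f(\C^+)\subset g(\C^+)\subset\C^+$, and correspondingly $\tilde{f}(\D)\subset\tilde{g}(\D)\subset\D$. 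This verifies hypothesis~\eqref{i:anglim_composite1} of Proposition~\ref{prop:anglim_composite}.

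Next I would verify that $\tilde{f}(1)=\tilde{g}(1)=1$ and that both $\tilde{f}'(1)$ and $\tilde{g}'(1)$ are finite and positive, which gives hypotheses~\eqref{i:anglim_composite2} and~\eqref{i:anglim_composite3}. By assumption, $f'(\infty)=\beta>0$ in the Nevanlinna representation \eqref{eq:PNrep} (and similarly for $g$), so \eqref{eq:angderiv_at_infty} yields $f(z)/z\to\beta$ angularly, whence $f(z)\to\infty$ through any Stolz angle at $\infty$. A direct computation shows that $T$ maps any Stolz angle at $1\in\partial\D$ into a Stolz angle at $\infty$ in $\C^+$ (as in the proof of Corollary~\ref{cor:Julia}, holocycles $E(1,R)$ correspond to half-planes $\C^+_M$), so $\tilde{f}(1)=T^{-1}(\infty)=1$, and the same holds for $\tilde{g}$. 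Now Proposition~\ref{prop:anglim_via_Cayley} applies to both $f$ and $g$: since $f'(\infty)\neq0$ and $g'(\infty)\neq0$, the angular derivatives $\tilde{f}'(1)=1/f'(\infty)$ and $\tilde{g}'(1)=1/g'(\infty)$ exist finitely. In particular, since $\beta>0$ (a Pick function has $\beta\geq 0$ and we excluded $\beta=0$), both $\tilde{f}'(1)$ and $\tilde{g}'(1)$ are \emph{positive reals}, so $\mathrm{Arg}(\tilde{g}'(1)/\tilde{f}'(1))=0\in(-\pi,\pi)$.

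With all hypotheses verified, Proposition~\ref{prop:anglim_composite} gives $\tilde{h}(1)=1$ and
\[
\tilde{h}'(1)=\frac{\tilde{f}'(1)}{\tilde{g}'(1)}=\frac{g'(\infty)}{f'(\infty)}.
\]
This is a finite nonzero number, so a final application of Proposition~\ref{prop:anglim_via_Cayley} (in the reverse direction, applied to the Pick function $h$) yields $h'(\infty)\neq 0$ with $h'(\infty)\cdot\tilde{h}'(1)=1$, and therefore $h'(\infty)=f'(\infty)/g'(\infty)$, as desired.

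There is no serious obstacle; the only point requiring care is confirming that the Cayley transform interacts nicely with Stolz angles, so that angular limits and angular derivatives transfer correctly between $(\infty,\C^+)$ and $(1,\D)$. Once this is granted, the corollary is essentially a mechanical combination of Propositions~\ref{prop:anglim_via_Cayley} and~\ref{prop:anglim_composite}.
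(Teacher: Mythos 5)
Your proposal is correct and follows exactly the same route as the paper, which simply cites Propositions~\ref{prop:anglim_composite} and \ref{prop:anglim_via_Cayley}; you have merely expanded the one-line proof into its full details (transferring to $\D$ via $\operatorname{ad}T$, verifying $\tilde f(1)=\tilde g(1)=1$ from $\beta>0$ and the conformality of the Cayley transform, noting that $\tilde f'(1)=1/f'(\infty)$ and $\tilde g'(1)=1/g'(\infty)$ are positive so the argument condition in hypothesis~\eqref{i:anglim_composite3} holds, and translating back). The bookkeeping $\tilde h'(1)=g'(\infty)/f'(\infty)$ and hence $h'(\infty)=1/\tilde h'(1)=f'(\infty)/g'(\infty)$ is right.
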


\begin{proof}
This follows immediately from Propositions~\ref{prop:anglim_composite} and \ref{prop:anglim_via_Cayley}.
\end{proof}

\section{Proofs and notes for Section~\ref{sec:Pick_Cauchy}}
\label{sec:Pick_appdx}

This appendix provides the proofs skipped in Section~\ref{sec:Pick_Cauchy}.

\subsection{Proof of Propositions~\ref{prop:Maassen_2.1} and \ref{prop:class_P}}
\label{sec:prf_char_appdx}

\begin{proof}[Proof of Proposition~\ref{prop:Maassen_2.1}]
$\eqref{Ma:G} \Leftrightarrow \eqref{Ma:F}$ is trivial; here, we note that $\mu \neq 0$ in these conditions due to the assumption that $g$ is a non-zero function.
$\eqref{Ma:g} \Leftrightarrow \eqref{Ma:f}$ is also obvious except for the case $c=0$; in fact, in the proof of $\eqref{Ma:g} \Rightarrow \eqref{Ma:G}$ below, we show that $c \neq 0$ in \eqref{Ma:g}.
Thus, it remains to prove $\eqref{Ma:g} \Leftrightarrow \eqref{Ma:G}$.

\smallskip\noindent
$\eqref{Ma:g} \Rightarrow \eqref{Ma:G}$.
Suppose that the Nevanlinna representation of $g$ is given by the right-hand side of \eqref{eq:PNrep}, and assume \eqref{Ma:g}.
In particular, $\lim_{y \to +\infty}y\Im g(iy)=\Re c$.
Comparing this to \eqref{eq:PN_total} with $f$ in this formula replaced by $g$, we have $\beta=0$ and $\rho(\R)=\Re c$.
Hence \eqref{eq:PNrep2} yields $g(z)=\alpha^\prime-G_\rho(z)$ for some $\alpha^\prime \in \R$.
Since the limits $\lim_{y \to +\infty}iy\, g(iy)$ and $\lim_{y \to +\infty}iy\, G_\rho(iy)$ exist, we have $\alpha^\prime=0$, and hence \eqref{Ma:G} follows with $\mu=\rho$.
Now $\mu \neq 0$ implies $\Re c \neq 0$.

\smallskip\noindent
$\eqref{Ma:G} \Rightarrow \eqref{Ma:g}$.
Assume \eqref{Ma:G};
then
\[
iy\, g(iy)
=-iy\, G_\mu(iy) = \int_{\R}\frac{-y^2+iyx}{x^2+y^2}\,\mu(dx) \to -\mu(\R)
\quad (y \to +\infty),
\]
i.e., \eqref{Ma:g} holds with $c=\mu(\R)$.
This also implies the last property of the proposition.
\end{proof}

\begin{proof}[Proof of Proposition~\ref{prop:class_P}]
We first show that \eqref{HN:ang}--\eqref{HN:PN} are equivalent. 

\smallskip\noindent
$\eqref{HN:ang} \Rightarrow \eqref{HN:rad}$.
Assume \eqref{HN:ang}. Then
\[
\angle\!\lim_{z \to \infty}\left( 1-\frac{f(z)}{z} \right)=\angle\!\lim_{z \to \infty}\frac{z(z-f(z))}{z^2}=0.
\]
Hence $f^\prime(\infty)=1$. By \eqref{eq:PN_total} we have
\[
y(\Im f(iy)-y)=\int_{\R}\frac{y^2}{x^2+y^2}\,\rho(dx).
\]
As $y^2/(x^2+y^2)=1-x^2/(x^2+y^2)$ is non-decreasing in $y \in (0,+\infty)$, it follows that
\begin{align*}
\sup_{y>0} y(\Im f(iy)-y)&=\lim_{y \to +\infty}y(\Im f(iy)-y) \\
&=\lim_{y \to +\infty}\Re[ iy(iy-f(iy))] \\
&= \AR(f)<+\infty.
\end{align*}
Thus we obtain \eqref{HN:rad}.

\smallskip\noindent
$\eqref{HN:rad} \Rightarrow \eqref{HN:PN}$.
This follows from \eqref{eq:PN_beta}, \eqref{eq:PN_total}, and \eqref{eq:PNrep2}.

\smallskip\noindent
$\eqref{HN:PN} \Rightarrow \eqref{HN:ang}$.
Assume \eqref{HN:PN}. Then
\begin{equation} \label{eq:high_ord_term_F}
z(z-f(z))=\int_{\R}\frac{z}{z-x}\,\rho(dx).
\end{equation}
For each $a>0$ and any $z \in \Gamma_{a,0}$ we have $\lvert z/(x-z) \rvert \le \lvert z \rvert/\Im z \le \sqrt{1+a^2}/a$.
Hence the bounded convergence theorem applies to \eqref{eq:high_ord_term_F} as $z \to \infty$ through $\Gamma_{a,0}$, which yields \eqref{HN:ang} and $\AR(f)=\rho(\R)<+\infty$. 

\smallskip
We have now seen that \eqref{HN:ang}--\eqref{HN:PN} are equivalent.
In a similar way, \eqref{HN1} is derived from \eqref{HN:PN}.
Indeed, for each $b>0$ and any $z \in \C^+_b$ we have $1/\lvert x-z \rvert \le 1/\Im z \le 1/b$, and hence the bounded convergence theorem yields \eqref{HN1}.

It is easy to see that \eqref{HN:PN} implies \eqref{HN:ineq} with $C=\rho(\R)$.
Conversely, \eqref{HN:ineq} implies \eqref{HN:rad} and also $\rho(\R) \le C$ by \eqref{eq:PN_total}.
Condition \eqref{HN:ineq} is thus equivalent to \eqref{HN:ang}--\eqref{HN:PN}, and also $\rho(\R)=\inf\{\, C : C\ \text{enjoys}\ \eqref{eq:HN_ineq}\,\}$.

It remains to prove that \eqref{HN:Cauchy} is equivalent to the other five conditions.
Assume \eqref{HN:rad}.
Then 
\[
\frac{f(iy)}{iy}-1 = \frac{1}{iy}(f(iy)-iy) \to 0
\qquad (y \to +\infty),
\]
and hence $f=F_\mu$ for some $\mu \in \prob$ by Proposition~\ref{prop:Maassen_2.1}.
Following Maassen \cite[pp.417--419]{Maa92} we introduce the function
\begin{align}
C_f(y)
&:=\frac{iy}{f(iy)}(f(iy)-iy)
=y^2\left(\frac{1}{f(iy)}-\frac{1}{iy}\right)
\label{eq:Maassen_function} \\
&=-\int_{\R}\frac{y^2x}{x^2+y^2}\,\mu(dx) + i\int_{\R}\frac{yx^2}{x^2+y^2}\,\mu(dx). \notag
\end{align}
As $y \mapsto y^2/(x^2+y^2)$ is non-decreasing, the monotone convergence theorem yields
\begin{equation} \label{eq:Maassen_Var}
\AR(f)=\lim_{y \to +\infty}y \Im C_f(y)
=\int_{\R}x^2 \,\mu(dx),
\end{equation}
which implies that $\mu$ has finite second moment.
The dominated convergence theorem then gives
\begin{equation} \label{eq:Maassen_mean}
-\lim_{y \to +\infty}\Re C_f(y)
=\int_{\R}x \,\mu(dx)=\Mean(\mu).
\end{equation}
The limit on the left-hand side of \eqref{eq:Maassen_mean} is zero by \eqref{HN:rad}.
Therefore, \eqref{eq:Maassen_Var} gives the variance $\var(\mu)$, which shows $\eqref{HN:rad} \Rightarrow \eqref{HN:Cauchy}$ and also \eqref{HN:four_quantities}.
Conversely, assume \eqref{HN:Cauchy};
then the limits in \eqref{eq:Maassen_Var} and \eqref{eq:Maassen_mean} exist by the dominated convergence theorem.
In particular, $\lim_{y \to +\infty}C_f(y)=0$.
As $\lim_{y \to +\infty}iy/f(iy)=1$ due to Proposition~\ref{prop:Maassen_2.1}, we have $\lim_{y \to +\infty}(f(iy)-iy)=0$.
The last property combined with \eqref{eq:Maassen_Var} implies \eqref{HN:rad}.
\end{proof}

\begin{remark}
We have utilized $C_f(y)$ defined by \eqref{eq:Maassen_function} as Maassen~\cite[Proposition~2.2]{Maa92},
but our proof is slightly different from his.
This is because the identity $\var(\mu)=\lim_{y \to +\infty} y \lvert C_f(y) \rvert$, which follows from \eqref{eq:PN_total}, \eqref{HN2}, and \eqref{HN:four_quantities} in our context, seems difficult to derive \emph{directly from condition \eqref{HN:Cauchy}}.
\end{remark}

\subsection{Proof of Proposition~\ref{prop:continuity_Cauchy} and Corollary~\ref{cor:continuity_Cauchy}}
\label{sec:prf_conv_appdx}

\begin{proof}[Proof of Proposition~\ref{prop:continuity_Cauchy}]
The ``only if'' part follows immediately from Proposition~\ref{prop:vague_bounded_set} because the function $\R \ni x \mapsto (x-z)^{-1}$ vanishes at infinity.
To prove the ``if'' part, assume $G[\mu_n] \to G$ pointwise on $\AC$.
Then any vague cluster point $\mu$ of $(\mu_n)_n$ enjoys $G_{\mu}=G$ on $\AC$ by the ``only if'' part, and hence on $\C^+$ by the identity theorem.
The cluster point $\mu$ is thus unique by the uniqueness of the Cauchy transform.
Since $\{\, \mu_n : n \in \N \,\}$ is vaguely compact by Proposition~\ref{prop:Alaoglu_for_vague}, we have $\mu_n \xrightarrow{\rm v} \mu$.
To see the remaining assertion, we note that $\sup_n \lvert G[\mu_n](z) \rvert \le C/\Im z$ for $C:=\sup_n \mu_n(\R)<+\infty$.
Hence $(G[\mu_n])_n$ is locally bounded on $\C^+$, and the pointwise convergence implies the locally uniform one by Vitali's convergence theorem.
\end{proof}

\begin{proof}[Proof of Corollary~\ref{cor:continuity_Cauchy}]
The only matter is the latter assertion about reciprocal Cauchy transforms in the case $\mu, \mu_n \in \prob$.
Suppose that $G[\mu_n] \to G_\mu$ locally uniformly on $\C^+$, and for a compact subset $K \subset \C^+$, put $b:=-\sup\{\, \Im w : w \in G_\mu(K) \,\}$.
As $G_\mu(K)$ is a compact subset of $-\C^+$, we have $b>0$.
Then there exists $N \in \N$ such that $\sup\{\, \lvert G[\mu_n](z)-G_\mu(z) \rvert : z \in K,\ n \ge N \,\}<b/2$.
For $n \ge N$ we have
\[
\sup_{z \in K}\left\lvert F[\mu_n](z)-F_\mu(z) \right\rvert
\le \frac{\displaystyle \sup_{z \in K }\left\lvert G[\mu_n](z)-G_\mu(z) \right\rvert}{\displaystyle \inf_{z \in K}\left\lvert G[\mu_n](z)G_\mu(z) \right\rvert} 
\le \frac{2}{b^2}\sup_{z \in K}\left\lvert G[\mu_n](z)-G_\mu(z) \right\rvert,
\]
which means that $F[\mu_n] \to F_\mu$ locally uniformly on $\C^+$.
The converse is proved in the same way.
\end{proof}

\if0
\begin{lemma}[e.g.\ {\cite{GB92} \cite[Theorem~4.2]{Bau05}}]
\label{lem:P_M_is_cpt}
Let $M>0$.
The sets
\[
\PickC_M:=\{\, f \in \PickC : \AR(f) \le M \,\}
\quad \text{and} \quad
\{\, f \in \PickC_M : f\ \text{is univalent} \,\}
\]
are compact in $\mathrm{End}(\C^+)$.
\end{lemma}

\begin{proof}
By Proposition~\ref{prop:class_P}~\eqref{HN:ineq}, the set $\PickC_M$ is locally bounded and hence relatively compact by Montel's theorem.
To see that it is closed, let $(f_n)_{n \in \N}$ be a sequence in $\PickC_M$ which converges to a function $f$ locally uniformly.
The characteristic measures $\rho(f_n;dx)$ then have masses bounded by $M$.
Hence there exists a subsequence $(\rho(f_{n(k)};dx))_{k \in \N}$ which converges vaguely to a finite Borel measure $\rho(dx)$ with $\rho(\R) \le M$ by the vague compactness of $\overline{\bf M}_1(\R)$ (Proposition~\ref{prop:Alaoglu_for_vague}).
Then
\[
f = \lim_{k \to \infty}f_{n(k)}(z)
=z-\lim_{k \to \infty}G[\rho(f_{n(k)};dx)](z) 
= z-G_\rho(z) \in \PickC_M
\]
by the continuity of Cauchy transform (Proposition~\ref{prop:continuity_Cauchy}).
Moreover, if each $f_n$ is univalent, then so is $f$ by Hurwitz's theorem.
\end{proof}

\begin{lemma} \label{lem:class_P_prime}
\begin{enumerate}
\item \label{i:shifts_P_prime}
Let $f \in \PickB$ and $a \in \R$.
Then the outer shift $f_1(z)=f(z)-a$ and inner shift $f_2(z)=f(z-a)$ are both members of $\PickB$ with
\[
\mean(f_1)=\mean(f_2)=\mean(f)+a
\quad \text{and}\quad
\AR(f_1)=\AR(f_2)=\AR(f).
\]

\item \label{i:monoid_P}
$\PickC$ is closed under the composition of mappings.
Moreover, the identity
\begin{equation} \label{eq:monoid1}
\AR(f \circ g)=\AR(f)+\AR(g)
\end{equation}
holds for $f,g \in \PickC$.

\item \label{i:monoid_P_prime}
$\PickB$ is closed under the composition of mappings.
Moreover, the identities
\begin{equation} \label{eq:monoid2}
\mean(f \circ g)=\mean(f)+\mean(g)
\end{equation}
and \eqref{eq:monoid1} hold for $f,g \in \PickB$.
\end{enumerate}
\end{lemma}

\begin{proof}
\eqref{i:shifts_P_prime}
For the outer shift $f_1$ the assertion is trivial from \eqref{def:class_P_prime}.
For the inner shift $f_2$ we have
\begin{equation} \label{eq:inner_shift}
f_2(z)
=z+a+\mean(f)+\int_{\R}\frac{1}{x-z-a}\,\rho_f(dx)
=z+(\mean(f)+a)+\int_{\R}\frac{1}{x-z}\,\tilde{\rho}(dx);
\end{equation}
here the measure $\tilde{\rho}$ is defined by $\tilde{\rho}(B)=\rho_f(B+a)$, $B \in \cB(\R)$, with $B+a:=\{\, x+a : x \in B \,\}$.
Comparing \eqref{eq:inner_shift} with \eqref{def:class_P_prime} yields $\mean(f_2)=\mean(f)+a$ and $\rho_{f_2}=\tilde{\rho}$, and hence $\AR(f_2)=\tilde{\rho}(\R)=\rho_f(\R)=\AR(f)$.

\smallskip\noindent
\eqref{i:monoid_P}
Since this was proved by Goryainov and Ba~\cite[Theorem~1]{GB92}, we just mention a sketch.
For $f,g \in \PickC$, it holds that
\[
z(z-(f \circ g)(z))
=z(z-g(z))+g(z)(g(z)-f(g(z)))+(g(z)-z)(f(g(z))-g(z)).
\]
If $z \to \infty$ through a Stolz angle at $\infty$, then so does $g(z)$ by Lemma~\ref{lem:Visser-Ostrowski}%
\footnote{The mapping \eqref{eq:induced_Cayley} makes Lemma~\ref{lem:Visser-Ostrowski} applicable to the Pick function $g$.}.
Thus, we can gradually check Proposition~\ref{prop:class_P}~\eqref{HN:ang} and \eqref{eq:monoid1}.

\smallskip\noindent
\eqref{i:monoid_P_prime}
Let $f,g \in \PickB$.
The functions $\tilde{f}(z):=f(z)+\mean(f)$ and $\tilde{g}(z):=g(z+\mean(g))$ are then members of $\PickC$ by \eqref{i:shifts_P_prime}.
Hence by \eqref{i:monoid_P} we have $\tilde{f} \circ \tilde{g} \in \PickC$ and $\AR(\tilde{f} \circ \tilde{g})=\AR(\tilde{f})+\AR(\tilde{g})=\AR(f)+\AR(g)$.
We thus obtain the desired conclusion, applying \eqref{i:shifts_P_prime} to the identity $(f \circ g)(z) = (\tilde{f} \circ \tilde{g})(z-\mean(g))-\mean(f)$.
\end{proof}
\fi

\section{A proof of Lemma \ref{TH_diff-one-trajectory}} \label{app:GHP}

Here we provide a generalization of Lemma \ref{TH_diff-one-trajectory} with a simple proof suggested by a referee.  Let $p \in [1,\infty]$. 
A two-parameter family $(\varphi_{s,t})_{s\le t}$ of holomorphic self-mappings $\varphi_{s,t} \colon \C^+ \to \C^+$ is called an \emph{EF of order $p$} \cite[Definition 3.1]{BCDM12} if it satisfies \ref{EF1}, \ref{EF2} in Remark \ref{rem:reversal} and \ref{EF3'} with $L^1_{\rm loc}(I)$ replaced by $L^p_{\rm loc}(I)$.  Note that the case $p=\infty$ corresponds to the local Lipschitz continuity. 

\begin{lemma} \label{TH_diff-one-trajectory2}
Let $p \in [1,\infty]$.
Suppose that $(\varphi_{s,t})_{s \le t}$ is a family of holomorphic self-mappings of $\C^+$ such that conditions \ref{EF1} and \ref{EF2} hold, and each $\varphi_{s,t}$ has Denjoy--Wolff fixed point at $\infty$.
If there are a function $k \in L^p_{\rm loc}(I)$ and $z_0 \in \C^+$ such that 
\[
|\varphi_{0,u}(z_0) - \varphi_{0,t}(z_0)| \le \int_{t}^u k(r)\,dr ,\qquad t \le u,
\] then $(\varphi_{s,t})$ is an EF of order $p$.
\end{lemma}
\begin{proof}
Since the problem is local, we may assume that $I=[0,T]$. Also we may assume $z_0=i$ by working with $L^{-1} \circ \varphi_{s,t} \circ L$ for a suitable affine transformation $L(z):= az+b,~a>0,~b\in\R$. For notational brevity, let $z_s := \varphi_{0,s}(z_0)$ and $\Phi_{s,t}(z) := \varphi_{s,t}(z+z_s)$. Observe that $\Phi_{s,t}(0) = \varphi_{s,t}(\varphi_{0,s}(z_0)) =  \varphi_{0,t}(z_0) = z_t$, and by the assumption on the Denjoy--Wolff point (see also Remark \ref{rem:DW}), we have $\Im[\varphi_{t,u}(z)] \ge \Im z$ and so 
\[
\Im[\varphi_{s,u}] - \Im[\varphi_{s,t}] = \Im[\varphi_{t,u} \circ \varphi_{s,t}] - \Im[\varphi_{s,t}] \ge 0, \qquad s\le t \le u. 
\]
 By the Harnack inequality \cite[Eq.~(11), Chapter 6]{Ahl79} for the nonnegative harmonic function $\Im[\Phi_{s,u}-\Phi_{s,t}]$~($s\le t \le u$), we have
\begin{align*}
0\le  \Im[\Phi_{s,u}(z) - \Phi_{s,t}(z)]  
\le  \frac{1+|z|}{1-|z|} \Im[z_u - z_t],   \qquad |z|<1 ~(\le |z_s|). 
\end{align*}
This obviously yields 
\begin{align} \label{eq:g1}
|\!\Im[\Phi_{s,u}(z) - \Phi_{s,t}(z)]| 
\le  \frac{1+|z|}{1-|z|} \int_t^u k(r)\,dr,   \qquad |z|<1, ~t,u \in [s,T]. 
\end{align}
By the Poisson--Schwarz formula \cite[Eq.~(66), Chapter 4]{Ahl79},  for each $0<R<1$, there exists  $c \in\R$ such that 
\begin{align}\label{eq:g2}
\Phi_{s,u}(z) - \Phi_{s,t}(z) = c +i \int_{-\pi}^\pi \frac{Re^{i\theta}+z}{Re^{i\theta}-z} & \Im[\Phi_{s,u}(Re^{i\theta}) - \Phi_{s,t}(Re^{i\theta})] \,\frac{d\theta}{2\pi}, \\
& \qquad |z|<R,  ~t,u \in [s,T], \notag 
\end{align}
Evaluating this at $z=0$ shows that $c=\Re[\Phi_{s,u}(0)-\Phi_{s,t}(0)]=0$. Combining \eqref{eq:g1} and \eqref{eq:g2} we obtain 
\begin{align}\label{eq:g3}
|\Phi_{s,u}(z) - \Phi_{s,t}(z)| \le \frac{(R+|z|)(1+R)}{(R-|z|)(1-R)}\int_t^u k(r)\,dr, \qquad |z|<R,~t,u\in [s,T].  
\end{align}
This kind of estimate can be obtained on other balls in $\C^+ -z_s$ inductively; for example, on the ball centered at $(9/10)i$ with radius $19/10$, using the consequence of \eqref{eq:g3} that $t\mapsto \Phi_{s,t}((9/10)i)$ has $L^p$-derivative, we can repeat the above arguments.  Eventually, we conclude that for every $z\in \C^+ - z_s$, there exists a constant $C_z>0$ such that 
\begin{equation*}
|\Phi_{s,u}(z) - \Phi_{s,t}(z)| \le C_z\int_t^u k(r)\,dr, \qquad t,u \in [s,T].  
\end{equation*}
Note that we can select $C_z$ that is independent of $(s,t,u)$.   
\end{proof}

\section{Proof of Lemmas  \ref{lem:inverse1} and \ref{lem:inverse2}}\label{app:conv}

\begin{proof}[Proof of Lemma \ref{lem:inverse1}]
As is easily seen, there exists a constant $C_a$ (depending on $a$) such that $\lvert x/(z-x) \rvert \le C_a$ for all $z\in \Gamma_{a,0}$ and $x \in \R$. 
Thus for any $R>0$ we have 
\begin{align*}
\lvert z G[\mu_\tau](z)-1 \rvert
&=\left\lvert \int_{\R}\frac{x}{z-x}\,\mu_\tau(dx)\right\rvert 
\leq \int_{[-R,R]} \frac{\lvert x \rvert}{\Im z} \,\mu_\tau(dx) + C_a\mu_\tau([-R,R]^c) \\
&\leq \frac{R}{\Im z} + C_a \sup_{\tau \in K}\mu_\tau([-R,R]^c).
\end{align*}
From this and the tightness it follows that
\[
\lim_{\substack{z\to\infty\\ z \in \Gamma_{a,0}}}\sup_{\tau \in K}\lvert z G[\mu_\tau](z)-1 \rvert =0,
\]
which implies the desired conclusion. 
\end{proof}

\begin{proof}[Proof of Lemma \ref{lem:inverse2}]
\textit{Univalence}.
We first establish 
\begin{equation}\label{eq:tight_derivative_F}
\sup_{\tau \in K} \lvert F[\mu_\tau]'(z)-1 \rvert = o(1), \qquad z\to\infty,\ z \in \Gamma_{a,0}.   
\end{equation}
Using Lemma \ref{lem:inverse1} established above we get 
\[
F[\mu_\tau]'(z)= - F[\mu_\tau](z)^2  G[\mu_\tau]'(z) = (1+o(1)) \int_\R \frac{z^2}{(z-x)^2} \,\mu_\tau(dx), 
\]
where $o(1)$ is uniform in $\tau \in K$.
Therefore, it suffices to show that 
\begin{equation}\label{eq:tight_derivative_G}
\sup_{\tau \in K}\left\lvert \int_{\R} \frac{z^2}{(z-x)^2} \,\mu_\tau(dx) -1 \right\rvert =o(1). 
\end{equation}
It is easy to find a constant $C^\prime_a$ such that $\lvert z^2 / (z-x)^2\rvert \le C^\prime_a$ for all $z \in \Gamma_{a,0}$ and $x \in \R$.
Using this $C^\prime_a$ we have
\begin{align*}
\int_{\R} \left\lvert \frac{z^2}{(z-x)^2} -1 \right\rvert \,\mu_\tau(dx) 
&\le \int_{[-R,R]} \frac{x^2+2\lvert x \rvert \lvert z \rvert}{(\Im z)^2} \,\mu_\tau(dx) + (C^\prime_a+1) \mu_\tau([-R,R]^c) \\
&\le \frac{R^2+2R\lvert z \rvert}{(\Im z)^2}+ (C^\prime_a+1) \sup_{\tau \in K} \mu_\tau([-R,R]^c).
\end{align*}
As $\lvert z \rvert \asymp \Im z$ for $z \in \Gamma_{a,0}$, letting first $z \to \infty$ and then $R \to \infty$ we obtain \eqref{eq:tight_derivative_G} and hence \eqref{eq:tight_derivative_F}. 

By \eqref{eq:tight_derivative_F} we can find such a large $b>0$ that 
\[
\sup_{\tau \in K} \lvert F[\mu_\tau]'(z)-1 \rvert \le \frac{1}{2},
\qquad z \in \Gamma_{a,b}. 
\]
From this it follows that, for $\tau \in K$ and $z, w \in \Gamma_{a,b}$,
\[
\lvert F[\mu_\tau](z)-z-(F[\mu_\tau](w)-w) \rvert \le \frac{1}{2}\lvert z-w \rvert,
\]
hence $\lvert F[\mu_\tau](z)-F[\mu_\tau](w) \rvert \ge 2^{-1}\lvert z-w \rvert$.
This implies that $F[\mu_\tau]$ is univalent on $\Gamma_{a,b}$ for all $\tau \in K$. 

\smallskip\noindent 
\textit{The inclusion $F[\mu_\tau](\Gamma_{a, b}) \supset \Gamma_{a+\epsilon,(1+\epsilon)b}$}. 
By a simple geometric consideration we can observe that there exists a constant $c=c(a,\epsilon)$ such that, for any $b>0$ and any $\zeta \in \Gamma_{a+\epsilon,(1+\epsilon)b}$, the closed disk $\overline{\D}(\zeta, c\lvert \zeta \rvert)$ with center $\zeta$ and radius $c\lvert \zeta \rvert$ is contained in $\Gamma_{a,b}$.
For this $c$ we can find $b_0>0$ such that
\[
\sup_{\tau \in K,\, z \in \Gamma_{a,b_0}} \lvert F[\mu_\tau](z)-z \rvert < \frac{c}{1+c}\lvert z \rvert
\]
by Lemma \ref{lem:inverse1}.
Now let $b \ge b_0$ and fix an arbitrary $\zeta \in \Gamma_{a+\epsilon,(1+\epsilon)b}$.
For any $z \in \partial \D(\zeta, c\lvert \zeta \rvert)$, it is clear that $\lvert z \rvert \le (1+c)\lvert \zeta \rvert$ and hence
\[
\lvert F[\mu_\tau](z)-z \rvert < \frac{c}{1+c}\lvert z \rvert \le c \lvert \zeta \rvert=\lvert z-\zeta \rvert.
\]
Thus by Rouch\'e's theorem, the functions $z \mapsto z-\zeta$ and $z \mapsto F[\mu_\tau](z)-\zeta$ have the same number of zeros in $\D(\zeta, c\lvert \zeta \rvert)$, which shows $\zeta \in F[\mu_\tau](\D(\zeta, c\lvert \zeta \rvert)) \subset F[\mu_\tau](\Gamma_{a, b})$.
\end{proof}


\begin{thebibliography}{99}

\bibitem{ABCDM10} M.\ Abate, F.\ Bracci, M.\ D.\ Contreras and S.\ D{\'{\i}}az-Madrigal, The evolution of Loewner's differential equations, Eur.\ Math.\ Soc.\ Newsl., 2010.

\bibitem{Ahl79}
L.\ V.\ Ahlfors, \textit{Complex Analysis: An Introduction to the Theory of Analytic Functions of One Complex Variable}, 3rd ed., International Series in Pure and Applied Mathematics, McGraw-Hill Book Co., New York, 1979.

\bibitem{Ahl73}
L.\ V.\ Ahlfors, \textit{Conformal Invariants: Topics in Geometric Function Theory}, Reprint of the 1973 original, with a foreword by Peter Duren, F. W. Gehring and Brad Osgood, AMS Chelsea Publishing, Providence, RI, 2010.

\bibitem{AG93}
N.\ I.\ Akhiezer and I.\ M.\ Glazman, \textit{Theory of linear operators in Hilbert space I and II}, Two volumes bound as one, Dover Publications, Inc., New York, 1993.

\bibitem{ATZ21} V.\ Akhmedova, T.\ Takebe and A.\ Zabrodin, L\"owner equations and reductions of dispersionless hierarchies, J.\ Geom.\ Phys.\ \textbf{162} (2021), Paper No.\ 104100, 32 pp.

\bibitem{AAS83} I.\ A.\ Aleksandrov, S.\ T.\ Aleksandrov and V.\ V.\ Sobolev, Extremal properties of mappings of a half plane into itself (Russian), in: \textit{Complex Analysis (Warsaw, 1979)}, Banach Center Publ. \textbf{11}, PWN, Warsaw, 1983, pp.7--32.


\bibitem{AW14} M.\ Anshelevich and J.\ D.\ Williams, Limit theorems for monotonic convolution and the Chernoff product formula, Int.\ Math.\ Res.\ Notices \textbf{11} (2014), 2990--3021.

\bibitem{AHLV15} O.\ Arizmendi, T.\ Hasebe, F.\ Lehner and C.\ Vargas, Relations between cumulants in noncommutative probability, Adv.\ Math.\ \textbf{282} (2015), 56--92. 

\bibitem{BDDM86}
A.\ Baernstein, II, D.\ Drasin, P.\ Duren, and A.\ Marden (eds.), \emph{The {B}ieberbach conjecture}, Mathematical Surveys and Monographs, vol.~21, American Mathematical Society, Providence, RI, 1986.

\bibitem{BNT02} O.\ E.\ Barndorff-Nielsen and S.\ Thorbj{\o}rnsen, Self-decomposability and L\'evy processes in free probability, Bernoulli \textbf{8} (2002), 323--366. 


\bibitem{Bau01} H.\ Bauer, \textit{Measure and Integration Theory}, de Gruyter Studies in Math.\ 26, Walter de Gruyter, Berlin/New York, 2001. 

\bibitem{Bau04} R.\ O.\ Bauer, L\"owner’s equation from a noncommutative probability perspective, J.\ Theoret.\ Probab.\ \textbf{17} (2004), 435--456.

\bibitem{Bau05} R.\ O.\ Bauer, Chordal Loewner families and univalent Cauchy transforms, J.\ Math.\ Anal.\ Appl.\ \textbf{302} (2005), 484--501.

 \bibitem{Bel05} S.\ T.\ Belinschi, Complex analysis methods in noncommutative probability, PhD thesis, Indiana University, 2005. 
Available at arXiv:math/0602343v1. 

\bibitem{BB07} S.\ T.\ Belinschi and H.\ Bercovici, A new approach to subordination results in free probability,  J.\ Anal.\ Math.\ \textbf{101} (2007), 357--365. 




\bibitem{BP96} H. Bercovici and V. Pata, The law of large numbers for free identically distributed random variables, Ann.\ Probab.\ \textbf{24} (1996), 453--465.

\bibitem{BP99} H.~Bercovici and V.~Pata, Stable laws and domains of attraction in free probability theory (with an appendix by Philippe Biane), Ann.\ of Math.\ \textbf{149} (1999), 1023--1060.

\bibitem{BP00} H.~Bercovici and V.~Pata, A free analogue of Hin\v{c}in's characterization of infinite divisibility, Proc.\ Amer.\ Math.\ Soc.\ \textbf{128} (2000), 1011--1015. 


\bibitem{BV93} H.~Bercovici and D.~Voiculescu, Free convolution of measures with unbounded support, Indiana Univ.\ Math.\ J.\ \textbf{42} (1993), 733--773.

 \bibitem{BP78} E.\ Berkson and H.\ Porta, Semigroups of analytic functions and composition operators, Michigan Math.\ J.\ \textbf{25} (1978), 101--115. 

\bibitem{BW16} R.\ Bhattacharya and E.C.\ Waymire, \textit{A Basic Course in Probability Theory}, Springer International Publishing AG, Cham, Switzerland, 2016. 

\bibitem{Bia98} P.\ Biane, Processes with free increments, Math.\ Z.\ \textbf{227} (1998), 143--174. 

\bibitem{Bia19} P.\ Biane, Nonlinear free L\'evy-Khintchine formula and conformal mapping, J.\ Operator Theory \textbf{85} (2021), 79--99.  

\bibitem{Bil68} P.\ Billingsley, \textit{Convergence of Probability Measures}, John Wiley \& Sons, Inc., New York, 1968.

\bibitem{Bil12} P.\ Billingsley, \textit{Probability and Measure}, Anniversary Edition, John Wiley \& Sons, Inc., New Jersey, 2012.



\bibitem{BCDM09} F. Bracci, M. D. Contreras, and S. D{\'{\i}}az-Madrigal, Evolution families and the Loewner equation II: complex hyperbolic manifolds, Math.\ Ann.\ \textbf{344} (2009), 947--962.

\bibitem{BCDM12} F.~Bracci, M.~D. Contreras, and S.~D{\'{\i}}az-Madrigal, Evolution families and the Loewner equation I: The unit disc, J. Reine Angew. Math. \textbf{672} (2012), 1--37.

\bibitem{BCDM20} F.\ Bracci, M.\ D.\ Contreras, and S.\ D{\'{\i}}az-Madrigal, \textit{Continuous Semigroups of Holomorphic Self-maps of the Unit Disc}, Springer Monographs in Mathematics, Springer, Cham, 2020.

\bibitem{BCDMG15} F.\ Bracci, M.\ D.\ Contreras, S.\ Diaz-Madrigal and P.\ Gumenyuk, Boundary regular fixed points in Loewner theory, Ann.\ Mat.\ Pura Appl.\ \textbf{194} (2015), 221--245.

\bibitem{BCDMV14} F.\ Bracci, M.\ D.\ Contreras, S.\ Diaz-Madrigal and A.\ Vasil'ev, Classical and stochastic L\"owner--Kufarev equations, in: \textit{Harmonic and complex analysis and its applications}, Trends Math., Birkh\"auser/Springer, Cham, 2014, pp.\ 39--134.




\bibitem{CL55} E.\ A.\ Coddington and N.\ Levinson, \textit{Theory of Ordinary Differential Equations}, McGraw-Hill, New York, 1955.    

\bibitem{CDM21} M.\ D.\ Contreras, S.\ D{\'{\i}}az-Madrigal, Topological Loewner theory on Riemann surfaces, J.\ Math.\ Anal.\ Appl.\ \textbf{493} (2021), 124525. 

\bibitem{CDMG10}
M.\ D.\ Contreras, S.\ D{\'{\i}}az-Madrigal and P.\ Gumenyuk, Loewner chains in the unit disk, Rev.\ Mat.\ Iberoam.\ \textbf{26} (2010), 975--1012.

\bibitem{CDMG13}
M.\ D.\ Contreras, S.\ D{\'{\i}}az-Madrigal and P.\ Gumenyuk, Loewner theory in annulus I: evolution families and differential equations, Trans.\ Amer.\ Math.\ Soc.\ \textbf{365} (2013), 2505--2543.

\bibitem{CDMG14}
M.~D. Contreras, S.~D{\'{\i}}az-Madrigal, and P.~Gumenyuk, 
Local duality in Loewner equations, 
J. Nonlinear Convex Anal. \textbf{15} (2014), no.~2, 269--297.

\bibitem{dMG16}
A.\ del Monaco and P.\ Gumenyuk, Chordal Loewner equation, in: \textit{Complex analysis and dynamical systems VI. Part 2}, 63--77, Comtemp.\ Math.\ \textbf{667}, Israel Math.\ Conf.\ Proc., American Mathematical Society, Providence, RI, 2016.

\bibitem{Dud02} R.\ M.\ Dudley, \textit{Real Analysis and Probability}, Cambridge University Press, Cambridge, 2002.

\bibitem{Dub10}
V.\ N.\ Dubinin, Lower bounds for the half-plane capacity of compact sets and symmetrization (Russian), Mat.\ Sb.\ \textbf{201} (2010), no.11, 77--88; Translation in Sb.\ Math.\ \textbf{201} (2010), 1635--1646.

\bibitem{DV14}
V.\ N.\ Dubinin and M.\ Vuorinen, Ahlfors-Beuring conformal invariant and relative capacity of compact sets, Proc.\ Amer.\ Math.\ Soc.\ \textbf{142} (2014), 3865--3879.

\bibitem{EK86}  S.\ N.\ Ethier and T.\ G.\ Kurtz, \emph{Markov Processes: Characterization and Convergence}. John Wiley \& Sons, Inc., New York, 1986. 

\bibitem{EG92}
L. C. Evans and R. F. Gariepy, \emph{Measure Theory and Fine Properties of Functions},  Revised edition, CRC Press, Boca Raton, FL, 2015.

\bibitem{Fe78}
P.\ Feinsilver, Processes with independent increments on a Lie group, Trans.\ Amer.\ Math.\ Soc.\ \textbf{242} (1978), 73--121.


\bibitem{Fra09b} U.\ Franz, Monotone and boolean convolutions for non-compactly supported probability measures, Indiana Univ.\ Math.\ J.\ \textbf{58} (2009), 1151--1186.

\bibitem{FHS20} U.\ Franz, T.\ Hasebe and S.\ Schlei{\ss}inger,
Monotone increment processes, classical Markov processes and Loewner chains, Diss.\ Math.\ \textbf{552}, 119 (2020).

\bibitem{GM05} J.\ B.\ Garnett and D.\ E.\ Marshall, \textit{Harmonic Measure}, New Mathematical Monographs \textbf{2}, Cambridge University Press, Cambridge, 2005.

\bibitem{GK54} B.\ V.\ Gnedenko and A.\ N.\ Kolmogorov, \textit{Limit Distributions for Sums of Independent Random Variables}, translated and annotated by K. L. Chung, with an appendix by J. L. Doob, Addison-Wesley Publishing Co., Inc., Cambridge, MA, 1954.

\bibitem{GL21} B.\ Gustafsson and Y.-L.\ Lin, \textit{Laplacian Growth on Branched Riemann Surfaces}, Lecture Notes in Mathematics \textbf{2287}, Springer, Cham, 2021.

\bibitem{Gol69} G.\ M.\ Goluzin, \textit{Geometric Theory of Functions of a Complex Variable}, Translations of Mathematical Monographs \textbf{26}, American Mathematical Society, Providence, RI, 1969.

\bibitem{GB92} V.\ V.\ Goryainov and I.\ Ba, Semigroups of conformal mappings of the upper half-plane into itself with hydrodynamic normalization at infinity, Ukrainian Math.\ J.\ \textbf{44} (1992), 1209--1217.
Translation from Ukra\"{\i}n.\ Mat.\ Zh.\ \textbf{44} (1992), 1320--1329.

\bibitem{GHP22+}  P.\ Gumenyuk, T.\ Hasebe and J.-L.\ P\'erez, Loewner Theory for Bernstein functions I: evolution families and differential equations, Constr.\ Approx.\ \textbf{61} (2025), 379--412. 

\bibitem{GHP23+}  P.\ Gumenyuk, T.\ Hasebe and J.-L.\ P\'erez, Loewner Theory for Bernstein functions II: applications to inhomogeneous continuous-state branching processes. Ann.\ Appl.\ Probab.\ \textbf{36}, No.\ 3 (2026), 2163--2198. 



\bibitem{HH22}
T.\ Hasebe and I.\ Hotta, Additive processes on the unit circle and Loewner chains, Int.\ Math.\ Res.\ Not.\ \textbf{22} (2022), 17797--17848.
 

\bibitem{Ha23}
K.\ Hata, Uniform weak convergence of random walks to additive processes, Master's thesis, Hokkaido University, 2023.

\bibitem{Hay51}
W.\ K.\ Hayman, Some applications of the transfinite diameter to the theory of functions, J.\ Analyse Math.\ \textbf{1} (1951), 155--179.

\bibitem{Hay94}
W.\ K.\ Hayman, \textit{Multivalent Functions}, 2nd ed., Cambridge Tracts in Mathematics \textbf{110}, Cambridge University Press, Cambridge, 1994.



\bibitem{HHY22+}
S.\ Hoshinaga, I.\ Hotta and H.\ Yanagihara, Continuous evolution families, Proc.\ Amer.\ Math.\ Soc.\ \textbf{151} (2023), 5251--5263.


\bibitem{JS10}
J. Jacod and A. N. Shiryaev, \textit{Limit Theorems for Stochastic Processes}, 2nd ed., Grundlehren der mathematischen Wissenshaften \textbf{288}, Springer, 2010.

\bibitem{Jek20} D.\ Jekel, Operator-valued chordal Loewner chains and non-commutative probability, J.\ Funct.\ Anal.\ \textbf{278} (2020), 108452.

\bibitem{Kal17} O.\ Kallenberg, \textit{Random Measures, Theory and Applications}, Probability Theory and Stochastic Modelling \textbf{77}, Springer, Cham, 2017.

\bibitem{Kel75} J.\ L.\ Kelley, \textit{General Topology}, Reprint of the 1955 ed.\ by Van Nostrand, Graduate Texts in Mathematics \textbf{27}, Springer-Verlag, New York--Berlin, 1975.

\bibitem{Kem17} A.\ Kemppainen, \textit{Schramm--Loewner Evolution}, Springer Briefs in Mathematical Physics \textbf{24}, Springer, Cham, 2017.


\bibitem{LSW03}
G.\ Lawler, O.\ Schramm and W.\ Werner, Conformal restriction: the chordal case, J.\ Amer.\ Math.\ Soc.\ \textbf{16} (2003), 917--955.

\bibitem{LLN09}
S.\ Lalley, G.\ Lawler and H.\ Narayanan, Geometric interpretation of half-plane capacity, Electron.\ Commun.\ Probab.\ \textbf{14} (2009), 566--571.


\bibitem{Loe23}
K.\ L\"owner, Untersuchungen \"uber schlichte konforme Abbildungen des Einheitskreises. I. (in German),  Math.\ Ann.\ \textbf{89} (1923), 103--121.

\bibitem{Maa92} H.\ Maassen, Addition of freely independent random variables, J.\ Funct.\ Anal.\ \textbf{106} (1992), 409--438.

\bibitem{MS17} J.\ A.\ Mingo and R.\ Speicher, \textit{Free Probability and Random Matrices}, Fields Institute Monographs \textbf{35}, Springer, New York; Fields Institute for Research in Mathematical Sciences, Toronto, ON, 2017.

\bibitem{Mur00} N.\ Muraki, Monotonic convolution and monotonic L\'{e}vy-Hin\v{c}in formula, preprint, 2000.  

\bibitem{Mur22} T.\ Murayama, On the continuity of half-plane capacity with respect to Carath\'eodory convergence, in: \textit{Dirichlet Forms and Related Topics}, Springer Proceedings in Mathematics \& Statistics \textbf{394}, Springer Nature Singapore Pte Ltd., Singapore, 2022, pp.379--399.

\bibitem{Mur23} T.\ Murayama, Loewner chains and evolution families on parallel slit half-planes, J.\ Math.\ Anal.\ Appl.\ \textbf{523} (2023), Paper No.\ 127180, 51 pp.



\bibitem{Pol20}
G.\ P\'olya, \"Uber den zentralen Grenzwertsatz der Wahrscheinlichkeitsrechnung und das Momentenproblem (in German), Math.\ Z.\ \textbf{8} (1920), 171--181.

\bibitem{Pom65}
Ch.\ Pommerenke, \"Uber die Subordination analytischer Funktionen, J.\ Reine Angew.\ Math.\ \textbf{218} (1965), 159--173.

\bibitem{Pom75} Ch.\ Pommerenke, \textit{Univalent Functions}, Vandenhoeck \& Ruprecht, G\"ottingen, 1975.

\bibitem{Pom92} Ch.\ Pommerenke, \textit{Boundary Behavior of Conformal Maps}, Grundlehren der mathematischen Wissenschaften \textbf{299}, Springer-Verlag, Berlin, 1992. 


\bibitem{RR89} B.\ S.\ Rajput and J.\ Rosinski, Spectral representations of infinitely divisible processes, Probab.\ Th.\ Rel.\ Fields \textbf{82} (1989), 451--487. 

\bibitem{RW14}
S.\ Rohde and C.\ Wong, Half-plane capacity and conformal radius, Proc.\ Amer.\ Math.\ Soc.\ \textbf{142} (2014), 931--938.

\bibitem{RR94} M.\ Rosenblum and J.\ Rovnyak, \textit{Topics in Hardy Classes and Univalent Functions}, Birkh\"auser Advanced Texts: Basler Lehrb\"ucher, Birkh\"auser Verlag, Basel, 1994.


\bibitem{Sat13} K.\ Sato, \textit{L\'evy Processes and Infinitely Divisible Distributions}, corrected paperback edition, Cambridge Studies in Advanced Math.\ 68, Cambridge University Press, Cambridge, 2013.


\bibitem{Sch17} S.\ Schlei{\ss}inger, The chordal Loewner equation and monotone probability theory, Infin.\ Dimens.\ Anal.\ Quantum Probab.\ Relat.\ Top.\ \textbf{20} (2017), 1750016, 17 pp. 


\bibitem{SW97} R.\ Speicher and R.\ Woroudi, Boolean convolution, in: \textit{Free Probability Theory}, Ed.\ D.\ Voiculescu, Fields Inst.\ Commun.\ \textbf{12}, Amer.\ Math.\ Soc. (1997), pp.267--280. 

\bibitem{Str11} D.\ W.\ Stroock, \textit{Probability Theory: An Analytic View}, 2nd ed., Cambridge University Press, Cambridge, 2011.

\bibitem{Tsu75}
M.~Tsuji, \textit{Potential theory in modern function theory}, Chelsea Publishing Co., New York, 1975.

\bibitem{Vil09} C.\ Villani, \textit{Optimal Transport: Old and New}, Grundlehren der mathematischen Wissenschaften \textbf{338}, Springer-Verlag, Berlin, 2009. 

\bibitem{V93} D.\ V.\ Voiculescu, The analogues of entropy and of Fisher's information measure in free probability theory I,  Commun.\ Math.\ Phys.\ \textbf{155} (1993), 71--92. 


\bibitem{Yan19+} H.\ Yanagihara, Loewner theory on analytic universal covering maps. arXiv:1907.11987v1

\end{thebibliography}
\end{document}